\documentclass[reqno,a4paper]{siamart1116}
\usepackage[top=1.5cm, bottom=1.5cm, left=2cm, right=2cm]{geometry}

\setlength{\parskip}{5pt}
\setlength{\parindent}{10pt}
\newtheorem{remark}{Remark}[section]
\usepackage{amsmath,amssymb,epsfig,float,color,mathtools,enumitem,comment}
\usepackage{arydshln}
\usepackage{booktabs}
\usepackage{mathrsfs}
\usepackage{stmaryrd}
\usepackage{hyperref}
\usepackage{cancel}
\usepackage{multirow}
\usepackage{graphicx,epstopdf} 
\usepackage[caption=false]{subfig} 



\newcommand{\ds}{\,\mbox{d}s}
\numberwithin{equation}{section}
\numberwithin{figure}{section}
\numberwithin{theorem}{section}

\definecolor{lightblue}{rgb}{0.22,0.45,0.70}
\definecolor{mygray}{rgb}{0.7,0.7,0.7}

\allowdisplaybreaks

\title{Equal order stabilized finite elements with Nitsche for stationary Navier-Stokes problem with slip boundary conditions : a priori and  a posteriori error analysis }
\author{Aparna Bansal\thanks{Department of Mathematics, Indian Institute of Technology Roorkee, Roorkee 247667, India. Email: \email{a\_bansal@ma.iitr.ac.in}.}
\and Nicol\'as A. Barnafi$^\P$\thanks{Instituto de Ingeniería Matemática y Computacional \& Facultad de Ciencias Biológicas, Pontificia Universidad Católica de Chile, Av Vicuña Mackenna 4860, Santiago, Chile, Email: \email{nicolas.barnafi@uc.cl}. \\
\indent$^\P$Centro de Modelamiento Matemático (CNRS IRL2807), Santiago, Chile.}
\and 
Rodolfo Araya \thanks{Departamento de Ingeniería Matemática and CI\textsuperscript{2}MA, Universidad de Concepción, Concepción, Chile \email{rodolfo.araya@udec.cl}.}
\and Dwijendra Narain Pandey\thanks{Department of Mathematics, Indian Institute of Technology Roorkee, Roorkee 247667, India. Email: \email{dwij@ma.iitr.ac.in}.}}
\date{\today}

\begin{document}

\maketitle
\begin{abstract}
In this work, we extend the equal-order stabilized scheme discussed in [Franca et al., Comput. Methods Appl. Mech. Engrg. 99 (1992) 209-233] to accommodate slip (i.e., Navier) boundary conditions for the stationary Navier-Stokes equations. Our analysis presents a robust formulation for implementing slip boundary conditions using Nitsche's method on arbitrarily complex boundaries. The well-posedness of the discrete problem is established under mild assumptions together with  optimal convergence rates for the approximation error. Furthermore, we establish the efficiency and reliability of residual-based a posteriori error estimators for the stationary discrete problem. Several well-known numerical tests validate our theoretical findings. The proposed method fits naturally within the framework of finite element implementation, offering both accuracy and enhanced flexibility in the selection of finite element pairs.
\end{abstract}
\begin{keywords}
 Nitsche method, Stabilization scheme, Stabilized mixed finite element,  
a priori error analysis, and a posteriori error estimator.
\end{keywords}
\begin{AMS}
65M30, 65M15.     
\end{AMS}

\section{Introduction}
Slip (or Navier ) boundary conditions naturally arise in the context of Stokes and Navier–Stokes equations, particularly when modeling biological surfaces \cite{bechert2000fluid}, slide coating processes \cite{christodoulou1989fluid}, or in turbulence modeling \cite{mohammadi1994analysis} and are essential for accurately capturing fluid behavior at small scales.
In statistical mechanics, the phenomenon of slip is well understood, as the Navier-Stokes equations have been shown to approximate the Boltzmann equation for small Knudsen numbers \cite{sone2002kinetic}, leading to the development of generalized slip-flow theory. These slip boundary conditions, often viewed as generalized Dirichlet conditions, are challenging to implement in standard finite element frameworks in the presence of complex geometries. 

There is an extensive literature for the imposition of slip boundary conditions. One of the earliest approaches involves enforcing slip conditions weakly through a Lagrange multiplier, as discussed in \cite{MR1707832, verfurth1986finite, MR1124131}, which are effective but they increase computational costs by introducing an extra variable. Penalty methods, which add a regularization term, have also been explored in \cite{MR351118, MR853660}. Despite their inconsistency and higher regularity requirements, they remain widely used because of their simplicity.  Nitsche's method is considered as a simple, consistent and primal technique to take into account the slip condition ( see \cite{MR4744101, MR2192480, MR2501054, MR341903, MR1365557, MR744045, MR3759094} ). Araya et al. extended this approach by discussing symmetric and non-symmetric variants of Nitsche’s method for the Stokes equations with slip conditions, using stabilized finite elements in \cite{MR4744101}. Additionally, a recent study \cite{MR4812237} combines symmetric Nitsche methods with inf-sup stable element pairs and Variational MultiScale (VMS) stabilization to solve the Navier-Stokes equations at high Reynolds numbers. Moreover, Poza et al. derived an equal-order stabilized formulation for the Stokes equations with variable viscosities in \cite{MR4669348}.

In adaptive finite element methods, a posteriori error estimators provide a way to quantify the local distribution of errors. Reliable estimator not only control the true error but also serve as stopping criterion in an adaptive refinement process. Furthermore, the efficiency of these estimators ensures that their convergence rate matches that of the true error. However, most existing error estimation results for finite element methods (see \cite{MR1885308, MR3059294} ) are focused on pure diffusion problems, with fewer studies addressing a posteriori estimators for advection-diffusion problems. Verfürth, in \cite{MR2182149}, presented robust a posteriori error estimates for stationary advection-diffusion equations, later extending these estimates to time-dependent cases in \cite{MR2182150}. Regarding the stationary Navier-Stokes or  Boussinesq equations it is worth mentioning the residual-based estimators proposed in \cite{MR3765706, MR2111747, MR4053090, MR3293502, MR1467674, MR1833723,MR4839142, MR4392237, MR2763864}. 

This work presents both a priori and a posteriori error analyses for the stationary Navier-Stokes equations with slip boundary conditions, employing Nitsche’s method in conjunction with a stabilized equal-order finite element formulation. Both symmetric and non-symmetric variants of Nitsche’s method are considered for their distinct advantages. In stabilized schemes, the introduction of appropriate stabilization terms ensures global inf-sup stability without the need for inf-sup compatible spaces for velocity and pressure. This also contributes to establish the coercivity with respect to a chosen norm. This approach guarantees the existence and uniqueness of the solution without relying on the inf-sup condition and facilitates the derivation of error bounds in this norm. Furthermore, a well-balanced numerical diffusion is incorporated through a stabilization parameter, which is crucial for resolving advection-dominated flows, especially at high Reynolds numbers (see \cite{ MR2278180, MR1186727, MR1377246}). The combination of stabilized methods with a posteriori error estimators improves the accuracy of numerical solutions while maintaining a low computational cost (see \cite{MR2398346}). This approach is particularly effective for approximating solutions involving multiple scales, as commonly encountered in non-linear Navier-Stokes equations. 
\subsection*{Outline}
The paper is organized as follows: Section \ref{section2} introduces the model and its continuous analysis. In Section \ref{section3}, we present the stabilized discrete formulation using equal-order stabilized finite elements and Nitsche’s method. The existence of solutions is established for the symmetric formulation under weak conditions on the given data and parameters using Brouwer’s fixed-point theorem. For all formulations i.e. symmetric, skew-symmetric, and incomplete, the existence and uniqueness of solutions are proven under strict conditions using Banach’s fixed-point theorem, as detailed in Section \ref{section4}. Section \ref{section5} derives a priori error estimates and proves the optimal convergence of the method. In Section \ref{section6}, we develop and analyze the efficiency and reliability of a residual-based a posteriori error estimator for the stationary problem. Finally, Section \ref{section8} presents numerical experiments that validate the theoretical convergence rates and demonstrate the robustness of the proposed a posteriori error estimators.

\section{The model problem}\label{section2}
The stationary incompressible Navier-Stokes equations are stated as follows
\begin{equation}\label{1}
\begin{aligned}
- 2 \nu \nabla \cdot  \varepsilon (\boldsymbol{u})  + \boldsymbol{u} \cdot \nabla \boldsymbol{u}+\nabla p  =\boldsymbol{f}, \quad 
\nabla \cdot \boldsymbol{u} &=0 \quad \text {in}\,\Omega , \\ 
\boldsymbol{u} &= \boldsymbol{0} \quad \text {on}\, \Gamma_D,
\end{aligned}
\end{equation}
posed on a spatial domain  $\Omega \subseteq \mathbb{R}^d, d \in\{2,3\}$ with Lipschitz boundary $\Gamma = \overline{\Gamma}_D \cup \overline{\Gamma}_{\mathrm{Nav}}$ and $\Gamma_D \cap \Gamma_{\mathrm{Nav}} = \emptyset$, where  $\boldsymbol{u}$ is the fluid velocity, $\nu>0$ is the viscosity, $p$ is the fluid pressure and  $\boldsymbol{f} $ represents the external body force on $\Omega$.
The Navier boundary condition on $\Gamma_{\mathrm{Nav}}$ is defined as
\begin{equation}\label{2}
\begin{aligned}
\boldsymbol{u} \cdot \boldsymbol{n} =0, \quad
2 \nu \boldsymbol{n}^t \varepsilon ( \boldsymbol{u}) \boldsymbol{\tau}^i + \beta \boldsymbol{u} \cdot \boldsymbol{\tau}^i =0 \quad \text {on}\,  \quad \Gamma_{\mathrm{Nav}}, \quad i =1,...,(d-1). 
\end{aligned}
\end{equation}
The Navier boundary condition allows the fluid to slip along the boundary and $\beta>0$ is a friction coefficient. It requires that the tangential component of the stress vector at the boundary be proportional to the tangential velocity. The strain tensor is defined as $\varepsilon(\boldsymbol{u}) = \frac{1}{2}(\nabla \boldsymbol{u} + \nabla \boldsymbol{u}^\top)$, with $\boldsymbol{n}$ and $\boldsymbol{\tau}^i$ representing the unit normal and tangent vectors to the boundary $\Gamma$, respectively.

For the sake of simplicity, throughout the analysis, $C$ will denote a generic positive constant independent of the mesh size $h$ but possibly dependent on the model parameters. We will also abuse notation by denoting $\delta_i, \gamma_j >0$ with $i,j\in \mathbb N$ as an arbitrary constant with different values at different occurrences, arising from the use of  Young's inequality. Additionally, whenever an inequality holds for positive constants independent of the mesh size and dependent on the parameters, we will use the symbols $\lesssim$ or $\gtrsim$ and omit specific constants. The assumption of homogeneity in the boundary conditions is made to simplify the subsequent analysis, as lifting operators have already been established \cite{MR3974685}. Non-homogeneous boundary conditions are utilized in the numerical tests in section \ref{section8}. 

 Define the spaces:
 $$
 \begin{aligned}
 \mathrm{V} &  \coloneqq \left\{\boldsymbol{v} \in \boldsymbol{H}^1(\Omega): \,\boldsymbol{v} \cdot \boldsymbol{n}=0 \,\text{ on }\,  \Gamma_{\mathrm{Nav}} , \boldsymbol{v} = \boldsymbol{0} \, \text{ on }\, \Gamma_D \right\}, \qquad
 \Pi  \coloneqq {L}^2_{0}(\Omega),
 \end{aligned}
 $$
 with their natural inner products, and consider the product space $\mathrm{V} \times \Pi$ equipped with the norm
$$
\|(\boldsymbol{v},q)\|^2 = \nu \|\varepsilon(\boldsymbol{v}) \|^2_{0,\Omega} +  \| q \|^2_{0,\Omega}  
$$
Now, we recall some classical results which will be needed in the forthcoming analysis sections.
\begin{lemma}{\cite{MR851383, MR1318914}}\label{lemma2.1}
There are positive constants $\alpha$ and $\beta$, depending only on $\Omega$, such that for all $\boldsymbol{v}, \boldsymbol{w} \in \mathrm{V}$ and $q \in \Pi$, it holds

$$
\begin{aligned}
 \sup _{\boldsymbol{v} \in \mathrm{V} \backslash(\boldsymbol{0}\}} \frac{(q, \nabla \cdot \boldsymbol{v})}{|\boldsymbol{v}|_{1, \Omega}} & \geq \beta\|q\|_{0, \Omega}, \\
( \boldsymbol{w} \cdot \nabla \boldsymbol{v} , \boldsymbol{v}) & \le \alpha|\boldsymbol{u}|_{1, \Omega}|\boldsymbol{w}|_{1, \Omega}|\boldsymbol{v}|_{1, \Omega}.
\end{aligned}
$$
Moreover, for all $\boldsymbol{u}, \boldsymbol{v}, \boldsymbol{w} \in \mathrm{V}$, it holds $( \boldsymbol{w} \cdot \nabla \boldsymbol{v} , \boldsymbol{v})=-\frac{1}{2}(\nabla \cdot \boldsymbol{w}, \boldsymbol{v} \cdot \boldsymbol{v}).$
\end{lemma}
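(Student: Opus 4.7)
The plan is to prove the three assertions one by one, each via a standard technique and, for the first, by reducing to the classical pure-Dirichlet situation. For the inf-sup condition, I would exploit the inclusion $\boldsymbol{H}^1_0(\Omega) \subset V$: any field vanishing on all of $\Gamma$ trivially satisfies both $\boldsymbol{v}=\boldsymbol{0}$ on $\Gamma_D$ and $\boldsymbol{v}\cdot\boldsymbol{n}=0$ on $\Gamma_{\mathrm{Nav}}$. The classical N\'ecas--Bogovskii result on Lipschitz domains then furnishes, for each $q \in L^2_0(\Omega)$, a field $\boldsymbol{v}_q \in \boldsymbol{H}^1_0(\Omega)$ with $\nabla\cdot\boldsymbol{v}_q = q$ and $|\boldsymbol{v}_q|_{1,\Omega} \leq C\,\|q\|_{0,\Omega}$. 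Testing the supremum with this particular $\boldsymbol{v}_q$ gives $(q,\nabla\cdot\boldsymbol{v}_q)=\|q\|^2_{0,\Omega}$, and dividing by $|\boldsymbol{v}_q|_{1,\Omega}$ produces the claimed lower bound with $\beta = 1/C$.

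For the trilinear bound, I would apply H\"older's inequality with exponents $(4,2,4)$ followed by the Sobolev embedding $H^1(\Omega)\hookrightarrow L^4(\Omega)$, which is valid in dimensions $d\in\{2,3\}$, and finally a Poincar\'e-type control of the full $H^1$-norm by the seminorm on $V$ (available because $V$-functions either vanish on $\Gamma_D$ or have a constrained trace):
\[
(\boldsymbol{w}\cdot\nabla\boldsymbol{v},\boldsymbol{v}) \;\leq\; \|\boldsymbol{w}\|_{L^4(\Omega)}\,|\boldsymbol{v}|_{1,\Omega}\,\|\boldsymbol{v}\|_{L^4(\Omega)} \;\leq\; \alpha\,|\boldsymbol{w}|_{1,\Omega}\,|\boldsymbol{v}|_{1,\Omega}^{\,2},
\]
with the obvious generalization to the three-field version of the estimate. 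For the integration-by-parts identity, I would rewrite the integrand pointwise as $\boldsymbol{w}\cdot\nabla\boldsymbol{v}\cdot\boldsymbol{v}=\tfrac{1}{2}\boldsymbol{w}\cdot\nabla(\boldsymbol{v}\cdot\boldsymbol{v})$, integrate by parts via Green's formula, and observe that the resulting boundary contribution $\tfrac{1}{2}\int_{\Gamma}(\boldsymbol{w}\cdot\boldsymbol{n})\,|\boldsymbol{v}|^{2}\,ds$ vanishes identically: on $\Gamma_{\mathrm{Nav}}$ because $\boldsymbol{w}\cdot\boldsymbol{n}=0$ by the definition of $V$, and on $\Gamma_D$ because $\boldsymbol{w}=\boldsymbol{0}$ there. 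What remains is precisely the stated identity, which can then be extended to all of $V$ by density of smooth functions.

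The main obstacle is genuinely only the N\'ecas--Bogovskii construction underlying the first inequality; this is the deep ingredient and is what the cited references supply. All other pieces, including the Sobolev embedding, H\"older, and the boundary-integral cancellation, are elementary once one recognises that the mixed boundary conditions defining $V$ do not obstruct the Dirichlet-subspace reduction in the inf-sup argument nor the tangential slip constraint needed to kill the boundary term in the skew-symmetry identity.
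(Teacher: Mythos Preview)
Your proposal is correct and follows the standard route to these classical results. The paper itself does not prove this lemma at all: it merely cites the references \cite{MR851383, MR1318914} (Girault--Raviart and Quarteroni--Valli), where precisely the arguments you outline---the Bogovskii/N\'ecas right-inverse of the divergence restricted to $\boldsymbol{H}^1_0(\Omega)\subset \mathrm{V}$, the H\"older $(4,2,4)$ splitting with the embedding $H^1\hookrightarrow L^4$, and the Green-formula identity with boundary cancellation from the constraints defining $\mathrm{V}$---are carried out. Your observation that the inclusion $\boldsymbol{H}^1_0(\Omega)\subset \mathrm{V}$ suffices for the inf-sup bound is exactly the right reduction, and your handling of the boundary term in the skew-symmetry identity correctly exploits both pieces of the mixed boundary condition.
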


The standard weak formulation of \eqref{1} and \eqref{2} is given by: Find $\left(\boldsymbol{u}, p \right) \in \mathrm{V} \times \Pi $, such that
	\begin{align}\label{bb}
	\mathcal{A}^{\mathrm{NS}}\left(\boldsymbol{u}, p  ; \boldsymbol {v}, q\right)=\mathcal{F}^{\mathrm{NS}}(\boldsymbol{v})  \quad \forall  \left(\boldsymbol{v}, q \right) \in \mathrm{V} \times \Pi,
	\end{align}
	where
	$$
	\mathcal{A}^{\mathrm{NS}}\left(\boldsymbol{u}, p  ; \boldsymbol {v}, q\right) \coloneqq  2 \nu \left(\varepsilon (\boldsymbol{u}), \varepsilon  (\boldsymbol{v})\right)_{\Omega} +(\boldsymbol{u} \cdot \nabla \boldsymbol{u}, \boldsymbol{v})_{\Omega} -(p, \nabla \cdot \boldsymbol{v})_{\Omega} -(q, \nabla \cdot \boldsymbol{u})_{\Omega} +\int_{\Gamma_{\mathrm{Nav}}} \beta \sum_i^{d-1}\left(\boldsymbol{\tau}^i \cdot \boldsymbol{v}\right)\left(\boldsymbol{\tau}^i \cdot \boldsymbol{u}\right) d s,
	$$
	$$
	\mathcal{F}^{\mathrm{NS}}(\boldsymbol{v}) \coloneqq \langle \boldsymbol{f},\boldsymbol{v} \rangle_{\Omega},
	$$
    where $\langle\cdot, \cdot\rangle_{\Omega}$ represents the duality pairing between $\mathrm{V}$ and its dual $\mathrm{V}^{\prime}$.

\section{Discrete stabilized scheme}\label{section3}
We assume that the polygonal computational domain ${\Omega}$ is discretized using a collection of regular partitions, denoted as $\{\mathcal{K}_h\}_{h>0}$, where $\Omega \subset \mathbb{R}^d$ is divided into simplices $K$ (triangles in 2D or tetrahedra in 3D) with a diameter $h_{{K}}$. The characteristic length of the finite element mesh $\mathcal{K}_h$ is denoted as $h \coloneqq \max _{{K} \in \mathcal{K}_h} h_{{K}}$. For a given triangulation $\mathcal{K}_h$, we define $\mathcal{E}_h$ as the set of all faces in $\mathcal{K}_h$, with the following partitioning
  		$$
  		\mathcal{E}_h \coloneqq \mathcal{E}_{\Omega} \cup \mathcal{E}_D \cup \mathcal{E}_{\mathrm{Nav}}
  		$$
  		where $\mathcal{E}_{\Omega}$ represents the faces  lying in the interior of $\Omega$, $\mathcal{E}_{\mathrm{Nav}}$ represents the faces lying on the boundary $\Gamma_{\mathrm{Nav}}$, and $\mathcal{E}_D$ represents the  faces lying on the boundary $\Gamma_D$. Additionally, $h_E$ denotes the $(d-1)$ dimensional diameter of a face. Here \emph{faces} loosely refers to the geometrical entities of co-dimension 1.
        
        Now, let us introduce the discrete finite element pair.
  		$$
  		\begin{aligned}
  		& \mathrm{V}_h \coloneqq \left\{\boldsymbol{v}_h \in \boldsymbol{C}(\overline{\Omega}) : \boldsymbol{v}_h =0 ~\, \text{on}\,~ E \in \mathcal{E}_D, \left.\boldsymbol{v}_h\right|_{K} \in \mathbb {P}_k({K}), k \geq  1 \quad \forall {K} \in \mathcal{K}_h,  \right\}, \\
  		& \mathrm{\Pi}_h \coloneqq \left\{q_h \in \mathrm{C}(\overline{\Omega}) :\left.q_h\right|_{{K}} \in \mathbb {P}_{k}({K}), k \geq 1 \quad \forall {K} \in \mathcal{K}_h\right\} \cap \Pi ,
  		\end{aligned}
  		$$
  		where $\mathbb{P}_k(K)$ is the space of polynomials of degree $k$ defined on $K$.
Let  $\boldsymbol{I}_h:\mathrm{V}\to \mathrm{V}_h$  and  $J_h: \Pi \to \Pi_h$  denote the vectorial and scalar versions of the Scott–Zhang interpolant, respectively. The following results regarding the approximation properties of these operators are established.
\begin{lemma}\label{interpolation_estimates}
For each element \( K \in \mathcal{K}_h \) and \( k \geq 1 \), there exist constants \( C_{SZ} \) and \( \tilde{C}_{SZ} \), independent of \( h_K \), such that the following inequalities hold:
\begin{align*}
    \left\|\boldsymbol{u} - \boldsymbol{I}_h \boldsymbol{u}\right\|_{0, K} + h_K \left|\boldsymbol{u} - \boldsymbol{I}_h \boldsymbol{u}\right|_{1, K} + h_K^2 \left|\boldsymbol{u} - \boldsymbol{I}_h \boldsymbol{u}\right|_{2, K} &\leq C_{SZ} h_K^{k+1} |\boldsymbol{u}|_{k+1, \omega_K}, \quad \forall \boldsymbol{u} \in \boldsymbol{H}^{k+1}(\omega_K), \\
    \left\|q - J_h q\right\|_{0, K} + h_K \left|q - J_h q\right|_{1, K} &\leq \tilde{C}_{SZ} h_K^k |q|_{k, \omega_K}, \quad \forall q \in H^k(\omega_K),
\end{align*}
where \( \omega_K \) is the patch defined as
   $ \omega_K := \bigcup_{\bar{K} \cap \bar{K}' \neq \emptyset} K'.$
\begin{proof}
    See \cite{MR2882148}. Note that for any \( q \in \Pi \), the Scott-Zhang interpolant \( J_h q \) does not typically belong to \( \Pi = L_0^2(\Omega) \). However, its modified version $J_h q - \frac{1}{|\Omega|} \int_{\Omega} J_h q \in \Pi,$
    which we denote as \( J_h q \) (with slight abuse of notation), also satisfy the above estimates.
\end{proof}
\end{lemma}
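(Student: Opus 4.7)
The plan is to reduce the claim to the classical Scott--Zhang interpolation theory and then patch the construction so that the image of the scalar interpolant lies in $\Pi = L^2_0(\Omega)$.

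First, I would invoke the standard Scott--Zhang estimates from the cited reference applied componentwise to $\boldsymbol{I}_h$ on $\mathbb{P}_k$ Lagrange elements, which directly yields the stated local bound in the $L^2$, $H^1$, and $H^2$ seminorms for $\boldsymbol{u} \in \boldsymbol{H}^{k+1}(\omega_K)$. For the (unmodified) scalar Scott--Zhang operator onto the continuous $\mathbb{P}_k$ space, the same reference provides the analogous estimate in the $L^2$ and $H^1$ seminorms for $q \in H^k(\omega_K)$. Both estimates rely on (i) the Bramble--Hilbert lemma on the reference element, (ii) a scaling argument using shape regularity, and (iii) the $L^2$--stability of the Scott--Zhang operator on each patch $\omega_K$; I would refer to the cited monograph rather than reproduce these standard steps.

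The non-routine point is that the bare interpolant $J_h q$ need not have zero mean, so it does not belong to $\Pi_h \cap \Pi$ a priori. To fix this, I define
\[
\widetilde{J}_h q \;:=\; J_h q - \frac{1}{|\Omega|}\int_\Omega J_h q,
\]
which evidently lies in $\Pi_h$. Using that $q \in \Pi$ satisfies $\int_\Omega q = 0$ and applying Cauchy--Schwarz gives
\[
\left|\frac{1}{|\Omega|}\int_\Omega J_h q\right|
= \left|\frac{1}{|\Omega|}\int_\Omega (J_h q - q)\right|
\leq \frac{1}{|\Omega|^{1/2}}\,\|J_h q - q\|_{0,\Omega},
\]
so the global mean correction is itself of order $h^k$ by summing the already-established local estimates for $J_h q$. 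Since the correction is a constant, it does not affect the $H^1$ seminorm on any element, and its contribution to the local $L^2$ estimate is absorbed into the constant $\tilde C_{SZ}$ at the price of an additional factor depending only on $|\Omega|$ and shape regularity.

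The main obstacle here is really only a bookkeeping one: the mean correction is intrinsically global, so to keep the local form of the estimate I have to enlarge the constant and interpret $|q|_{k,\omega_K}$ as bounding the correction via the global sum of local estimates. Once this is acknowledged, both estimates follow from the standard theory and the above identity, which is why the proof is rightly deferred to the reference with only a short comment on the mean adjustment.
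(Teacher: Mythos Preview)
Your proposal is essentially the same as the paper's: cite the standard Scott--Zhang theory and remark that the scalar interpolant must be mean-corrected to land in $\Pi_h\subset L^2_0(\Omega)$. You are in fact more careful than the paper, which simply asserts that the mean-corrected operator ``also satisfies the above estimates'' without addressing the local-versus-global tension you flag; your honest acknowledgment that the constant subtraction is a global quantity and that the purely local bound only survives after summing (or by absorbing a global factor into $\tilde C_{SZ}$) is accurate and worth keeping.
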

  	\begin{remark}
  	It is noted that $\mathrm{\Pi}_h$ is a subspace of $\mathrm{\Pi}$, but $\mathrm{V}_h$ is not a subspace of $\mathrm{V}$. In that sense, Nitsche's method can be considered as a non-conforming finite element approximation.
  	 \end{remark}
We consider the discrete norms
\begin{align*}
& \| \boldsymbol{v}_h \|_{1, \mathcal{K}_h}^2 := \nu \|\varepsilon(\boldsymbol{v}_h) \|^2_{0,\Omega} + \sum_{E \in \mathcal{E}_{\mathrm{Nav}}} \frac{\nu}{h_E} \|\boldsymbol{v}_h \cdot \boldsymbol{n} \|_{0,E}^2 \\
& \left|\!\left|\!\left| (\boldsymbol{v}_h, q_h )\right|\!\right|\!\right|^2 :=  \nu \|\varepsilon(\boldsymbol{v}_h) \|^2_{0,\Omega} + \sum_{E \in \mathcal{E}_{\mathrm{Nav}}} \frac{\nu}{h_E} \|\boldsymbol{v}_h \cdot \boldsymbol{n} \|_{0,E}^2 + \sum_{K \in \mathcal{K}_h} \frac{h_K^2}{\nu} \|\nabla q_h \|^2_{0,K}
\end{align*}
Next, we equip the dual of the discrete spaces with the following norm:
$$
\|(\boldsymbol{v}_h, q_h)\|_{\left(\mathrm{V}_h \times \Pi_h\right)^{\prime}}:=\sup _{\left\|\left(\boldsymbol{w}_h, r_h\right)\right\| \leq 1}\left\{\left\langle\boldsymbol{v}_h, \boldsymbol{w}_h\right\rangle+\left(q_h, r_h\right)\right\}
$$
Consider the parameter $\theta \in \{-1, 0, 1\}$ , a Nitsche penalty parameter $\gamma > 0$, and stabilization parameters $\tau$ and $\delta$. The stabilized discrete scheme used builds upon the framework presented in \cite{MR1186727}, and  combines it with the Nitsche approach. The formulation is given by: \textit{Find $\left(\boldsymbol{u}_h, p_h \right) \in \mathrm{V}_h \times \Pi_h $, such that}
  		\begin{align}\label{DSF}
  		\mathcal{A}_h^{\mathrm{NS}}\left(\boldsymbol{u}_h, p_h ; \boldsymbol{v}_h, q_h\right)=\mathcal{F}_h^{\mathrm{NS}}(\boldsymbol{v}_h)  \quad \forall  \left(\boldsymbol{v}_h, q_h \right) \in \mathrm{V}_h \times \Pi_h,
  		\end{align}
  with 
  $$
\begin{aligned}
\mathcal{A}_h^{\mathrm{NS}}\left(\boldsymbol{u}_h, p_h ; \boldsymbol{v}_h, q_h\right)   &\coloneqq \sum_{K \in \mathcal{K}_{h}} \bigg( 2 \nu \left(\varepsilon (\boldsymbol{u}_h), \varepsilon (\boldsymbol{v}_h) \right)_{K} + (\boldsymbol{u}_h \cdot \nabla \boldsymbol{u}_h, \boldsymbol{v}_h)_{K} - (p_h, \nabla \cdot \boldsymbol{v}_h)_{K} - (q_h, \nabla \cdot \boldsymbol{u}_h)_{K} \bigg) \\ 
&\quad + \sum_{E \in \mathcal{E}_{\mathrm{Nav}}} \bigg( -\int_{E} \boldsymbol{n}^t( 2 \nu \varepsilon (\boldsymbol{u}_h) - p_h I) \boldsymbol{n} (\boldsymbol{n} \cdot \boldsymbol{v}_h) \, ds  - \theta \int_{E} \boldsymbol{n}^t(2 \nu \varepsilon (\boldsymbol{v}_h) - q_h I) \\ 
&\quad  \boldsymbol{n} (\boldsymbol{n} \cdot \boldsymbol{u}_h) \, ds + \int_{E} \beta \sum_{i=1}^{d-1} (\boldsymbol{\tau}^i \cdot \boldsymbol{v}_h)(\boldsymbol{\tau}^i \cdot \boldsymbol{u}_h)\, ds  + \gamma \nu \int_{E} h_E^{-1} (\boldsymbol{u}_h \cdot \boldsymbol{n})(\boldsymbol{v}_h \cdot \boldsymbol{n})  \, ds \bigg) \\& \quad + \sum_{K \in \mathcal{K}_{h}}  \int_K  \tau\bigg(-2 \nu \nabla \cdot  \varepsilon (\boldsymbol{u}_h) + \boldsymbol{u}_h \cdot \nabla \boldsymbol{u}_h + \nabla p_h \bigg) \bigg(-2 \nu \nabla \cdot  \varepsilon (\boldsymbol{v}_h)  + \boldsymbol{u}_h \cdot \nabla \boldsymbol{v}_h \\ & \quad  - \nabla q_h \bigg) +  \sum_{K \in \mathcal{K}_{h}} \delta  \int_K \nabla \cdot \boldsymbol{u}_h \nabla \cdot \boldsymbol{v}_h, \\
  \mathcal{F}^{\mathrm{NS}}_h(\boldsymbol{v}_h,q_h) &\coloneqq  \left(\boldsymbol{f} ,\boldsymbol{v}_h \right)_K + \sum_{K \in \mathcal{K}_{h}} \tau\left( \boldsymbol{f} , - 2 \nu \nabla \cdot  \varepsilon (\boldsymbol{v}_h) + \boldsymbol{u}_h \cdot \nabla \boldsymbol{v}_h - \nabla q_h  \right)_K  ,
   \end{aligned}
$$
where $\theta = \{1,0,-1\}$ represents the symmetric, incomplete and skew-symmetric variants of Nitsche, respectively. The stabilization parameters \( \tau \) and \( \delta \) are defined as
\begin{align*}
    \delta &:= \lambda |\boldsymbol{u}_h(\boldsymbol{x})|_p h_K \xi(\operatorname{Re}_K(\boldsymbol{x})), \qquad
    \tau := \frac{h_K}{2|\boldsymbol{u}_h(\boldsymbol{x})|_p} \xi(\operatorname{Re}_K(\boldsymbol{x})),
\end{align*}
where 
\[
    \operatorname{Re}_K(\boldsymbol{x}) := \frac{m_K |\boldsymbol{u}_h(\boldsymbol{x})|_p h_K}{4\nu}, \quad
    \xi(y) := 
    \begin{cases} 
        y, & 0 \leq y < 1, \\
        1, & y \geq 1,
    \end{cases} \quad
    m_K := \min\left\{ \frac{1}{3}, 2 C_K \right\},
\]
and \( \lambda > 0 \) is a positive parameter. Here, \( |\boldsymbol{u}_h(\boldsymbol{x})|_p \) denotes the \( l_p \)-norm, with \( 1 \leq p \leq \infty \), and \( C_K \) is the inverse inequality constant.
\section{Well-posedness and stability analysis}\label{section4}
Before working on our results, we sequence the following technical lemmas.
\begin{lemma}{\cite[Proposition 1.135]{MR2050138}}\label{local_inverse}
There exists a positive constant $C$, independent of $h$, such for all $\boldsymbol{v}_h \in \mathrm{V}_h$ and all $q_h \in \Pi_h$, we have
$$
\begin{aligned}
& \left\|\boldsymbol{v}_h\right\|_{l, p, K} \leq C h_K^{m-l+d(1 / p-1 / q)}\left\|\boldsymbol{v}_h\right\|_{m, q, K},
\end{aligned}
$$
where $0 \leq m \leq l$ and $1 \leq p, q \leq \infty$.
\end{lemma}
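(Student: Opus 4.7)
The plan is to reduce the inequality to a statement on a reference element by an affine change of variables, and then exploit finite-dimensional norm equivalence on the polynomial space $\mathbb{P}_k$. Let $F_K:\hat{K}\to K$ denote the affine map $F_K(\hat{\boldsymbol{x}}) = B_K \hat{\boldsymbol{x}} + \boldsymbol{c}_K$ from a fixed reference simplex $\hat{K}$ onto $K$, and for $\boldsymbol{v}_h \in \mathbb{P}_k(K)$ set $\hat{\boldsymbol{v}} \coloneqq \boldsymbol{v}_h \circ F_K \in \mathbb{P}_k(\hat{K})$. Shape-regularity of $\{\mathcal{K}_h\}_{h>0}$ gives $\|B_K\| \sim h_K$, $\|B_K^{-1}\| \sim h_K^{-1}$, and $|\det B_K| \sim h_K^d$, with constants independent of $K$ and $h$.

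The proof rests on two ingredients. The first is the pair of classical scaling estimates for Sobolev seminorms, valid for every integer $r\geq 0$ and every $s\in[1,\infty]$:
$$
|\boldsymbol{v}_h|_{r,s,K} \lesssim h_K^{-r+d/s}\,|\hat{\boldsymbol{v}}|_{r,s,\hat{K}}, \qquad |\hat{\boldsymbol{v}}|_{r,s,\hat{K}} \lesssim h_K^{r-d/s}\,|\boldsymbol{v}_h|_{r,s,K},
$$
obtained by combining the chain rule with the change-of-variables formula. The second is the reference-element seminorm comparison
$$
|\hat{\boldsymbol{v}}|_{l,p,\hat{K}} \lesssim |\hat{\boldsymbol{v}}|_{m,q,\hat{K}} \qquad \forall\,\hat{\boldsymbol{v}} \in \mathbb{P}_k(\hat{K}).
$$
Chaining the two delivers
$$
|\boldsymbol{v}_h|_{l,p,K} \lesssim h_K^{-l+d/p}\,|\hat{\boldsymbol{v}}|_{l,p,\hat{K}} \lesssim h_K^{-l+d/p}\,|\hat{\boldsymbol{v}}|_{m,q,\hat{K}} \lesssim h_K^{m-l+d(1/p-1/q)}\,|\boldsymbol{v}_h|_{m,q,K},
$$
and the full-norm statement then follows by applying the argument to each seminorm $|\boldsymbol{v}_h|_{r,p,K}$, $r\leq l$, and summing, using $h_K\lesssim 1$ to absorb lower-order terms.

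The main obstacle is justifying the reference-element inequality, which is \emph{not} a direct consequence of equivalence of norms on finite-dimensional spaces: $|\cdot|_{m,q,\hat{K}}$ is only a seminorm on $\mathbb{P}_k(\hat{K})$, with kernel $\mathbb{P}_{m-1}(\hat{K})$. The resolution is to pass to the finite-dimensional quotient $\mathbb{P}_k(\hat{K})/\mathbb{P}_{m-1}(\hat{K})$, on which $|\cdot|_{m,q,\hat{K}}$ becomes a genuine norm; the hypothesis $m\leq l$ guarantees the kernel inclusion $\mathbb{P}_{m-1}(\hat{K}) \subseteq \mathbb{P}_{l-1}(\hat{K})$, so $|\cdot|_{l,p,\hat{K}}$ also descends to this quotient, and the equivalence of any two norms on a finite-dimensional space yields the stated comparison. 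Once this point is handled, the remainder of the argument is a careful bookkeeping of the exponents of $h_K$ arising from the scaling estimates.
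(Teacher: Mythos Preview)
The paper does not prove this lemma; it is stated with a citation to \cite[Proposition 1.135]{MR2050138} and used as a black box. Your argument is the standard proof found in that reference and is correct, including the quotient-space step: since $m\le l$ ensures $\mathbb{P}_{m-1}(\hat K)\subseteq\ker|\cdot|_{l,p,\hat K}$, the seminorm $|\cdot|_{l,p,\hat K}$ descends to $\mathbb{P}_k(\hat K)/\mathbb{P}_{m-1}(\hat K)$, on which $|\cdot|_{m,q,\hat K}$ is a genuine norm, and bounding a seminorm by a norm on a finite-dimensional space is automatic.
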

\begin{lemma}{\cite[Lemma 12.8]{MR4242224}}\label{Trace Inequality $-I$}
Let $\boldsymbol{v}_h \in \mathrm{V}_h$ then for each $K \in \mathcal{K}_h, E \subset \partial K$, there exists a positive constant $C_{\mathrm{tr}}$, independent of $K$, such that

$$
\left\|\boldsymbol{v}_h\right\|_{0, E} \leq C_{\mathrm{tr}} h_K^{-\frac{1}{2}}\left\|\boldsymbol{v}_h\right\|_{0, K}.
$$
\end{lemma}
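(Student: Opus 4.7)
The plan is to prove this discrete trace inequality by the standard scaling argument to a reference simplex. Let $\hat{K}$ be a fixed reference simplex and let $F_K:\hat{K}\to K$ denote the affine map $F_K(\hat{x}) = B_K\hat{x} + b_K$, so that any $\boldsymbol{v}_h|_K$ pulls back to $\hat{\boldsymbol{v}} := \boldsymbol{v}_h\circ F_K \in \mathbb{P}_k(\hat{K})^d$. Similarly, each face $E\subset\partial K$ is the image of a fixed face $\hat{E}\subset\partial\hat{K}$. By shape regularity of the family $\{\mathcal{K}_h\}_{h>0}$, there exist constants (independent of $h$ and $K$) bounding $\|B_K\|$, $\|B_K^{-1}\|$, and $|\det B_K|$ in terms of $h_K$; in particular one has the scaling relations
\begin{equation*}
\|\boldsymbol{v}_h\|_{0,K}^2 \simeq h_K^d\,\|\hat{\boldsymbol{v}}\|_{0,\hat{K}}^2,
\qquad
\|\boldsymbol{v}_h\|_{0,E}^2 \simeq h_K^{d-1}\,\|\hat{\boldsymbol{v}}\|_{0,\hat{E}}^2,
\end{equation*}
with implicit constants depending only on the shape-regularity parameter.

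On the reference element, $\mathbb{P}_k(\hat{K})^d$ is a finite-dimensional normed space, so the linear trace operator $\hat{\boldsymbol{v}} \mapsto \hat{\boldsymbol{v}}|_{\hat{E}}$ is continuous from $L^2(\hat{K})^d$ to $L^2(\hat{E})^d$ restricted to this space, by equivalence of norms on finite-dimensional vector spaces. Hence there exists a constant $\hat{C}$ (depending only on $k$, $d$, and $\hat{K}$) such that
\begin{equation*}
\|\hat{\boldsymbol{v}}\|_{0,\hat{E}} \le \hat{C}\,\|\hat{\boldsymbol{v}}\|_{0,\hat{K}} \qquad \forall\,\hat{\boldsymbol{v}} \in \mathbb{P}_k(\hat{K})^d.
\end{equation*}
Combining this bound with the two scaling relations yields
\begin{equation*}
\|\boldsymbol{v}_h\|_{0,E}^2 \lesssim h_K^{d-1}\,\|\hat{\boldsymbol{v}}\|_{0,\hat{K}}^2 \lesssim h_K^{d-1}\,h_K^{-d}\,\|\boldsymbol{v}_h\|_{0,K}^2 = h_K^{-1}\,\|\boldsymbol{v}_h\|_{0,K}^2,
\end{equation*}
which on taking square roots gives the stated inequality with $C_{\mathrm{tr}}$ independent of $K$.

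The main subtlety, rather than any computational obstacle, lies in verifying that the implicit constants in the scaling relations are uniform over the whole mesh family; this is precisely where shape regularity of $\{\mathcal{K}_h\}_{h>0}$ enters, through two-sided bounds of the form $h_K^d \lesssim |\det B_K| \lesssim h_K^d$ and analogous ones on the boundary face Jacobians. Given that $\{\mathcal{K}_h\}_{h>0}$ is assumed regular, all these bounds hold with constants depending only on the regularity parameter, so $C_{\mathrm{tr}}$ depends only on $k$, $d$, $\hat{K}$, and the shape regularity, matching the statement of the lemma.
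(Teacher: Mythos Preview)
Your proof is correct and follows the standard scaling argument via pullback to a reference element. The paper itself does not provide a proof of this lemma; it is stated with a citation to \cite[Lemma 12.8]{MR4242224} and used as a technical tool, so your argument supplies exactly the standard proof one would find in that reference.
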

\begin{lemma}{\cite[Lemma 12.15]{MR4242224}}\label{Trace Inequality $- II$} Let $\boldsymbol{v}_h \in \boldsymbol{H}^1(K)$, with $K \in \mathcal{K}_h$. Then for any facet $E \subset \partial K$, there exists a positive constant $C_{\mathrm{tr}}^{\prime}$, independent of $K$, such that

$$
\|\boldsymbol{v}_h\|_{0, E} \leq C_{\mathrm{tr}}^{\prime}\left\{h_K^{-1 / 2}\|\boldsymbol{v}_h\|_{0, K}+h_K^{1 / 2}\|\nabla \boldsymbol{v}_h\|_{0, K}\right\}.
$$
\end{lemma}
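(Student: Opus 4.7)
The plan is a standard scaling argument: transfer the estimate to a reference element where the classical trace embedding $H^1(\hat K) \hookrightarrow L^2(\partial \hat K)$ is available, and then rescale back to $K$ while carefully tracking the powers of $h_K$ introduced by the affine transformation.

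First I would fix a reference simplex $\hat K$ of unit diameter and, for each $K \in \mathcal{K}_h$, the affine diffeomorphism $F_K(\hat x) = A_K \hat x + b_K$ mapping $\hat K$ onto $K$. Under the shape-regularity assumption on $\{\mathcal{K}_h\}_{h>0}$ already invoked in Section \ref{section3}, one has $\|A_K\| \lesssim h_K$, $\|A_K^{-1}\| \lesssim h_K^{-1}$, and $|\det A_K| \sim h_K^d$, together with the analogous surface-measure relation $|E| \sim h_K^{d-1}|\hat E|$ for any facet $E = F_K(\hat E) \subset \partial K$.

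Next, given $\boldsymbol{v}_h \in \boldsymbol{H}^1(K)$, I would pull back to $\hat{\boldsymbol{v}} := \boldsymbol{v}_h \circ F_K \in \boldsymbol{H}^1(\hat K)$ and invoke the classical bounded trace operator on the fixed reference simplex,
$$
\|\hat{\boldsymbol{v}}\|_{0,\hat E} \leq \hat C \bigl( \|\hat{\boldsymbol{v}}\|_{0,\hat K} + \|\hat\nabla \hat{\boldsymbol{v}}\|_{0,\hat K} \bigr),
$$
where $\hat C$ depends only on $\hat K$. I would then transport this inequality back to $K$ using the change-of-variables identities $\|\boldsymbol{v}_h\|_{0,E}^2 \sim h_K^{d-1}\|\hat{\boldsymbol{v}}\|_{0,\hat E}^2$, $\|\boldsymbol{v}_h\|_{0,K}^2 \sim h_K^d\|\hat{\boldsymbol{v}}\|_{0,\hat K}^2$, and (from $\nabla \boldsymbol{v}_h = A_K^{-\top} \hat\nabla \hat{\boldsymbol{v}}$) $\|\nabla \boldsymbol{v}_h\|_{0,K}^2 \sim h_K^{d-2}\|\hat\nabla \hat{\boldsymbol{v}}\|_{0,\hat K}^2$. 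Substituting these into the reference trace bound and simplifying, the factors collapse exactly to $h_K^{-1/2}$ and $h_K^{1/2}$, yielding the stated inequality with a constant $C_{\mathrm{tr}}^{\prime}$ independent of $K$.

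The main obstacle is essentially bookkeeping: one must align the three distinct scaling exponents for volume, surface, and gradient so that all shape-regularity constants hidden in the norms of $A_K$ and $A_K^{-1}$ are absorbed into a single mesh-independent constant. A more elementary alternative would apply Green's identity to $\int_E |\boldsymbol{v}_h|^2 (\boldsymbol{n} \cdot \boldsymbol{w})\,ds$ on each $K$ for an auxiliary vector field $\boldsymbol{w}$ chosen with $\boldsymbol{w} \cdot \boldsymbol{n}|_E \geq c > 0$, followed by the Cauchy--Schwarz and arithmetic--geometric mean inequalities; this avoids the reference mapping but replicates the same scaling content less transparently. Either route gives the result, and both feed directly into the discrete norm control that will be used in the well-posedness and a posteriori analyses of the later sections.
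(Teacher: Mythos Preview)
Your scaling argument via the reference element is correct and is the standard proof of this multiplicative trace inequality. Note, however, that the paper does not actually prove this lemma: it is simply quoted from \cite[Lemma 12.15]{MR4242224} with no accompanying argument, so there is no ``paper's own proof'' to compare against. Your proposal therefore supplies a valid proof where the paper only gives a citation, and both the affine-mapping route and the alternative Green's-identity route you sketch are legitimate ways to obtain the result.
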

\begin{lemma}{\cite[Lemma 1.138]{MR2050138}}\label{H1Hinfty}
    Let \( \boldsymbol{v}_h \in \mathrm{V}_h \). Then, the following inequalities hold:
    \begin{align*}
        \|\boldsymbol{v}_h\|_{\infty, K} & \leq C h_K^{-1} \|\boldsymbol{v}_h\|_{0, K}, \\
        h_K |\boldsymbol{v}_h|_{1, K} & \leq C \|\boldsymbol{v}_h\|_{0, K}.
    \end{align*}
\end{lemma}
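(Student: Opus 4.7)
The plan is to recognize both inequalities as standard local inverse estimates for piecewise-polynomial functions on a shape-regular simplex, and to derive them either by specializing the more general inverse inequality already stated in Lemma \ref{local_inverse} or, equivalently, by a direct scaling argument on a reference element. Either route bypasses any need for regularity of $\boldsymbol{v}_h$ beyond membership in $\mathbb{P}_k(K)^d$.

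The cleanest route is specialization. Applying Lemma \ref{local_inverse} with $l = 1$, $m = 0$, $p = q = 2$ yields $|\boldsymbol{v}_h|_{1,K} \lesssim h_K^{-1}\|\boldsymbol{v}_h\|_{0,K}$, which immediately gives the second claim after multiplying by $h_K$. Applying the same lemma with $l = m = 0$, $q = 2$, $p = \infty$ yields $\|\boldsymbol{v}_h\|_{\infty,K} \lesssim h_K^{-d/2}\|\boldsymbol{v}_h\|_{0,K}$, which is the first claim for $d=2$ and, for $d=3$, delivers an even stronger bound than the one stated (so the displayed $h_K^{-1}$ factor may in fact be a slight dimensional simplification in the $d=3$ case).

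If one wishes to avoid quoting Lemma \ref{local_inverse}, the underlying argument is classical. Let $F_K \colon \hat K \to K$ be the affine map $\hat x \mapsto B_K \hat x + b_K$ from a fixed reference simplex $\hat K$, where shape regularity of $\{\mathcal{K}_h\}_{h>0}$ gives $\|B_K\| \lesssim h_K$, $\|B_K^{-1}\| \lesssim h_K^{-1}$, and $|\det B_K| \sim h_K^d$. Set $\hat{\boldsymbol{v}} := \boldsymbol{v}_h \circ F_K \in \mathbb{P}_k(\hat K)^d$. Since this is a finite-dimensional space, all norms on it are equivalent, so $\|\hat{\boldsymbol{v}}\|_{\infty,\hat K} \lesssim \|\hat{\boldsymbol{v}}\|_{0,\hat K}$ and $|\hat{\boldsymbol{v}}|_{1,\hat K} \lesssim \|\hat{\boldsymbol{v}}\|_{0,\hat K}$. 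Combining with the standard change-of-variable scalings $\|\boldsymbol{v}_h\|_{\infty,K} = \|\hat{\boldsymbol{v}}\|_{\infty,\hat K}$, $\|\hat{\boldsymbol{v}}\|_{0,\hat K} \sim h_K^{-d/2}\|\boldsymbol{v}_h\|_{0,K}$, and $|\boldsymbol{v}_h|_{1,K} \sim h_K^{d/2-1}|\hat{\boldsymbol{v}}|_{1,\hat K}$ chains both inequalities.

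There is no genuine obstacle here; the whole proof is a template. The only care required is the $h_K$-bookkeeping under the affine pullback, and checking that shape regularity is what allows the constants in the scalings of $B_K$ and $\det B_K$ to be absorbed into the generic $C$ uniformly in $K$.
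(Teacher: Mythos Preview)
Your approach is correct and is exactly the standard one; the paper itself offers no proof, only a citation to \cite[Lemma~1.138]{MR2050138}, so there is nothing to compare against beyond confirming that your specialization of Lemma~\ref{local_inverse} (or the equivalent affine-scaling argument) is the intended derivation.

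One correction, however: your remark about $d=3$ has the direction reversed. The general inverse inequality yields $\|\boldsymbol{v}_h\|_{\infty,K} \lesssim h_K^{-d/2}\|\boldsymbol{v}_h\|_{0,K}$, and for $d=3$ the factor $h_K^{-3/2}$ is a \emph{weaker} (larger) upper bound than $h_K^{-1}$, not a stronger one. In fact the inequality as displayed, with $h_K^{-1}$, is false for $d=3$: take $\boldsymbol{v}_h$ constant on $K$, so that $\|\boldsymbol{v}_h\|_{\infty,K}=|\boldsymbol{v}_h|$ while $\|\boldsymbol{v}_h\|_{0,K}\sim h_K^{3/2}|\boldsymbol{v}_h|$, and the claimed bound would force $1\lesssim h_K^{1/2}$. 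So the stated exponent is correct only for $d=2$; the paper's formulation should be read either as implicitly two-dimensional or as a transcription slip for $h_K^{-d/2}$. Your proof is fine once this is noted.
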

\begin{remark}
Let us first note that the stability parameter $\tau$ is bounded by a constant in each element domain $K$. By definition
\begin{align}
\tau  &= \frac{h_K}{2|\boldsymbol{u}_h(\boldsymbol{x})|_p}, \quad \operatorname{Re}_K(\boldsymbol{x}) \geq 1 \qquad
\tau = \frac{m_k h_K^2}{8 \nu}, \quad 0 \leq \operatorname{Re}_K(\boldsymbol{x}) < 1   \label{rem2}
\end{align}
Therefore, for $\operatorname{Re}_K(\boldsymbol{x}) \geq 1$,
\begin{align}\label{rem3}
\tau  & =\frac{h_K}{2|\boldsymbol{u}_h(\boldsymbol{x})|_p} \frac{1}{\operatorname{Re}_K(\boldsymbol{x})} \frac{m_k|\boldsymbol{u}_h(\boldsymbol{x})|_p h_K}{4 \nu} \leq \frac{m_k h_K^2}{8 \nu}
\end{align}
and combining with the definition \eqref{rem2}, we conclude that the bound \eqref{rem3} is valid for all values of $\operatorname{Re}_K(\boldsymbol{x})$.
\end{remark}
Before heading to the proof of the existence and the uniqueness of a solution for discrete problem \eqref{DSF}, we need some auxiliary results. 
We define the operator $\mathscr{P}: \mathrm{V}_h \longrightarrow \Pi_h$ by
\begin{equation}\label{ope_P}
\begin{aligned}
\sum_{K \in \mathcal{K}_h} \tau\left(\nabla \mathscr{P}\left(\boldsymbol{w} \right), \nabla r \right)_K  = -\left(r, \nabla \cdot \boldsymbol{w} \right) + \theta \left(r, \boldsymbol{w} \cdot \boldsymbol{n} \right)- \sum_{K \in \mathcal{K}_h} \tau \left( - 2 \nu \nabla \cdot  \varepsilon (\boldsymbol{w})+ \boldsymbol{w} \cdot \nabla \boldsymbol{w}   - \boldsymbol{f} , \nabla r \right),
\end{aligned}
\end{equation}

for all $\boldsymbol{w} \in \mathrm{V}_h,\, r \in \Pi_h$. Observe that $\mathscr{P}$ is well-defined from Lax-Milgram's Theorem with the norm
$
\left\|r\right\|_*^2 := \sum_{K \in \mathcal{K}_h} \tau \left\|\nabla r \right\|_{0, K}^2 
$
Also, define the mapping $\mathscr{N}: \mathrm{V}_h \longrightarrow \mathrm{V}_h$ by
$$
\begin{aligned}
\left(\mathscr{N}\left(\boldsymbol{w}\right), \boldsymbol{v} \right)& = 2 \nu \left(\varepsilon (\boldsymbol{w}), \varepsilon  (\boldsymbol{v}) \right)+\left(\boldsymbol{w} \cdot \nabla \boldsymbol{w}, \boldsymbol{v}\right)-\left(\mathscr{P}\left(\boldsymbol{w}\right), \nabla \cdot \boldsymbol{v}\right)-\left(\boldsymbol{f} , \boldsymbol{v}\right) \\& \quad  + \sum_{E \in \mathcal{E}_{\mathrm{Nav}}} \bigg( -\int_{E} \boldsymbol{n}^t(2 \nu \varepsilon ( \boldsymbol{w} ) - \mathscr{P}\left( \boldsymbol{w} \right) I) \boldsymbol{n} (\boldsymbol{n} \cdot \boldsymbol{v}) \, ds  - \theta \int_{E} \boldsymbol{n}^t \left(2 \nu \varepsilon ( \boldsymbol{v}) \right) \boldsymbol{n} (\boldsymbol{n} \cdot \boldsymbol{w}) \, ds   \\& \quad + \int_{E} \beta \sum_{i=1}^{d-1} (\boldsymbol{\tau}^i \cdot \boldsymbol{v})(\boldsymbol{\tau}^i \cdot \boldsymbol{w}) \, ds  + \gamma \nu \int_{E} h_E^{-1} (\boldsymbol{w} \cdot \boldsymbol{n})(\boldsymbol{v} \cdot \boldsymbol{n}) \, ds \bigg)  +\sum_{K \in \mathcal{K}_h} \delta \left( \nabla \cdot \boldsymbol{w}, \nabla \cdot \boldsymbol{v} \right)_K  \\& \quad  + \sum_{K \in \mathcal{K}_h} \tau\left( -2 \nu \nabla \cdot  \varepsilon (\boldsymbol{w}) + \boldsymbol{w} \cdot \nabla \boldsymbol{w}  + \nabla \mathscr{P} \left(\boldsymbol{w}\right) - \boldsymbol{f} , -2 \nu \nabla \cdot  \varepsilon (\boldsymbol{v}) +  \boldsymbol{w} \cdot \nabla \boldsymbol{v} \right).
\end{aligned}
$$
The next result provides a characterization of the solution of problem \eqref{DSF} with respect to the operators $\mathscr{P}$ and $\mathscr{N}$.

\begin{lemma}
The pair $\left(\boldsymbol{u}_h, p_h\right) \in \mathrm{V}_h \times \Pi_h$ is a solution of problem \eqref{DSF} if and only if $\mathscr{N}\left(\boldsymbol{u}_h\right)=\boldsymbol{0}$ and $p_h=\mathscr{P}\left(\boldsymbol{u}_h\right)$.
\end{lemma}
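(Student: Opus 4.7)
The plan is to establish both directions of the equivalence by testing the discrete formulation \eqref{DSF} against carefully chosen elements of $\mathrm{V}_h \times \Pi_h$, exploiting the linearity of $\mathcal{A}_h^{\mathrm{NS}}(\boldsymbol{u}_h, p_h;\cdot,\cdot)$ and $\mathcal{F}_h^{\mathrm{NS}}$ in the test argument. The proof is essentially a bookkeeping exercise: the definition of $\mathscr{N}$ packages all terms of $\mathcal{A}_h^{\mathrm{NS}} - \mathcal{F}_h^{\mathrm{NS}}$ that are linear in the velocity test, after $p_h$ has been eliminated via $p_h = \mathscr{P}(\boldsymbol{u}_h)$, whereas \eqref{ope_P} packages exactly those terms that are linear in the pressure test.

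For the forward direction, suppose $(\boldsymbol{u}_h, p_h)$ solves \eqref{DSF} and first test with $(\boldsymbol{0}, q_h)$ for arbitrary $q_h \in \Pi_h$. All bulk terms containing the velocity test vanish; the surviving contributions are the divergence coupling $-(q_h, \nabla\cdot\boldsymbol{u}_h)$, the $\theta$-consistency piece on $\Gamma_{\mathrm{Nav}}$ (which simplifies to $\theta\sum_{E \in \mathcal{E}_{\mathrm{Nav}}} \int_E q_h\, \boldsymbol{u}_h\cdot\boldsymbol{n}\, ds$ after using $\boldsymbol{n}^t(-q_h I)\boldsymbol{n} = -q_h$), the SUPG piece involving $-\nabla q_h$, and the corresponding $-\sum_K \tau(\boldsymbol{f}, \nabla q_h)_K$ on the right. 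Isolating $\sum_K \tau(\nabla p_h, \nabla q_h)_K$ on the left produces precisely the defining identity \eqref{ope_P} for $\mathscr{P}(\boldsymbol{u}_h)$, and uniqueness of the Lax--Milgram solution in $(\Pi_h, \|\cdot\|_*)$ forces $p_h = \mathscr{P}(\boldsymbol{u}_h)$. Next, testing \eqref{DSF} with $(\boldsymbol{v}_h, 0)$ and substituting $p_h = \mathscr{P}(\boldsymbol{u}_h)$, each remaining term matches one in the definition of $\mathscr{N}$: the body force $(\boldsymbol{f}, \boldsymbol{v}_h)$ and the SUPG right-hand side are moved to the left with opposite sign and combine into the factor $\nabla\mathscr{P}(\boldsymbol{u}_h) - \boldsymbol{f}$ appearing inside $\mathscr{N}$. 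Thus $(\mathscr{N}(\boldsymbol{u}_h),\boldsymbol{v}_h) = 0$ for every $\boldsymbol{v}_h$, hence $\mathscr{N}(\boldsymbol{u}_h) = \boldsymbol{0}$.

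For the converse, assume $\mathscr{N}(\boldsymbol{u}_h) = \boldsymbol{0}$ and $p_h = \mathscr{P}(\boldsymbol{u}_h)$. Given any $(\boldsymbol{v}_h, q_h) \in \mathrm{V}_h \times \Pi_h$, write
\[
\mathcal{A}_h^{\mathrm{NS}}(\boldsymbol{u}_h, p_h; \boldsymbol{v}_h, q_h) - \mathcal{F}_h^{\mathrm{NS}}(\boldsymbol{v}_h, q_h) = \big[\text{contribution from }(\boldsymbol{v}_h, 0)\big] + \big[\text{contribution from }(\boldsymbol{0}, q_h)\big].
\]
The first bracket equals $(\mathscr{N}(\boldsymbol{u}_h), \boldsymbol{v}_h)$ and vanishes by hypothesis; the second bracket, after the rearrangement used above, equals the difference between the two sides of \eqref{ope_P} evaluated at $r = q_h$, hence vanishes by the very definition of $\mathscr{P}(\boldsymbol{u}_h)$. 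Summing recovers \eqref{DSF}.

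The only genuine obstacle is careful accounting of signs. The delicate items are the $\theta$-consistency boundary term, which distributes a $\boldsymbol{v}_h$-piece to $\mathscr{N}$ and a $q_h$-piece to $\mathscr{P}$ via the identity $\boldsymbol{n}^t(-q_h I)\boldsymbol{n} = -q_h$, and the SUPG stabilization, which splits between its $\nabla p_h$-factor (contributing the principal part of the $\mathscr{P}$-equation) and its $-\nabla q_h$-factor (routed into the test side of $\mathscr{P}$). Beyond this matching, the only analytic input is the uniqueness of $\mathscr{P}(\boldsymbol{u}_h)$ already granted by Lax--Milgram in the $\|\cdot\|_*$ norm on the mean-zero space $\Pi_h$.
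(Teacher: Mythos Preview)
Your proof is correct and follows the standard splitting argument that the paper itself defers to (its own proof merely cites Lemma~3.5 in \cite{MR2914281}). The only thing to note is that your bookkeeping of the $\theta$-consistency boundary term and the SUPG stabilization matches the definitions of $\mathscr{P}$ and $\mathscr{N}$ exactly, so nothing further is needed.
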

\begin{proof}
The proof is similar to Lemma 3.5 in \cite{MR2914281}.
\end{proof}
We are now ready to prove the well-posedness of problem \eqref{DSF}. The proof follows closely the arguments presented in \cite{MR2914281}.
\subsection{Existence of the discrete solution under weak conditions}
\begin{theorem}\label{existence_theorem}
There exists a positive constant \( \tilde{C} \), independent of \( h \) and \( \nu \), such that the problem \eqref{DSF} has at least one solution \( (\boldsymbol{u}_h, p_h) \) for \( \theta = 1 \), provided that the Nitsche parameter \( \gamma \) is sufficiently large and 
\begin{equation}\label{assumption}
h^{1-\kappa}\left\{\left\|\boldsymbol{f} \right\|_{-1, \Omega}^2 + \sum_{K \in \mathcal{K}_h} \tau\left\| \boldsymbol{f}  \right\|_{0, K}^2\right\}^{1 / 2} \leq \tilde{C}, \qquad 0 < \kappa < 1.
\end{equation}
\begin{proof}
To prove the existence of the solution of the nonlinear formulation \eqref{DSF}, we use a Corollary of Brouwer's Fixed Point Theorem (see \cite[Chapter IV, Corollary 1.1]{MR851383}). Let $\boldsymbol{u}_h \in \mathrm{V}_h$, with $\| \boldsymbol{u}_h \|_{1,\mathcal{K}_h}=R$, where $R$ is a positive number that will be choosen later. Denote
$$
\begin{aligned}
& a_1:=\left\{\sum_{K \in \mathcal{K}_h} \tau\left\| -2 \nu \nabla \cdot  \varepsilon (\boldsymbol{u}_h) + \boldsymbol{u}_h \cdot \nabla \boldsymbol{u}_h  + \nabla \mathscr{P}\left(\boldsymbol{u}_h \right) \right\|_{0, K}^2\right\}^{1 / 2},
 \\
&a_2:=\left\{\left\|\boldsymbol{f} \right\|_{-1, \Omega}^2+\sum_{K \in \mathcal{K}_h} \tau\left\|\boldsymbol{f}  \right\|_{0, K}^2\right\}^{1 / 2}, \qquad a_3:=\left\{\sum_{K \in \mathcal{K}_h}  \delta \left\|\nabla \cdot \boldsymbol{u}_h \right\|_{0, K}^2\right\}^{1 / 2}.
\end{aligned}
$$

Taking $r=\mathscr{P}\left(\boldsymbol{u}_h\right)$ and $\boldsymbol{w} = \boldsymbol{u}_h$ in  \eqref{ope_P} give us
\[
\begin{aligned}
-(\mathscr{P}(\boldsymbol{u}_h), \nabla \cdot \boldsymbol{u}_h) 
&= -\theta \sum_{E \in \mathcal{E}_{\mathrm{Nav}}} 
\big( \mathscr{P}(\boldsymbol{u}_h), \boldsymbol{u}_h \cdot \boldsymbol{n} \big)_E + \sum_{K \in \mathcal{K}_h} 
\tau \Big( -2 \nu \nabla \cdot \varepsilon(\boldsymbol{u}_h) 
+ \boldsymbol{u}_h \cdot \nabla \boldsymbol{u}_h - \boldsymbol{f} \\
&\quad + \nabla \mathscr{P}(\boldsymbol{u}_h), 
\nabla \mathscr{P}(\boldsymbol{u}_h) \Big)_K.
\end{aligned}
\]
Using the above identity in the definition of operator $\mathscr{N}$, we get
\begin{align*}
\left(\mathscr{N}\left(\boldsymbol{u}_h\right), \boldsymbol{u}_h\right) 
&= 2 \nu \left\|\varepsilon(\boldsymbol{u}_h)\right\|_{0, \Omega}^2 
+ \left( \boldsymbol{u}_h \cdot \nabla \boldsymbol{u}_h, \boldsymbol{u}_h\right) 
- \left(\boldsymbol{f}, \boldsymbol{u}_h\right) \\
&\quad + \sum_{E \in \mathcal{E}_{\mathrm{Nav}}} \bigg( 
- \int_{E} \boldsymbol{n}^t (2 \nu \varepsilon (\boldsymbol{u}_h)) \boldsymbol{n} (\boldsymbol{n} \cdot \boldsymbol{u}_h) \, ds 
- \theta \int_{E} \boldsymbol{n}^t (2 \nu \varepsilon (\boldsymbol{u}_h)) \boldsymbol{n} (\boldsymbol{n} \cdot \boldsymbol{u}_h) \, ds \\
&\quad + (1- \theta) \int_E \mathscr{P}(\boldsymbol{u}_h) (\boldsymbol{n} \cdot \boldsymbol{u}_h) \, ds 
+ \int_{E} \beta \sum_{i=1}^{d-1} (\boldsymbol{\tau}^i \cdot \boldsymbol{u}_h)^2 \, ds 
+ \gamma \nu \int_{E} h_E^{-1} (\boldsymbol{u}_h \cdot \boldsymbol{n})^2 \, ds \bigg) \\
&\quad + \sum_{K \in \mathcal{K}_h} \delta \left( \nabla \cdot \boldsymbol{u}_h, \nabla \cdot \boldsymbol{u}_h \right)_K 
+ \sum_{K \in \mathcal{K}_h} \tau \big( -2 \nu \nabla \cdot \varepsilon (\boldsymbol{u}_h) + \boldsymbol{u}_h \cdot \nabla \boldsymbol{u}_h \\
&\quad + \nabla \mathscr{P} (\boldsymbol{u}_h) - \boldsymbol{f}, 
-2 \nu \nabla \cdot \varepsilon (\boldsymbol{u}_h) + \boldsymbol{u}_h \cdot \nabla \boldsymbol{u}_h + \nabla \mathscr{P} (\boldsymbol{u}_h) \big)_K \\
&= \mathcal{I} + \left( \boldsymbol{u}_h \cdot \nabla \boldsymbol{u}_h, \boldsymbol{u}_h\right) 
- \left(\boldsymbol{f}, \boldsymbol{u}_h\right) 
+ \sum_{K \in \mathcal{K}_h} \delta \left( \nabla \cdot \boldsymbol{u}_h, \nabla \cdot \boldsymbol{u}_h \right)_K \\
&\quad + \sum_{K \in \mathcal{K}_h} \tau \big( -2 \nu \nabla \cdot \varepsilon (\boldsymbol{u}_h) 
+ \boldsymbol{u}_h \cdot \nabla \boldsymbol{u}_h 
+ \nabla \mathscr{P} (\boldsymbol{u}_h) - \boldsymbol{f}, \\
&\quad -2 \nu \nabla \cdot \varepsilon (\boldsymbol{u}_h) + \boldsymbol{u}_h \cdot \nabla \boldsymbol{u}_h 
+ \nabla \mathscr{P} (\boldsymbol{u}_h) \big)_K.
\end{align*}
where 
\begin{align}\label{I}
\mathcal{I} : & = 2 \nu \left\| \varepsilon (\boldsymbol{u}_h) \right\|_{0, \Omega}^2 + \sum_{E \in \mathcal{E}_{\mathrm{Nav}}} \bigg( (1- \theta ) \int_E \mathscr{P}(\boldsymbol{u}_h) \left( \boldsymbol{n} \cdot \boldsymbol{u}_h \right) \, ds   - (1+ \theta) \int_{E}  \boldsymbol{n}^t(2 \nu \varepsilon  (\boldsymbol{u}_h) ) \boldsymbol{n} (\boldsymbol{n} \cdot \boldsymbol{u}_h)  \, ds  \nonumber  \\ & \quad + \int_{E} \beta \sum_{i=1}^{d-1} (\boldsymbol{\tau}^i \cdot \boldsymbol{u}_h)(\boldsymbol{\tau}^i \cdot \boldsymbol{u}_h)  \, ds  + \gamma \nu \int_{E} h_E^{-1} (\boldsymbol{u}_h \cdot \boldsymbol{n})(\boldsymbol{u}_h \cdot \boldsymbol{n})   \, ds \bigg).
\end{align}
The terms in \eqref{I} without $\theta$ can be bounded as follows:
\begin{align}\label{I1}
& 2 \nu \left\|\varepsilon ( \boldsymbol{u}_h ) \right\|_{0, \Omega}^2 + \sum_{E \in \mathcal{E}_{\mathrm{Nav}}}\left( \int_{E} \beta \sum_{i=1}^{d-1} (\boldsymbol{\tau}^i \cdot \boldsymbol{u}_h)(\boldsymbol{\tau}^i \cdot \boldsymbol{u}_h)  \, ds  + \frac{\gamma \nu} {h_E} \int_{E} (\boldsymbol{u}_h \cdot \boldsymbol{n})(\boldsymbol{u}_h \cdot \boldsymbol{n})  \, ds \right) \nonumber \\
\nonumber \\ 
& \geq 2 \nu \left\|\varepsilon( \boldsymbol{u}_h) \right\|_{0, \Omega}^2 + \sum_{E \in \mathcal{E}_{\mathrm{Nav}}}  \frac{\gamma \nu}{ h_E}\left\|\boldsymbol{u}_h \cdot \boldsymbol{n}\right\|_{0, E}^2  
\end{align}
We now bound the remaining terms in \eqref{I} from below. By choosing a positive parameter $\delta_1 > 0$ and applying trace, Cauchy-Schwarz and Young's inequality, we obtain
\begin{align}\label{I2}
2 \nu \left(\varepsilon \left(\boldsymbol{u}_h\right) \boldsymbol{n} \cdot \boldsymbol{n}, \boldsymbol{u}_h \cdot \boldsymbol{n}\right)_{\Gamma_{\mathrm{Nav}}} & \leq  2 \nu \sum_{E \in \mathcal{E}_{\mathrm{Nav}}}\left\|\varepsilon \left(\boldsymbol{u}_h \right) \boldsymbol{n}\right\|_{0, E}\left\|\boldsymbol{u}_h \cdot \boldsymbol{n}\right\|_{0, E} \nonumber \\
& = 2 \nu \sum_{E \in \mathcal{E}_{\mathrm{Nav}}} h_E^{1 / 2}\left\|\varepsilon \left(\boldsymbol{u}_h\right) \boldsymbol{n}\right\|_{0, E} h_E^{-1 / 2}\left\|\boldsymbol{u}_h \cdot \boldsymbol{n}\right\|_{0, E} \nonumber \\
& \leq  2 \nu \sum_{E \in \mathcal{E}_{\mathrm{Nav}}}\left(\frac{h_E \delta_1}{2}\left\|\varepsilon (\boldsymbol{u}_h) \right\|_{0, E}^2+\frac{h_E^{-1}}{2 \delta_1}\left\|\boldsymbol{u}_h \cdot \boldsymbol{n}\right\|_{0, E}^2\right) \nonumber \\
& \leq 2 \nu \delta_1 C_{\mathrm{tr}}^2 \sum_{K \in \mathcal{K}_h}\left\|\varepsilon (\boldsymbol{u}_h )\right\|_{0, K}^2+\frac{1}{\delta_1} \sum_{E \in \mathcal{E}_{\mathrm{Nav}}} \frac{\nu}{2 h_E}\left\|\boldsymbol{u}_h \cdot \boldsymbol{n}\right\|_{0, E}^2 .
\end{align}

On the other hand, for $\delta_2 > 0$, and using trace, inverse, and H\"older's inequalities, we have that
\begin{align}\label{I3}
\left(\mathscr{P} \left(\boldsymbol{u}_h\right) , \boldsymbol{u}_h \cdot \boldsymbol{n}\right)_{\Gamma_{\mathrm{Nav}}} & \leq \sum_{E \in \mathcal{E}_{\mathrm{Nav}}}\left\| \mathscr{P} \left(\boldsymbol{u}_h\right)  \right\|_{0, E}\left\|\boldsymbol{u}_h \cdot \boldsymbol{n}\right\|_{0, E} \nonumber \\
& =\sum_{E \in \mathcal{E}_{\mathrm{Nav}}} \nu^{-\frac{1}{2}} h_E^{1 / 2}\left\| \mathscr{P} \left(\boldsymbol{u}_h\right)  \right\|_{0, E} \nu^{\frac{1}{2}} h_E^{-1 / 2}\left\|\boldsymbol{u}_h \cdot \boldsymbol{n}\right\|_{0, E} \nonumber \\
& \leq \sum_{E \in \mathcal{E}_{\mathrm{Nav}}} \frac{h_E \delta_2}{2 \nu}\left\| \mathscr{P} \left(\boldsymbol{u}_h\right)  \right\|_{0, E}^2+\sum_{E \in \mathcal{E}_{\mathrm{Nav}}} \nu \frac{h_E^{-1}}{2 \delta_2}\left\|\boldsymbol{u}_h \cdot \boldsymbol{n}\right\|_{0, E}^2 \nonumber \\
& \leq \sum_{K \in \mathcal{K}_h} \frac{C_{\mathrm{tr}}^2 \delta_2}{2 \nu}\left\| \mathscr{P} \left(\boldsymbol{u}_h\right)  \right\|_{0, K}^2+\frac{1}{2 \delta_2} \sum_{E \in \mathcal{E}_{\mathrm{Nav}}} \frac{\nu}{h_E}\left\|\boldsymbol{u}_h \cdot \boldsymbol{n}\right\|_{0, E}^2  \nonumber \\
& \leq \frac{C_{\mathrm{in}}^2 C_{\mathrm{tr}}^2 \delta_2}{2} \sum_{K \in \mathcal{K}_h} \frac{h_K^2}{\nu }\left\|\nabla \mathscr{P} \left(\boldsymbol{u}_h\right) 
 \right\|_{0, K}^2+\frac{1}{ \delta_2} \sum_{E \in \mathcal{E}_{\mathrm{Nav}}} \frac{\nu}{2 h_E}\left\|\boldsymbol{u}_h \cdot \boldsymbol{n}\right\|_{0, E}^2.
\end{align}
Now, using \eqref{I1}, \eqref{I2}, \eqref{I3} in \eqref{I}, we get
$$
\begin{aligned}
 \mathcal{I}  & \geq  2 \nu \left\|\varepsilon (\boldsymbol{u}_h)\right\|_{0, \Omega}^2 -2 (1+ \theta)\nu \delta_1 C_{\mathrm{tr}}^2 \sum_{K \in \mathcal{K}_h}\left\|\varepsilon (\boldsymbol{u}_h) \right\|_{0, K}^2 + \sum_{E \in \mathcal{E}_{\mathrm{Nav}}} \left(\gamma - \frac{(1+ \theta)}{2 \delta_1} - \frac{(1- \theta)}{2 \delta_2} \right) \frac{ \nu}{ h_E}\left\|\boldsymbol{u}_h \cdot \boldsymbol{n}\right\|_{0, E}^2 \\ & \quad - \frac{C_{\mathrm{in}}^2 C_{\mathrm{tr}}^2 \delta_2 (1- \theta) }{2} \sum_{K \in \mathcal{K}_h} \frac{h_K^2}{\nu }\left\|\nabla \mathscr{P} \left(\boldsymbol{u}_h\right)  \right\|_{0, K}^2
\\ & \geq  \left( 2 \nu  - 2 (1+ \theta)\nu \delta_1 C_{\mathrm{tr}}^2 \right) \left\|\varepsilon (\boldsymbol{u}_h) \right\|_{0, \Omega}^2  + \sum_{E \in \mathcal{E}_{\mathrm{Nav}}} \left(\gamma - \frac{(1+ \theta)}{2 \delta_1} - \frac{(1- \theta)}{2 \delta_2} \right) \frac{ \nu}{ h_E}\left\|\boldsymbol{u}_h \cdot \boldsymbol{n}\right\|_{0, E}^2 \\& \quad - \frac{C_{\mathrm{in}}^2 C_{\mathrm{tr}}^2 \delta_2 (1- \theta) }{2} \sum_{K \in \mathcal{K}_h} \frac{h_K^2}{\nu }\left\|\nabla \mathscr{P} \left(\boldsymbol{u}_h\right)  \right\|_{0, K}^2
 \end{aligned}
$$
If $ \theta = 1 $ and we choose $\gamma > 1/\delta_1$ then we have 
$$
\begin{aligned}
 \mathcal{I} & \geq 2 \nu (1 - 2 \delta_1 C_{\mathrm{tr}}^2)  \left\| \varepsilon (\boldsymbol{u}_h ) \right\|_{0, \Omega}^2  + \sum_{E \in \mathcal{E}_{\mathrm{Nav}}} \left(\gamma - \frac{1}{\delta_1}\right) \frac{ \nu }{ h_E}\left\|\boldsymbol{u}_h \cdot \boldsymbol{n}\right\|_{0, E}^2 \\ &
 \geq \min\left\{2 (1 - 2 \delta_1 C_{\mathrm{tr}}^2) , \gamma - \frac{1}{\delta_1}\right\} \| \boldsymbol{u}_h \|^2_{1,\mathcal{K}_h}
 \end{aligned}
$$
Finally, setting $C_S := \min\left\{2 (1 - 2 \delta_1 C_{\mathrm{tr}}^2) , \gamma - \frac{1}{\delta_1}\right\}$, we get 
\begin{align*}
\left(\mathscr{N}\left(\boldsymbol{u}_h\right), \boldsymbol{u}_h\right) & \geq\, C_S \| \boldsymbol{u}_h \|^2_{1,\mathcal{K}_h}  + \left( \boldsymbol{u}_h \cdot \nabla \boldsymbol{u}_h, \boldsymbol{u}_h\right)  - \left(\boldsymbol{f}  , \boldsymbol{u}_h\right)  + \sum_{K \in \mathcal{K}_h} \delta \left( \nabla \cdot \boldsymbol{u}_h,  \nabla \cdot \boldsymbol{u}_h \right)_K \\ & \quad + \sum_{K \in \mathcal{K}_h} \tau\left(  -2 \nu \nabla \cdot  \varepsilon (\boldsymbol{u}_h) + \boldsymbol{u}_h \cdot \nabla \boldsymbol{u}_h + \nabla \mathscr{P} (\boldsymbol{u}_h)  , -2 \nu \nabla \cdot  \varepsilon (\boldsymbol{u}_h) + \boldsymbol{u}_h \cdot \nabla \boldsymbol{u}_h \right. \\  & \left. \quad + \nabla \mathscr{P} (\boldsymbol{u}_h) \right)_K  - \sum_{K \in \mathcal{K}_h} \tau \left(  \boldsymbol{f} ,  -2 \nu \nabla \cdot  \varepsilon (\boldsymbol{u}_h) +  \boldsymbol{u}_h  \cdot \nabla \boldsymbol{u}_h + \nabla \mathscr{P} (\boldsymbol{u}_h)\right)  \\&  \geq (C_S - \frac{1}{2}) R^2  + a_3^2 + (1 - \delta_3 )a_1^2 - \frac{1}{2} \left\|  \boldsymbol{f}  \right\|^2_{-1, \Omega} - \frac{1}{4 \delta_3}  \sum_{K \in \mathcal{K}_h} \tau \left\| \boldsymbol{f} \right\|_{0,\Omega}^2 +  \left(\boldsymbol{u}_h \cdot \nabla \boldsymbol{u}_h , \boldsymbol{u}_h\right) \\& \geq (C_S - \frac{1}{2}) R^2 + a_3^2 + (1- \delta_3 )a_1^2 - \min \left\{ \frac{1}{2}, \frac{1}{4 \delta_3}  \right\} a_2^2  +  \left(\boldsymbol{u}_h \cdot \nabla \boldsymbol{u}_h, \boldsymbol{u}_h\right)
\end{align*}
Choosing $\delta_2 = \frac{1}{4 C_{\mathrm{tr}}^2}$ and $\delta_3 = \frac{1}{M}$, with $M \gg 1$, we obtain
\begin{align}\label{choosen_values}
\left(\mathscr{N}\left(\boldsymbol{u}_h\right), \boldsymbol{u}_h\right) \geq (C_S - \frac{1}{2}) R^2 + a_3^2 + ( 1 - \frac{1}{M})a_1^2 - \min \left\{ \frac{1}{2}, \frac{M}{4}   \right\} a_2^2   +  \left(\boldsymbol{u}_h \cdot \nabla \boldsymbol{u}_h, \boldsymbol{u}_h\right)
\end{align}
Now, using the identity  
$\int_{\Omega} \left( \boldsymbol{u}_h \cdot \nabla \boldsymbol{u}_h \right) \boldsymbol{u}_h \, \mathrm{d}x = \frac{1}{2} \int_{\Gamma_{\mathrm{Nav}}} \left( \boldsymbol{u}_h \cdot \boldsymbol{u}_h \right) \left( \boldsymbol{u}_h \cdot \boldsymbol{n} \right) \, \mathrm{d}s$ 
$- \frac{1}{2} \int_{\Omega} \left(\nabla \cdot \boldsymbol{u}_h \right) \boldsymbol{u}_h \cdot \boldsymbol{u}_h \, \mathrm{d}x, $ gives
\begin{align}\label{NNNN}
\left(\mathscr{N}\left(\boldsymbol{u}_h\right), \boldsymbol{u}_h\right) & \geq (C_S - \frac{1}{2}) R^2 + a_3^2 + ( 1 - \frac{1}{M})a_1^2 - \min \left\{ \frac{1}{2}, \frac{M}{4}   \right\} a_2^2   + \frac{1}{2} \int_{\Gamma_{\mathrm{Nav}}} \left( \boldsymbol{u}_h \cdot \boldsymbol{u}_h \right) \left( \boldsymbol{u}_h \cdot \boldsymbol{n} \right) \, \mathrm{d}s \nonumber \\ & \quad 
- \frac{1}{2} \int_{\Omega} \left(\nabla \cdot \boldsymbol{u}_h \right) \boldsymbol{u}_h \cdot \boldsymbol{u}_h \, \mathrm{d}x \nonumber \\ & \geq C \left( R^2 + a_3^2 + a_1^2 - a_2^2 \right)  + \frac{1}{2} \int_{\Gamma_{\mathrm{Nav}}} \left( \boldsymbol{u}_h \cdot \boldsymbol{u}_h \right) \left( \boldsymbol{u}_h \cdot \boldsymbol{n} \right) \, \mathrm{d}s  - \frac{1}{2} \int_{\Omega} \left(\nabla \cdot \boldsymbol{u}_h \right) \boldsymbol{u}_h \cdot \boldsymbol{u}_h \, \mathrm{d}x
\end{align}
where $C$ is a generic positive constant, as the coefficient of all the terms are positive.

If $ \theta = 0 $ then we have 
$$
\begin{aligned}
 \mathcal{I} & \geq  2 \nu \left( 1- \delta_1 C_{\mathrm{tr}}^2 \right) \left\|\varepsilon (\boldsymbol{u}_h) \right\|_{0, \Omega}^2  + \sum_{E \in \mathcal{E}_{\mathrm{Nav}}} \left(\gamma - \frac{1}{2 \delta_1} - \frac{1}{2 \delta_2} \right) \frac{ \nu}{ h_E}\left\|\boldsymbol{u}_h \cdot \boldsymbol{n}\right\|_{0, E}^2  \\ & \quad - \frac{C_{\mathrm{in}}^2 C_{\mathrm{tr}}^2 \delta_2 }{2} \sum_{K \in \mathcal{K}_h} \frac{h_K^2}{\nu }\left\|\nabla \mathscr{P} \left(\boldsymbol{u}_h\right)  \right\|_{0, K}^2 
 \end{aligned}
$$
Choose $ \gamma > \frac{1}{2\delta_1} + \frac{1}{2\delta_2}  $ and $ \frac{1}{\delta_1} > C_{\mathrm{tr}}^2$ we get 
$$
\begin{aligned}
\mathcal{I} & \geq \min\left\{ 1-\delta_1 C_{\mathrm{tr}}^2 , \gamma - \frac{1}{2 \delta_1} - \frac{1}{2 \delta_2} \right\}\left( 2 \nu  \left\|\varepsilon (\boldsymbol{u}_h)\right\|_{0, \Omega}^2  + \sum_{E \in \mathcal{E}_{\mathrm{Nav}}}  \frac{ \nu}{ h_E}\left\|\boldsymbol{u}_h \cdot \boldsymbol{n}\right\|_{0, E}^2 \right) \\ & \quad - \frac{C_{\mathrm{in}}^2 C_{\mathrm{tr}}^2 \delta_2}{2}  \sum_{K \in \mathcal{K}_h} \frac{h_K^2}{\nu }\left\|\nabla \mathscr{P} \left(\boldsymbol{u}_h\right)  \right\|_{0, K}^2
\\& \quad \geq C_S  \| \boldsymbol{u}_h\|_{1,\mathcal{K}_h}^2 - \frac{C_{\mathrm{in}}^2 C_{\mathrm{tr}}^2 \delta_2}{2}  \sum_{K \in \mathcal{K}_h} \frac{h_K^2}{\nu }\left\|\nabla \mathscr{P} \left(\boldsymbol{u}_h\right)  \right\|_{0, K}^2
 \end{aligned}
$$
The subsequent steps involve calculations similar to those described earlier.
\begin{align}\label{incomplete_case}
 \left(\mathscr{N}\left(\boldsymbol{u}_h\right), \boldsymbol{u}_h\right) & \geq (C_S - \frac{1}{2}) R^2 + a_3^2 + ( 1 - \frac{1}{M})a_1^2 - \min \left\{ \frac{1}{2}, \frac{M}{4}   \right\} a_2^2   + \frac{1}{2} \int_{\Gamma_{\mathrm{Nav}}} \left( \boldsymbol{u}_h \cdot \boldsymbol{u}_h \right) \left( \boldsymbol{u}_h \cdot \boldsymbol{n} \right) \, \mathrm{d}s \nonumber \\ & \quad 
- \frac{1}{2} \int_{\Omega} \left(\nabla \cdot \boldsymbol{u}_h \right) \boldsymbol{u}_h \cdot \boldsymbol{u}_h \, \mathrm{d}x  - \frac{C_{\mathrm{in}}^2 C_{\mathrm{tr}}^2 \delta_2}{2}  \sum_{K \in \mathcal{K}_h} \frac{h_K^2}{\nu }\left\|\nabla \mathscr{P} \left(\boldsymbol{u}_h\right)  \right\|_{0, K}^2    
\end{align}
Now, if we take $r =\boldsymbol{J}_h\left(\boldsymbol{u}_h \cdot \boldsymbol{u}_h\right)$  in \eqref{ope_P}, use Cauchy-Schwarz's inequality, trace inequality, mesh regularity, and following closely what was done in \cite{MR2914281, MR3463441}, we get
\begin{equation}\label{a2}
\left|\left(\nabla \cdot \boldsymbol{u}_h, \boldsymbol{u}_h \cdot \boldsymbol{u}_h\right)\right| \leq C\left\{ R+a_1+a_2+a_4\right\}\left\{\sum_{K \in \mathcal{K}_h} h_K^2\left|\boldsymbol{u}_h \cdot \boldsymbol{u}_h\right|_{1, K}^2\right\}^{1 / 2}
\end{equation}
By applying Lemma \ref{local_inverse} with \( l = m = 0 \), \( p = \infty \), and \( 1 \leq q \leq \infty \), we obtain
$\|\boldsymbol{u}_h\|_{\infty, K} \leq C h_K^{-d / q} \|\boldsymbol{u}_h\|_{0, q, K}.$
Next, using the Sobolev embedding \( H^1(\Omega) \hookrightarrow L^q(\Omega) \), which holds for all \( 2 \leq q < \infty \) if \( d = 2 \) and for \( 2 \leq q \leq 6 \) if \( d = 3 \), we derive a similar result as in \cite[Theorem 3.4]{MR3463441}.
\begin{align}\label{a3}
\left|\boldsymbol{u}_h \cdot \boldsymbol{u}_h\right|_{1, K} &  \leq C h_K^{-d / q}\left|\boldsymbol{u}_h\right|_{1, K}\left|\boldsymbol{u}_h\right|_{1, \Omega}.
\end{align}
Using Cauchy-Schwarz and trace inequalities, we get
\begin{align}\label{a4}
\int_{\Gamma_{\mathrm{Nav}}} \left( \boldsymbol{u}_h \cdot \boldsymbol{u}_h \right) \left( \boldsymbol{u}_h \cdot \boldsymbol{n} \right)\ds & \leq \| \boldsymbol{u}_h \cdot    \boldsymbol{u}_h \|_{0, \Gamma_{\mathrm{Nav}}}  \| \boldsymbol{u}_h \cdot    \boldsymbol{n} \|_{0, \Gamma_{\mathrm{Nav}}}  \nonumber \\ & \leq
h_K \| \boldsymbol{u}_h \cdot    \boldsymbol{u}_h \|_{1,K} h_E^{-1/2} \| \boldsymbol{u}_h \cdot    \boldsymbol{n} \|_{0, \Gamma_{\mathrm{Nav}}} \nonumber \\ & \leq 
R ( \sum_{K \in \mathcal{K}_h}   h_K^2 \|\boldsymbol{u}_h \cdot    \boldsymbol{u}_h \|_{1,K}^2 )^{1/2}
\end{align}
and then from \eqref{NNNN}, \eqref{a2}, \eqref{a3} and \eqref{a4}, and by choosing $\kappa:=\frac{d}{q}$, it holds
$$
\begin{aligned}
\left(\mathscr{N}\left(\boldsymbol{u}_h\right), \boldsymbol{u}_h\right) & \gtrsim   R^2+ a_1^2+a_3^2 -a_2^2 + \frac{1}{2} \int_{\Gamma_{\mathrm{Nav}}} \left( \boldsymbol{u}_h \cdot \boldsymbol{u}_h \right) \left( \boldsymbol{u}_h \cdot \boldsymbol{n} \right)\ds -\frac{1}{2}\left(\nabla \cdot \boldsymbol{u}_h, \boldsymbol{u}_h \cdot \boldsymbol{u}_h\right)  \\
& \gtrsim  R^2+ a_1^2+a_3^2 + a_2^2-C \left\{ R+a_1+a_2 + a_4\right\} \left\{\sum_{K \in \mathcal{K}_h} h_K^{2-2 \kappa}\left|\boldsymbol{u}_h\right|_{1, K}^2\right\}^{\frac{1}{2}}\left|\boldsymbol{u}_h\right|_{1, \Omega}  \\
& \gtrsim  R^2+ a_1^2+a_3^2+ -a_2^2-C h^{1-\kappa}\left\{ R+a_1+a_2 \right\} R^2 \\
& \gtrsim  R^2+  a_1^2+a_3^2 -a_2^2-C h^{1-\kappa} R^3-\frac{1}{2} a_1^2 -\frac{1}{2}a_2^2 - C^2 h^{2(1-\kappa)} R^4  \\
& \gtrsim  R^2+  a_1^2+a_3^2 -a_2^2 -C h^{1-\kappa} R^3- C^2 h^{2(1-\kappa)} R^4,
\end{aligned}
$$
where $R:=\frac{1}{6 C h^{1-\kappa}}$ and $\tilde{C}=\frac{1}{12 C}$. 
Note that, for only $\theta = 1 $, assumption \eqref{assumption} leads to $a_2 \leq \frac{R}{2} $, thus
$$
\begin{aligned}
\left(\mathscr{N}\left(\boldsymbol{u}_h\right), \boldsymbol{u}_h\right) & \gtrsim R^2 +a_1^2 +a_3^2 -a_2^2 \gtrsim R^2 + a_1^2 + a_3^2 -a_2^2 \geq 0 .
\end{aligned}
$$
Therefore, by applying Brouwer's Fixed Point Theorem (see \cite[Chapter IV, Corollary 1.1]{MR851383}) , we conclude that there exists a function \( \boldsymbol{u}_h \in \mathrm{V}_h \) such that the norm \( \| \boldsymbol{u}_h \|_{1,\mathcal{K}_h} \leq R \) and the fixed point operator \( \mathscr{N}(\boldsymbol{u}_h) = \boldsymbol{0} \). This indicates that \( \boldsymbol{u}_h \) is a fixed point of the operator \( \mathscr{N} \) within the constrained norm.
\end{proof}
\end{theorem}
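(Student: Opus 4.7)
The plan is to invoke the standard corollary of Brouwer's fixed point theorem (see \cite[Chapter IV, Corollary 1.1]{MR851383}): since the lemma preceding the theorem reduces \eqref{DSF} to finding a zero of $\mathscr{N}$ on $\mathrm{V}_h$, it suffices to exhibit a radius $R>0$ such that $(\mathscr{N}(\boldsymbol{u}_h), \boldsymbol{u}_h) \geq 0$ whenever $\|\boldsymbol{u}_h\|_{1,\mathcal{K}_h} = R$. Then $R$ and the smallness constant $\tilde C$ can be read off from the resulting coercivity-type inequality.

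To bound $(\mathscr{N}(\boldsymbol{u}_h), \boldsymbol{u}_h)$ from below, I would first test the definition \eqref{ope_P} of $\mathscr{P}$ with $r = \mathscr{P}(\boldsymbol{u}_h)$ in order to rewrite the coupling $-(\mathscr{P}(\boldsymbol{u}_h), \nabla \cdot \boldsymbol{u}_h)$ as a Nitsche boundary term plus a residual-stabilization piece, introducing the abbreviations $a_1, a_2, a_3$ for the stabilization, data, and grad-div norms. Collecting terms, the diffusive plus Nitsche contribution $\mathcal{I}$ would be treated as follows: for $\theta = 1$ the pressure-boundary coupling $(1-\theta)(\mathscr{P}(\boldsymbol{u}_h), \boldsymbol{u}_h\cdot\boldsymbol{n})_E$ vanishes, while the strain-boundary term is absorbed via Young's inequality combined with the discrete trace estimate of Lemma~\ref{Trace Inequality $-I$}, at the cost of small fractions of $\nu\|\varepsilon(\boldsymbol{u}_h)\|_{0,\Omega}^2$ and of the weighted normal-trace seminorm. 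Choosing the Young parameter $\delta_1$ small enough and $\gamma > 1/\delta_1$ large enough will give $\mathcal{I} \gtrsim \|\boldsymbol{u}_h\|_{1,\mathcal{K}_h}^2$, and the residual-stabilization contribution involving $\boldsymbol{f}$ can be absorbed by Young's inequality against $a_1, a_2$.

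The genuinely nonlinear obstruction is the convective contribution $(\boldsymbol{u}_h \cdot \nabla \boldsymbol{u}_h, \boldsymbol{u}_h)$. Using the identity from Lemma~\ref{lemma2.1}, it splits into a boundary piece $\tfrac{1}{2}\int_{\Gamma_{\mathrm{Nav}}} |\boldsymbol{u}_h|^2 (\boldsymbol{u}_h\cdot\boldsymbol{n})$ and a volumetric piece $-\tfrac{1}{2}(\nabla \cdot \boldsymbol{u}_h, \boldsymbol{u}_h \cdot \boldsymbol{u}_h)$; neither vanishes because $\boldsymbol{u}_h$ is only weakly divergence-free and only weakly impermeable. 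The boundary piece is controlled by a scaled trace inequality against the impermeability seminorm already present in $\|\cdot\|_{1,\mathcal{K}_h}$. For the volumetric piece, I would exploit \eqref{ope_P} once more with $r = J_h(\boldsymbol{u}_h\cdot\boldsymbol{u}_h)$, trading the divergence for the stabilization residual plus Scott--Zhang errors controlled by Lemma~\ref{interpolation_estimates}; Lemma~\ref{local_inverse} applied with $l=m=0$ together with the Sobolev embedding $H^1\hookrightarrow L^q$ delivers the critical scaling $|\boldsymbol{u}_h \cdot \boldsymbol{u}_h|_{1,K} \lesssim h_K^{-d/q}|\boldsymbol{u}_h|_{1,K}|\boldsymbol{u}_h|_{1,\Omega}$, with $\kappa := d/q \in (0,1)$.

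Putting everything together, I expect an estimate of the form $(\mathscr{N}(\boldsymbol{u}_h), \boldsymbol{u}_h) \gtrsim R^2 + a_1^2 + a_3^2 - a_2^2 - C h^{1-\kappa} R^3 - C^2 h^{2(1-\kappa)} R^4$. Choosing $R$ proportional to $h^{\kappa-1}$ makes the cubic and quartic nonlinear terms subdominant to $R^2$, and the smallness hypothesis \eqref{assumption} is then exactly what is needed to absorb $a_2^2$ into $R^2$. The main obstacle I anticipate is the delicate balancing of the Young parameters $\delta_1, \delta_2, \delta_3$ (and of $\gamma$) so that the coefficients of all surviving positive quantities remain strictly positive simultaneously; this is also where the hypothesis $\theta = 1$ becomes crucial, because for $\theta \in \{0,-1\}$ an uncontrolled term proportional to $(1-\theta)h_K^2 \nu^{-1}\|\nabla\mathscr{P}(\boldsymbol{u}_h)\|_{0,K}^2$ survives the lower bound on $\mathcal{I}$ and cannot be absorbed without an additional hypothesis on the data.
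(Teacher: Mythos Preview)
Your proposal is correct and follows essentially the same route as the paper's proof: the same Brouwer corollary, the same testing of \eqref{ope_P} first with $r=\mathscr{P}(\boldsymbol{u}_h)$ and then with $r=J_h(\boldsymbol{u}_h\cdot\boldsymbol{u}_h)$, the same treatment of $\mathcal{I}$ via trace/Young with $\theta=1$ killing the pressure--boundary coupling, the same inverse-plus-Sobolev scaling $|\boldsymbol{u}_h\cdot\boldsymbol{u}_h|_{1,K}\lesssim h_K^{-d/q}|\boldsymbol{u}_h|_{1,K}|\boldsymbol{u}_h|_{1,\Omega}$, and the same choice $R\sim h^{\kappa-1}$. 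Your closing remark on why $\theta\in\{0,-1\}$ fails also matches the paper's explanation.
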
 
\begin{remark}
Equation \eqref{incomplete_case} corresponds to \( \theta = 0 \). The term $
\frac{C_{\mathrm{in}}^2 C_{\mathrm{tr}}^2 \delta_2}{2} \sum_{K \in \mathcal{K}_h} \frac{h_K^2}{\nu} \left\|\nabla \mathscr{P}(\boldsymbol{u}_h) \right\|_{0, K}^2$
presents a challenge due to its negative coefficient, preventing us from proving existence for \( \theta = 0 \). Similar difficulties arise in the skew-symmetric case, complicates the existence proof under weaker assumptions, such as Brouwer's fixed-point theorem. However, we establish existence for \( \theta = 1 \) (the symmetric case) and later for all \( \theta \) values under stricter conditions, using Banach's fixed-point theorem.
\end{remark}
\subsection{ Existence and uniqueness of the discrete solution under strict conditions}
We prove the existence and uniqueness result for formulation \eqref{DSF} under the diffusion dominated assumption (i.e. $\nu$ large enough). We first write \eqref{DSF} as a fixed-point problem. To this end, we define $T_h: \mathrm{V}^{\prime} \times$ $\Pi \longrightarrow \mathrm{V}_h \times \Pi_h$, a discrete Stokes operator, which contains the linear terms of \eqref{DSF}, for each $(\boldsymbol{w}, r) \in \mathrm{V}^{\prime} \times \Pi$, it associates the unique solution $\left(\boldsymbol{u}_h, p_h\right) \in \mathrm{V}_h \times \Pi_h$ of
\begin{equation}\label{fixed_op}
\mathcal{A}_h^{\mathrm{S}}\left(\boldsymbol{u}_h, p_h ; \boldsymbol{v}_h, q_h\right) =\left\langle \boldsymbol{w}, \boldsymbol{v}_h\right\rangle+\left(r, q_h\right),
\end{equation}
for all $\left(\boldsymbol{v}_h, q_h\right) \in \mathrm{V}_h \times \Pi_h$, where $\mathcal{A}^{\mathrm{S}}(\cdot, \cdot)$ is given by 
$$
\begin{aligned}
\mathcal{A}_h^{\mathrm{S}}\left(\boldsymbol{u}_h, p_h ; \boldsymbol{v}_h, q_h\right):& = \sum_{K \in \mathcal{K}_{h}} \bigg(  2 \nu (\varepsilon (\boldsymbol{u}_h), \varepsilon ( \boldsymbol{v}_h)) - (p_h, \nabla \cdot \boldsymbol{v}_h)_{K} - (q_h, \nabla \cdot \boldsymbol{u}_h)_{K} \bigg)  + \sum_{E \in \mathcal{E}_{\mathrm{Nav}}} \\ 
&\quad \bigg( -\int_{E} \boldsymbol{n}^t(2 \nu \varepsilon (\boldsymbol{u}_h) - p_h I) \boldsymbol{n} (\boldsymbol{n} \cdot \boldsymbol{v}_h)  \, ds - \theta \int_{E} \boldsymbol{n}^t(2 \nu \varepsilon ( \boldsymbol{v}_h)  - q_h I) \boldsymbol{n} (\boldsymbol{n} \cdot \boldsymbol{u}_h)  \, ds \\ 
&\quad + \int_{E} \beta \sum_{i=1}^{d-1} (\boldsymbol{\tau}^i \cdot \boldsymbol{v}_h)(\boldsymbol{\tau}^i \cdot \boldsymbol{u}_h) \, ds  + \gamma \nu \int_{E} h_E^{-1} (\boldsymbol{u}_h \cdot \boldsymbol{n})(\boldsymbol{v}_h \cdot \boldsymbol{n})  \, ds \bigg)  \\ &\quad + \sum_{K \in \mathcal{K}_{h}} \int_K \tau \bigg(-2 \nu \nabla \cdot  \varepsilon (\boldsymbol{u}_h)  + \nabla p_h \bigg) \bigg(-2 \nu \nabla \cdot  \varepsilon (\boldsymbol{v}_h)  - \nabla q_h \bigg) \\& \quad   + \sum_{K \in \mathcal{K}_{h}} \delta  \int_K  \nabla \cdot \boldsymbol{u}_h \nabla \cdot \boldsymbol{v}_h
\end{aligned}
$$

Additionally, we introduce the mapping $G_h: H^2\left(\mathcal{K}_h\right)^2 \times H^1\left(\mathcal{K}_h\right) \longrightarrow \mathrm{V}_h \times \Pi_h$, which contains the nonlinear terms of \eqref{DSF} for a fixed point, where $\left(\boldsymbol{w}_h, r_h\right):=G_h(\boldsymbol{z}, t)$ solves
$$
\begin{aligned}
(\boldsymbol{w}_h, \boldsymbol{v}_h) + (r_h, q_h) & = -\left(\boldsymbol{f} - \boldsymbol{z} \cdot \nabla \boldsymbol{z}, \boldsymbol{v}_h\right)  - \sum_{K \in \mathcal{K}_h} \tau\left(\boldsymbol{f}  -  \boldsymbol{z} \cdot \nabla \boldsymbol{z} , -2 \nu \nabla \cdot  \varepsilon (\boldsymbol{v}_h) - \nabla q_h \right)_K \\
& \quad  -  \sum_{K \in \mathcal{K}_h} \tau\left(\boldsymbol{f}  + 2 \nu \nabla \cdot  \varepsilon (\boldsymbol{v}_h) -  \boldsymbol{z} \cdot \nabla \boldsymbol{z} - \nabla t, \boldsymbol{z} \cdot \nabla \boldsymbol{v}_h\right)_K,
\end{aligned}
$$
for all $\left(\boldsymbol{v}_h, q_h\right) \in \mathrm{V}_h \times \Pi_h$. Combining these operators, problem \eqref{DSF} is written as the following fixed point problem
\begin{equation}\label{TG}
-T_h G_h\left(\boldsymbol{u}_h, p_h\right) = \left(\boldsymbol{u}_h, p_h\right).
\end{equation}

Before proving the uniqueness result for problem \eqref{DSF}, we need to establish the well-posedness of problem \eqref{fixed_op}. This result is presented in the next lemma.
\begin{lemma}\label{lemma_4.8}
The mapping $T_h$  is well-defined for sufficiently large values of $\gamma$.
\end{lemma}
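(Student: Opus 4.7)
The plan is to establish the well-posedness of the linear square system \eqref{fixed_op} on the finite-dimensional space $\mathrm{V}_h \times \Pi_h$; by a standard dimension argument it is enough to prove a coercivity-type estimate (equivalently, injectivity) for $\mathcal{A}_h^{\mathrm{S}}$ with respect to the triple norm $|\!|\!|\cdot|\!|\!|$. I would do this by testing with $(\boldsymbol{v}_h, q_h) = (\boldsymbol{u}_h, -p_h)$. Two convenient things then happen: the divergence cross terms $-(p_h, \nabla\cdot \boldsymbol{u}_h) - (-p_h, \nabla\cdot \boldsymbol{u}_h)$ cancel exactly, and the Franca-Hughes stabilization term becomes the non-negative square $\sum_{K\in\mathcal{K}_h} \tau\| {-2\nu\nabla\cdot\varepsilon(\boldsymbol{u}_h) + \nabla p_h}\|_{0,K}^2$, instead of the indefinite expression $\sum_{K}\tau(\|{-2\nu\nabla\cdot\varepsilon(\boldsymbol{u}_h)}\|_{0,K}^2-\|\nabla p_h\|_{0,K}^2)$ produced by testing with $(\boldsymbol{u}_h, p_h)$.

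The Nitsche viscous consistency term $(1+\theta)\int_E 2\nu\,\boldsymbol{n}^t\varepsilon(\boldsymbol{u}_h)\boldsymbol{n}\,(\boldsymbol{n}\cdot\boldsymbol{u}_h)\,ds$ that remains can then be bounded exactly as in \eqref{I2} in the proof of Theorem \ref{existence_theorem}, and the residual pressure-boundary contribution $(1-\theta)\int_E p_h(\boldsymbol{n}\cdot\boldsymbol{u}_h)\,ds$ (only nonzero when $\theta \neq 1$) is handled as in \eqref{I3}. These estimates produce small multiples of $h_K^2/\nu\,\|\nabla p_h\|_{0,K}^2$ and of $\nu h_E^{-1}\|\boldsymbol{u}_h\cdot\boldsymbol{n}\|_{0,E}^2$, both of which are reabsorbed at the end.

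The main technical step is to extract the missing pressure part $\sum_K h_K^2/\nu\,\|\nabla p_h\|_{0,K}^2$ of the triple norm from the positive but mixed stabilization square. I would use the elementary inequality $\|a+b\|^2 \geq \tfrac{1}{2}\|b\|^2 - \|a\|^2$ with $a=-2\nu\nabla\cdot\varepsilon(\boldsymbol{u}_h)$ and $b=\nabla p_h$, and then absorb the resulting negative contribution $-\tau\|2\nu\nabla\cdot\varepsilon(\boldsymbol{u}_h)\|_{0,K}^2$ into a small fraction of $\nu\|\varepsilon(\boldsymbol{u}_h)\|_{0,\Omega}^2$ by combining the inverse inequality $\|\nabla\cdot\varepsilon(\boldsymbol{u}_h)\|_{0,K}\lesssim h_K^{-1}\|\varepsilon(\boldsymbol{u}_h)\|_{0,K}$ from Lemma \ref{local_inverse} with the bound $\tau \leq m_K h_K^2/(8\nu)$ from the remark following Lemma \ref{H1Hinfty}; the definition of $m_K$ keeps the absorption constant as small as needed.

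Collecting the estimates and choosing the Young parameters $\delta_1,\delta_2$ appropriately, a sufficiently large Nitsche parameter $\gamma$ (independent of $h$ and $\nu$) then yields $\mathcal{A}_h^{\mathrm{S}}(\boldsymbol{u}_h,p_h;\boldsymbol{u}_h,-p_h) \gtrsim |\!|\!|(\boldsymbol{u}_h,p_h)|\!|\!|^2$, from which injectivity of the left-hand side of \eqref{fixed_op} and hence well-definedness of $T_h$ follow by the finite-dimensional argument. The main obstacle is careful bookkeeping of the constants in the non-symmetric and incomplete cases $\theta \in \{-1, 0\}$, where the extra pressure-boundary term consumes a portion of the just-extracted $\tau\|\nabla p_h\|_{0,K}^2$, forcing $\gamma$ to be chosen larger (but still $h$- and $\nu$-independent).
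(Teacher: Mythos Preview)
Your proposal is correct and follows essentially the same route as the paper: test with $(\boldsymbol{u}_h,-p_h)$, bound the Nitsche boundary terms via \eqref{I2} and \eqref{I3}, extract $\tau\|\nabla p_h\|_{0,K}^2$ from the stabilization square using $\|a+b\|^2 \ge \tfrac12\|b\|^2 - \|a\|^2$, and absorb $\tau\|2\nu\nabla\cdot\varepsilon(\boldsymbol{u}_h)\|_{0,K}^2$ into $\nu\|\varepsilon(\boldsymbol{u}_h)\|_{0,\Omega}^2$ via the inverse estimate and the bound $\tau\le m_K h_K^2/(8\nu)$ (this is exactly the paper's inequality \eqref{tau_bound}). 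The only cosmetic differences are that the paper works with the slightly stronger norm $\|(\cdot,\cdot)\|_h$ that also carries the $\|\delta^{1/2}\nabla\cdot\boldsymbol{u}_h\|_{0,K}^2$ term (which you simply drop as non-negative), and it writes out the three cases $\theta\in\{1,-1,0\}$ separately rather than treating them uniformly.
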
 
\begin{proof}
Defining the mesh dependent norm
$$
\| (\boldsymbol{u}_h, p_h )\|_h^2 = \nu \|\varepsilon(\boldsymbol{u}_h) \|^2_{0,\Omega} + \sum_{E \in \mathcal{E}_{\mathrm{Nav}}} \frac{\nu}{h_E} \|\boldsymbol{u}_h \cdot \boldsymbol{n} \|_{0,E}^2 + \sum_{K \in \mathcal{K}_h} \left( \frac{h_K^2}{\nu} \|\nabla p_h \|^2_{0,K} + \| \delta^{1/2} \nabla \cdot \boldsymbol{u}_h \|^2_{0, K} \right),
$$
and it is proven in a similar way to the proof in Theorem \ref{existence_theorem} for sufficiently large $\gamma$ and $\forall (\boldsymbol{u}_h,p_h) \in \mathrm{V}_h \times \Pi_h$ 
\begin{align*}
\mathcal{A}_h^{\mathrm{S}}\left(\boldsymbol{u}_h, p_h ; \boldsymbol{u}_h, - p_h\right) & =  2 \nu (\varepsilon (\boldsymbol{u}_h), \varepsilon ( \boldsymbol{u}_h))  + \sum_{E \in \mathcal{E}_{\mathrm{Nav}}} \bigg( -(1+\theta)  \int_{E} \boldsymbol{n}^t 2 \nu \varepsilon (\boldsymbol{u}_h) \boldsymbol{n} (\boldsymbol{n} \cdot \boldsymbol{u}_h)  \, ds \\& \quad + (1- \theta)\int_E p_h (\boldsymbol{n} \cdot \boldsymbol{u}_h)  + \int_{E} \beta \sum_{i=1}^{d-1} (\boldsymbol{\tau}^i \cdot \boldsymbol{u}_h)^2 \, ds + \gamma \nu \int_{E} h_E^{-1} (\boldsymbol{u}_h \cdot \boldsymbol{n})^2  \, ds \bigg)  \\& \quad   + \sum_{K \in \mathcal{K}_{h}} \int_K \tau \left(-2 \nu \nabla \cdot  \varepsilon (\boldsymbol{u}_h)  + \nabla p_h \right)^2  + \sum_{K \in \mathcal{K}_{h}}   \int_K \delta ( \nabla \cdot \boldsymbol{u}_h )^2 
\end{align*}
Consider the following bound
\begin{align}\label{tau_bound}
\sum_{K \in \mathcal{K}_h} \| \tau^{1/2} 2 \nu \nabla \cdot \varepsilon (\boldsymbol{u}_h )\|_{0,K}^2 \leq \frac{m_K 2 \nu}{4}    \sum_{K \in \mathcal{K}_h} h_K^2 \| \nabla \cdot \varepsilon (\boldsymbol{u}_h) \|^2_{0,K} \leq \frac{2 \nu}{2} \| \varepsilon (\boldsymbol{u}_h) \|^2_{0,\Omega}
\end{align}
For \( \theta = 1 \). Using equations \eqref{I2}, \eqref{I3} (with \( \mathscr{P}(\boldsymbol{u}_h) \) replaced by \( p_h \)), and \eqref{tau_bound}, we get
\begin{align*}
    \mathcal{A}_h^{\mathrm{S}}\left(\boldsymbol{u}_h, p_h ; \boldsymbol{u}_h, - p_h\right) &\geq 2 \nu (\varepsilon (\boldsymbol{u}_h), \varepsilon ( \boldsymbol{u}_h)) 
    + \sum_{E \in \mathcal{E}_{\mathrm{Nav}}} \bigg( 
    - 4 \nu \delta_1 C_{\mathrm{tr}}^2 \sum_{K \in \mathcal{K}_h}\left\|\varepsilon (\boldsymbol{u}_h )\right\|_{0, K}^2 \\
    &\quad 
    - \frac{2}{\delta_1} \sum_{E \in \mathcal{E}_{\mathrm{Nav}}} \frac{\nu}{2 h_E}\left\|\boldsymbol{u}_h \cdot \boldsymbol{n}\right\|_{0, E}^2 + \gamma \nu \int_{E} h_E^{-1} (\boldsymbol{u}_h \cdot \boldsymbol{n})^2 \, ds \bigg) 
  \\ &\quad   + \sum_{K \in \mathcal{K}_{h}} \int_K \tau \left(-2 \nu \nabla \cdot  \varepsilon (\boldsymbol{u}_h) + \nabla p_h \right)^2 + \sum_{K \in \mathcal{K}_{h}} \delta \int_K ( \nabla \cdot \boldsymbol{u}_h )^2 \\
    &\geq 2 \nu (1-2 \delta_1 C_{\mathrm{tr}}^2) \left\|\varepsilon (\boldsymbol{u}_h )\right\|_{0, \Omega}^2 
    + \sum_{E \in \mathcal{E}_{\mathrm{Nav}}} \frac{\nu}{h_E} \left(\gamma - \frac{1}{\delta_1}\right) \left\|\boldsymbol{u}_h \cdot \boldsymbol{n}\right\|_{0, E}^2 \\
    &\quad + \sum_{K \in \mathcal{K}_h} \bigg( 
    \| \delta^{1/2} \nabla \cdot \boldsymbol{u}_h \|^2_{0, K} 
    + \| \tau^{1/2} 2 \nu \nabla \cdot \varepsilon (\boldsymbol{u}_h) \|_{0,K}^2 
    + \| \tau^{1/2} \nabla p_h \|_{0,K}^2 \\
    &\quad - 2 \| \tau^{1/2} 2 \nu \nabla \cdot \varepsilon (\boldsymbol{u}_h) \|_{0,K}^2 
    - \frac{1}{2} \| \tau^{1/2} \nabla p_h \|_{0,K}^2 \bigg) \\
    &\geq 2 \nu \left(\frac{1}{2}-2 \delta_1 C_{\mathrm{tr}}^2\right) \left\|\varepsilon (\boldsymbol{u}_h )\right\|_{0, \Omega}^2 
    + \sum_{E \in \mathcal{E}_{\mathrm{Nav}}} \frac{\nu}{h_E} \left(\gamma - \frac{1}{\delta_1}\right) \left\|\boldsymbol{u}_h \cdot \boldsymbol{n}\right\|_{0, E}^2 \\
    &\quad + \sum_{K \in \mathcal{K}_h} \bigg( 
    \| \delta^{1/2} \nabla \cdot \boldsymbol{u}_h \|^2_{0, K} 
    + \frac{1}{2} \| \tau^{1/2} \nabla p_h \|_{0,K}^2 \bigg) 
    \gtrsim \| (\boldsymbol{u}_h, p_h )\|_h^2.
\end{align*}
The result holds for sufficiently large values of $\gamma$, specifically when $\gamma > 4 C_{\mathrm{tr}}^2$.
Next, for $\theta = -1$ and use \eqref{I2}, \eqref{I3} (with \( \mathscr{P}(\boldsymbol{u}_h) \) replaced by \( p_h \)), and \eqref{tau_bound}, we have
\begin{align*}
\mathcal{A}_h^{\mathrm{S}}\left(\boldsymbol{u}_h, p_h ; \boldsymbol{u}_h, - p_h\right)  & \geq   \nu \| \varepsilon (\boldsymbol{u}_h) \|^2_{0,\Omega} +  \frac{\nu}{h_E} \sum_{E \in \mathcal{E}_{\mathrm{Nav}}} \left( \gamma - \frac{1}{\delta_2} \right) \left\|\boldsymbol{u}_h \cdot \boldsymbol{n}\right\|_{0, E}^2 + \left( \frac{m_K}{16} - C_{\mathrm{in}}^2 C_{\mathrm{tr}}^2 \delta_2 \right) \\ & \quad   \sum_{K \in \mathcal{K}_h}  \frac{h_K^2}{\nu }\left\|\nabla p_h \right\|_{0, K}^2 + \sum_{K \in \mathcal{K}_h}  \| \delta^{1/2} \nabla \cdot \boldsymbol{u}_h \|^2_{0, K} \gtrsim \| (\boldsymbol{u}_h, p_h )\|_h^2 
\end{align*}
The result holds for sufficiently large values of $\gamma$, specifically when $\gamma > 1/\delta_2$.

Similarly, for $\theta = 0$ and use \eqref{I2}, \eqref{I3} (with \( \mathscr{P}(\boldsymbol{u}_h) \) replaced by \( p_h \)), and \eqref{tau_bound}, we get
\begin{align*}
\mathcal{A}_h^{\mathrm{S}}\left(\boldsymbol{u}_h, p_h ; \boldsymbol{u}_h, - p_h\right)  & \geq  2 \nu \left(\frac{1}{2} - \delta_1 C_{\mathrm{tr}}^2 \right) \| \varepsilon (\boldsymbol{u}_h) \|^2_{0,\Omega} +  \frac{\nu}{h_E} \left(  \gamma - \frac{1}{2 \delta_1} - \frac{1}{2 \delta_2} \right) \sum_{E \in \mathcal{E}_{\mathrm{Nav}}}  \left\|\boldsymbol{u}_h \cdot \boldsymbol{n}\right\|_{0, E}^2 \\ & \quad  + \left( \frac{m_K}{16} - \frac{C_{\mathrm{in}}^2 C_{\mathrm{tr}}^2 \delta_2}{2} \right) \sum_{K \in \mathcal{K}_h} \frac{h_K^2}{\nu }\left\|\nabla p_h \right\|_{0, K}^2 + \sum_{K \in \mathcal{K}_h}  \| \delta^{1/2} \nabla \cdot \boldsymbol{u}_h \|^2_{0, K} \gtrsim \| (\boldsymbol{u}_h, p_h )\|_h^2 
\end{align*}
The result holds for sufficiently large values of $\gamma$, specifically when $\gamma > \frac{1}{2 \delta_1} + \frac{1}{2 \delta_2}$. 
Hence, the problem \eqref{fixed_op} is well-posed for sufficiently large $\gamma$, and the operator $T_h$ is well-defined.
\end{proof}
\begin{lemma}\label{continuity_T}
The operator $T_h$ is continuous for sufficiently large values of $\gamma$. More precisely, there exists $C>0$, independent of $h$ and $\nu$, such that
\begin{align*}
\|T_h(\boldsymbol{w}, r)\|\leq C \left(\sqrt{\nu}+ \frac{h } {\sqrt{\nu}}\right)^2 \| (\boldsymbol{w}, r)\|_{\left(\mathrm{V}_h \times \Pi_h\right)^{\prime}},
\end{align*}
for all $(\boldsymbol{w}, r) \in(\mathrm{V} \times \Pi)^{\prime}$.
\end{lemma}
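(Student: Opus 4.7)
The plan is to combine the coercivity of the bilinear form $\mathcal{A}_h^{\mathrm{S}}$ established in Lemma \ref{lemma_4.8} with a discrete pressure inf-sup argument, in order to upgrade control in the mesh-dependent norm $\|\cdot\|_h$ to the $L^2(\Omega)$ control of pressure required by $\|(\cdot,\cdot)\|$. Since $\|(\boldsymbol{u}_h,p_h)\|^2=\nu\|\varepsilon(\boldsymbol{u}_h)\|_{0,\Omega}^2+\|p_h\|_{0,\Omega}^2$ but $\|(\boldsymbol{u}_h,p_h)\|_h^2$ only contains the weighted seminorm $\sum_K \tfrac{h_K^2}{\nu}\|\nabla p_h\|_{0,K}^2$ of the pressure, the $L^2$ norm of $p_h$ must be recovered separately from the equation.

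First I would test \eqref{fixed_op} with $(\boldsymbol{u}_h,-p_h)$. By Lemma \ref{lemma_4.8} and the definition of the dual norm applied to the pair $(\boldsymbol{u}_h,-p_h)$,
\begin{equation*}
\|(\boldsymbol{u}_h,p_h)\|_h^2 \lesssim \mathcal{A}_h^{\mathrm{S}}(\boldsymbol{u}_h,p_h;\boldsymbol{u}_h,-p_h) = \langle\boldsymbol{w},\boldsymbol{u}_h\rangle-(r,p_h) \leq \|(\boldsymbol{w},r)\|_{(\mathrm{V}_h\times\Pi_h)'}\,\|(\boldsymbol{u}_h,p_h)\|.
\end{equation*}
Next, to recover $\|p_h\|_{0,\Omega}$, I would invoke the continuous inf-sup condition from Lemma \ref{lemma2.1}: there exists $\boldsymbol{v}\in\mathrm{V}$ with $|\boldsymbol{v}|_{1,\Omega}\le 1$ and $(p_h,\nabla\cdot\boldsymbol{v})\geq\beta\|p_h\|_{0,\Omega}$. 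Setting $\boldsymbol{v}_h:=\boldsymbol{I}_h\boldsymbol{v}\in\mathrm{V}_h$, I would split
\begin{equation*}
(p_h,\nabla\cdot\boldsymbol{v}) = (p_h,\nabla\cdot(\boldsymbol{v}-\boldsymbol{v}_h)) + (p_h,\nabla\cdot\boldsymbol{v}_h).
\end{equation*}
The first piece is handled by elementwise integration by parts together with the Scott--Zhang estimate $\|\boldsymbol{v}-\boldsymbol{v}_h\|_{0,K}\lesssim h_K|\boldsymbol{v}|_{1,\omega_K}$, producing a bound of order $\sqrt{\nu}\|(\boldsymbol{u}_h,p_h)\|_h$ after pairing $\tfrac{h_K}{\sqrt{\nu}}\|\nabla p_h\|_{0,K}$ with $\sqrt{\nu}|\boldsymbol{v}|_{1,\omega_K}$ in Cauchy--Schwarz; the boundary contribution on $\Gamma_{\mathrm{Nav}}$ is controlled using $\boldsymbol{v}\cdot\boldsymbol{n}=0$, so that $\boldsymbol{v}_h\cdot\boldsymbol{n}=(\boldsymbol{v}_h-\boldsymbol{v})\cdot\boldsymbol{n}$ gains one half power of $h_E$ from the Scott--Zhang trace estimate. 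The second piece is extracted from $\mathcal{A}_h^{\mathrm{S}}(\boldsymbol{u}_h,p_h;\boldsymbol{v}_h,0)=\langle\boldsymbol{w},\boldsymbol{v}_h\rangle$, and each remaining term (viscous, Nitsche consistency, Navier friction, Nitsche penalty, and the $\tau,\delta$-stabilization terms) is estimated by trace, inverse, and Cauchy--Schwarz inequalities, using Scott--Zhang stability $|\boldsymbol{v}_h|_{1,\Omega}\lesssim 1$ together with $\tau\le m_K h_K^2/(8\nu)$ and $\sqrt{\delta}\lesssim \sqrt{h_K|\boldsymbol{u}_h|_p}$. The net estimate takes the form
\begin{equation*}
\|p_h\|_{0,\Omega}\;\lesssim\;\left(\sqrt{\nu}+\frac{h}{\sqrt{\nu}}\right)\|(\boldsymbol{u}_h,p_h)\|_h + \sqrt{\nu}\,\|(\boldsymbol{w},r)\|_{(\mathrm{V}_h\times\Pi_h)'}.
\end{equation*}

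Combining this with $\sqrt{\nu}\|\varepsilon(\boldsymbol{u}_h)\|_{0,\Omega}\le\|(\boldsymbol{u}_h,p_h)\|_h$ yields
\begin{equation*}
\|(\boldsymbol{u}_h,p_h)\|\;\lesssim\;\left(\sqrt{\nu}+\frac{h}{\sqrt{\nu}}\right)\|(\boldsymbol{u}_h,p_h)\|_h + \sqrt{\nu}\,\|(\boldsymbol{w},r)\|_{(\mathrm{V}_h\times\Pi_h)'}.
\end{equation*}
Substituting this into the first displayed inequality and applying Young's inequality to absorb a fraction of $\|(\boldsymbol{u}_h,p_h)\|_h^2$ on the left-hand side gives $\|(\boldsymbol{u}_h,p_h)\|_h\lesssim (\sqrt{\nu}+h/\sqrt{\nu})\|(\boldsymbol{w},r)\|_{(\mathrm{V}_h\times\Pi_h)'}$; one final substitution produces the claimed bound with prefactor $(\sqrt{\nu}+h/\sqrt{\nu})^2$. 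The main obstacle will be the pressure inf-sup step: tracking how the Nitsche penalty weighted by $\nu/h_E$, the stabilization terms with weights $\tau\sim h_K^2/\nu$ and $\delta\sim h_K|\boldsymbol{u}_h|_p$, and the Scott--Zhang interpolation errors on $\Gamma_{\mathrm{Nav}}$ rescale in $\nu$ and $h$, so that the prefactor emerging on the right is exactly $\sqrt{\nu}+h/\sqrt{\nu}$ rather than something blowing up in either limit. A secondary subtlety is that $\boldsymbol{v}_h=\boldsymbol{I}_h\boldsymbol{v}$ need not satisfy $\boldsymbol{v}_h\cdot\boldsymbol{n}=0$ on $\Gamma_{\mathrm{Nav}}$ (since $\mathrm{V}_h$ enforces this only weakly via Nitsche), so the associated Nitsche boundary contributions must be absorbed by the $\nu/h_E\|\boldsymbol{u}_h\cdot\boldsymbol{n}\|_{0,E}^2$ piece of $\|\cdot\|_h$ together with the face approximation bound for $(\boldsymbol{v}-\boldsymbol{v}_h)\cdot\boldsymbol{n}$.
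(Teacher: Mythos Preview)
Your proposal is correct and follows essentially the same strategy as the paper: test \eqref{fixed_op} with $(\boldsymbol{u}_h,-p_h)$ to invoke the coercivity of Lemma~\ref{lemma_4.8}, then recover $\|p_h\|_{0,\Omega}$ via the continuous inf-sup condition by splitting $(p_h,\nabla\cdot\boldsymbol{v})$ into an interpolation-error piece and a piece read off from $\mathcal{A}_h^{\mathrm{S}}(\boldsymbol{u}_h,p_h;\boldsymbol{v}_h,0)=\langle\boldsymbol{w},\boldsymbol{v}_h\rangle$, and finally close by substitution and Young's inequality. The paper uses the Cl\'ement interpolant for $\boldsymbol{v}_h$ where you use Scott--Zhang, but either works since only $H^1$-stability and the local $L^2$ approximation estimate $\|\boldsymbol{v}-\boldsymbol{v}_h\|_{0,K}\lesssim h_K|\boldsymbol{v}|_{1,\omega_K}$ are needed; the subtleties you flag concerning the Nitsche boundary terms and the $\tau,\delta$ stabilizations are exactly the ones the paper isolates as $\mathcal{R}_1$ and $\mathcal{R}_2$ and bounds by the same trace/inverse arguments you outline.
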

\begin{proof}
 The proof follows standard arguments, but we present it here for completeness. Let $\left(\boldsymbol{u}_h, p_h\right)=T_h(\boldsymbol{w}, r)$. From Lemma \ref{lemma_4.8} we see that 

\begin{align}\label{main_bound}
\left\|\left(\boldsymbol{u}_h, p_h\right)\right\|_h^2 & \leq \mathcal{A}_h^{\mathrm{S}}\left(\boldsymbol{u}_h, p_h ; \boldsymbol{u}_h, - p_h\right)  =\left\langle\boldsymbol{w}, \boldsymbol{u}_h\right\rangle+\left(r, - p_h\right)\nonumber \\
& \leq\|\boldsymbol{w}\|_{\mathrm{V}_h^{\prime}}\left|\boldsymbol{u}_h\right|_{1, \Omega}+\|r\|_{\Pi_h^{\prime}}\left\|p_h\right\|_{0, \Omega} .
\end{align}

To bound the $L^2(\Omega)$ norm of $p_h$, let $\boldsymbol{z} \in \mathrm{V}$ be such that 

$$
\beta_1 \left\|p_h\right\|_{0, \Omega}|\boldsymbol{z}|_{1, \Omega} \leq\left(p_h, \nabla \cdot \boldsymbol{z}\right).
$$
Let $\boldsymbol{z}_h$ denote the Cl\'ement interpolant of $\boldsymbol{z}$. By integrating by parts on the term $\left(p_h, \nabla \cdot \boldsymbol{z}\right)$ and utilizing the fact that $\left(\boldsymbol{u}_h, p_h\right)$ is the solution of \eqref{TG}, along with the Cl\'ement interpolant estimates \cite{MR400739}, we arrive at
\begin{align}\label{lemma_4.9}
\beta_1 \left\|p_h\right\|_{0, \Omega}|\boldsymbol{z}|_{1, \Omega} & \leq \left(p_h, \nabla \cdot\left(\boldsymbol{z}-\boldsymbol{z}_h\right)\right)+\left(p_h, \nabla \cdot \boldsymbol{z}_h\right) \nonumber  \\ &
=  2 \nu \left( \varepsilon ( \boldsymbol{u}_h), \varepsilon (\boldsymbol{z}_h)  \right)  - \sum_{K \in \mathcal{K}_h} \int_K \nabla p_h (\boldsymbol{z}-\boldsymbol{z}_h) + \sum_{E \in \Gamma_{\mathrm{Nav}}} \left[ \int_E (p_h \cdot \boldsymbol{n} )(\boldsymbol{z} - \boldsymbol{z}_h)  \right. \nonumber \\ & \quad \left. - \int_E  \boldsymbol{n}^t ( 2 \nu \varepsilon (\boldsymbol{u}_h )) \boldsymbol{n} (\boldsymbol{n} \cdot \boldsymbol{z}_h )  - \theta \int_E \boldsymbol{n}^t (2 \nu \varepsilon (\boldsymbol{z}_h) ) \boldsymbol{n} ( \boldsymbol{n} \cdot \boldsymbol{u}_h) + \int_E  p_h ( \boldsymbol{n} \cdot \boldsymbol{z}_h) \right. \nonumber \\ & \quad  \left. + \int_E \beta \sum_{i}^{d-1} (\tau^i \cdot \boldsymbol{z}_h) (\tau^i \cdot \boldsymbol{u}_h) + \gamma \nu \int_E h_E^{-1} (\boldsymbol{u}_h \cdot \boldsymbol{n})  (\boldsymbol{z}_h \cdot \boldsymbol{n}) \right]\nonumber  \\ & \quad + \sum_{K \in \mathcal{K}_h} \left[ \int_K \tau ( -2 \nu \nabla \cdot \varepsilon ( \boldsymbol{u}_h) + \nabla p_h ) ( - 2 \nu \nabla \cdot \varepsilon (z_h) ) + \delta \int_K (\nabla \cdot \boldsymbol{u}_h) (\nabla \cdot \boldsymbol{z}_h) \right] \nonumber \\ & \quad - \langle 
 \boldsymbol{w}, \boldsymbol{z}_h \rangle \nonumber  \\ & \leq  
 2 \nu \| \varepsilon (\boldsymbol{u}_h)\|_{0,\Omega} \| \varepsilon (\boldsymbol{z}_h)\|_{0,\Omega} + \sum_{K \in \mathcal{K}_h} h_K \| \nabla p_h \|_{0,K} | \boldsymbol{z} |_{1,\omega_K} \nonumber \\ & \quad  + \sum_{E \in \Gamma_{\mathrm{Nav}}} h_E^{1/2} \|p_h \boldsymbol{n} \|_{0,E} | \boldsymbol{z}|_{1,\omega_E} + \|\boldsymbol{w}\|_{\mathrm{V}_h^{\prime}} |\boldsymbol{z}_h|_{1,\Omega} + \mathcal{R}_1 +\mathcal{R}_2
\end{align}
where 
\begin{align*}
    \mathcal{R}_1  & =  \sum_{E \in \Gamma_{\mathrm{Nav}}} \left[ - \int_E  \boldsymbol{n}^t ( 2 \nu \varepsilon (\boldsymbol{u}_h )) \boldsymbol{n} (\boldsymbol{n} \cdot \boldsymbol{z}_h )  - \theta \int_E \boldsymbol{n}^t (2 \nu \varepsilon (\boldsymbol{z}_h) ) \boldsymbol{n} ( \boldsymbol{n} \cdot \boldsymbol{u}_h) + \int_E  p_h ( \boldsymbol{n} \cdot \boldsymbol{z}_h) \right. \\ & \quad  \left. + \int_E \beta \sum_{i}^{d-1} (\tau^i \cdot \boldsymbol{z}_h) (\tau^i \cdot \boldsymbol{u}_h) + \gamma \nu \int_E h_E^{-1} (\boldsymbol{u}_h \cdot \boldsymbol{n}) (\boldsymbol{z}_h \cdot \boldsymbol{n}) \right] \\ 
   \mathcal{R}_2 & =  \sum_{K \in \mathcal{K}_h} \left[ \int_K \tau ( -2 \nu \nabla \cdot \varepsilon ( \boldsymbol{u}_h) + \nabla p_h ) ( - 2 \nu \nabla \cdot \varepsilon (z_h) ) + \delta \int_K (\nabla \cdot \boldsymbol{u}_h) (\nabla \cdot \boldsymbol{z}_h) \right] 
\end{align*}
First, we compute a bound for $\mathcal{R}_1$. Using the Poincaré inequality, we have 
\begin{align*}
    |\mathcal{R}_1 | & \leq  2 \nu \sum_{E \in \mathcal{E}_{\mathrm{Nav}}} h_E^{1 / 2}\left\|\varepsilon \left(\boldsymbol{u}_h\right) \boldsymbol{n}\right\|_{0, E} h_E^{-1 / 2}\left\|\boldsymbol{z}_h \cdot \boldsymbol{n}\right\|_{0, E}  + \theta 2 \nu \sum_{E \in \mathcal{E}_{\mathrm{Nav}}} h_E^{1 / 2}\left\|\varepsilon \left(\boldsymbol{z}_h\right) \boldsymbol{n}\right\|_{0, E} \\
    & \quad h_E^{-1 / 2}\left\|\boldsymbol{u}_h \cdot \boldsymbol{n}\right\|_{0, E}  + \sum_{E \in \mathcal{E}_{\mathrm{Nav}}} \nu^{-\frac{1}{2}} h_E^{1 / 2}\left\| p_h \right\|_{0, E} \nu^{\frac{1}{2}} h_E^{-1 / 2}\left\|\boldsymbol{z}_h \cdot \boldsymbol{n}\right\|_{0, E} \\
    & \quad + \beta \| \varepsilon( \boldsymbol{u}_h) \|_{0,\Omega}  \| \varepsilon( \boldsymbol{z}_h) \|_{0,\Omega} + \sum_{E \in \Gamma_{\mathrm{Nav}}} \frac{\gamma \nu}{h_E}  \| \boldsymbol{u}_h \cdot \boldsymbol{n} \|_{0,E}  \| \boldsymbol{z}_h \cdot \boldsymbol{n} \|_{0,E}
\end{align*}
Similarly, for $\mathcal{R}_2$ we have 
\begin{align*}
    |\mathcal{R}_2| & \leq \sum_{K \in \mathcal{K}_h} \bigg(\|\tau^{1/2} (-2 \nu \nabla \cdot \varepsilon (\boldsymbol{u}_h) + \nabla p_h )\|_{0,K} \| \tau^{1/2} (-2 \nu \nabla \cdot \varepsilon (\boldsymbol{z}_h)) \|_{0,K} \\ & \quad + \| \delta^{1/2} \nabla \cdot \boldsymbol{u}_h \|_{0,K} \| \delta^{1/2} \nabla \cdot \boldsymbol{z}_h \|_{0,K}   \bigg)
\end{align*}
Next, using the generalized Poincar\'{e} inequality, we obtain 
$ \| \delta^{1/2} \nabla \cdot \boldsymbol{z}_h \|_{0,K}  \leq C \frac{h_K } {\sqrt{\nu}}
 \|\boldsymbol{z}_h\|_{1,\omega_K} $ and the bound for $\mathcal{R}_2$ becomes 
\begin{align*}
     \mathcal{R}_2 & \leq \sum_{K \in \mathcal{K}_h} \left(\|\tau^{1/2} (-2 \nu \nabla \cdot \varepsilon (\boldsymbol{u}_h) + \nabla p_h )\|_{0,K} \| \tau^{1/2} (-2 \nu \nabla \cdot \varepsilon (\boldsymbol{z}_h)) \|_{0,K} + \| \delta^{1/2} \nabla \cdot \boldsymbol{u}_h \|_{0,K} \frac{h_K } {\sqrt{\nu}} \|\boldsymbol{z}_h\|_{1,\omega_K}   \right)
\end{align*}
Substituting the bounds for $\mathcal{R}_1$ and $\mathcal{R}_2$ in \eqref{lemma_4.9} and applying trace and inverse inequalities, we obtain 
\begin{align}\label{p_inequality}
\beta\left\|p_h\right\|_{0, \Omega}|\boldsymbol{z}|_{1, \Omega} & \leq  C \left[ \nu \|\varepsilon(\boldsymbol{u}_h) \|^2_{0,\Omega} + \sum_{E \in \mathcal{E}_{\mathrm{Nav}}} \frac{\nu}{h_E} \|\boldsymbol{u}_h \cdot \boldsymbol{n} \|_{0,E}^2 + \sum_{K \in \mathcal{K}_h} \left( \frac{h_K^2}{\nu} \|\nabla p_h \|^2_{0,K} + \| \delta^{1/2} \nabla \cdot \boldsymbol{u}_h \|^2_{0, K} \right)  \right. \nonumber \\ & \left. \quad + \|\boldsymbol{w}_h \|^2_{\mathrm{V}_h^{\prime}}\right]^{1/2}  \left( \sqrt{\nu} +\frac{h } {\sqrt{\nu}}\right) \|\boldsymbol{z}\|_{1,\Omega} \nonumber  \\
\left\|p_h\right\|_{0, \Omega} & \leq C \left( \sqrt{\nu} + \frac{h} {\sqrt{\nu}}\right) \left\{\left\|\left(\boldsymbol{u}_h, p_h\right)\right\|_h^2+\|\boldsymbol{w}\|_{\boldsymbol{V}_h^{\prime}}^2\right\}^{1 / 2} .
\end{align}
Then, using \eqref{p_inequality} in \eqref{main_bound}, and $a b \leq a^2+\frac{1}{4} b^2$, we arrive at
$$
\begin{aligned}
\left\|\left(\boldsymbol{u}_h, p_h\right)\right\|_h^2 & \leq C\left(\|\boldsymbol{w}\|_{\boldsymbol{V}_h^{\prime}}^2+\left(\sqrt{\nu}+ \frac{h } {\sqrt{\nu}}\right)^2\|r\|_{\Pi_h^{\prime}}^2\right)  \leq C \left(\sqrt{\nu}+ \frac{h } {\sqrt{\nu}}\right)^2\left(\|\boldsymbol{w}\|_{\boldsymbol{V}_h^{\prime}}^2+\|r\|_{\Pi_h^{\prime}}^2\right)
\end{aligned}
$$
and then replacing this in \eqref{p_inequality} it holds that
$$
\begin{aligned}
\left\|p_h\right\|_{0, \Omega} \leq C\left(\sqrt{\nu}+ \frac{h } {\sqrt{\nu}}\right)^2\left(\|\boldsymbol{w}\|_{\boldsymbol{V}_h^{\prime}}+\|r\|_{\Pi_h^{\prime}}\right) .
\end{aligned}
$$
To conclude the proof, we note that 
$\left\|\left(\boldsymbol{u}_h, p_h\right)\right\| \leq \left(\left\|\left(\boldsymbol{u}_h, p_h\right)\right\|_h^2 + \left\|p_h\right\|_{0, \Omega}^2\right)^{1 / 2}$,
along with the inequality
\[
\|\boldsymbol{w}\|_{\boldsymbol{V}_h^{\prime}} + \|r\|_{\Pi_h^{\prime}} \leq C\|(\boldsymbol{w}, r)\|_{\left(\boldsymbol{V}_h \times \Pi_h\right)^{\prime}}.
\]
 \end{proof}
We are ready to prove the uniqueness result.
\begin{lemma}
For \( \theta \in \{-1, 0, 1\} \), and for \(\nu\) and \(\gamma\) sufficiently large such that  
\begin{equation}
    \frac{1}{\sqrt{\nu}}\left\{2+\frac{5}{\sqrt{\nu}}+\frac{h}{\sqrt{\nu}} \|\boldsymbol{f}\|_{0, \Omega} \right\}\left(1+\frac{h}{\nu}\right)^2 < \frac{1}{C},
\end{equation}
where \( C \) is a positive constant independent of \( h \), the solution to problem \eqref{fixed_op} exists and is unique.
  \begin{proof}
First, observe that a solution of \eqref{fixed_op} is a fixed point of the operator $-T_h G_h$ using \eqref{TG}. Thereby, the proof reduces to show that the operator $-T_h G_h$ is a strict contraction in the ball $$B:=\left\{\left(\boldsymbol{v}_h, q_h\right) \in \mathrm{V}_h \times \Pi_h:\|\left(\boldsymbol{v}_h, q_h\right)\| \leq 1\right\}$$ and use Banach's fixed point Theorem.

Let $\left(\boldsymbol{u}_h, p_h\right),\left(\boldsymbol{v}_h, q_h\right) \in B$. Using Lemma \ref{continuity_T}, the definition of operators $T_h$ and $G_h$, it holds
\begin{equation}\label{main_eq}
\begin{aligned}
&\|T_h G_h\left(\boldsymbol{u}_h, p_h\right) - T_h G_h\left(\boldsymbol{v}_h, q_h\right)\| 
= \|T_h\left(G_h\left(\boldsymbol{u}_h, p_h\right) - G_h\left(\boldsymbol{v}_h, q_h\right)\right)\| \\
& \leq C \left(\sqrt{\nu}+ \frac{h } {\sqrt{\nu}}\right)^2 \sup_{\left\|(\boldsymbol{w}_h, t_h)\right\| \leq 1} \Bigg\{ 
    \left( \boldsymbol{u}_h  \cdot  \nabla \boldsymbol{u}_h - \boldsymbol{v}_h  \cdot \nabla \boldsymbol{v}_h, \boldsymbol{w}_h \right) \\
& \quad - \sum_{K \in \mathcal{K}_h} \tau\left( \boldsymbol{f}  + 2 \nu \nabla \cdot  \varepsilon (\boldsymbol{u}_h) - \boldsymbol{u}_h \cdot \nabla \boldsymbol{u}_h  - \nabla p_h, \boldsymbol{u}_h \cdot \nabla \boldsymbol{w}_h  \right)_K \\
& \quad - \sum_{K \in \mathcal{K}_h} \tau\left( -\boldsymbol{f}  - 2 \nu \nabla \cdot  \varepsilon (\boldsymbol{v}_h) + \boldsymbol{v}_h  \cdot \nabla \boldsymbol{v}_h + \nabla q_h, \boldsymbol{v}_h \cdot  \nabla \boldsymbol{w}_h  \right)_K \\
& \quad +  \sum_{K \in \mathcal{K}_h} \tau\left( \left(\nabla \boldsymbol{v}_h \right) \boldsymbol{v}_h - \boldsymbol{u}_h \cdot \nabla \boldsymbol{u}_h , -2 \nu \nabla \cdot \varepsilon (\boldsymbol{w}_h) - \nabla t_h \right) 
\Bigg\} \\
& = C \left(\sqrt{\nu}+ \frac{h } {\sqrt{\nu}}\right)^2 \sup_{\left\|(\boldsymbol{w}_h, t_h)\right\| \leq 1} \left\{ \mathrm{I} + \mathrm{II} + \mathrm{III} + \mathrm{IV} \right\} ,
\end{aligned}
\end{equation}

where
\begin{align*}
    \mathrm{I} &= \left( \boldsymbol{u}_h  \cdot  \nabla \boldsymbol{u}_h - \boldsymbol{v}_h  \cdot \nabla \boldsymbol{v}_h, \boldsymbol{w}_h \right)\\
     \mathrm{II} &= - \sum_{K \in \mathcal{K}_h} \tau\left(\boldsymbol{f} + 2 \nu \nabla \cdot \varepsilon ( \boldsymbol{u}_h) - \boldsymbol{u}_h \cdot \nabla \boldsymbol{u}_h  - \nabla p_h, \left(\boldsymbol{u}_h - \boldsymbol{v}_h \right) \cdot \nabla \boldsymbol{w}_h  \right)_K, \\
    \mathrm{III} &= -  \sum_{K \in \mathcal{K}_h} \tau\left( 2 \nu \left( \nabla \cdot \varepsilon  (\boldsymbol{u}_h) - \nabla \cdot \varepsilon  (\boldsymbol{v}_h) \right) - \left( \boldsymbol{u}_h \cdot \nabla \boldsymbol{u}_h  -  \boldsymbol{v}_h \cdot \nabla \boldsymbol{v}_h \right) - \left(\nabla p_h - \nabla q_h \right), \boldsymbol{v}_h \cdot \nabla \boldsymbol{w}_h  \right)_K \\
    \mathrm{IV} &= \sum_{K \in \mathcal{K}_h} \tau\left( \boldsymbol{v}_h \cdot \nabla \boldsymbol{v}_h  - \boldsymbol{u}_h \cdot \nabla \boldsymbol{u}_h , -2 \nu \nabla \cdot \varepsilon  (\boldsymbol{w}_h) - \nabla t_h \right)_K.
\end{align*}
To bound $\mathrm{I}$, we use that Lemma \ref{lemma2.1}, and we have 
\begin{equation}\label{bound1}  
\mathrm{I} \leq \frac{C}{\nu \sqrt{\nu}}\|\left(\boldsymbol{u}_h, p_h\right)-\left(\boldsymbol{v}_h, q_h\right) \|
  \|\left(\boldsymbol{w}_h, t_h\right)\| .
\end{equation}

To bound  $\mathrm{II}, \mathrm{III} \text{ and } \mathrm{IV}$, we use the same arguments as in \cite{MR2914281, MR3463441}. Thus we obtain
\begin{align}
\mathrm{II} & \leq \frac{C}{\nu^2}\left\{ h \|\boldsymbol{f}\|_{0, \Omega}+ \frac{1}{\nu}+\sqrt{\nu}\right\}\|\left(\boldsymbol{u}_h, p_h\right)-\left(\boldsymbol{v}_h, q_h\right)\| \|\left(\boldsymbol{w}_h, t_h\right)\|, \label{bound2} \\
 \mathrm{ III } & \leq \frac{C}{\nu^2 \sqrt{\nu}}\left\{\frac{2}{\nu}+1\right\}\|\left(\boldsymbol{u}_h, p_h\right)-\left(\boldsymbol{v}_h, q_h\right)\|
\|\left(\boldsymbol{w}_h, t_h\right)\|, \label{bound3} \\
\mathrm{IV} & \leq \frac{C}{\nu^2} \| \left(\boldsymbol{u}_h, p_h\right)-\left(\boldsymbol{v}_h, q_h\right)\|
\|\left(\boldsymbol{w}_h, t_h\right)\| . \label{bound4}
\end{align}
Collecting the bounds from \eqref{bound1}, \eqref{bound2}, \eqref{bound3}, and \eqref{bound4}, and substituting them into \eqref{main_eq} yields
$$  
\begin{aligned}
\|T_h G_h\left(\boldsymbol{u}_h, p_h\right)-T_h G_h\left(\boldsymbol{v}_h, q_h\right) 
\|
& \leq \frac{C}{\sqrt{\nu}}\left\{2+\frac{5}{\sqrt{\nu}}+\frac{h}{\sqrt{\nu}} \|\boldsymbol{f}\|_{0, \Omega} \right\}\left(1+\frac{h}{\nu}\right)^2
\|\left(\boldsymbol{u}_h, p_h\right)-\left(\boldsymbol{v}_h, q_h\right)\|,
\end{aligned}
$$
and the result follows under the assumption that viscosity $\nu$ is such that 
$ \frac{C}{\sqrt{\nu}}\left\{2+\frac{5}{\sqrt{\nu}}+\frac{h}{\sqrt{\nu}} \|\boldsymbol{f}\|_{0, \Omega} \right\}\left(1+\frac{h}{\nu}\right)^2<1$.
\end{proof}
\end{lemma}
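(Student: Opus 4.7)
The plan is to recast problem \eqref{DSF} as a fixed point equation for the composite operator $-T_h G_h$ via \eqref{TG}, and then apply Banach's fixed point theorem on the closed unit ball $B := \{(\boldsymbol{v}_h, q_h) \in \mathrm{V}_h \times \Pi_h : \|(\boldsymbol{v}_h, q_h)\| \leq 1\}$. The self-mapping property $-T_h G_h(B) \subseteq B$ is a direct consequence of Lemma \ref{continuity_T} combined with an explicit bound on $\|G_h(\boldsymbol{v}_h, q_h)\|_{(\mathrm{V}_h \times \Pi_h)'}$ using the ball constraint, provided $\nu$ is large enough; the substantive content lies in proving the strict contraction.

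For $(\boldsymbol{u}_h, p_h), (\boldsymbol{v}_h, q_h) \in B$, linearity of $T_h$ and Lemma \ref{continuity_T} give
\[
\|T_h G_h(\boldsymbol{u}_h, p_h) - T_h G_h(\boldsymbol{v}_h, q_h)\| \leq C\Bigl(\sqrt{\nu}+\tfrac{h}{\sqrt{\nu}}\Bigr)^{2} \|G_h(\boldsymbol{u}_h, p_h)-G_h(\boldsymbol{v}_h, q_h)\|_{(\mathrm{V}_h \times \Pi_h)'}.
\]
Pairing the difference against an arbitrary unit test $(\boldsymbol{w}_h, t_h)$ and organizing by structure, I would split the result into four terms: (I) the pure convective difference $(\boldsymbol{u}_h \cdot \nabla \boldsymbol{u}_h - \boldsymbol{v}_h \cdot \nabla \boldsymbol{v}_h, \boldsymbol{w}_h)$; (II) a $\tau$-weighted stabilization term involving the $(\boldsymbol{u}_h, p_h)$-residual tested against $(\boldsymbol{u}_h - \boldsymbol{v}_h)\cdot \nabla \boldsymbol{w}_h$, obtained by splitting $\boldsymbol{u}_h = \boldsymbol{v}_h + (\boldsymbol{u}_h - \boldsymbol{v}_h)$ in the convective factor; (III) the residual-difference term tested against $\boldsymbol{v}_h \cdot \nabla \boldsymbol{w}_h$; and (IV) the $\tau$-weighted term in which the convective difference itself is tested against the test-residual $-2\nu \nabla \cdot \varepsilon(\boldsymbol{w}_h) - \nabla t_h$.

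Each piece is then bounded individually. For (I), I would write $\boldsymbol{u}_h\cdot\nabla \boldsymbol{u}_h - \boldsymbol{v}_h \cdot \nabla \boldsymbol{v}_h = (\boldsymbol{u}_h - \boldsymbol{v}_h)\cdot \nabla \boldsymbol{u}_h + \boldsymbol{v}_h \cdot \nabla(\boldsymbol{u}_h - \boldsymbol{v}_h)$, apply the trilinear estimate of Lemma \ref{lemma2.1}, and use the ball bound together with $|\boldsymbol{v}|_{1,\Omega} \lesssim \nu^{-1/2}\|(\boldsymbol{v}, q)\|$ to extract the characteristic $\nu^{-3/2}$ factor. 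For (II)--(IV), I would invoke the uniform bound $\tau \lesssim h_K^{2}/\nu$ from \eqref{rem3}, the local inverse inequalities of Lemma \ref{local_inverse} to absorb $\|\nabla \cdot \varepsilon(\cdot)\|_{0,K}$ into $h_K^{-1}|\cdot|_{1,K}$, and Lemma \ref{H1Hinfty} to control $\|\boldsymbol{u}_h\|_{\infty,K}$ appearing in the convective factors $\boldsymbol{u}_h \cdot \nabla \boldsymbol{w}_h$; the linear occurrence of $\boldsymbol{f}$ in $G_h$ is precisely what produces the $h\|\boldsymbol{f}\|_{0,\Omega}$ contribution. Assembling the four bounds and multiplying by $(\sqrt{\nu}+h/\sqrt{\nu})^{2}$ yields exactly the Lipschitz constant in the hypothesis, which is strictly less than one under the assumption, and Banach's theorem delivers the unique fixed point. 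The main obstacle will be the bookkeeping in (II) and (III): the interaction between $\tau$, the inverse-inequality constants, and the discrete $L^\infty$ bound on the velocities must be traced so that no stray power of $\sqrt{\nu}$ or $h$ is lost, since the final contraction constant's explicit form $\frac{C}{\sqrt{\nu}}\{2 + \frac{5}{\sqrt{\nu}} + \frac{h}{\sqrt{\nu}}\|\boldsymbol{f}\|_{0,\Omega}\}(1+\frac{h}{\nu})^{2}$ is dictated precisely by the sharpest available estimates of these terms.
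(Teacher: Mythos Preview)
Your proposal is correct and follows essentially the same route as the paper: recast the problem as the fixed-point equation \eqref{TG}, apply Lemma \ref{continuity_T} to reduce to bounding $G_h(\boldsymbol{u}_h,p_h)-G_h(\boldsymbol{v}_h,q_h)$ in the dual norm, split into the same four terms $\mathrm{I}$--$\mathrm{IV}$, and estimate each using Lemma \ref{lemma2.1}, the $\tau$-bound \eqref{rem3}, and inverse/$L^\infty$ inequalities. The paper simply cites \cite{MR2914281,MR3463441} for the bounds on $\mathrm{II}$--$\mathrm{IV}$ where you sketch the mechanism explicitly, and it does not spell out the self-mapping check that you (rightly) flag as needed for Banach's theorem.
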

\subsection{Stability analysis}
\begin{theorem}\label{stability} 
Assume that  $\|\boldsymbol{u}_h \|_{L^4(\Omega)} < \hat{C} $ for some constant $\hat{C}>0$. For \( \theta \in \{-1, 0, 1\} \) and sufficiently large \(\gamma\), there exists a positive constant \( C_S(\gamma) \), independent of \( h \) and $\nu$, such that:
    \begin{align*}
    \mathcal{A}_h^{\mathrm{NS}}(\boldsymbol{u}_h, p_h; \boldsymbol{u}_h, -p_h ) & \geq C_S \left( \nu \| \varepsilon (\boldsymbol{u}_h ) \|_{0,\Omega}^2 + \sum_{E \in \mathcal{E}_{\mathrm{Nav}}} \frac{\nu}{h_E} \| \boldsymbol{u}_h \cdot \boldsymbol{n} \|_{0,E}^2 + \| \tau^{1/2}  \boldsymbol{u}_h \cdot \nabla \boldsymbol{u}_h  \|_{0,\Omega}^2 \right. \\&  \quad \left.  + \sum_{K \in \mathcal{K}_h} \frac{h_K^2}{\nu} \| \nabla p_h\|_{0,K}^2 + \| \delta^{1/2} \nabla \cdot \boldsymbol{u}_h \|_{0,\Omega}^2 \right), 
    \end{align*}
    $ \forall ~ (\boldsymbol{u}_h, p_h) \in \mathrm{V}_h \times \Pi_h $. The bounds on the parameter \(\gamma\) depend only on the constants associated with trace and inverse inequalities.
    \begin{proof}
      Substitute $(\boldsymbol{v}_h, q_h )= (\boldsymbol{u}_h, - p_h )$ in the formulation \eqref{DSF}, we have 
  \begin{align}\label{stability_formulation}
\mathcal{A}_h^{\mathrm{NS}}(\boldsymbol{u}_h, p_h; \boldsymbol{u}_h, -p_h ) & = \sum_{K \in \mathcal{K}_h} \bigg[ 2 \nu \left( \varepsilon (\boldsymbol{u}_h) ,  \varepsilon (\boldsymbol{u}_h) \right) + \left( \boldsymbol{u}_h \cdot \nabla \boldsymbol{u}_h, \boldsymbol{u}_h \right) \bigg]
+ \sum_{E \in \mathcal{E}_{\mathrm{Nav}}} \bigg[ - \int_E \boldsymbol{n}^t (2 \nu \varepsilon (\boldsymbol{u}_h) - p_h \boldsymbol{I}) \nonumber \\ & \quad  \boldsymbol{n} (\boldsymbol{n} \cdot \boldsymbol{u}_h )  \, ds  - \theta \int_E \boldsymbol{n}^t (2 \nu \varepsilon (\boldsymbol{u}_h) + p_h \boldsymbol{I}) \boldsymbol{n} (\boldsymbol{n} \cdot \boldsymbol{u}_h )  \, ds + \beta \sum_{i=1}^{d-1} \int_E ( \tau^i \cdot \boldsymbol{u}_h)^2   \, ds \nonumber \\ & \quad + \frac{\gamma \nu}{h_E} \int_E ( \boldsymbol{u}_h \cdot \boldsymbol{n})^2   \, ds  \bigg] 
+ \sum_{K \in \mathcal{K}_h} \| \tau^{1/2} (-2 \nu \nabla \cdot \varepsilon(\boldsymbol{u}_h) + \boldsymbol{u}_h \cdot \nabla \boldsymbol{u}_h + \nabla p_h) \|_{0,K}^2  \nonumber \\ & \quad + \sum_{K \in \mathcal{K}_h}  \| \delta^{1/2} \nabla \cdot \boldsymbol{u}_h \|^2_{0,K}  \nonumber \\& 
=  2 \nu \| \varepsilon (\boldsymbol{u}_h) \|^2_{0,\Omega} + \left( \boldsymbol{u}_h \cdot \nabla \boldsymbol{u}_h, \boldsymbol{u}_h \right) 
+ \sum_{E \in \mathcal{E}_{\mathrm{Nav}}} \bigg[ - (1+ \theta) \int_E \boldsymbol{n}^t 2 \nu \varepsilon (\boldsymbol{u}_h)  \boldsymbol{n} (\boldsymbol{n} \cdot \boldsymbol{u}_h )  \, ds \nonumber \\ & \quad + (1- \theta) \int_E p_h (\boldsymbol{n} \cdot \boldsymbol{u}_h )  \, ds   + \beta \sum_{i=1}^{d-1}\| \tau^i \cdot \boldsymbol{u}_h \|^2_{0,E} + \frac{\gamma \nu}{h_E} \|\boldsymbol{u}_h \cdot \boldsymbol{n} \|_{0,E}^2  \bigg]  \nonumber \\ & \quad  + \sum_{K \in \mathcal{K}_h}  \| \delta^{1/2} \nabla \cdot \boldsymbol{u}_h \|^2_{0,K}
 + \sum_{K \in \mathcal{K}_h} \bigg[\| \tau^{1/2} ( \boldsymbol{u}_h \cdot \nabla \boldsymbol{u}_h + \nabla p_h) \|_{0,K}^2   \nonumber \\ & \quad  + \| \tau^{1/2}  2 \nu \nabla \cdot \varepsilon (\boldsymbol{u}_h) \|^2_{0,K} -2 ( 2 \nu \nabla \cdot \varepsilon (\boldsymbol{u}_h), \tau (\boldsymbol{u}_h \cdot \nabla \boldsymbol{u}_h + \nabla p_h) \bigg]
\end{align}   
\begin{remark}\label{remark_4.3} 
Consider the following estimates: 
\begin{enumerate}
\item Using the Cauchy-Schwarz, inverse and Poincar\'e inequalities, along with the Sobolev embedding \\ $H^1(\Omega) \hookrightarrow L^4(\Omega)$ and the assumption $\|\boldsymbol{u}_h\|_{L^4(\Omega)} < \hat{C}$, we obtain  
\[
(\boldsymbol{u}_h \cdot \nabla \boldsymbol{u}_h, \boldsymbol{u}_h) 
\leq \|\boldsymbol{u}_h\|_{0,4,\Omega} \|\nabla \boldsymbol{u}_h\|_{0,\Omega} \|\boldsymbol{u}_h\|_{0,4,\Omega} 
\leq \frac{\hat{C} C_{\mathrm{emb}}(1 + C_{\mathrm{poin}})}{\nu} \nu \| \varepsilon(\boldsymbol{u}_h) \|_{0,\Omega}^2 
\leq \tilde{C} \nu \| \varepsilon(\boldsymbol{u}_h) \|_{0,\Omega}^2,
\]
where $\hat{C} = \frac{\nu}{M C_{\mathrm{emb}} C_{\mathrm{poin}}} > 0$ and $M >> 1$.
    \item To derive this estimate, we utilize the given assumptions along with the inverse , Cauchy-Schwarz,  Poincaré inequalities, and Sobolev embedding $L^4 \hookrightarrow L^2$. Specifically, we have
\begin{align*}
\sum_{K \in \mathcal{K}_h} \| \tau^{1/2} \boldsymbol{u}_h \cdot \nabla \boldsymbol{u}_h \|_{0,K}^2 
    &\leq \sum_{K \in \mathcal{K}_h}  \frac{m_K h_K^2}{8 \nu} \| \boldsymbol{u}_h \|_{0,K}^2 \| \nabla \boldsymbol{u}_h \|_{0,K}^2 
    \leq \sum_{K \in \mathcal{K}_h}  \frac{m_K}{8 \nu} C_{\mathrm{in}}^2 C_{\mathrm{emb}}^2 \| \boldsymbol{u}_h \|_{0,4,K}^2 \| \boldsymbol{u}_h \|_{0,K}^2 \\
    &\leq \sum_{K \in \mathcal{K}_h}  \frac{m_K}{8 \nu} \| \boldsymbol{u}_h \|_{0,K}^2 \| \nabla \boldsymbol{u}_h \|_{0,K}^2 
    \leq \sum_{K \in \mathcal{K}_h}  \frac{m_K}{8 \nu} C_{\mathrm{in}}^2 C_{\mathrm{emb}}^2 \hat{C}^2 C_{\mathrm{poin}}^2 \| \nabla \boldsymbol{u}_h \|_{0,K}^2 \\
    &\leq \sum_{K \in \mathcal{K}_h}  \frac{m_K}{8 \nu^2} C_{\mathrm{in}}^2 C_{\mathrm{emb}}^2 \hat{C}^2 C_{\mathrm{poin}}^2 \nu \| \varepsilon( \boldsymbol{u}_h) \|_{0,K}^2 \leq  \sum_{K \in \mathcal{K}_h} \frac{m_K C_{\mathrm{in}}^2}{M^2} \nu \| \varepsilon( \boldsymbol{u}_h) \|_{0,K}^2.
    \end{align*}
Thus, by appropriately choosing $M$, the coefficient can be made sufficiently small.
\end{enumerate}
\end{remark}
Using \eqref{I2}, \eqref{I3} (with \( \mathscr{P}(\boldsymbol{u}_h) \) replaced by \( p_h \)), \eqref{tau_bound}, and Remark~\ref{remark_4.3}, along with the Cauchy-Schwarz and Young's inequalities in \eqref{stability_formulation}, we obtain

\begin{align*}
    \mathcal{A}_h^{\mathrm{NS}}(\boldsymbol{u}_h, p_h; \boldsymbol{u}_h, -p_h ) & \geq (1- \tilde{C}) 2 \nu \| \varepsilon (\boldsymbol{u}_h) \|^2_{0,\Omega} - (1+ \theta) \bigg[ 2 \nu \delta_1 C_{\mathrm{tr}}^2 \|\varepsilon(\boldsymbol{u}_h)\|_{0,\Omega}^2  + \frac{1}{\delta_1} \sum_{E \in \mathcal{E}_{\mathrm{Nav}}} \frac{\nu}{2 h_E} \|\boldsymbol{u}_h \cdot \boldsymbol{n}\|^2_{0,\Gamma_{\mathrm{Nav}}} \bigg] \\
    & \quad - (1- \theta) \bigg[ \frac{C_{\mathrm{in}}^2 C_{\mathrm{tr}}^2 \delta_2}{2} \sum_{K \in \mathcal{K}_h} \frac{h_K^2}{\nu} \| \nabla p_h \|^2_{0,K}  + \frac{1}{\delta_2} \sum_{E \in \mathcal{E}_{\mathrm{Nav}}} \frac{\nu}{2 h_E} \| \boldsymbol{u}_h \cdot \boldsymbol{n} \|^2_{0,\Gamma_\mathrm{Nav}} \bigg] \\
    & \quad + \frac{\gamma \nu}{h_E} \| \boldsymbol{u}_h \cdot \boldsymbol{n} \|^2_{0,\Gamma_{\mathrm{Nav}}}  + \sum_{K \in \mathcal{K}_h} \| \delta^{1/2} \nabla \cdot \boldsymbol{u}_h \|_{0,K}  \\
    & \quad + \sum_{K \in \mathcal{K}_h} \bigg[ \frac{1}{2} \| \tau^{1/2} ( \boldsymbol{u}_h \cdot \nabla \boldsymbol{u}_h + \nabla p_h) \|_{0,K}^2 - \| \tau^{1/2}  2 \nu \nabla \cdot \varepsilon (\boldsymbol{u}_h) \|^2_{0,K} \bigg]  \\
    & \geq 2 \nu \left(\frac{1}{2} - \tilde{C} - (1+ \theta) \delta_1 C_{\mathrm{tr}}^2  \right)\| \varepsilon (\boldsymbol{u}_h) \|_{0,\Omega}^2 + \left( \gamma  - \frac{(1+ \theta)}{2 \delta_1}  \frac{(1- \theta)}{2 \delta_2}  \right) \frac{\nu}{h_E} \| \boldsymbol{u}_h \cdot \boldsymbol{n} \|^2_{0,\Gamma_{\mathrm{Nav}}} \\
    & \quad - (1- \theta) \frac{C_{\mathrm{in}}^2 C_{\mathrm{tr}}^2 \delta_2}{2} \sum_{K \in \mathcal{K}_h} \frac{h_K^2}{\nu} \| \nabla p_h \|^2_{0,K}  + \sum_{K \in \mathcal{K}_h} \| \delta^{1/2} \nabla \cdot \boldsymbol{u}_h \|_{0,K}  \\
    & \quad + \sum_{K \in \mathcal{K}_h} \bigg[ \frac{1}{2} \| \tau^{1/2}  \boldsymbol{u}_h \cdot \nabla \boldsymbol{u}_h  \|_{0,K}^2 +  \frac{1}{2} \| \tau^{1/2} \nabla p_h \|_{0,K}^2  \\
    & \quad - \| \tau^{1/2}  \boldsymbol{u}_h \cdot \nabla \boldsymbol{u}_h  \|_{0,K}^2 -  \frac{1}{4} \| \tau^{1/2} \nabla p_h \|_{0,K}^2 \bigg] \\
    & \geq 2 \nu \left(\frac{1}{2} - \tilde{C} - (1+ \theta) \delta_1 C_{\mathrm{tr}}^2  \right)\| \varepsilon (\boldsymbol{u}_h) \|_{0,\Omega}^2 + \left( \gamma  - \frac{(1+ \theta)}{2 \delta_1} - \frac{(1- \theta)}{2 \delta_2}  \right) \frac{\nu}{h_E} \| \boldsymbol{u}_h \cdot \boldsymbol{n} \|^2_{0,\Gamma_{\mathrm{Nav}}}  \\
    & \quad + \| \delta^{1/2} \nabla \cdot \boldsymbol{u}_h \|^2_{0,\Omega}  + \left( \frac{\tau}{4}  - \frac{(1- \theta) C_{\mathrm{in}}^2 C_{\mathrm{tr}}^2 \delta_2 h_K^2}{2 \nu} \right) \| \nabla p_h \|^2_{0,\Omega} \\
    & \quad - \frac{m_K C_{\mathrm{in}}^2}{2 M^2} 2 \nu \| \varepsilon( \boldsymbol{u}_h) \|_{0,K}^2 + \frac{1}{2} \| \tau^{1/2}  \boldsymbol{u}_h \cdot \nabla \boldsymbol{u}_h  \|_{0,\Omega}^2 \\
    & \geq C_S \left( 2 \nu \| \varepsilon (\boldsymbol{u}_h ) \|_{0,\Omega}^2 + \sum_{E \in \mathcal{E}_{\mathrm{Nav}}} \frac{\nu}{h_E} \| \boldsymbol{u}_h \cdot \boldsymbol{n} \|_{0,E}^2 + \sum_{K \in \mathcal{K}_h} \frac{h_K^2}{\nu} \| \nabla p_h\|_{0,K}^2 \right. \\
    & \quad ~ \left. + \| \delta^{1/2} \nabla \cdot \boldsymbol{u}_h \|_{0,\Omega}^2 + \| \tau^{1/2}  \boldsymbol{u}_h \cdot \nabla \boldsymbol{u}_h  \|_{0,\Omega}^2 \right)
\end{align*}
  where $C_S = \min\left\{ \left( \frac{1}{2} - \tilde{C} - (1+ \theta) \delta_1 C_{\mathrm{tr}}^2 - \frac{m_K C_{\mathrm{in}}^2}{2 M^2}  \right), \left(\gamma  - \frac{(1+ \theta)}{2 \delta_1} - \frac{(1- \theta)}{2 \delta_2} \right), \frac{1}{2},  \left(\frac{m_K}{32}  - \frac{(1- \theta) C_{\mathrm{in}}^2 C_{\mathrm{tr}}^2 \delta_2}{2} \right) \right\}$ is a 
 positive constant and the result holds for sufficiently large $\gamma$. 
\end{proof}
\end{theorem}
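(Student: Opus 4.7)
The plan is to substitute $(\boldsymbol{v}_h,q_h)=(\boldsymbol{u}_h,-p_h)$ into $\mathcal{A}_h^{\mathrm{NS}}$ and exploit the same structural cancellations that appeared in Theorem~\ref{existence_theorem} and Lemma~\ref{lemma_4.8}, with the convective contribution now tamed by the hypothesis $\|\boldsymbol{u}_h\|_{L^4(\Omega)}<\hat{C}$. After the substitution, the Galerkin diffusion yields $2\nu\|\varepsilon(\boldsymbol{u}_h)\|_{0,\Omega}^2$, the friction boundary term is nonnegative, the Nitsche penalty contributes $\gamma\nu h_E^{-1}\|\boldsymbol{u}_h\cdot\boldsymbol{n}\|_{0,E}^2$, the consistency terms combine with coefficients $(1+\theta)$ and $(1-\theta)$, the divergence stabilizer produces $\|\delta^{1/2}\nabla\cdot\boldsymbol{u}_h\|_{0,\Omega}^2$ directly, and the momentum stabilizer produces the perfect square $\|\tau^{1/2}(-2\nu\nabla\cdot\varepsilon(\boldsymbol{u}_h)+\boldsymbol{u}_h\cdot\nabla\boldsymbol{u}_h+\nabla p_h)\|_{0,K}^2$.

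Next, I would treat each hostile term. The convective energy $(\boldsymbol{u}_h\cdot\nabla\boldsymbol{u}_h,\boldsymbol{u}_h)$ is bounded directly by $\|\boldsymbol{u}_h\|_{0,4,\Omega}^2\|\nabla\boldsymbol{u}_h\|_{0,\Omega}$ via H\"older and the Sobolev embedding $H^1\hookrightarrow L^4$, so that with $\hat{C}$ chosen small (as in Remark~\ref{remark_4.3}(1)) this is absorbed by a fraction of $2\nu\|\varepsilon(\boldsymbol{u}_h)\|_{0,\Omega}^2$. The consistency boundary terms $-(1+\theta)\int_E \boldsymbol{n}^t 2\nu\varepsilon(\boldsymbol{u}_h)\boldsymbol{n}(\boldsymbol{n}\cdot\boldsymbol{u}_h)$ and $(1-\theta)\int_E p_h(\boldsymbol{n}\cdot\boldsymbol{u}_h)$ are handled exactly by the estimates already proven in \eqref{I2} and \eqref{I3}, yielding a loss $-(1+\theta)2\nu\delta_1 C_{\mathrm{tr}}^2\|\varepsilon(\boldsymbol{u}_h)\|_{0,\Omega}^2$ in the diffusion, a loss $-\big(\tfrac{1+\theta}{2\delta_1}+\tfrac{1-\theta}{2\delta_2}\big)\tfrac{\nu}{h_E}\|\boldsymbol{u}_h\cdot\boldsymbol{n}\|_{0,E}^2$ in the Nitsche penalty (absorbed by choosing $\gamma$ large), and a loss $-\tfrac{(1-\theta)C_{\mathrm{in}}^2 C_{\mathrm{tr}}^2\delta_2}{2}\sum_K\tfrac{h_K^2}{\nu}\|\nabla p_h\|_{0,K}^2$ in the pressure-gradient control.

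The crucial and delicate step is the expansion of the momentum-stabilizer square. Writing $\|\tau^{1/2}(A+B+C)\|^2$ with $A=-2\nu\nabla\cdot\varepsilon(\boldsymbol{u}_h)$, $B=\boldsymbol{u}_h\cdot\nabla\boldsymbol{u}_h$, $C=\nabla p_h$, I apply the inequality $|A+B+C|^2\ge\tfrac12|B+C|^2-|A|^2\ge\tfrac12\|B\|^2+\tfrac14\|C\|^2-\|A\|^2-\tfrac12\|B\|^2$ (more efficiently, peeling the $A$-term with Young and keeping $\tfrac12\|B\|^2+\tfrac14\|C\|^2$). The discarded term $\|\tau^{1/2}2\nu\nabla\cdot\varepsilon(\boldsymbol{u}_h)\|_{0,K}^2$ is controlled by \eqref{tau_bound} at the cost of part of the diffusion, while $\|\tau^{1/2}\boldsymbol{u}_h\cdot\nabla\boldsymbol{u}_h\|_{0,K}^2$ produced on the positive side is the quantity we want on the right-hand side, and simultaneously by Remark~\ref{remark_4.3}(2) it is also bounded by a small multiple of $\nu\|\varepsilon(\boldsymbol{u}_h)\|_{0,K}^2$, ensuring that if it later needs to be absorbed it can be. Finally the pressure piece $\tfrac14\|\tau^{1/2}\nabla p_h\|_{0,K}^2$ is comparable (up to $m_K/8$) to $\tfrac{h_K^2}{\nu}\|\nabla p_h\|_{0,K}^2$ by the Remark following Lemma~\ref{H1Hinfty}, which supplies the third target norm.

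Collecting: the diffusion coefficient becomes $2\nu\big(\tfrac12-\tilde C-(1+\theta)\delta_1 C_{\mathrm{tr}}^2-\tfrac{m_K C_{\mathrm{in}}^2}{2M^2}\big)$, the pressure coefficient becomes $\big(\tfrac{m_K}{32}-\tfrac{(1-\theta)C_{\mathrm{in}}^2 C_{\mathrm{tr}}^2\delta_2}{2}\big)$, the Nitsche coefficient becomes $\big(\gamma-\tfrac{1+\theta}{2\delta_1}-\tfrac{1-\theta}{2\delta_2}\big)$, and the convective and divergence stabilizer contributions keep a fixed positive fraction; choosing $\delta_1,\delta_2$ small enough (independently of $h,\nu$) and then $\gamma$ large enough yields the claimed $C_S$. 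I expect the main obstacle to be bookkeeping the competition between the $(1-\theta)$ pressure loss and the $\tau$-weighted pressure gain, because the comparability $\tau\lesssim h_K^2/\nu$ used to pass from $\|\tau^{1/2}\nabla p_h\|_{0,K}^2$ to $\tfrac{h_K^2}{\nu}\|\nabla p_h\|_{0,K}^2$ degrades in the advection-dominated regime where $\tau\sim h_K/|\boldsymbol{u}_h|_p$; this is precisely the reason the $L^4$ bound on $\boldsymbol{u}_h$ and the specific definition of $m_K$ are needed so that both bounds hold uniformly.
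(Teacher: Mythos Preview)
Your proposal is correct and follows essentially the same approach as the paper: substitute $(\boldsymbol{u}_h,-p_h)$, bound the convective energy and the stabilizer cross term using the $L^4$ smallness assumption (Remark~\ref{remark_4.3}), handle the Nitsche consistency terms via \eqref{I2}--\eqref{I3}, absorb $\|\tau^{1/2}2\nu\nabla\cdot\varepsilon(\boldsymbol{u}_h)\|^2$ with \eqref{tau_bound}, and collect the same final coefficients you list. The paper performs exactly the trick you describe of writing the net $-\tfrac12\|\tau^{1/2}\boldsymbol{u}_h\cdot\nabla\boldsymbol{u}_h\|^2$ as $+\tfrac12\|\cdot\|^2-\|\cdot\|^2$ and absorbing the full negative piece into the diffusion via Remark~\ref{remark_4.3}(2); your identification of the $\tau$ versus $h_K^2/\nu$ comparability as the delicate point is apt, and the paper resolves it in the same way you anticipate, by invoking the diffusion-dominated bound $\tau=m_Kh_K^2/(8\nu)$ implicitly guaranteed by the smallness of $\hat C$.
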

\section{a priori analysis}\label{section5}
This section is dedicated to the a priori error analysis based on the arguments presented in \cite{MR1935805}. We introduce the following notation:
\begin{align*} 
\boldsymbol{u} - \boldsymbol{u}_h = (\boldsymbol{u} - \boldsymbol{I}_h)+( \boldsymbol{I}_h \boldsymbol{u} - \boldsymbol{u}_h) = \eta^{\boldsymbol{u}_h} + e^{\boldsymbol{u}_h}, \text{ and }
p - p_h = (p - J_h p)+(J_h p - p_h) =\eta^{p_h} + e^{p_h},
\end{align*}
\begin{remark}
For a fixed $\tilde{\boldsymbol{u}} \in \mathrm{V}_h$, we define the bilinear form $\mathcal{A}_h^{\mathrm{NS}, \tilde{\boldsymbol{u}}} ( \cdot, \cdot)$ as 
$$
    \begin{aligned}
\mathcal{A}_h^{\mathrm{NS},\tilde{\boldsymbol{u}}}\left(\boldsymbol{u}_h, p_h ; \boldsymbol{v}_h, q_h\right)   &\coloneqq \sum_{K \in \mathcal{K}_{h}} \bigg( 2 \nu \left(\varepsilon (\boldsymbol{u}_h), \varepsilon (\boldsymbol{v}_h) \right)_{K} + (\tilde{\boldsymbol{u}} \cdot \nabla \boldsymbol{u}_h, \boldsymbol{v}_h)_{K} - (p_h, \nabla \cdot \boldsymbol{v}_h)_{K} - (q_h, \nabla \cdot \boldsymbol{u}_h)_{K} \bigg) \\ 
&\quad + \sum_{E \in \mathcal{E}_{\mathrm{Nav}}} \bigg( -\int_{E} \boldsymbol{n}^t( 2 \nu \varepsilon (\boldsymbol{u}_h) - p_h I) \boldsymbol{n} (\boldsymbol{n} \cdot \boldsymbol{v}_h) \, ds  - \theta \int_{E} \boldsymbol{n}^t(2 \nu \varepsilon (\boldsymbol{v}_h) - q_h I) \boldsymbol{n} (\boldsymbol{n} \cdot \boldsymbol{u}_h) \, ds \\ 
&\quad + \int_{E} \beta \sum_{i=1}^{d-1} (\boldsymbol{\tau}^i \cdot \boldsymbol{v}_h)(\boldsymbol{\tau}^i \cdot \boldsymbol{u}_h)\, ds  + \gamma \nu \int_{E} h_E^{-1} (\boldsymbol{u}_h \cdot \boldsymbol{n})(\boldsymbol{v}_h \cdot \boldsymbol{n})  \, ds \bigg) \\& \quad + \sum_{K \in \mathcal{K}_{h}}  \int_K  \tau\bigg(-2 \nu \nabla \cdot  \varepsilon (\boldsymbol{u}_h) + \tilde{\boldsymbol{u}} \cdot \nabla \boldsymbol{u}_h + \nabla p_h \bigg) \bigg(-2 \nu \nabla \cdot  \varepsilon (\boldsymbol{v}_h)  + \tilde{\boldsymbol{u}} \cdot \nabla \boldsymbol{v}_h - \nabla q_h \bigg) \\ & \quad +  \sum_{K \in \mathcal{K}_{h}} \delta  \int_K \nabla \cdot \boldsymbol{u}_h \nabla \cdot \boldsymbol{v}_h,
\end{aligned}
$$
\end{remark}
\begin{lemma}\label{interpolation_estimate}
 Assume that the solution to \eqref{bb} satisfies $\boldsymbol{u} \in \boldsymbol{H}^{k+1}(\Omega) \cap \mathrm{V}$ and $p \in$ $H^{k+1}(\Omega) \cap \Pi$. The interpolation errors satisfy
$$
\begin{aligned}
&\left\|\tau^{-1 / 2} \eta^{\boldsymbol{u}_h}\right\|_{0,\Omega}^2+2 \nu\left\|\varepsilon\left(\eta^{\boldsymbol{u}_h}\right)\right\|_{0,\Omega}^2+ \sum_{E \in \mathcal{E}_{\mathrm{Nav}}} \frac{\gamma \nu}{h_E} \| \eta^{\boldsymbol{u}_h} \cdot \boldsymbol{n} \|_{0,E}^2+ \|\tau^{1 / 2} \boldsymbol{u}_h \cdot \nabla \eta^{\boldsymbol{u}_h} \|_{0,\Omega}^2+\left\|\delta^{1 / 2} \nabla \cdot \eta^{\boldsymbol{u}_h}\right\|_{0,\Omega}^2 \\
&+\sum_{K \in \mathcal{K}_h}\left[\left\|\tau^{1 / 2} 2 \nu \nabla \cdot \varepsilon\left(\eta^{\boldsymbol{u}_h}\right)\right\|_{0, K}^2+H\left(\operatorname{Re}_K-1\right)\left\|\delta^{-1 / 2} \eta^{p_h}\right\|_{0, K}^2 +H\left(1-\operatorname{Re}_K\right)(2 \nu)^{-1}\left\|\eta^{p_h}\right\|_{0, K}^2\right] \\
& +\left\|\tau^{1 / 2} \nabla \eta^{p_h}\right\|_{0,\Omega}^2  \leq C\left[\sum_{K \in \mathcal{K}_h} h_K^{2 k}|\boldsymbol{u}|_{k+1, K}^2\left(H\left(\operatorname{Re}_K-1\right) h_K \sup _{x \in K}|\boldsymbol{u}_h|_p+H\left(1-\operatorname{Re}_K\right) 2 \nu\right)\right. \\
&\left. +\sum_{K \in \mathcal{K}_h} h_K^{2 k}|p|_{k+1, K}^2\left(H\left(\operatorname{Re}_K-1\right) h_K \sup _{x \in K}|\boldsymbol{u}_h|_p^{-1}+H\left(1-\operatorname{Re}_K\right) h_K^2(2 \nu)^{-1}\right)\right] 
\end{aligned}
$$
where $H(\cdot)$ is the Heaviside function given by $H(x-y)= \begin{cases}0, & x<y \\ 1, & x>y\end{cases}.$
\begin{proof}
To prove this lemma, we first consider $\mathcal{A}_h^{\mathrm{NS},\boldsymbol{u}_h}(\eta^{\boldsymbol{u}_h}, \eta^{p_h};\eta^{\boldsymbol{u}_h}, \eta^{p_h})$. Then, the velocity estimates related to the momentum equation follow as in \cite{MR1155924}, while the pressure and velocity estimates related to the continuity equation follow as in \cite{MR1186727}. The terms corresponding to the slip condition are handled as we did in the stability proof.
\end{proof}
\end{lemma}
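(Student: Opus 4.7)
The plan is to bound each weighted seminorm on the left-hand side by expanding $\eta^{\boldsymbol{u}_h} = \boldsymbol{u} - \boldsymbol{I}_h\boldsymbol{u}$ and $\eta^{p_h} = p - J_h p$, applying the Scott--Zhang approximation estimates of Lemma \ref{interpolation_estimates} elementwise, and letting the Heaviside splitting emerge from the piecewise definition of $\tau$ and $\delta$. A convenient way to organize the bookkeeping is to consider the quantity $\mathcal{A}_h^{\mathrm{NS}, \boldsymbol{u}_h}(\eta^{\boldsymbol{u}_h}, \eta^{p_h}; \eta^{\boldsymbol{u}_h}, -\eta^{p_h})$: by the same coercivity argument as in Theorem \ref{stability} (applied with the frozen convective field $\boldsymbol{u}_h$), it is bounded from below by a positive constant times the entire left-hand side of the lemma, so it suffices to bound the same quantity from above by the right-hand side.

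Next I would split the terms into four families: the viscous/Nitsche family, the GLS--type residual family weighted by $\tau$, the divergence family weighted by $\delta$, and the pressure family. For each family I substitute the regime-specific value of the parameters. When $\mathrm{Re}_K \geq 1$ one has $\tau \sim h_K/|\boldsymbol{u}_h|_p$ and $\delta \sim |\boldsymbol{u}_h|_p h_K$, whereas for $\mathrm{Re}_K < 1$ the remark following the definition of $\tau$ gives $\tau \leq m_K h_K^2/(8\nu)$ together with $|\boldsymbol{u}_h|_p h_K < 4\nu/m_K$. Combining these bounds with the Scott--Zhang rates $\|\eta^{\boldsymbol{u}_h}\|_{0,K} \lesssim h_K^{k+1}|\boldsymbol{u}|_{k+1,\omega_K}$, $|\eta^{\boldsymbol{u}_h}|_{1,K} \lesssim h_K^{k}|\boldsymbol{u}|_{k+1,\omega_K}$, and $|\eta^{\boldsymbol{u}_h}|_{2,K} \lesssim h_K^{k-1}|\boldsymbol{u}|_{k+1,\omega_K}$ produces precisely the weights $h_K|\boldsymbol{u}_h|_p$ and $2\nu$ that multiply the two Heaviside branches on the right. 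The pressure terms are treated analogously with $\|\eta^{p_h}\|_{0,K} \lesssim h_K^{k}|p|_{k,\omega_K}$ and $|\eta^{p_h}|_{1,K} \lesssim h_K^{k-1}|p|_{k,\omega_K}$, yielding the inverse weights $h_K|\boldsymbol{u}_h|_p^{-1}$ and $h_K^2(2\nu)^{-1}$. The boundary contributions $\sum_E \tfrac{\gamma\nu}{h_E}\|\eta^{\boldsymbol{u}_h}\cdot\boldsymbol{n}\|_{0,E}^2$ and the normal-stress traces are then controlled by Lemmas \ref{Trace Inequality $-I$}--\ref{Trace Inequality $- II$} exactly as in the proof of Theorem \ref{stability}.

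The main obstacle is the careful tracking of powers of $h_K$, $\nu$, and $|\boldsymbol{u}_h|_p$ in the GLS residual terms $\|\tau^{-1/2}\eta^{\boldsymbol{u}_h}\|_{0,\Omega}^2$ and $\|\tau^{1/2}\boldsymbol{u}_h\cdot\nabla\eta^{\boldsymbol{u}_h}\|_{0,\Omega}^2$, where both regimes must collapse into the unified Heaviside-weighted form on the right. Following \cite{MR1155924} one exploits the identity $\tau|\boldsymbol{u}_h|_p^2 = h_K|\boldsymbol{u}_h|_p/2$ in the advection-dominated case and the bound $\tau|\boldsymbol{u}_h|_p^2 \lesssim \nu$ (a direct consequence of $\mathrm{Re}_K<1$ and the remark on $\tau$) in the diffusion-dominated case, and symmetrically for $\tau^{-1}$; the continuity and pressure-gradient estimates are handled as in \cite{MR1186727}. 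A final consistency check at the transition $\mathrm{Re}_K = 1$ is immediate from the continuity of $\xi$, which ensures both branches match on the threshold, so that the overall bound is robust across regimes.
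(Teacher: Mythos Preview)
Your substantive plan—the direct, term-by-term estimation via the Scott--Zhang bounds of Lemma~\ref{interpolation_estimates}, combined with the regime-specific values of $\tau$ and $\delta$ to produce the Heaviside splitting—is correct and is precisely what the paper does (deferring the momentum-side estimates to \cite{MR1155924}, the continuity/pressure-side estimates to \cite{MR1186727}, and handling the Nitsche boundary terms as in Theorem~\ref{stability}).

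However, your organizational framing through coercivity is a genuine gap and should be removed. You claim that $\mathcal{A}_h^{\mathrm{NS},\boldsymbol{u}_h}(\eta^{\boldsymbol{u}_h},\eta^{p_h};\eta^{\boldsymbol{u}_h},-\eta^{p_h})$ is bounded below by the full left-hand side of the lemma via Theorem~\ref{stability}. This fails for two independent reasons. First, the proof of Theorem~\ref{stability} hinges on the inverse inequality \eqref{tau_bound} (and the analogous bound in Remark~\ref{remark_4.3}), which requires the argument to be piecewise polynomial; since $\eta^{\boldsymbol{u}_h}=\boldsymbol{u}-\boldsymbol{I}_h\boldsymbol{u}$ contains the non-polynomial exact solution, that step is unavailable. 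Second, even formally, the coercivity lower bound in Theorem~\ref{stability} does \emph{not} contain several of the terms you need on the left—e.g.\ $\|\tau^{-1/2}\eta^{\boldsymbol{u}_h}\|_{0,\Omega}^2$, $\|\tau^{1/2}2\nu\nabla\cdot\varepsilon(\eta^{\boldsymbol{u}_h})\|_{0,K}^2$, and the Heaviside-weighted pressure $L^2$ terms $H(\mathrm{Re}_K-1)\|\delta^{-1/2}\eta^{p_h}\|_{0,K}^2$ and $H(1-\mathrm{Re}_K)(2\nu)^{-1}\|\eta^{p_h}\|_{0,K}^2$—so it could not deliver the stated left-hand side anyway. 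The lemma is a pure interpolation estimate; no variational or coercivity argument is involved. The paper's mention of $\mathcal{A}_h^{\mathrm{NS},\boldsymbol{u}_h}(\eta^{\boldsymbol{u}_h},\eta^{p_h};\eta^{\boldsymbol{u}_h},\eta^{p_h})$ is only an organizational device to enumerate which weighted seminorms arise, not an appeal to coercivity. Once you drop this detour, the remainder of your argument matches the paper.
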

We may now establish the following convergence estimate. 
\begin{theorem}
Under the same assumptions as in Theorem \ref{stability}, we consider a sufficiently small \( M \) that satisfies the condition $ \| \boldsymbol{u}\|_{1,\Omega} < M.$
Let \( (\boldsymbol{u}, p) \in \mathrm{V} \times \Pi \) represent the solution to Problem \eqref{bb}, and let \( (\boldsymbol{u}_h, p_h) \in \mathrm{V}_h \times \Pi_h \) denote the solution to Problem \eqref{DSF}. We assume that \( (\boldsymbol{u}, p) \in (\boldsymbol{H}^{k+1}(\Omega) \cap \mathrm{V}) \times (H^k(\Omega) \cap \Pi) \), where \( k \geq 1 \). Under these conditions, there exists a constant \( C > 0 \), such that
\begin{align*}
& \nu \| \varepsilon (\boldsymbol{u}-\boldsymbol{u}_h ) \|_{0,\Omega}^2 + \sum_{E \in \mathcal{E}_{\mathrm{Nav}}} \frac{\nu}{h_E} \| (\boldsymbol{u}-\boldsymbol{u}_h) \cdot \boldsymbol{n} \|_{0,E}^2 + \sum_{K \in \mathcal{K}_h} \frac{h_K^2}{\nu} \| \nabla (p-p_h)\|_{0,K}^2 + \| \delta^{1/2} \nabla \cdot (\boldsymbol{u}-\boldsymbol{u}_h) \|_{0,\Omega}^2 \\ & + \| \tau^{1/2} \boldsymbol{u}_h \cdot \nabla (\boldsymbol{u}-\boldsymbol{u}_h) \|_{0,\Omega}^2 
\leq C\left[\sum_{K \in \mathcal{K}_h} h_K^{2 k}|\boldsymbol{u}|_{k+1, K}^2\left(H\left(\operatorname{Re}_K-1\right) h_K \sup _{x \in K}|\boldsymbol{u}_h|_p+H\left(1-\operatorname{Re}_K\right) 2 \nu\right)\right. \\
&\left. +\sum_{K \in \mathcal{K}_h} h_K^{2 k}|p|_{k+1, K}^2\left(H\left(\operatorname{Re}_K-1\right) h_K \sup _{x \in K}|\boldsymbol{u}_h|_p^{-1}+H\left(1-\operatorname{Re}_K\right) h_K^2(2 \nu)^{-1}\right)\right] + h^2 \| \boldsymbol{f} \|^2_{0,\Omega},
\end{align*}
where $H( \cdot )$ is defined above.
\begin{proof}
The convergence proof is as follows: 
    \begin{align}\label{main_a_priori}
&  \nu \| \varepsilon (\boldsymbol{I}_h \boldsymbol{u}-\boldsymbol{u}_h ) \|_{0,\Omega}^2 
+ \sum_{E \in \mathcal{E}_{\mathrm{Nav}}} \frac{\nu}{h_E} \| (\boldsymbol{I}_h \boldsymbol{u}-\boldsymbol{u}_h) \cdot \boldsymbol{n} \|_{0,E}^2  + \sum_{K \in \mathcal{K}_h} \frac{h_K^2}{\nu} \| \nabla (I_h p-p_h)\|_{0,K}^2 \nonumber \\
&
+ \| \delta^{1/2} \nabla \cdot (\boldsymbol{I}_h \boldsymbol{u}-\boldsymbol{u}_h) \|_{0,\Omega}^2 + \| \tau^{1/2} \boldsymbol{u}_h \cdot \nabla ( \boldsymbol{I}_h \boldsymbol{u}-\boldsymbol{u}_h) \|_{0,\Omega}^2   \nonumber \\
& \quad 
\leq \mathcal{A}_h^{\mathrm{NS}, \boldsymbol{u}_h} \left( e^{\boldsymbol{u}_h}, e^{p_h}; e^{\boldsymbol{u}_h},- e^{p_h} \right) \nonumber  \\
& \quad =  \mathcal{A}_h^{\mathrm{NS}, \boldsymbol{u}_h} \left( \boldsymbol{I}_h \boldsymbol{u}_h, I_h p_h; e^{\boldsymbol{u}_h},- e^{p_h} \right) 
- \mathcal{A}_h^{\mathrm{NS}, \boldsymbol{u}_h} \left( \boldsymbol{u}_h, p_h; e^{\boldsymbol{u}_h},- e^{p_h} \right) \nonumber \\
& \  = \mathcal{A}_h^{\mathrm{NS}, \boldsymbol{u}_h} \left(  \boldsymbol{u}, p; e^{\boldsymbol{u}_h},- e^{p_h} \right) 
-  \mathcal{A}_h^{\mathrm{NS}, \boldsymbol{u}} \left(  \boldsymbol{u}, p; e^{\boldsymbol{u}_h},- e^{p_h} \right) -  \mathcal{A}_h^{\mathrm{NS}, \boldsymbol{u}_h} \left(  \eta^{\boldsymbol{u}_h}, \eta^{p_h}; e^{\boldsymbol{u}_h},- e^{p_h} \right)
\end{align}
Consider the third term and choose \( \theta = \{-1, 1\} \). Apply the Cauchy-Schwarz, Triangle, and Young's inequalities; we get
\begin{align*}
  &  \mathcal{A}_h^{\mathrm{NS}, \boldsymbol{u}_h} \left(  \eta^{\boldsymbol{u}_h}, \eta^{p_h}; e^{\boldsymbol{u}_h}, -e^{p_h} \right) \\ 
  & = \sum_{K \in \mathcal{K}_{h}} \bigg( 2 \nu \left(\varepsilon (\eta^{\boldsymbol{u}_h}), \varepsilon (e^{\boldsymbol{u}_h}) \right)_{K} + (\boldsymbol{u}_h \cdot \nabla \eta^{\boldsymbol{u}_h}, e^{\boldsymbol{u}_h})_{K} - (\eta^{p_h}, \nabla \cdot e^{\boldsymbol{u}_h})_{K} + ( e^{p_h}, \nabla \cdot \eta^{\boldsymbol{u}_h})_{K} \bigg) \\ 
  & \quad + \sum_{E \in \mathcal{E}_{\mathrm{Nav}}} \bigg( -\int_{E} \boldsymbol{n}^t( 2 \nu \varepsilon (\eta^{\boldsymbol{u}_h}) 
  - \eta^{p_h} I) \boldsymbol{n} (\boldsymbol{n} \cdot e^{\boldsymbol{u}_h}) \, ds  - \theta \int_{E} \boldsymbol{n}^t(2 \nu \varepsilon (e^{\boldsymbol{u}_h}) + e^{p_h} I) \boldsymbol{n} (\boldsymbol{n} \cdot \eta^{\boldsymbol{u}_h}) \, ds \\ 
  & \quad + \int_{E} \beta \sum_{i=1}^{d-1} (\boldsymbol{\tau}^i \cdot e^{\boldsymbol{u}_h})(\boldsymbol{\tau}^i \cdot \eta^{\boldsymbol{u}_h}) \, ds + \frac{\gamma \nu }{h_E} \int_{E}  (\eta^{\boldsymbol{u}_h} \cdot \boldsymbol{n})(e^{\boldsymbol{u}_h} \cdot \boldsymbol{n}) \, ds \bigg) \\ 
  & \quad + \sum_{K \in \mathcal{K}_{h}}  \int_K  \tau  \bigg(-2 \nu \nabla \cdot  \varepsilon (\eta^{\boldsymbol{u}_h}) + \boldsymbol{u}_h \cdot \nabla \eta^{\boldsymbol{u}_h} + \nabla \eta^{p_h} \bigg) \bigg(-2 \nu \nabla \cdot  \varepsilon (e^{\boldsymbol{u}_h})  + \boldsymbol{u}_h \cdot \nabla e^{\boldsymbol{u}_h} + \nabla  e^{p_h} \bigg)  \\ 
  & \quad +  \sum_{K \in \mathcal{K}_{h}} \delta  \int_K \nabla \cdot \eta^{\boldsymbol{u}_h} \nabla \cdot e^{\boldsymbol{u}_h} \\ 
  & \leq  2 \nu \left(\varepsilon (\eta^{\boldsymbol{u}_h}), \varepsilon (e^{\boldsymbol{u}_h}) \right)_{K} + (\boldsymbol{u}_h \cdot \nabla \eta^{\boldsymbol{u}_h}, e^{\boldsymbol{u}_h})_{K} - (\eta^{p_h}, \nabla \cdot e^{\boldsymbol{u}_h})_{K} + ( e^{p_h}, \nabla \cdot \eta^{\boldsymbol{u}_h})_{K} \\ 
  & \quad + \sum_{E \in \mathcal{E}_{\mathrm{Nav}}} \bigg( -\int_{E} \boldsymbol{n}^t( 2 \nu \varepsilon (\eta^{\boldsymbol{u}_h}) - \eta^{p_h} I) \boldsymbol{n} (\boldsymbol{n} \cdot e^{\boldsymbol{u}_h}) \, ds - \theta \int_{E} \boldsymbol{n}^t(2 \nu \varepsilon (e^{\boldsymbol{u}_h}) + e^{p_h} I) \boldsymbol{n} (\boldsymbol{n} \cdot \eta^{\boldsymbol{u}_h}) \, ds \\ 
  & \quad + \int_{E} \beta \sum_{i=1}^{d-1} (\boldsymbol{\tau}^i \cdot e^{\boldsymbol{u}_h})(\boldsymbol{\tau}^i \cdot \eta^{\boldsymbol{u}_h}) \, ds + \frac{\gamma \nu}{h_E} \int_{E} (\eta^{\boldsymbol{u}_h} \cdot \boldsymbol{n})(e^{\boldsymbol{u}_h} \cdot \boldsymbol{n}) \, ds \bigg)  + \left( \nabla \cdot \eta^{\boldsymbol{u}_h}, \delta \nabla \cdot e^{\boldsymbol{u}_h} \right) \\ 
  & \quad + \sum_{K \in \mathcal{K}_h} \left( \| \tau^{1/2} (\boldsymbol{u}_h \cdot \nabla e^{\boldsymbol{u}_h} + \nabla  e^{p_h}) \|_{0,K} + \| \tau^{1/2} 2 \nu \nabla \cdot  \varepsilon (e^{\boldsymbol{u}_h}) \|_{0,K} \right) \\ 
  & \quad \times  \left( \|\tau^{1/2} ( -2 \nu \nabla \cdot  \varepsilon (\eta^{\boldsymbol{u}_h})  + \boldsymbol{u}_h \cdot \nabla \eta^{\boldsymbol{u}_h} + \nabla  \eta^{p_h}) \|_{0,K} \right) \\ & \leq \frac{3}{ 4 \gamma_1} 2 \nu \| \varepsilon ( e^{\boldsymbol{u}_h}) \|_{0,\Omega}^2 + \frac{1}{ \gamma_2} \| \tau^{1/2} \boldsymbol{u}_h \cdot \nabla e^{\boldsymbol{u}_h} \|_{0,\Omega}^2 + \frac{1}{ \gamma_2} \| \tau^{1/2}  \nabla e^{p_h} \|_{0,\Omega}^2 + \frac{1}{2 \gamma_3} \| \delta^{1/2} \nabla \cdot e^{\boldsymbol{u}_h} \|_{0,\Omega}^2 \\& \quad  + \left( \eta^{p_h}, \nabla \cdot e^{\boldsymbol{u}_h} \right)  + (\gamma_1 + \gamma_2 ) \| \tau^{1/2} \boldsymbol{u}_h \cdot \nabla \eta^{\boldsymbol{u}_h} \|_{0,\Omega}^2 + 2 (\gamma_1 + \gamma_2) \| \tau^{1/2} \nabla \eta^{p_h} \|_{0,\Omega}^2 + 2 (\gamma_1 + \gamma_2) \\ & \qquad \| \tau^{1/2} 2 \nu \nabla \cdot \varepsilon (\eta^{\boldsymbol{u}_h}) \|_{0,\Omega}^2  + \frac{\gamma_3}{2} \| \delta^{1/2} \nabla \cdot \eta^{\boldsymbol{u}_h}
  \|_{0,\Omega}^2  + \frac{\gamma_1}{2 } 2 \nu \| \varepsilon ( \eta^{\boldsymbol{u}_h} ) \|_{0,\Omega}^2 + ( \boldsymbol{u}_h \cdot \nabla \eta^{\boldsymbol{u}_h} , e^{\boldsymbol{u}_h}) \\& \quad + \frac{1}{2 \gamma_2} \| \tau^{1/2} \nabla e^{p_h} \|_{0,\Omega}^2 + \frac{\gamma_2}{2} \| \tau^{-1/2} \eta^{\boldsymbol{u}_h} \|^2_{0,\Omega} + \sum_{E \in \mathcal{E}_{\mathrm{Nav}}} \bigg(  \int_E 2 e^{p_h} (\boldsymbol{n} \cdot \eta^{\boldsymbol{u}_h}) \, ds \\& \quad  + \int_E \boldsymbol{n}^t (2 \nu) \varepsilon (\eta^{\boldsymbol{u}_h}) \boldsymbol{n} (\boldsymbol{n} \cdot e^{\boldsymbol{u}_h}) \, ds  + \int_E \boldsymbol{n}^t (2 \nu) \varepsilon (e^{\boldsymbol{u}_h}) \boldsymbol{n} (\boldsymbol{n} \cdot \eta^{\boldsymbol{u}_h}) \, ds \\ 
  & \quad + \int_E \eta^{p_h} (\boldsymbol{n} \cdot  e^{\boldsymbol{u}_h}) \, ds   + \int_E \beta \sum_{i=1}^{d-1} (\boldsymbol{\tau}^i \cdot e^{\boldsymbol{u}_h})(\boldsymbol{\tau}^i \cdot \eta^{\boldsymbol{u}_h}) \, ds + \frac{\gamma \nu}{h_E} \int_{E} (\eta^{\boldsymbol{u}_h} \cdot \boldsymbol{n})(e^{\boldsymbol{u}_h} \cdot \boldsymbol{n}) \, ds \bigg)  
\end{align*}
Next, by applying the Cauchy-Schwarz and Young's inequalities, we address the boundary terms in $\mathcal{E}_{\mathrm{Nav}}$, similar to \eqref{I2}, \eqref{I3}, and \eqref{tau_bound}
\begin{align*}
    \int_E \boldsymbol{n}^t (2 \nu) \varepsilon (\eta^{\boldsymbol{u}_h}) \boldsymbol{n} (\boldsymbol{n} \cdot e^{\boldsymbol{u}_h}) \, ds    &\leq 2 \nu \gamma_1 C_{\mathrm{tr}}^2 \|\varepsilon(\eta^{\boldsymbol{u}_h)}\|_{0,\Omega}^2 + \frac{1}{\gamma_1} \sum_{E \in \mathcal{E}_{\mathrm{Nav}}} \frac{\nu}{2 h_E} \|e^{\boldsymbol{u}_h} \cdot \boldsymbol{n}\|^2_{0,\Gamma_{\mathrm{Nav}}} \\ \int_E \boldsymbol{n}^t (2 \nu) \varepsilon (e^{\boldsymbol{u}_h}) \boldsymbol{n} (\boldsymbol{n} \cdot \eta^{\boldsymbol{u}_h}) \, ds   & \leq  \gamma_1  \sum_{E \in \mathcal{E}_{\mathrm{Nav}}} \frac{2 \nu}{ h_E}  \|\eta^{\boldsymbol{u}_h}  \cdot \boldsymbol{n} \|^2_{0,\Gamma_{\mathrm{Nav}}} +   \frac{2 \nu}{4 \gamma_1} C_{\mathrm{tr}}^2 \|  \varepsilon( e^{\boldsymbol{u}_h})\|^2_{0,\Omega}
    \\ \int_E 2 e^{p_h}( \eta^{\boldsymbol{u}_h} \cdot \boldsymbol{n} ) \, ds & \leq   {\gamma_2} \sum_{E \in \mathcal{E}_{\mathrm{Nav}}} \frac{2 \nu}{ 2 h_E} \| \eta^{\boldsymbol{u}_h} \cdot \boldsymbol{n} \|^2_{0,\Gamma_\mathrm{Nav}} + \frac{ 2 C_{\mathrm{in}}^2 C_{\mathrm{tr}}^2}{2 \gamma_2}\sum_{K \in \mathcal{K}_h} \frac{h_K^2}{\nu} \| \nabla e^{p_h} \|^2_{0,K} \\ \int_E \eta^{p_h}( e^{\boldsymbol{u}_h} \cdot \boldsymbol{n} ) \, ds & \leq \gamma_2 \frac{C_{\mathrm{in}}^2 C_{\mathrm{tr}}^2}{2 }\sum_{K \in \mathcal{K}_h} \frac{h_K^2}{\nu} \| \nabla \eta^{p_h} \|^2_{0,K} +  \frac{1}{\gamma_2} \sum_{E \in \mathcal{E}_{\mathrm{Nav}}} \frac{\nu}{ 2 h_E} \| e^{\boldsymbol{u}_h} \cdot \boldsymbol{n} \|^2_{0,\Gamma_\mathrm{Nav}} \\
    \frac{\gamma \nu}{h_E} \int_{E} (\eta^{\boldsymbol{u}_h} \cdot \boldsymbol{n})(e^{\boldsymbol{u}_h} \cdot \boldsymbol{n}) \, ds & \leq {\gamma_1} \sum_{E \in \mathcal{E}_{\mathrm{Nav}}} \frac{\nu}{ 2 h_E} \| \eta^{\boldsymbol{u}_h} \cdot \boldsymbol{n} \|^2_{0,\Gamma_\mathrm{Nav}} + \frac{1}{\gamma_1} \sum_{E \in \mathcal{E}_{\mathrm{Nav}}} \frac{\nu}{ 2 h_E} \| e^{\boldsymbol{u}_h} \cdot \boldsymbol{n} \|^2_{0,\Gamma_\mathrm{Nav}} \\
    \int_E \beta \sum_{i=1}^{d-1} (\boldsymbol{\tau}^i \cdot e^{\boldsymbol{u}_h})(\boldsymbol{\tau}^i \cdot \eta^{\boldsymbol{u}_h}) \, ds  & \leq \frac{\beta \gamma_1}{2} \| \varepsilon (\eta^{\boldsymbol{u}_h})\|^2_{0,\Omega} + \frac{\beta}{2 \gamma_1} \| \varepsilon (e^{\boldsymbol{u}_h})\|^2_{0,\Omega} \\
      \| \tau^{1/2} 2 \nu \nabla \cdot \varepsilon (\eta^{\boldsymbol{u}_h})\|_{0,\Omega}^2 & \leq \frac{2 \nu}{2} \| \varepsilon (\eta^{\boldsymbol{u}_h}) \|_{0,\Omega}^2
\end{align*}
Now, the simplified equation is defined as 
  \begin{align}\label{subs1}
  &  \mathcal{A}_h^{\mathrm{NS}, \boldsymbol{u}_h} \left( \eta^{\boldsymbol{u}_h}, \eta^{p_h}; e^{\boldsymbol{u}_h}, -e^{p_h} \right) \nonumber \\
    & \quad \leq 2 \nu \left( \frac{3}{4 \gamma_1} + \frac{C_{\mathrm{tr}}^2}{4 \gamma_1} +\frac{\beta}{4 \nu \gamma_1} \right) \| \varepsilon (e^{\boldsymbol{u}_h}) \|_{0,\Omega}^2  + \left( \frac{3 \gamma_1}{2} + \gamma_2 + \gamma_1 C^2_{\mathrm{tr}} + \frac{\beta \gamma_1}{4 \nu} \right) 2 \nu \| \varepsilon (\eta^{\boldsymbol{u}_h}) \|_{0,\Omega}^2  \nonumber \\
    & \quad + \frac{\nu}{h_E} \sum_{E \in \mathcal{E}_{\mathrm{Nav}}} \left( \frac{1}{ \gamma_1} + \frac{1}{2 \gamma_2} \right) \| e^{\boldsymbol{u}_h} \cdot  \boldsymbol{n} \|_{0, \Gamma_{\mathrm{Nav}}}^2  + \frac{\nu}{h_E} \left( \frac{5}{2} \gamma_1 + \gamma_2 \right) \sum_{E \in \mathcal{E}_{\mathrm{Nav}}}  \|\eta^{\boldsymbol{u}_h} \cdot  \boldsymbol{n} \|_{0, \Gamma_{\mathrm{Nav}}}^2  \nonumber \\
    & \quad  + \left( \frac{3 }{2 \gamma_2} + \frac{C_{\mathrm{in}}^2 C_{\mathrm{tr}}^2}{\gamma_2} \right) \| \tau^{1/2} \nabla e^{p_h} \|_{0,\Omega}^2   + \left( 2 \gamma_1 + 2 \gamma_2 + \frac{\gamma_2 C_{\mathrm{in}}^2 C_{\mathrm{tr}}^2}{2} \right) \|  \tau^{1/2} \nabla \eta^{p_h} \|_{0,\Omega}^2 + \frac{1}{\gamma_2} \| \tau^{1/2} \boldsymbol{u}_h \cdot \nabla e^{\boldsymbol{u}_h} \|_{0,\Omega}^2 \nonumber \\
    & \quad  + ( \gamma_1 + \gamma_2) \| \tau^{1/2} \boldsymbol{u}_h \cdot \nabla \eta^{\boldsymbol{u}_h} \|_{0,\Omega}^2+ \frac{\gamma_2}{2} \| \tau^{-1/2} \eta^{\boldsymbol{u}_h} \|_{0,\Omega}^2 + \frac{\gamma_3}{2} \| \delta^{1/2} \nabla \cdot \eta^{\boldsymbol{u}_h} \|_{0,\Omega}^2 + \frac{1}{2 \gamma_3} \| \delta^{1/2} \nabla \cdot e^{\boldsymbol{u}_h} \|_{0,\Omega}^2   \nonumber \\ & \quad  + \left( \eta^{p_h}, \nabla \cdot e^{\boldsymbol{u}_h} \right) + \left( \boldsymbol{u}_h \cdot \nabla \eta^{\boldsymbol{u}_h}, e^{\boldsymbol{u}_h} \right) 
\end{align}
Consider the term $ \left(\eta^{p_h}, \nabla \cdot e^{\boldsymbol{u}_h}\right)$, and simplify it using the Cauchy-Schwarz and Young inequalities.
\begin{align}\label{subs2}
 \left(\eta^{p_h}, \nabla \cdot e^{\boldsymbol{u}_h}\right) & \leq  \sum_{K \in \mathcal{K}_h}\left[\frac{1}{2 \gamma_3} H\left(\operatorname{Re}_K-1\right)\left\|\delta^{1 / 2} \nabla \cdot e^{\boldsymbol{u}_h}\right\|_{0, K}^2+\frac{1}{2 \gamma_1} H\left(1-\operatorname{Re}_K\right) 2 \nu\left\|\varepsilon\left(e^{\boldsymbol{u}_h}\right)\right\|_{0, K}^2\right. \nonumber \\
& \quad  \left.+\frac{\gamma_3}{2} H\left(\operatorname{Re}_K-1\right)\left\|\delta^{-1 / 2} \eta^{p_h}\right\|_{0,K}^2+\frac{\gamma_1}{2} H\left(1-\operatorname{Re}_K\right)(2 \nu)^{-1}\left\|\eta^{p_h}\right\|_{0, K}^2\right] 
\end{align}
Next, Using Cauchy-Schwarz, Young's inequalities and the Sobolev embedding $H^1(\Omega) \hookrightarrow L^4(\Omega)$, along with the assumption of Theorem \ref{stability},  and  we get 
\begin{align}\label{subs3}
    \left( \boldsymbol{u}_h \cdot \nabla \eta^{\boldsymbol{u}_h}, e^{\boldsymbol{u}_h} \right) & \leq \|\boldsymbol{u}_h\|_{0,4,\Omega} \|\nabla \eta^{\boldsymbol{u}_h} \|_{2,\Omega}\|e^{\boldsymbol{u}_h}\|_{0,4,\Omega} \leq C \|\eta^{\boldsymbol{u}_h} \|_{1,\Omega} \|e^{\boldsymbol{u}_h}\|_{1,\Omega} \nonumber \\& \leq
     C \nu \left(\frac{\gamma_1}{2} \|\varepsilon (\eta^{\boldsymbol{u}_h})\|_{0,\Omega}^2 + \frac{1}{2 \gamma_1 } \| \varepsilon (e^{\boldsymbol{u}_h}) \|_{0,\Omega}^2 \right)
\end{align}
Next, we consider the first two terms from \eqref{main_a_priori} to establish the convergence of the nonlinear argument.
\begin{align}\label{a_priori_2}
   & \mathcal{A}_h^{\mathrm{NS}, \boldsymbol{u}_h} \left(  \boldsymbol{u}, p; e^{\boldsymbol{u}_h},- e^{p_h} \right) -  \mathcal{A}_h^{\mathrm{NS}, \boldsymbol{u}} \left(  \boldsymbol{u}, p; e^{\boldsymbol{u}_h},- e^{p_h} \right) = \nonumber  \\ & \quad  \left((\boldsymbol{u}_h - \boldsymbol{u}) \cdot \nabla \boldsymbol{u}, e^{\boldsymbol{u}_h}  \right) + \int_K \tau (-2 \nu \nabla \cdot \varepsilon (\boldsymbol{u}) + \nabla p) \left((\boldsymbol{u}_h - \boldsymbol{u}) \cdot \nabla e^{\boldsymbol{u}_h} \right) + \int_K \tau \left( (\boldsymbol{u}_h - \boldsymbol{u}) \cdot \nabla \boldsymbol{u}  \right)\nonumber \\ & \quad \times  ( -2 \nu \nabla \cdot \varepsilon (e^{\boldsymbol{u}_h}) + \nabla e^{p_h })  + \int_K \tau \left[ (\boldsymbol{u}_h \cdot \nabla \boldsymbol{u})( \boldsymbol{u}_h \cdot \nabla e^{\boldsymbol{u}_h}) - ( \boldsymbol{u} \cdot \nabla \boldsymbol{u}) ( \boldsymbol{u} \cdot \nabla e^{\boldsymbol{u}_h })  \right]
\end{align}
Now add and subtract $ \int_K \tau (\boldsymbol{u} \cdot \nabla \boldsymbol{u}) (\boldsymbol{u}_h \cdot \nabla e^{\boldsymbol{u}_h})$ in \eqref{a_priori_2}, we get 
\begin{align}\label{a_priori_3}
   \mathcal{A}_h^{\mathrm{NS}, \boldsymbol{u}_h} \left(  \boldsymbol{u}, p; e^{\boldsymbol{u}_h},- e^{p_h} \right) -  \mathcal{A}_h^{\mathrm{NS}, \boldsymbol{u}}\left(  \boldsymbol{u}, p; e^{\boldsymbol{u}_h},- e^{p_h} \right)  & = \mathcal{J}_1 + \mathcal{J}_2 + \mathcal{J}_3
 \end{align}
 where the $\mathcal{J}_i's$ are defined as follows 
\begin{align*}
    \mathcal{J}_1 :=&  \left((\boldsymbol{u}_h - \boldsymbol{u}) \cdot \nabla \boldsymbol{u}, e^{\boldsymbol{u}_h}  \right)   \\ 
    \mathcal{J}_2 :=& \int_K \tau \left( - 2 \nu \nabla \cdot \varepsilon (\boldsymbol{u}) + \boldsymbol{u} \cdot \nabla \boldsymbol{u} + \nabla p \right) \left( ( \boldsymbol{u}_h - \boldsymbol{u} )\cdot \nabla e^{\boldsymbol{u}_h} \right) \\
    \mathcal{J}_3  :=& \int_K \tau \left( ( \boldsymbol{u}_h - \boldsymbol{u}) \cdot \nabla \boldsymbol{u} \right) \left( - 2 \nu \nabla \cdot \varepsilon (e^{\boldsymbol{u}_h}) + \boldsymbol{u}_h \cdot \nabla e^{\boldsymbol{u}_h} + \nabla e^{p_h} \right)
\end{align*}
Now, we bound $\mathcal{J}_1$ as 
\begin{align*}
     \mathcal{J}_1 &
     \leq (\|\eta^{\boldsymbol{u}_h}\|_{1,\Omega} + \| e^{\boldsymbol{u}_h}\|_{1,\Omega} ) \|\nabla \boldsymbol{u} \|_{0,\Omega} \| e^{\boldsymbol{u}_h} \|_{1,\Omega} \\ & \leq  C M ( \| e^{\boldsymbol{u}_h}\|_{1,\Omega}^2 + \|\eta^{\boldsymbol{u}_h}\|_{1,\Omega}^2 )
\end{align*}
Since $(\boldsymbol{u}, p)$ satisfies the momentum equation, we apply the Cauchy–Schwarz and Young's inequalities, along with the embedding $H^1(\Omega) \hookrightarrow L^4(\Omega)$ and \eqref{rem2}, to bound $\mathcal{J}_2$ as
\begin{align*}
    \mathcal{J}_2  & \lesssim  h \| \boldsymbol{f} \|_{0,\Omega} \| \tau^{1/2} \boldsymbol{u}_h \cdot \nabla e^{\boldsymbol{u}_h} \|_{0,\Omega} + h_K \| \boldsymbol{f} \|_{0,K} \| \boldsymbol{u} \|_{0,4,K} \| e^{\boldsymbol{u}_h} \|_{0,4,K} \\ & \lesssim h \| \boldsymbol{f} \|_{0,\Omega} \left( \| \tau^{1/2} \boldsymbol{u}_h \cdot \nabla e^{\boldsymbol{u}_h} \|_{0,\Omega} + \| \boldsymbol{u} \|_{1,\Omega} \| e^{\boldsymbol{u}_h} \|_{1,\Omega} \right) \\
    & \lesssim h \| \boldsymbol{f} \|_{0,\Omega} \left( \| \tau^{1/2} \boldsymbol{u}_h \cdot \nabla e^{\boldsymbol{u}_h} \|_{0,\Omega} + M \| e^{\boldsymbol{u}_h} \|_{1,\Omega} \right) \\
    & \lesssim h^2 \| \boldsymbol{f} \|_{0,\Omega}^2 + \frac{1}{2} \| \tau^{1/2} \boldsymbol{u}_h \cdot \nabla e^{\boldsymbol{u}_h} \|_{0,\Omega}^2 + \frac{\nu}{2} \| \varepsilon(e^{\boldsymbol{u}_h}) \|_{0,\Omega}^2. 
    \end{align*}
Next, using Lemma \ref{local_inverse} with $m=0$, $l=1$, and $p=q=4$, we apply the Cauchy–Schwarz and Young's inequalities, along with  \eqref{tau_bound} and the Sobolev embedding $H^1(\Omega) \hookrightarrow L^4(\Omega)$. Consequently, we can bound $\mathcal{J}_3$ as
\begin{align*}
    \mathcal{J}_3  & \leq  h_K \|\boldsymbol{u}_h - \boldsymbol{u}\|_{0,4,K} \|\nabla \boldsymbol{u}\|_{0,4,K} \left( \| \tau^{1/2} 2 \nu \nabla \cdot \varepsilon (e^{\boldsymbol{u}_h}) \|_{0,2,K} + \| \tau^{1/2} \boldsymbol{u}_h \cdot \nabla e^{\boldsymbol{u}_h} \|_{0,2,K} + \| \tau^{1/2} \nabla e^{p_h} \|_{0,2,K} \right) \\ & \leq h_K \|\boldsymbol{u}_h - \boldsymbol{u}\|_{0,4,K}  h_K^{-1} \| \boldsymbol{u} \|_{0,4,K}  \left( \frac{2 \nu}{2} \|  \varepsilon (e^{\boldsymbol{u}_h}) \|_{0,2, K } + \| \tau^{1/2} \boldsymbol{u}_h \cdot \nabla e^{\boldsymbol{u}_h} \|_{0,2,K} + \| \tau^{1/2} \nabla e^{p_h} \|_{0,2,K} \right) \\ &  \leq \|\boldsymbol{u}_h - \boldsymbol{u}\|_{1,\Omega} \|\boldsymbol{u}\|_{1,\Omega} \left( \frac{2 \nu}{2} \| \varepsilon (e^{\boldsymbol{u}_h}) \|_{0,\Omega} + \| \tau^{1/2} \boldsymbol{u}_h \cdot \nabla e^{\boldsymbol{u}_h} \|_{0,\Omega} + \| \tau^{1/2} \nabla e^{p_h} \|_{0,\Omega} \right) \\ & \leq M  \|\boldsymbol{u}_h - \boldsymbol{u}\|_{1,\Omega} \left( \frac{2 \nu}{2} \| \varepsilon (e^{\boldsymbol{u}_h}) \|_{0,\Omega} + \| \tau^{1/2} \boldsymbol{u}_h \cdot \nabla e^{\boldsymbol{u}_h} \|_{0,\Omega} + \| \tau^{1/2} \nabla e^{p_h} \|_{0,\Omega} \right) \\ & \leq M \left( \left( \frac{\gamma_1}{2} + \frac{1}{\gamma_4} \right) \| \varepsilon (\eta^{\boldsymbol{u}_h}) \|_{0,\Omega}^2 + \left( \frac{1}{2 \gamma_1} + \frac{1}{\gamma_4} + 1 \right) \| \varepsilon (e^{\boldsymbol{u}_h}) \|_{0,\Omega}^2 + \frac{\gamma_4}{2} \| \tau^{1/2} \boldsymbol{u}_h \cdot \nabla e^{\boldsymbol{u}_h} \|_{0,\Omega}^2 \right. \\
    &  \left. + \frac{\gamma_4}{2} \| \tau^{1/2} \nabla e^{p_h} \|_{0,\Omega}^2 \right).
\end{align*}
Combine all the bounds on $\mathcal{J}_i's$ and substitute in \eqref{a_priori_3} and we have 
\begin{align}\label{subs4}
  & \mathcal{A}_h^{\mathrm{NS}, \boldsymbol{u}_h} \left(  \boldsymbol{u}, p; e^{\boldsymbol{u}_h}, - e^{p_h} \right) - \mathcal{A}_h^{\mathrm{NS}, \boldsymbol{u}} \left(  \boldsymbol{u}, p; e^{\boldsymbol{u}_h}, - e^{p_h} \right) 
    \leq M \bigg[ \left(\frac{\gamma_1}{2} + \frac{1}{\gamma_4} + 1 \right) \| \varepsilon (\eta^{\boldsymbol{u}_h}) \|_{0,\Omega}^2 \nonumber \\
   & \quad + \left( \frac{1}{2 \gamma_1} + \frac{1}{\gamma_4} + \frac{5}{2} \right) \nu \| \varepsilon (e^{\boldsymbol{u}_h}) \|_{0,\Omega}^2  + \left( \frac{1}{2} + \frac{\gamma_4}{2} \right) \| \tau^{1/2} \boldsymbol{u}_h \cdot \nabla e^{\boldsymbol{u}_h} \|_{0,\Omega}^2 \nonumber \\
   & \quad + \frac{\gamma_4}{2} \|\tau^{1/2} \nabla e^{p_h} \|_{0,\Omega}^2 \bigg] + h^2 \| \boldsymbol{f}\|_{0,\Omega}
\end{align}
Finally, we substitute \eqref{subs1}, \eqref{subs2}, \eqref{subs3}, and \eqref{subs4} into \eqref{main_a_priori}, and after simplification, we obtain 
\begin{align*}
    &  2 \nu \| \varepsilon (\boldsymbol{I}_h \boldsymbol{u}-\boldsymbol{u}_h ) \|_{0,\Omega}^2 + \sum_{E \in \mathcal{E}_{\mathrm{Nav}}} \frac{\nu}{h_E} \| (\boldsymbol{I}_h \boldsymbol{u}-\boldsymbol{u}_h) \cdot \boldsymbol{n} \|_{0,E}^2 + \sum_{K \in \mathcal{K}_h} \frac{h_K^2}{\nu} \| \nabla (I_h p-p_h)\|_{0,K}^2 \\ & + \| \delta^{1/2} \nabla \cdot (\boldsymbol{I}_h \boldsymbol{u}-\boldsymbol{u}_h) \|_{0,\Omega}^2  + \| \tau^{1/2} \boldsymbol{u}_h \cdot \nabla ( \boldsymbol{I}_h \boldsymbol{u}-\boldsymbol{u}_h) \|_{0,\Omega}^2 
\\& \leq 2 \nu \left( \frac{1}{ \gamma_1} + \frac{C_{\mathrm{tr}}^2}{4 \gamma_1}+ \frac{\beta}{4 \nu \gamma_1} + \frac{1}{2 \gamma_1} H\left(\operatorname{Re}_K-1\right) + M \left( \frac{1}{2 \gamma_1} + \frac{1}{\gamma_4} + \frac{5}{2} \right) \right) \| \varepsilon (e^{\boldsymbol{u}_h}) \|_{0,\Omega}^2  \\
    & \quad + \left( \frac{3 \gamma_1}{2} + \frac{\gamma_1}{4} + \gamma_2 + \gamma_1 C^2_{\mathrm{tr}}   + \frac{\beta \gamma_2}{4 \nu} + M \left( \frac{\gamma_1}{2 } + \frac{1}{\gamma_4} + 1 \right) \right) 2 \nu \| \varepsilon (\eta^{\boldsymbol{u}_h}) \|_{0,\Omega}^2  \\
    & \quad   + \frac{\nu}{h_E} \sum_{E \in \mathcal{E}_{\mathrm{Nav}}} \left( \frac{1}{ \gamma_1} + \frac{1}{2 \gamma_2} \right) \| e^{\boldsymbol{u}_h} \cdot  \boldsymbol{n} \|_{0, \Gamma_{\mathrm{Nav}}}^2   + \frac{\nu}{h_E} \left( \frac {5 \gamma_1}{2} + \gamma_2 \right)   \sum_{E \in \mathcal{E}_{\mathrm{Nav}}} \|\eta^{\boldsymbol{u}_h} \cdot  \boldsymbol{n} \|_{0, \Gamma_{\mathrm{Nav}}}^2 \\ & \quad   + \left( \frac{3 }{2 \gamma_2} + \frac{C_{\mathrm{in}}^2 C_{\mathrm{tr}}^2}{\gamma_2} + \frac{M \gamma_4 }{2} \right) \| \tau^{1/2} \nabla e^{p_h} \|_{0,\Omega}^2  + \left( 2 \gamma_1 + 2 \gamma_2 + \frac{\gamma_2 C_{\mathrm{in}}^2 C_{\mathrm{tr}}^2}{2} \right) \|  \tau^{1/2} \nabla \eta^{p_h} \|_{0,\Omega}^2  \\
    & \quad  + \left(\frac{1}{2 \gamma_2} + \frac{M \gamma_4}{2 } +  \frac{M}{2 }\right) \| \tau^{1/2} \boldsymbol{u}_h \cdot \nabla e^{\boldsymbol{u}_h} \|_{0,\Omega}^2+ ( \gamma_1 + \gamma_2) \| \tau^{1/2} \boldsymbol{u}_h \cdot \nabla \eta^{\boldsymbol{u}_h} \|_{0,\Omega}^2 + \frac{\gamma_2}{2} \| \tau^{-1/2} \eta^{\boldsymbol{u}_h} \|_{0,\Omega}^2 \\
    & \quad + \frac{\gamma_3}{2} \| \delta^{1/2} \nabla \cdot \eta^{\boldsymbol{u}_h} \|_{0,\Omega}^2   + \frac{1}{2 \gamma_3} \| \delta^{1/2} \nabla \cdot e^{\boldsymbol{u}_h} \|_{0,\Omega}^2  +  \sum_{K \in \mathcal{K}_h}\left[\frac{1}{2 \gamma_3} H\left(\operatorname{Re}_K-1\right)\left\|\delta^{1 / 2} \nabla \cdot e^{\boldsymbol{u}_h}\right\|_{0, K}^2  \right. \\
    & \quad  \left. +\frac{\gamma_3}{2} H\left(\operatorname{Re}_K-1\right)\left\|\delta^{-1 / 2} \eta^{p_h}\right\|_{0,K}^2 
    +\frac{\gamma_1}{2} H\left(1-\operatorname{Re}_K\right)(2 \nu)^{-1}\left\|\eta^{p_h}\right\|_{0, K}^2\right] + h^2 \| \boldsymbol{f} \|^2_{0,\Omega}
\end{align*}
We can choose $\gamma_1, \gamma_2, \gamma_3 > 0$ such that the coefficients of the terms involving $(\boldsymbol{I}_h \boldsymbol{u} - \boldsymbol{u}_h)$ and $(I_h p - p_h)$ on the left-hand side are less than 1. Additionally, we select $M$ to be sufficiently small so that, regardless of the value of $\gamma_4$, its coefficient remains less than 1.
\begin{align*}
    &  2 \nu \| \varepsilon (\boldsymbol{I}_h \boldsymbol{u}-\boldsymbol{u}_h ) \|_{0,\Omega}^2 + \sum_{E \in \mathcal{E}_{\mathrm{Nav}}} \frac{\nu}{h_E} \| (\boldsymbol{I}_h \boldsymbol{u}-\boldsymbol{u}_h) \cdot \boldsymbol{n} \|_{0,E}^2 + \sum_{K \in \mathcal{K}_h} \frac{h_K^2}{\nu} \| \nabla (I_h p-p_h)\|_{0,K}^2 \\ & \quad + \| \delta^{1/2} \nabla \cdot (\boldsymbol{I}_h \boldsymbol{u}-\boldsymbol{u}_h) \|_{0,\Omega}^2  + \| \tau^{1/2} \boldsymbol{u}_h \cdot \nabla ( \boldsymbol{I}_h \boldsymbol{u}-\boldsymbol{u}_h) \|_{0,\Omega}^2 
 \lesssim   2 \nu \| \varepsilon (\eta^{\boldsymbol{u}_h} ) \|_{0,\Omega}^2 + \sum_{E \in \mathcal{E}_{\mathrm{Nav}}} \frac{\nu}{h_E} \| \eta^{\boldsymbol{u}_h} \cdot \boldsymbol{n} \|_{0,E}^2 \\& \quad + \sum_{K \in \mathcal{K}_h} \frac{h_K^2}{\nu} \| \nabla \eta^{p_h}\|_{0,K}^2 + \| \tau^{1/2} \boldsymbol{u}_h \cdot \nabla \eta^{\boldsymbol{u}_h} \|_{0,\Omega}^2  + \| \delta^{1/2} \nabla \cdot \eta^{\boldsymbol{u}_h} \|_{0,\Omega}^2 + \| \tau^{-1/2} \eta^{\boldsymbol{u}_h} \|_{0,\Omega}^2 \\& \quad  +  H\left(\operatorname{Re}_K-1\right)\left\|\delta^{-1 / 2} \eta^{p_h}\right\|_{0,K}^2 
    + H\left(1-\operatorname{Re}_K\right)(2 \nu)^{-1}\left\|\eta^{p_h}\right\|_{0, K}^2 + h^2 \| \boldsymbol{f} \|^2_{0,\Omega}
\end{align*}
Now, by using Lemma \ref{interpolation_estimate}, we obtain 
\begin{align*}
    & 2 \nu \| \varepsilon (\boldsymbol{I}_h \boldsymbol{u} - \boldsymbol{u}_h) \|_{0,\Omega}^2 
    + \sum_{E \in \mathcal{E}_{\mathrm{Nav}}} \frac{\nu}{h_E} \| (\boldsymbol{I}_h \boldsymbol{u} - \boldsymbol{u}_h) \cdot \boldsymbol{n} \|_{0,E}^2  + \sum_{K \in \mathcal{K}_h} \frac{h_K^2}{\nu} \| \nabla (I_h p - p_h) \|_{0,K}^2 
    \\ & + \| \delta^{1/2} \nabla \cdot (\boldsymbol{I}_h \boldsymbol{u} - \boldsymbol{u}_h) \|_{0,\Omega}^2 + \| \tau^{1/2} \boldsymbol{u}_h \cdot \nabla ( \boldsymbol{I}_h \boldsymbol{u} - \boldsymbol{u}_h) \|_{0,\Omega}^2 \lesssim  h^2 \| \boldsymbol{f} \|_{0,\Omega}^2 + \\
    &  \left[ \sum_{K \in \mathcal{K}_h} h_K^{2 k} |\boldsymbol{u}|_{K+1, K}^2 
    \left( H\left(\operatorname{Re}_K - 1\right) h_K \sup_{x \in K} |\boldsymbol{u}_h|_p 
    + H\left(1 - \operatorname{Re}_K\right) 2 \nu \right) \right. \\
    & \left. + \sum_{K \in \mathcal{K}_h} h_K^{2 k} |p|_{k+1, K}^2 
    \left( H\left(\operatorname{Re}_K - 1\right) h_K \sup_{x \in K} |\boldsymbol{u}_h|_p^{-1} 
    + H\left(1 - \operatorname{Re}_K\right) h_K^2 (2 \nu)^{-1} \right) \right].
\end{align*}
The result follows simply by using triangle inequality in last inequality and Lemma \ref{interpolation_estimate}.
\end{proof} 
\end{theorem}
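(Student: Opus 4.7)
The plan is to adapt the standard Galerkin-style error decomposition to the stabilized Oseen linearization $\mathcal{A}_h^{\mathrm{NS},\tilde{\boldsymbol{u}}}$ introduced in the remark preceding Lemma \ref{interpolation_estimate}. Writing the total errors as $\boldsymbol{u} - \boldsymbol{u}_h = \eta^{\boldsymbol{u}_h} + e^{\boldsymbol{u}_h}$ and $p - p_h = \eta^{p_h} + e^{p_h}$, the problem reduces to controlling the discrete components $(e^{\boldsymbol{u}_h}, e^{p_h})$ in the natural triple norm appearing on the left-hand side of the claimed inequality, since the interpolation pieces are already quantified by Lemma \ref{interpolation_estimate}.

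First, I would apply the coercivity estimate of Theorem \ref{stability} to the Oseen bilinear form $\mathcal{A}_h^{\mathrm{NS},\boldsymbol{u}_h}$ tested against $(e^{\boldsymbol{u}_h}, -e^{p_h})$, yielding a lower bound of the form $C_S$ times the squared triple norm of $(e^{\boldsymbol{u}_h}, e^{p_h})$. Next, combining the discrete equation $\mathcal{A}_h^{\mathrm{NS}}(\boldsymbol{u}_h, p_h; \cdot) = \mathcal{F}_h^{\mathrm{NS}}(\cdot)$ with the consistency of the continuous formulation, and observing that $\mathcal{A}_h^{\mathrm{NS},\boldsymbol{u}_h}(\boldsymbol{u}_h, p_h; \cdot) = \mathcal{A}_h^{\mathrm{NS}}(\boldsymbol{u}_h, p_h; \cdot)$, one arrives at the key identity
\[
\mathcal{A}_h^{\mathrm{NS},\boldsymbol{u}_h}(e^{\boldsymbol{u}_h}, e^{p_h}; e^{\boldsymbol{u}_h}, -e^{p_h}) = \bigl[\mathcal{A}_h^{\mathrm{NS},\boldsymbol{u}_h} - \mathcal{A}_h^{\mathrm{NS},\boldsymbol{u}}\bigr](\boldsymbol{u}, p; \cdot) - \mathcal{A}_h^{\mathrm{NS},\boldsymbol{u}_h}(\eta^{\boldsymbol{u}_h}, \eta^{p_h}; \cdot),
\]
which separates the total defect into a linearization mismatch and a pure interpolation residual.

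Second, the interpolation residual $\mathcal{A}_h^{\mathrm{NS},\boldsymbol{u}_h}(\eta^{\boldsymbol{u}_h}, \eta^{p_h}; e^{\boldsymbol{u}_h}, -e^{p_h})$ is estimated term by term by Cauchy--Schwarz and Young with carefully tuned weights $\gamma_i$, so that every $(e^{\boldsymbol{u}_h}, e^{p_h})$ factor can eventually be absorbed into the coercive left-hand side. The boundary integrals on $\mathcal{E}_{\mathrm{Nav}}$ are handled exactly as in \eqref{I2}--\eqref{I3} and the stability argument; the stabilization volume integrals are tamed using the bound $\tau \leq m_K h_K^2/(8\nu)$ (the remark in Section \ref{section4}) together with $H^1(\Omega) \hookrightarrow L^4(\Omega)$. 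The linearization mismatch is split into three pieces $\mathcal{J}_1 + \mathcal{J}_2 + \mathcal{J}_3$ by adding and subtracting $\int_K \tau (\boldsymbol{u} \cdot \nabla \boldsymbol{u})(\boldsymbol{u}_h \cdot \nabla e^{\boldsymbol{u}_h})$; each is bounded using $H^1 \hookrightarrow L^4$, Lemma \ref{local_inverse} with $l=1, p=q=4$, the small-data hypothesis $\|\boldsymbol{u}\|_{1,\Omega} < M$, and the elementwise identity $-2\nu \nabla\cdot\varepsilon(\boldsymbol{u}) + \boldsymbol{u}\cdot\nabla\boldsymbol{u} + \nabla p = \boldsymbol{f}$, which is precisely where the $h^2 \|\boldsymbol{f}\|_{0,\Omega}^2$ contribution is produced.

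Finally, absorbing all $(e^{\boldsymbol{u}_h}, e^{p_h})$-weighted terms into the coercive left-hand side -- which is possible once $M$ is chosen small enough and the $\gamma_i$ small enough that every absorbed coefficient is strictly below $C_S$ -- and estimating the interpolation terms on the right by Lemma \ref{interpolation_estimate}, I obtain the target bound for $(\boldsymbol{I}_h \boldsymbol{u} - \boldsymbol{u}_h, J_h p - p_h)$. A triangle inequality combined with Lemma \ref{interpolation_estimates} then yields the claimed estimate for $(\boldsymbol{u} - \boldsymbol{u}_h, p - p_h)$. The main obstacle is the bookkeeping during absorption: one has to track simultaneously the Young weights used for the interpolation residual, the linearization mismatch, and the boundary and stabilization contributions, ensuring the corresponding sums of coefficients remain $<C_S$ while keeping the smallness threshold on $M$ and on $\gamma^{-1}$ free of $h$ and $\nu$; this is what forces explicit control of the trace, inverse, embedding, and Poincaré constants throughout the argument.
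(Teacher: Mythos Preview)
Your proposal is correct and follows essentially the same approach as the paper's own proof: the same error splitting $\eta + e$, the same coercivity via Theorem \ref{stability}, the same key identity separating the linearization mismatch from the interpolation residual, the identical $\mathcal{J}_1+\mathcal{J}_2+\mathcal{J}_3$ decomposition obtained by adding and subtracting $\int_K \tau(\boldsymbol{u}\cdot\nabla\boldsymbol{u})(\boldsymbol{u}_h\cdot\nabla e^{\boldsymbol{u}_h})$, and the same use of the momentum equation to produce the $h^2\|\boldsymbol{f}\|_{0,\Omega}^2$ term. The bookkeeping concern you flag at the end is precisely what the paper works through explicitly by tracking the constants $\gamma_1,\gamma_2,\gamma_3,\gamma_4$ and invoking the smallness of $M$.
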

\section{A posteriori error estimation for the stationary Navier Stokes problem }\label{section6}
  For each  $ K \in \mathcal{K}_h$ and each $E \in \mathcal{E}_h$, we define element-wise and facet-wise residuals as follows:
 \begin{align*}
    \label{estimator_s}
    \boldsymbol{R}_K &= \left\{\boldsymbol{f}_h +  2 \nu \nabla \cdot \varepsilon(\boldsymbol{u}_h) - \boldsymbol{u}_h \cdot \nabla \boldsymbol{u}_h - \nabla p_h \right\}|_K,  \qquad
    \boldsymbol{R}_E = 
        \frac{1}{2} \llbracket (p_h \boldsymbol{I} - 2 \nu \varepsilon(\boldsymbol{u}_h) )\boldsymbol{n} \rrbracket_E , \quad \text{for } E \in \mathcal{E}_h \backslash \Gamma, \\
    \boldsymbol{R}_{J_K}^1 &= \left\|\sum_{i=1}^{d-1} \left(2 \nu \boldsymbol{n}^t \varepsilon(\boldsymbol{u}_h) \tau^i + \beta \boldsymbol{u}_h \cdot \tau^{i} \right)\right\|_{0,E}^2, \qquad 
    \boldsymbol{R}_{J_K}^2 = \frac{\gamma^2 \nu}{h_E}  \| \boldsymbol{u}_h \cdot \boldsymbol{n} \|_{0,E}^2.
\end{align*}

  Then we introduce the element-wise error estimators $\Psi_K^2 = \Psi_{R_K}^2 + \Psi^2_{R_E} + \Psi_{J_K}^2$ with contributions defined as 
  \begin{align*}
   \Psi^2_{R_K} = \frac{h_K^2}{\nu} \| \boldsymbol{R}_K\|^2_{0,K}, \qquad
   \Psi^2_{R_E} = \sum_{E \in \partial K \backslash \Gamma } \frac{h_E}{\nu} \| \boldsymbol{R}_E\|^2_{0,E}, \qquad
   \Psi^2_{J_K} = \sum_{E \in \Gamma_{\mathrm{Nav}} } \left(   \frac{h_E}{\nu} \boldsymbol{R}_{J_K}^1+ \boldsymbol{R}_{J_K}^2 \right).
  \end{align*}
  so a global a posteriori error estimator for the nonlinear stationary problem is 
  \begin{align}\label{total_estimate}
 \Psi := \left( \sum_{K \in \mathcal{K}_h}  \Psi_K^2  \right)^{1/2}
  \end{align}
    \subsection{Reliability} 
\begin{theorem}\label{GIS}
    (Global inf-sup stability)  Let $\tilde{\boldsymbol{u}} \in \boldsymbol{H}^1(\mathcal{K}_h)$ such that $\|\tilde{\boldsymbol{u}}\|_{1,\mathcal{K}_h} \leq M$, for a sufficiently small $M>0$. For any $\left( \boldsymbol{u}, p \right) \in \tilde{\mathrm{V}} \times \tilde{\Pi} = (\mathrm{V} \cap \boldsymbol{H}^2(\mathcal{K}_h)) \times \Pi \cap \boldsymbol{H}^1(\mathcal{K}_h))$,  with $\left|\!\left|\!\left| (\boldsymbol{v}, q )\right|\!\right|\!\right| \leq 1$ such that
    $$  
\mathcal{A}_h^{{\mathrm{NS}},\tilde{\boldsymbol{u}}}\left(\boldsymbol{u},p ; \boldsymbol{v},q \right) \geq C \left|\!\left|\!\left| (\boldsymbol{u}, p )\right|\!\right|\!\right|.
    $$
\end{theorem}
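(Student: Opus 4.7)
The plan is to establish the bound by making the canonical choice of test pair and reducing the inf--sup claim to a coercivity-type estimate. Specifically, I would set $(\boldsymbol{v}, q) := (\boldsymbol{u}, -p)/|\!|\!|(\boldsymbol{u},p)|\!|\!|$, so that $|\!|\!|(\boldsymbol{v},q)|\!|\!| = 1$, and it suffices to prove
\[
\mathcal{A}_h^{\mathrm{NS},\tilde{\boldsymbol{u}}}(\boldsymbol{u}, p; \boldsymbol{u}, -p) \geq C\, |\!|\!|(\boldsymbol{u}, p)|\!|\!|^2.
\]

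Next I would expand $\mathcal{A}_h^{\mathrm{NS},\tilde{\boldsymbol{u}}}(\boldsymbol{u}, p; \boldsymbol{u}, -p)$ precisely as in the proof of Theorem~\ref{stability}. The two incompressibility couplings cancel, producing $2\nu\|\varepsilon(\boldsymbol{u})\|^2_{0,\Omega}$, the convection $(\tilde{\boldsymbol{u}}\cdot\nabla\boldsymbol{u},\boldsymbol{u})$, the Nitsche boundary terms $-(1+\theta)\int_E \boldsymbol{n}^t 2\nu\varepsilon(\boldsymbol{u})\boldsymbol{n}\,(\boldsymbol{n}\cdot\boldsymbol{u})\mathrm{d}s$, $(1-\theta)\int_E p\,(\boldsymbol{n}\cdot\boldsymbol{u})\mathrm{d}s$, the tangential friction and the penalty $\gamma\nu h_E^{-1}\|\boldsymbol{u}\cdot\boldsymbol{n}\|_{0,E}^2$, the grad-div $\|\delta^{1/2}\nabla\cdot\boldsymbol{u}\|_{0,\Omega}^2$, and the stabilization $\sum_K \|\tau^{1/2}(-2\nu\nabla\cdot\varepsilon(\boldsymbol{u})+\tilde{\boldsymbol{u}}\cdot\nabla\boldsymbol{u}+\nabla p)\|_{0,K}^2$. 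The boundary crosstalk is controlled by the trace/Young estimates \eqref{I2}--\eqref{I3}, exactly as done in Theorem~\ref{stability}, and I split the stabilization square via Young's inequality to expose a clean $\tfrac{1}{2}\sum_K \|\tau^{1/2}\nabla p\|^2_{0,K}$ contribution, which together with $\tau\simeq m_K h_K^2/(8\nu)$ reproduces the pressure-gradient component of $|\!|\!|\cdot|\!|\!|^2$.

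The convection $(\tilde{\boldsymbol{u}}\cdot\nabla\boldsymbol{u},\boldsymbol{u})$ is handled as in Remark~\ref{remark_4.3} by H\"older and the embedding $H^1(\Omega)\hookrightarrow L^4(\Omega)$:
\[
|(\tilde{\boldsymbol{u}}\cdot\nabla\boldsymbol{u},\boldsymbol{u})| \leq \|\tilde{\boldsymbol{u}}\|_{0,4,\Omega}\,\|\nabla\boldsymbol{u}\|_{0,\Omega}\,\|\boldsymbol{u}\|_{0,4,\Omega} \leq C\,\|\tilde{\boldsymbol{u}}\|_{1,\mathcal{K}_h}\,\|\boldsymbol{u}\|_{1,\Omega}^2 \leq C\,M\,\nu\,\|\varepsilon(\boldsymbol{u})\|_{0,\Omega}^2,
\]
using Korn's inequality on $\mathrm{V}$; for $M$ sufficiently small this is absorbed into the coercive $2\nu\|\varepsilon(\boldsymbol{u})\|^2$. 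The analogous term $\|\tau^{1/2}\tilde{\boldsymbol{u}}\cdot\nabla\boldsymbol{u}\|_{0,K}$ arising from the stabilization cross product is bounded by the same embedding together with $\tau\leq m_K h_K^2/(8\nu)$, again producing a factor of $M^2$ to be absorbed. Finally, I would select the Young parameters $\delta_1,\delta_2,\gamma_3$ and the Nitsche constant $\gamma$ according to the same trichotomy in $\theta\in\{-1,0,1\}$ used in Theorem~\ref{stability} so that every coefficient in the resulting sum is strictly positive, giving the target lower bound $C_S|\!|\!|(\boldsymbol{u},p)|\!|\!|^2$, and divide by the normalizing factor to conclude.

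The main obstacle is that the inverse-inequality bound \eqref{tau_bound} used in Theorem~\ref{stability} applies only to discrete functions, whereas here $\boldsymbol{u}\in\tilde{\mathrm{V}}=\mathrm{V}\cap\boldsymbol{H}^2(\mathcal{K}_h)$ is not discrete, so the $\|\tau^{1/2}\,2\nu\,\nabla\cdot\varepsilon(\boldsymbol{u})\|_{0,K}^2$ piece obtained by expanding the stabilization square has to be controlled differently: one replaces \eqref{tau_bound} by the continuous estimate $\|\tau^{1/2}2\nu\nabla\cdot\varepsilon(\boldsymbol{u})\|_{0,K}^2\lesssim m_K h_K^2\,\nu\,\|\nabla\cdot\varepsilon(\boldsymbol{u})\|_{0,K}^2$, which is consistent with the $\boldsymbol{H}^2(\mathcal{K}_h)$ regularity of $\boldsymbol{u}\in\tilde{\mathrm{V}}$ and is the natural term carried through in the subsequent residual-type bounds of the a posteriori analysis; the smallness of $M$ ensures that all absorbed contributions remain below a fixed fraction of $C_S$.
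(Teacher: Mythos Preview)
Your approach diverges from the paper's in a substantive way. You attempt to obtain the bound purely from coercivity, testing with $(\boldsymbol{u},-p)$ alone. The paper instead combines the diagonal test with the \emph{continuous} inf--sup condition (Lemma~\ref{lemma2.1}): for $p\in\tilde\Pi$ it picks $\boldsymbol{v}\in\tilde{\mathrm V}$ with $-(p,\nabla\cdot\boldsymbol{v})\ge\beta_1\|p\|_{0,\Omega}^2$, bounds $\mathcal{A}_h^{\mathrm{NS},\tilde{\boldsymbol{u}}}(\boldsymbol{u},p;\boldsymbol{v},0)$ from below, and then tests with the \emph{combined} pair $(\boldsymbol{u}+\delta_1\boldsymbol{v},-p)$ for a suitably small $\delta_1>0$. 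This extra step buys control of $\|p\|_{0,\Omega}$, not merely the mesh--weighted seminorm $\sum_K h_K^2\nu^{-1}\|\nabla p\|_{0,K}^2$; that $L^2$ pressure control is precisely what is consumed downstream in Lemmas~\ref{SRL2}--\ref{SRL3} and in the efficiency bounds (Lemmas~\ref{efficiency_1}--\ref{efficiency_3}), where errors are measured against $\|p-p_h\|_{0,K}$. Your diagonal test cannot produce $\|p\|_{0,\Omega}$.

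There is also a genuine gap in the way you handle the obstacle you yourself flag. Expanding the stabilization square and applying Young's inequality leaves a \emph{negative} contribution $-\,C\sum_K\|\tau^{1/2}\,2\nu\,\nabla\!\cdot\!\varepsilon(\boldsymbol{u})\|_{0,K}^2$. In the discrete setting this is absorbed by \eqref{tau_bound}; for $\boldsymbol{u}\in\tilde{\mathrm V}$ you correctly note that the inverse inequality is unavailable, but your proposed remedy---to ``carry through'' the bound $\|\tau^{1/2}2\nu\nabla\!\cdot\!\varepsilon(\boldsymbol{u})\|_{0,K}^2\lesssim m_K h_K^2\nu\|\nabla\!\cdot\!\varepsilon(\boldsymbol{u})\|_{0,K}^2$---does not close the argument: this second--order quantity is not part of $|\!|\!|(\boldsymbol{u},p)|\!|\!|^2$ and there is nothing on the left to absorb it. The smallness of $M$ governs only the $\tilde{\boldsymbol{u}}$--dependent terms and has no effect here. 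Consequently the inequality $\mathcal{A}_h^{\mathrm{NS},\tilde{\boldsymbol{u}}}(\boldsymbol{u},p;\boldsymbol{u},-p)\ge C\,|\!|\!|(\boldsymbol{u},p)|\!|\!|^2$ does not follow from your outline, and the missing inf--sup step is what the paper uses to recover the required control.
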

\begin{proof}
   For any $\left( \boldsymbol{u}, p \right) \in  \tilde{\mathrm{V}} \times \tilde{\Pi}$, there holds 
    $$
   \mathcal{A}_h^{{\mathrm{NS}},\tilde{\boldsymbol{u}}} \left( \boldsymbol{u},p ; \boldsymbol{u},-p \right) \geq \alpha \left|\!\left|\!\left| (\boldsymbol{u}, p )\right|\!\right|\!\right|^2 
    $$                     
    Applying the inf-sup stability of the continuous problem, we get that for any $p \in \tilde{\Pi} $, there exists $\boldsymbol{v} \in \tilde{\mathrm{V}} $ such that $-(p, \nabla \cdot \boldsymbol{v}) \geq \beta_1 \|p\|^2_{0, \Omega}$, where $\beta_1 >0$ is the inf-sup constant depending only on $\Omega$. 
    Then, we have 
    \begin{align*}
\mathcal{A}_h^{{\mathrm{NS}},\tilde{\boldsymbol{u}}}\left(\boldsymbol{u},p;\boldsymbol{v},0\right) & = 2 \nu \left(\varepsilon (\boldsymbol{u}), \varepsilon (\boldsymbol{v}) \right)_{\Omega} +(\tilde{\boldsymbol{u}} \cdot \nabla \boldsymbol{u}, \boldsymbol{v})_{\Omega} -(p, \nabla \cdot \boldsymbol{v})_{\Omega}  + \sum_{E \in \mathcal{E}_{\mathrm{Nav}}}  \int_{E} \beta \sum_{i=1}^{d-1} (\boldsymbol{\tau}^i \cdot \boldsymbol{v})(\boldsymbol{\tau}^i \cdot \boldsymbol{u})\, ds \\& \quad + \sum_{K \in \mathcal{K}_{h}}  \int_K  \tau\bigg(-2 \nu \nabla \cdot  \varepsilon (\boldsymbol{u}) + \boldsymbol{\tilde{u}} \cdot \nabla \boldsymbol{u} + \nabla p \bigg) \bigg(-2 \nu \nabla \cdot  \varepsilon (\boldsymbol{v})  + \boldsymbol{\tilde{u}} \cdot \nabla \boldsymbol{v} \bigg)
\\& \geq 
    \beta_1 \|p\|_{0,\Omega}^2 -|2 \nu \left(\varepsilon (\boldsymbol{u}), \varepsilon (\boldsymbol{v}) \right)_{\Omega}| -| (\tilde{\boldsymbol{u}} \cdot \nabla \boldsymbol{u}, \boldsymbol{v})_{\Omega}| - \sum_{K \in \mathcal{K}_h} \| \tau^{1/2} (-2 \nu \nabla \cdot  \varepsilon (\boldsymbol{u}) + \boldsymbol{\tilde{u}} \cdot \nabla \boldsymbol{u} \\ & \quad + \nabla p )\|_{0,K}  \| \tau^{1/2} (-2 \nu \nabla \cdot  \varepsilon (\boldsymbol{v})  + \boldsymbol{\tilde{u}} \cdot \nabla \boldsymbol{v}) \|_{0,K}  \\& \geq
    \beta_1 \|p\|_{0,\Omega}^2 -  \left(C_a + C_b \|\tilde{\boldsymbol{u}}\|_{1, \mathcal{K}_h} \right) \| \boldsymbol{u}\|_{1,\Omega} \|\boldsymbol{v}\|_{1,\Omega}   -  C \left( \| \nabla \boldsymbol{u} \|_{0,\Omega}^2 + \sum_{K \in \mathcal{K}_h} \frac{h_K^2}{\nu} \|\nabla p\|_{0,K}^2 \right)^{1/2}  \\& \qquad \| \nabla \boldsymbol{v} \|_{0,\Omega} \\&
    \geq \beta_1 \|p\|^2_{0,\Omega} - C \|\boldsymbol{u}\|_{1,\Omega} \|\boldsymbol{v}\|_{1,\Omega} - C \left( \| \nabla \boldsymbol{u} \|_{0,\Omega}^2 + \sum_{K \in \mathcal{K}_h} \frac{h_K^2}{\nu} \|\nabla p\|_{0,K}^2 \right)^{1/2}   \| \nabla \boldsymbol{v} \|_{0,\Omega} \\& \geq \left( \beta_1 - \frac{2}{\epsilon}\right) \|p\|^2_{0,\Omega} - \epsilon C ^2 \|\boldsymbol{u}\|^2_{1,\Omega} - \epsilon C^2 \left( \| \nabla \boldsymbol{u} \|_{0,\Omega}^2 + \sum_{K \in \mathcal{K}_h} \frac{h_K^2}{\nu} \|\nabla p\|_{0,K}^2 \right).
    \end{align*}
    where $\epsilon>0$ is a postive constants. Now, we introduce $\delta_1>0$ such that 
    \begin{align*}
\mathcal{A}_h^{{\mathrm{NS}},\tilde{\boldsymbol{u}}}\left(\boldsymbol{u},p;\boldsymbol{u}+\delta_1 \boldsymbol{v},-p\right)  & = \mathcal{A}_h^{{\mathrm{NS}},\tilde{\boldsymbol{u}}}(\boldsymbol{u},p;\boldsymbol{u},-p)  + \delta_1 \mathcal{A}_h^{{\mathrm{NS}},\tilde{\boldsymbol{u}}}(\boldsymbol{u},p; \boldsymbol{v},0) \\ & \geq(\alpha -2 \delta_1 \epsilon C^2  ) \|\boldsymbol{u}\|^2_{1,\Omega} + \alpha \| p\|_{0,\Omega}^2 + \delta_1 \left( \beta_1 - \frac{2}{\epsilon} \right) \| p\|^2_{0,\Omega} - \epsilon \delta_1 C^2 \frac{h_K^2}{\nu} \|\nabla p\|_{0,K}^2.
    \end{align*}
    Choosing $\epsilon = 4/\beta_1$ and $\delta_1= \alpha/ 4 \epsilon C^2$, we obtain 
    \begin{align*}
    \mathcal{A}_h^{{\mathrm{NS}},\tilde{\boldsymbol{u}}}\left(\boldsymbol{u},p;\boldsymbol{u}+\delta_1 \boldsymbol{v},-p \right)    & \gtrsim   \|\boldsymbol{u}\|^2_{1,\Omega} +  \| p\|^2_{0,\Omega}.
    \end{align*}
    Finally, using the triangle inequality, we can assert that 
 \begin{align*}
 \left|\!\left|\!\left|  (\boldsymbol{u}+\delta_1 \boldsymbol{v},-p) \right|\!\right|\!\right|^2 & = \|\boldsymbol{u}+\delta_1 \boldsymbol{v}\|^2_{1,\Omega} +\|p\|^2_{0,\Omega } \\ & \leq 2(\|\boldsymbol{u}\|_{1,\Omega}^2 + \delta_1^2 \|\boldsymbol{v}\|_{1,\Omega}^2) + \|p \|^2_{0,\Omega} \\ & \leq    2(\|\boldsymbol{u}\|_{1,\Omega}^2 + \delta_1^2 \|p\|_{0,\Omega}^2) + \|p \|^2_{0,\Omega} 
  \\ & \leq \max \left\{2,(2 \delta_1^2+1) \right\} \left|\!\left|\!\left|  \boldsymbol{u},p\right|\!\right|\!\right|^2.
 \end{align*} 
This concludes the proof.
 \end{proof}
Since $\mathrm{V}_h$ is not a conforming space. Now, we define the conforming space $\mathrm{V}_h^c = \mathrm{V}_h \cap \mathrm{V}$. Finally, we decompose the approximate velocity  uniquely into $\boldsymbol{u}_h = \boldsymbol{u}_h^c + \boldsymbol{u}_h^r $, where $\boldsymbol{u}_h^c \in \mathrm{V}_h^c $ and $\boldsymbol{u}_h^r \in (\mathrm{V}_h^c)^{\perp} $, and we note that $ \boldsymbol{u}_h^r = \boldsymbol{u}_h - \boldsymbol{u}_h^c \in \mathrm{V}_h $.
 \begin{lemma}\label{SRL1}
     There holds 
     $$
     \| \boldsymbol{u}_h^r\|_{1,\mathcal{K}_h} \leq C_r \left( \sum_{K \in \mathcal{K}_h}  \boldsymbol{R}_{J_K}^2\right)^{1/2}.
     $$
 \end{lemma}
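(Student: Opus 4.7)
The plan is to construct $\boldsymbol{u}_h^c$ explicitly through a nodal correction that kills the normal component on $\Gamma_{\mathrm{Nav}}$, and then use standard local scaling arguments on the boundary layer of elements to convert this back into a computable jump. Let $\{x_i\}$ denote the Lagrange nodes associated with $\mathrm{V}_h$. First I would define $\boldsymbol{u}_h^c$ by setting $\boldsymbol{u}_h^c(x_i) = \boldsymbol{u}_h(x_i)$ at all interior nodes and at nodes on $\Gamma_D$ (where both quantities vanish), and $\boldsymbol{u}_h^c(x_i) = \boldsymbol{u}_h(x_i) - (\boldsymbol{u}_h(x_i)\cdot \boldsymbol{n}(x_i))\,\boldsymbol{n}(x_i)$ at nodes on $\Gamma_{\mathrm{Nav}} \setminus \overline{\Gamma_D}$, using a (piecewise constant on each boundary face, suitably averaged at vertices) well-defined normal. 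By construction $\boldsymbol{u}_h^c \in \mathrm{V}_h^c$, and the residual $\boldsymbol{u}_h^r = \boldsymbol{u}_h - \boldsymbol{u}_h^c$ is supported on the (one-element-thick) strip $\mathcal{K}_h^{\mathrm{Nav}} := \{K \in \mathcal{K}_h : \overline{K}\cap \Gamma_{\mathrm{Nav}} \neq \emptyset\}$, with its nodal values aligned with $\boldsymbol{n}$ and bounded in absolute value by $|\boldsymbol{u}_h(x_i)\cdot \boldsymbol{n}(x_i)|$.

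Next I would run a standard scaling argument element by element. For $K \in \mathcal{K}_h^{\mathrm{Nav}}$ and $E = \overline{K}\cap \Gamma_{\mathrm{Nav}}$, norm equivalence on the reference element combined with the fact that $\boldsymbol{u}_h^r|_K$ is a polynomial determined entirely by nodal values supported on $E$ gives
\begin{equation*}
\|\boldsymbol{u}_h^r\|_{0,K}^2 \leq C\, h_K \|\boldsymbol{u}_h\cdot \boldsymbol{n}\|_{0,E}^2, \qquad |\boldsymbol{u}_h^r|_{1,K}^2 \leq C\, h_K^{-1} \|\boldsymbol{u}_h\cdot \boldsymbol{n}\|_{0,E}^2,
\end{equation*}
which by Korn's inequality applied elementwise yields $\|\varepsilon(\boldsymbol{u}_h^r)\|_{0,K}^2 \leq C\, h_E^{-1} \|\boldsymbol{u}_h\cdot \boldsymbol{n}\|_{0,E}^2$. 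Summing over $K \in \mathcal{K}_h^{\mathrm{Nav}}$ and using that $\boldsymbol{u}_h^c\cdot\boldsymbol{n} = 0$ on $\Gamma_{\mathrm{Nav}}$ (hence $\boldsymbol{u}_h^r\cdot\boldsymbol{n} = \boldsymbol{u}_h\cdot\boldsymbol{n}$ on each $E\in \mathcal{E}_{\mathrm{Nav}}$), I would conclude
\begin{equation*}
\|\boldsymbol{u}_h^r\|_{1,\mathcal{K}_h}^2 = \nu\|\varepsilon(\boldsymbol{u}_h^r)\|_{0,\Omega}^2 + \sum_{E\in \mathcal{E}_{\mathrm{Nav}}} \frac{\nu}{h_E}\|\boldsymbol{u}_h^r\cdot\boldsymbol{n}\|_{0,E}^2 \leq C \sum_{E \in \mathcal{E}_{\mathrm{Nav}}} \frac{\nu}{h_E}\|\boldsymbol{u}_h\cdot \boldsymbol{n}\|_{0,E}^2 = \frac{C}{\gamma^2}\sum_{K\in\mathcal{K}_h}\boldsymbol{R}_{J_K}^2,
\end{equation*}
absorbing the $\gamma$ factor into $C_r$.

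The main obstacle is the proper definition of the nodal normal at vertices belonging to several boundary faces with distinct outward normals and at nodes where $\Gamma_{\mathrm{Nav}}$ meets $\Gamma_D$. For the latter the Dirichlet condition $\boldsymbol{u}_h(x_i) = \boldsymbol{0}$ makes the correction vacuous, so nothing breaks there. For the former I would use an averaged/weighted normal at each boundary node (as is standard in Nitsche-type slip treatments) and verify that the local scaling estimates above remain valid because the correction is still uniformly bounded by $\|\boldsymbol{u}_h\cdot\boldsymbol{n}\|_{L^\infty(E)}$ on each incident face $E$, and norm equivalence then converts this into the $L^2(E)$ norm at the price of an $h_E^{-(d-1)/2}$ factor absorbed by the volume/face scaling exponents. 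This yields the stated bound.
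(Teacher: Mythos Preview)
Your explicit nodal-correction construction is precisely the argument that the paper defers to the cited reference, so you are on the same track; the paper itself gives no details. Two points deserve tightening.

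First, the paper defines $\boldsymbol{u}_h^r$ as the component of $\boldsymbol{u}_h$ in $(\mathrm{V}_h^c)^{\perp}$, whereas your construction yields a specific (in general different) difference $\boldsymbol{u}_h-\boldsymbol{u}_h^c$. This is harmless: either observe that the orthogonal residual minimises the relevant norm over all conforming corrections and is therefore dominated by your explicit one, or simply note that the downstream argument in Lemma~\ref{SRL2} only requires \emph{some} splitting $\boldsymbol{u}_h=\boldsymbol{u}_h^c+\boldsymbol{u}_h^r$ with $\boldsymbol{u}_h^c\in\mathrm{V}_h^c$ satisfying the stated bound, so your decomposition can be used verbatim.

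Second, the ``averaged normal'' device at a vertex $x_i$ shared by boundary faces $E_1,E_2\subset\Gamma_{\mathrm{Nav}}$ with distinct outward normals $\boldsymbol{n}_1,\boldsymbol{n}_2$ does \emph{not} land you in $\mathrm{V}_h^c$: subtracting $(\boldsymbol{u}_h(x_i)\cdot\bar{\boldsymbol{n}})\bar{\boldsymbol{n}}$ leaves nonzero components along both $\boldsymbol{n}_1$ and $\boldsymbol{n}_2$, so $\boldsymbol{u}_h^c\cdot\boldsymbol{n}$ fails to vanish on the adjacent faces. The right fix is to set $\boldsymbol{u}_h^c(x_i)$ equal to the orthogonal projection of $\boldsymbol{u}_h(x_i)$ onto $\bigl(\operatorname{span}\{\boldsymbol{n}_E: x_i\in\bar E\}\bigr)^{\perp}$. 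For a Lipschitz polygonal/polyhedral boundary the incident normals are uniformly linearly independent, whence
\[
|\boldsymbol{u}_h^r(x_i)| \le C\sum_{E\ni x_i}|\boldsymbol{u}_h(x_i)\cdot\boldsymbol{n}_E|,
\]
with $C$ depending only on the minimal corner angle; your scaling and norm-equivalence argument then goes through unchanged. With this adjustment the proof is complete.
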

 \begin{proof}
     It follows straightforwadly from the decomposition $ \boldsymbol{u}_h = \boldsymbol{u}_h^r + \boldsymbol{u}_h^c$
     and from the facet residual as given in \cite{ MR2532864}. 
 \end{proof}
\begin{lemma}\label{SRL2}
    If $\| \boldsymbol{u}\|_{1,\infty} < M$ for sufficiently small $M$, then the following estimate holds:
   \begin{align*}
  \frac{C}{2} \left|\!\left|\!\left| ( \boldsymbol{e^u} , e^p) \right|\!\right|\!\right| & \leq \int_{\Omega} ( \boldsymbol{f} - \boldsymbol{f}_h ) \cdot \boldsymbol{v}  + \int_{\Omega} \boldsymbol{f}_h ( \boldsymbol{v} - \boldsymbol{v}_h )   - \mathcal{A}_h^{\mathrm{NS},\boldsymbol{u}_h}(\boldsymbol{u}_h,p_h; \boldsymbol{v}-\boldsymbol{v}_h,q ) + (1+C) C_r \left( \sum_{K \in \mathcal{K}_h} \boldsymbol{R}_{J_K}^2  \right)^{1/2} \\& \quad  + \sum_{K \in \mathcal{K}_h}  \tau \left( \boldsymbol{f} - \boldsymbol{f}_h ,-2 \nu \nabla \cdot \varepsilon (\boldsymbol{v}) + \boldsymbol{u} \cdot \nabla \boldsymbol{v} - \nabla q \right) + \sum_{K \in \mathcal{K}_h}  \tau  \left( \boldsymbol{f}_h, (\boldsymbol{u}- \boldsymbol{u}_h) \cdot \nabla \boldsymbol{v} \right) \\ & \quad  + \sum_{K \in \mathcal{K}_h}  \tau  \left( \boldsymbol{f}_h, - 2 \nu \nabla \cdot \varepsilon ( \boldsymbol{v}- \boldsymbol{v}_h)+ \boldsymbol{u}_h \cdot \nabla  ( \boldsymbol{v} - \boldsymbol{v}_h) - \nabla q \right) ,
 \end{align*}
  where $\boldsymbol{e}^{\boldsymbol{u}} := \boldsymbol{u} - \boldsymbol{u}_h$, $e^p:= p-p_h$.
    Moreover, $\boldsymbol{v}_h$ denotes the Cl\'{e}ment interpolation \cite{MR400739} of $\boldsymbol{v} \in \tilde{\mathrm{V}}$.
\end{lemma}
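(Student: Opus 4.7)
The plan is to invoke the global inf-sup stability of Theorem \ref{GIS}, the conforming/non-conforming decomposition $\boldsymbol{u}_h = \boldsymbol{u}_h^c + \boldsymbol{u}_h^r$ introduced just before the lemma, and the Galerkin orthogonality of \eqref{DSF} tested against a Cl\'ement-type interpolant. Since $\boldsymbol{u}_h \notin \mathrm{V}$ the theorem cannot be applied to $(\boldsymbol{e}^{\boldsymbol{u}}, e^p)$ directly; instead I would apply it to the conforming pair $(\boldsymbol{u} - \boldsymbol{u}_h^c, e^p) \in \tilde{\mathrm{V}} \times \tilde{\Pi}$ with linearization point $\tilde{\boldsymbol{u}} = \boldsymbol{u}$ (whose norm is controlled by the hypothesis $\|\boldsymbol{u}\|_{1,\infty} < M$), obtaining a unit-norm test pair $(\boldsymbol{v}, q) \in \tilde{\mathrm{V}} \times \tilde{\Pi}$ satisfying $C \left|\!\left|\!\left| (\boldsymbol{u} - \boldsymbol{u}_h^c, e^p) \right|\!\right|\!\right| \leq \mathcal{A}_h^{\mathrm{NS}, \boldsymbol{u}}(\boldsymbol{u} - \boldsymbol{u}_h^c, e^p; \boldsymbol{v}, q)$. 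Combining the triangle inequality $\left|\!\left|\!\left| (\boldsymbol{e}^{\boldsymbol{u}}, e^p) \right|\!\right|\!\right| \leq \left|\!\left|\!\left| (\boldsymbol{u} - \boldsymbol{u}_h^c, e^p) \right|\!\right|\!\right| + \|\boldsymbol{u}_h^r\|_{1, \mathcal{K}_h}$ with the continuity bound $\mathcal{A}_h^{\mathrm{NS}, \boldsymbol{u}}(\boldsymbol{u}_h^r, 0; \boldsymbol{v}, q) \lesssim \|\boldsymbol{u}_h^r\|_{1, \mathcal{K}_h}$ (uniform over unit-norm $(\boldsymbol{v}, q)$) and Lemma \ref{SRL1} then produces the non-conforming contribution $(1+C) C_r \bigl(\sum_K \boldsymbol{R}_{J_K}^2\bigr)^{1/2}$ on the right and isolates $\tfrac{C}{2} \left|\!\left|\!\left| (\boldsymbol{e}^{\boldsymbol{u}}, e^p) \right|\!\right|\!\right|$ on the left after the usual absorption argument.

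What remains is to expand $\mathcal{A}_h^{\mathrm{NS}, \boldsymbol{u}}(\boldsymbol{e}^{\boldsymbol{u}}, e^p; \boldsymbol{v}, q) = \mathcal{A}_h^{\mathrm{NS}, \boldsymbol{u}}(\boldsymbol{u}, p; \boldsymbol{v}, q) - \mathcal{A}_h^{\mathrm{NS}, \boldsymbol{u}}(\boldsymbol{u}_h, p_h; \boldsymbol{v}, q)$. For the continuous term, since $(\boldsymbol{u}, p)$ solves \eqref{bb} strongly and $\boldsymbol{v} \in \mathrm{V}$, integration by parts together with $\boldsymbol{u} \cdot \boldsymbol{n} = \boldsymbol{v} \cdot \boldsymbol{n} = 0$ on $\Gamma_{\mathrm{Nav}}$ and the Navier tangential condition $2 \nu \boldsymbol{n}^t \varepsilon(\boldsymbol{u}) \tau^i + \beta \boldsymbol{u} \cdot \tau^i = 0$ collapses all Nitsche surface terms and the grad-div contribution, leaving $\mathcal{A}_h^{\mathrm{NS}, \boldsymbol{u}}(\boldsymbol{u}, p; \boldsymbol{v}, q) = (\boldsymbol{f}, \boldsymbol{v})_\Omega + \sum_K \tau(\boldsymbol{f}, -2\nu \nabla \cdot \varepsilon(\boldsymbol{v}) + \boldsymbol{u} \cdot \nabla \boldsymbol{v} - \nabla q)_K$. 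For the discrete term, I split off the $(\boldsymbol{v} - \boldsymbol{v}_h, q)$ piece, which directly produces term~3 of the stated inequality once I change the linearization point from $\boldsymbol{u}$ to $\boldsymbol{u}_h$ (i.e. $\mathcal{A}_h^{\mathrm{NS}, \boldsymbol{u}_h}$), and invoke Galerkin orthogonality of \eqref{DSF} on the remaining $(\boldsymbol{v}_h, 0)$-tested piece, yielding $\mathcal{A}_h^{\mathrm{NS}, \boldsymbol{u}_h}(\boldsymbol{u}_h, p_h; \boldsymbol{v}_h, 0) = (\boldsymbol{f}, \boldsymbol{v}_h) + \sum_K \tau(\boldsymbol{f}, -2\nu \nabla \cdot \varepsilon(\boldsymbol{v}_h) + \boldsymbol{u}_h \cdot \nabla \boldsymbol{v}_h)_K$. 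The advection-linearization discrepancy is handled by the telescoping identity $\boldsymbol{u} \cdot \nabla \boldsymbol{v} - \boldsymbol{u}_h \cdot \nabla \boldsymbol{v}_h = (\boldsymbol{u} - \boldsymbol{u}_h) \cdot \nabla \boldsymbol{v} + \boldsymbol{u}_h \cdot \nabla (\boldsymbol{v} - \boldsymbol{v}_h)$ applied inside the SUPG bilinear stabilization.

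Finally, I isolate the data oscillation through the splitting $\boldsymbol{f} = \boldsymbol{f}_h + (\boldsymbol{f} - \boldsymbol{f}_h)$ combined with the $L^2$-orthogonality $(\boldsymbol{f} - \boldsymbol{f}_h, \boldsymbol{w}_h)_\Omega = 0$ for all $\boldsymbol{w}_h \in \mathrm{V}_h$; this converts $(\boldsymbol{f}, \boldsymbol{v}) - (\boldsymbol{f}, \boldsymbol{v}_h)$ into the pair $(\boldsymbol{f} - \boldsymbol{f}_h, \boldsymbol{v}) + (\boldsymbol{f}_h, \boldsymbol{v} - \boldsymbol{v}_h)$ (terms~1 and~2), while the $\tau$-weighted consistency difference decomposes into an oscillation piece (term~5), an advection-linearization piece $\sum_K \tau(\boldsymbol{f}_h, (\boldsymbol{u} - \boldsymbol{u}_h) \cdot \nabla \boldsymbol{v})_K$ (term~6), and a Cl\'ement-error piece $\sum_K \tau(\boldsymbol{f}_h, -2\nu \nabla \cdot \varepsilon(\boldsymbol{v} - \boldsymbol{v}_h) + \boldsymbol{u}_h \cdot \nabla(\boldsymbol{v} - \boldsymbol{v}_h) - \nabla q)_K$ (term~7). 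The main obstacle is precisely this reassembly: the three independent perturbations --- data oscillation $\boldsymbol{f} - \boldsymbol{f}_h$, Cl\'ement error $\boldsymbol{v} - \boldsymbol{v}_h$, and the linearization shift from $\boldsymbol{u}$ to $\boldsymbol{u}_h$ in the SUPG argument --- all enter bilinearly inside the tensor stabilization $\tau(\cdot)(\cdot)$, so careful bookkeeping is required to ensure that every cross-term recombines exactly into the six terms on the right-hand side without spurious residues. The smallness assumption $\|\boldsymbol{u}\|_{1,\infty} < M$ is indispensable both for activating Theorem \ref{GIS} with $\tilde{\boldsymbol{u}} = \boldsymbol{u}$ and for controlling the linearization remainders in a norm compatible with the energy triple-bar norm.
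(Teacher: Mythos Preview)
Your overall architecture matches the paper's: conforming/non-conforming splitting plus Lemma~\ref{SRL1}, global inf-sup from Theorem~\ref{GIS}, consistency of the exact solution, Galerkin orthogonality tested against the Cl\'ement interpolant, and add/subtract of $\boldsymbol{f}_h$. The one structural difference is the linearization point in Theorem~\ref{GIS}: the paper takes $\tilde{\boldsymbol{u}}=\boldsymbol{u}_h$, whereas you take $\tilde{\boldsymbol{u}}=\boldsymbol{u}$. With the paper's choice the shift occurs on the \emph{continuous} side via the identity $\mathcal{A}_h^{\mathrm{NS},\boldsymbol{u}_h}(\boldsymbol{u},p;\boldsymbol{v},q)=\mathcal{A}_h^{\mathrm{NS},\boldsymbol{u}}(\boldsymbol{u},p;\boldsymbol{v},q)-(\boldsymbol{e}^{\boldsymbol{u}}\cdot\nabla\boldsymbol{u},\boldsymbol{v})$, and the remainder $(\boldsymbol{e}^{\boldsymbol{u}}\cdot\nabla\boldsymbol{u},\boldsymbol{v})\le C_1 M\,\|\boldsymbol{e}^{\boldsymbol{u}}\|_{1,\mathcal{K}_h}$ is absorbed precisely because the hypothesis bounds $\|\nabla\boldsymbol{u}\|_\infty$; this is where the constant $C/2$ (rather than $C$) on the left originates.

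Your route instead forces the linearization change on the \emph{discrete} side, $\mathcal{A}_h^{\mathrm{NS},\boldsymbol{u}}(\boldsymbol{u}_h,p_h;\cdot)\to\mathcal{A}_h^{\mathrm{NS},\boldsymbol{u}_h}(\boldsymbol{u}_h,p_h;\cdot)$, and the Galerkin-level remainder is then $(\boldsymbol{e}^{\boldsymbol{u}}\cdot\nabla\boldsymbol{u}_h,\boldsymbol{v})$. Bounding this by $CM\,\|\boldsymbol{e}^{\boldsymbol{u}}\|$ would require control of $\|\nabla\boldsymbol{u}_h\|_\infty$, which the hypothesis $\|\boldsymbol{u}\|_{1,\infty}<M$ does not supply; your telescoping identity handles the SUPG test-function piece but not this bulk convection remainder. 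Swapping to $\tilde{\boldsymbol{u}}=\boldsymbol{u}_h$ as the paper does removes the difficulty. A second minor point: the $L^2$-orthogonality $(\boldsymbol{f}-\boldsymbol{f}_h,\boldsymbol{w}_h)=0$ you invoke is not assumed in the paper; the decomposition into terms~1 and~2 follows from the plain add/subtract $\int_\Omega\boldsymbol{f}\,\boldsymbol{v}=\int_\Omega(\boldsymbol{f}-\boldsymbol{f}_h)\boldsymbol{v}+\int_\Omega\boldsymbol{f}_h\boldsymbol{v}$ together with the discrete equation~\eqref{DSF} (which is stated with $\boldsymbol{f}$, not $\boldsymbol{f}_h$, on the right).
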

\begin{proof}
    Using $\boldsymbol{u}_h = \boldsymbol{u}_h^c + \boldsymbol{u}_h^r$, $\boldsymbol{e}_c^{\boldsymbol{u}} = \boldsymbol{u} - \boldsymbol{u}_h^c$ and the triangle inequality implies 
   $$
     \left|\!\left|\!\left|( \boldsymbol{e^u} , e^p) \right|\!\right|\!\right| \leq \left|\!\left|\!\left| (\boldsymbol{e}_c^{\boldsymbol{u}} , e^p )\right|\!\right|\!\right| + \|\boldsymbol{u}_h^r\|_{1,\mathcal{K}_h} \leq  \left|\!\left|\!\left| (\boldsymbol{e}_c^{\boldsymbol{u}}, e^p) \right|\!\right|\!\right| + C_r \left( \sum_{K \in \mathcal{K}_h} \boldsymbol{R}_{J_K}^2 \right)^{1/2}  
   $$
   where $(\boldsymbol{e}_c^{\boldsymbol{u}}, e^p) \in\mathrm{V}\times \Pi$. Then, Theorem \ref{GIS} gives 
   \begin{align*}
       C \left|\!\left|\!\left| (\boldsymbol{e_c^u}, e^p) \right|\!\right|\!\right| & \leq\mathcal{A}_h^{\mathrm{NS},\boldsymbol{u}_h} (\boldsymbol{e_c^u}, e^p; \boldsymbol{v},q)  \\ & \leq  \mathcal{A}_h^{\mathrm{NS},\boldsymbol{u}_h} (\boldsymbol{e^u}, e^p; \boldsymbol{v},q) + \mathcal{A}_h^{\mathrm{NS},\boldsymbol{u}_h} (\boldsymbol{u}_h^r, 0; \boldsymbol{v},q) \\ & \leq \mathcal{A}_h^{\mathrm{NS},\boldsymbol{u}_h} (\boldsymbol{e^u}, e^p; \boldsymbol{v},q) + C_r \left( \sum_{K \in \mathcal{K}_h} \boldsymbol{R}_{J_K}^2 \right)^{1/2}, 
   \end{align*}
   with $\left|\!\left|\!\left|(\boldsymbol{v},q) \right|\!\right|\!\right| \leq   1 $. Owing to the relation 
   $$
  \mathcal{A}_h^{\mathrm{NS},\boldsymbol{u}_h}(\boldsymbol{u},p;\boldsymbol{v},q) = \mathcal{A}_h^{\mathrm{NS},\boldsymbol{u}}(\boldsymbol{u},p;\boldsymbol{v},q)- \left( \boldsymbol{e^u} \cdot \nabla \boldsymbol{u}, \boldsymbol{v}\right)
   $$
   we then have 
   \begin{align*}
       C \left|\!\left|\!\left|( \boldsymbol{e^u}, e^p ) \right|\!\right|\!\right| & \leq \left|\!\left|\!\left|( \boldsymbol{e}_c^{\boldsymbol{u}}, e^p ) \right|\!\right|\!\right| + C_r \left( \sum_{K \in \mathcal{K}_h} \Psi_{J_K} ^2 \right)^{1/2} 
    \\   & \leq  \mathcal{A}_h^{\mathrm{NS},\boldsymbol{u}}(\boldsymbol{u},p; \boldsymbol{v},q ) - \left( \boldsymbol{e^u} \cdot \nabla \boldsymbol{u}, \boldsymbol{v}\right) - \mathcal{A}_h^{\mathrm{NS},\boldsymbol{u}_h}(\boldsymbol{u}_h,p_h; \boldsymbol{v},q ) +(1+C) C_r \left( \sum_{K \in \mathcal{K}_h} \boldsymbol{R}_{J_K}^2 \right)^{1/2}
   \end{align*}
   while using the properties $ \left( \boldsymbol{e^u} \cdot \nabla \boldsymbol{u}, \boldsymbol{v}\right) \leq C_1 M \|\boldsymbol{e^u}\|_{1, \mathcal{K}_h}$ yields the bounds
   $$
  C \left|\!\left|\!\left|(\boldsymbol{e^u}, e^p )\right|\!\right|\!\right|  \leq \mathcal{A}_h^{\mathrm{NS},\boldsymbol{u}}(\boldsymbol{u},p; \boldsymbol{v},q ) - \mathcal{A}_h^{\mathrm{NS},\boldsymbol{u}_h}(\boldsymbol{u}_h,p_h; \boldsymbol{v},q ) +(1+C) C_r \left( \sum_{K \in \mathcal{K}_h} \boldsymbol{R}_{J_K}^2 \right)^{1/2} - C_1 M  \left|\!\left|\!\left|(\boldsymbol{e^u}, e^p )\right|\!\right|\!\right|.
   $$
   Moreover, from \eqref{bb} we have 
   \begin{align*}
C_2 \left|\!\left|\!\left| ( \boldsymbol{e^u}, e^p ) \right|\!\right|\!\right| & \leq \int_{\Omega} \boldsymbol{f} \boldsymbol{v} + \sum_{K \in \mathcal{K}_h}  \tau \left( \boldsymbol{f} ,-2 \nu \nabla \cdot \varepsilon (\boldsymbol{v}) + \boldsymbol{u} \cdot \nabla \boldsymbol{v} - \nabla q \right) \\ & \quad  -  \mathcal{A}_h^{\mathrm{NS},\boldsymbol{u}_h}(\boldsymbol{u}_h,p_h; \boldsymbol{v},q )  +(1+C) C_r \left( \sum_{K \in \mathcal{K}_h} \boldsymbol{R}_{J_K}^2 \right)^{1/2}.
\end{align*}
By adding and subtracting \( \int_{\Omega} \boldsymbol{f}_h \boldsymbol{v} \) and \( \mathcal{A}_h^{\mathrm{NS},\boldsymbol{u}_h}\left(\boldsymbol{u}_h, p_h; \boldsymbol{v}_h, 0 \right) \), respectively, and utilizing the definition of the discrete weak formulation \eqref{DSF}, we obtain:
  \begin{align*}
  \frac{C}{2} \left|\!\left|\!\left| ( \boldsymbol{e^u} , e^p) \right|\!\right|\!\right| & \leq \int_{\Omega} ( \boldsymbol{f} - \boldsymbol{f}_h ) \cdot \boldsymbol{v}  + \int_{\Omega} \boldsymbol{f}_h ( \boldsymbol{v} - \boldsymbol{v}_h )   - \mathcal{A}_h^{\mathrm{NS},\boldsymbol{u}_h}(\boldsymbol{u}_h,p_h; \boldsymbol{v}-\boldsymbol{v}_h,q ) + (1+C) C_r \left( \sum_{K \in \mathcal{K}_h} \boldsymbol{R}_{J_K}^2  \right)^{1/2} \\& \quad  - \sum_{K \in \mathcal{K}_h}  \tau  \left( \boldsymbol{f}_h, - 2 \nu \nabla \cdot \varepsilon (  \boldsymbol{v}_h)+ \boldsymbol{u}_h \cdot \nabla  \boldsymbol{v}_h \right) + \sum_{K \in \mathcal{K}_h}  \tau \left( \boldsymbol{f} ,-2 \nu \nabla \cdot \varepsilon (\boldsymbol{v}) + \boldsymbol{u} \cdot \nabla \boldsymbol{v} - \nabla q \right) ,
 \end{align*}
 By adding and subtracting $\sum_{K \in \mathcal{K}_h}  \tau \left( \boldsymbol{f}_h ,-2 \nu \nabla \cdot \varepsilon (\boldsymbol{v}) + \boldsymbol{u} \cdot \nabla \boldsymbol{v} - \nabla q \right)$ and $\sum_{K \in \mathcal{K}_h}  \tau \left( \boldsymbol{f}_h, \boldsymbol{u}_h \cdot \nabla \boldsymbol{v} \right)$ in above equation, we get 
      \begin{align*}
  \frac{C}{2} \left|\!\left|\!\left| ( \boldsymbol{e^u} , e^p) \right|\!\right|\!\right| & \leq \int_{\Omega} ( \boldsymbol{f} - \boldsymbol{f}_h ) \cdot \boldsymbol{v}  + \int_{\Omega} \boldsymbol{f}_h ( \boldsymbol{v} - \boldsymbol{v}_h )   - \mathcal{A}_h^{\mathrm{NS},\boldsymbol{u}_h}(\boldsymbol{u}_h,p_h; \boldsymbol{v}-\boldsymbol{v}_h,q ) + (1+C) C_r \left( \sum_{K \in \mathcal{K}_h} \boldsymbol{R}_{J_K}^2  \right)^{1/2} \\& \quad  + \sum_{K \in \mathcal{K}_h}  \tau \left( \boldsymbol{f} - \boldsymbol{f}_h ,-2 \nu \nabla \cdot \varepsilon (\boldsymbol{v}) + \boldsymbol{u} \cdot \nabla \boldsymbol{v} - \nabla q \right) + \sum_{K \in \mathcal{K}_h}  \tau  \left( \boldsymbol{f}_h, (\boldsymbol{u}- \boldsymbol{u}_h) \cdot \nabla \boldsymbol{v} \right) \\ & \quad + \sum_{K \in \mathcal{K}_h}  \tau  \left( \boldsymbol{f}_h, - 2 \nu \nabla \cdot \varepsilon ( \boldsymbol{v}- \boldsymbol{v}_h)+ \boldsymbol{u}_h \cdot \nabla  ( \boldsymbol{v} - \boldsymbol{v}_h) - \nabla q \right) ,
 \end{align*}
\end{proof}
\begin{lemma}\label{SRL3}
    For $\left( \boldsymbol{v},q \right) \in \tilde{\mathrm{V}} \times \tilde{\Pi} $, there is $(\boldsymbol{u}_h, p_h) \in \mathrm{V}_h \times \Pi_h$ such that 
    \begin{align}\label{SRL3_eq1}
  &  \int_{\Omega} ( \boldsymbol{f} - \boldsymbol{f}_h ) \cdot \boldsymbol{v}  + \int_{\Omega} \boldsymbol{f}_h ( \boldsymbol{v} - \boldsymbol{v}_h )   - \mathcal{A}_h^{\mathrm{NS},\boldsymbol{u}_h}(\boldsymbol{u}_h,p_h; \boldsymbol{v}-\boldsymbol{v}_h,q )   + \sum_{K \in \mathcal{K}_h}  \tau \left( \boldsymbol{f} - \boldsymbol{f}_h ,-2 \nu \nabla \cdot \varepsilon (\boldsymbol{v}) + \boldsymbol{u} \cdot \nabla \boldsymbol{v} - \nabla q \right) \nonumber  \\&  + \sum_{K \in \mathcal{K}_h}  \tau  \left( \boldsymbol{f}_h, - 2 \nu \nabla \cdot \varepsilon ( \boldsymbol{v}- \boldsymbol{v}_h)+ \boldsymbol{u}_h \cdot \nabla  ( \boldsymbol{v} - \boldsymbol{v}_h) - \nabla q \right)
 \leq C \max\left\{\frac{m_K}{8}, \frac{\|\boldsymbol{u}_h\|_{0,\Omega}}{\nu} \right\}  \left( \Psi + \| \boldsymbol{f}-\boldsymbol{f}_h\|_{0,\Omega}   \right.\nonumber   \\ & \left. + \sum_{K \in \mathcal{K}_h} \frac{h_K^2}{\nu^{3/2}} \| \nabla \cdot \boldsymbol{u}_h \|_{0,K} \right) \left|\!\left|\!\left| \boldsymbol{v}, q \right|\!\right|\!\right|.
    \end{align}
\end{lemma}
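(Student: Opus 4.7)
The plan is to expose the element and facet residuals that define $\Psi$ by applying element-wise integration by parts to the Galerkin contributions in $\mathcal{A}_h^{\mathrm{NS},\boldsymbol{u}_h}(\boldsymbol{u}_h,p_h;\boldsymbol{v}-\boldsymbol{v}_h,q)$, and then to estimate the resulting quantities against the triple norm via Cl\'ement interpolation together with the two structural bounds on $\tau$ collected in the remark following the definition of the stabilization parameters.

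First I would integrate by parts the viscous and pressure couplings on each $K\in\mathcal{K}_h$ to shift derivatives off the test $\boldsymbol{v}-\boldsymbol{v}_h$. The resulting volume contribution combines with $\int_\Omega\boldsymbol{f}_h\cdot(\boldsymbol{v}-\boldsymbol{v}_h)$ to form $\sum_K(\boldsymbol{R}_K,\boldsymbol{v}-\boldsymbol{v}_h)_K$, while the boundary contributions on interior faces reassemble, using the continuity of $\boldsymbol{v}-\boldsymbol{v}_h$, into $\sum_{E\in\mathcal{E}_\Omega}\int_E\boldsymbol{R}_E\cdot(\boldsymbol{v}-\boldsymbol{v}_h)\,ds$. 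On $\Gamma_{\mathrm{Nav}}$, the normal component of the boundary flux cancels exactly with the Nitsche consistency term in $\mathcal{A}_h^{\mathrm{NS}}$, while the tangential component combines with the slip term $\beta(\boldsymbol{\tau}^i\cdot\boldsymbol{u}_h)$ to yield the slip residual $\boldsymbol{R}_{J_K}^1$, and the penalty $\gamma\nu h_E^{-1}(\boldsymbol{u}_h\cdot\boldsymbol{n})(w\cdot\boldsymbol{n})$ produces $\boldsymbol{R}_{J_K}^2$ after Cauchy--Schwarz. I would rewrite the SUPG strong residual as $-2\nu\nabla\cdot\varepsilon(\boldsymbol{u}_h)+\boldsymbol{u}_h\cdot\nabla\boldsymbol{u}_h+\nabla p_h=\boldsymbol{f}_h-\boldsymbol{R}_K$; the $\boldsymbol{f}_h$ contribution then cancels exactly with the SUPG piece of the fifth term on the left-hand side, leaving only $\sum_K\tau(\boldsymbol{R}_K,-2\nu\nabla\cdot\varepsilon(\boldsymbol{v}-\boldsymbol{v}_h)+\boldsymbol{u}_h\cdot\nabla(\boldsymbol{v}-\boldsymbol{v}_h)-\nabla q)$ to be absorbed into $\Psi_{R_K}$ via the bound on $\tau$.

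Next I would apply Cauchy--Schwarz together with the Cl\'ement estimates $\|\boldsymbol{v}-\boldsymbol{v}_h\|_{0,K}\lesssim h_K|\boldsymbol{v}|_{1,\omega_K}$ and $\|\boldsymbol{v}-\boldsymbol{v}_h\|_{0,E}\lesssim h_E^{1/2}|\boldsymbol{v}|_{1,\omega_E}$, Lemma~\ref{local_inverse} for the required inverse bounds, and finite overlap of Cl\'ement patches, so that each residual is paired with a $\sqrt{\nu}\,|\boldsymbol{v}|_{1,\omega_K}$ factor that telescopes into $|||(\boldsymbol{v},q)|||$; this yields the $\Psi_{R_K}$, $\Psi_{R_E}$ and $\Psi_{J_K}$ contributions bundled into $\Psi$. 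The data-oscillation term $(\boldsymbol{f}-\boldsymbol{f}_h,\boldsymbol{v})$ is handled directly by Cauchy--Schwarz and Poincar\'e, giving $\|\boldsymbol{f}-\boldsymbol{f}_h\|_{0,\Omega}\,|||(\boldsymbol{v},q)|||$. The divergence pairing $(q,\nabla\cdot\boldsymbol{u}_h)_K$ together with the grad-div stabilization $\sum_K\delta(\nabla\cdot\boldsymbol{u}_h,\nabla\cdot(\boldsymbol{v}-\boldsymbol{v}_h))_K$ is where the extra $\sum_K h_K^2/\nu^{3/2}\,\|\nabla\cdot\boldsymbol{u}_h\|_{0,K}$ contribution originates, bounded by Cauchy--Schwarz after matching the mesh and viscosity weights against $|||(\boldsymbol{v},q)|||$ via a patch-localized Poincar\'e argument on $q\in\tilde{\Pi}$.

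The source of the $\max\{m_K/8,\,\|\boldsymbol{u}_h\|_{0,\Omega}/\nu\}$ prefactor is the surviving SUPG remainder and the data-stabilization $\sum_K\tau(\boldsymbol{f}-\boldsymbol{f}_h,\cdot)$: each occurrence of $\tau$ is bounded either by $\tau\leq m_K h_K^2/(8\nu)$ when the partner is a viscous or pressure-gradient term, or by $\tau\,|\boldsymbol{u}_h|_p\leq h_K/2$ when the partner involves $\boldsymbol{u}_h\cdot\nabla$; the maximum of the two resulting constants is exactly the stated prefactor. The main obstacle is the bookkeeping of these SUPG cross-terms: one must select the correct regime of $\tau$ in each pairing so that the bounds collapse into $\Psi$ and the two data terms on the right without generating spurious powers of $h_K$ or $\nu$, and must verify the exact cancellation of the $\tau(\boldsymbol{f}_h,\cdot)$ pieces between the stabilization part of $\mathcal{A}_h^{\mathrm{NS},\boldsymbol{u}_h}$ and the fifth term on the left-hand side. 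A secondary delicate point is assembling $\int_{\partial K}$ contributions consistently on $\Gamma_{\mathrm{Nav}}$, so that the tangential slip residual and the Nitsche penalty produce precisely $\boldsymbol{R}_{J_K}^1$ and $\boldsymbol{R}_{J_K}^2$ without any leftover normal-direction coupling.
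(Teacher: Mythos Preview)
Your proposal is correct and follows essentially the same route as the paper: elementwise integration by parts to expose $\boldsymbol{R}_K$, $\boldsymbol{R}_E$ and the Navier-boundary residuals, followed by Cauchy--Schwarz with Cl\'ement estimates and the uniform bound $\tau\le m_K h_K^2/(8\nu)$ on the SUPG remainder; the paper organizes this into five pieces $T_1,\dots,T_5$ matching your volume, divergence, facet/boundary, SUPG and grad-div contributions. One small discrepancy: the paper obtains the $\|\boldsymbol{u}_h\|_{0,\Omega}/\nu$ part of the prefactor not via $\tau|\boldsymbol{u}_h|_p\le h_K/2$ but by combining $\tau\le m_K h_K^2/(8\nu)$ with the inverse estimate $\|\boldsymbol{u}_h\|_{\infty,K}\le Ch_K^{-1}\|\boldsymbol{u}_h\|_{0,K}$ from Lemma~\ref{H1Hinfty}, and it also retains (and bounds by $\boldsymbol{R}_{J_K}^2$) the surviving $\theta$-Nitsche term $\theta\int_E\boldsymbol{n}^t(2\nu\varepsilon(\boldsymbol{v}-\boldsymbol{v}_h)-qI)\boldsymbol{n}(\boldsymbol{n}\cdot\boldsymbol{u}_h)$, which your sketch does not mention explicitly.
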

\begin{proof}
    Using integration  by parts elementwise gives 
    \begin{align}\label{sum_eq}
             &  \int_{\Omega} ( \boldsymbol{f} - \boldsymbol{f}_h ) \cdot \boldsymbol{v}  + \int_{\Omega} \boldsymbol{f}_h ( \boldsymbol{v} - \boldsymbol{v}_h )   - \mathcal{A}_h^{\mathrm{NS},\boldsymbol{u}_h}(\boldsymbol{u}_h,p_h; \boldsymbol{v}-\boldsymbol{v}_h,q )   + \sum_{K \in \mathcal{K}_h}  \tau \left( \boldsymbol{f} - \boldsymbol{f}_h ,-2 \nu \nabla \cdot \varepsilon (\boldsymbol{v}) + \boldsymbol{u} \cdot \nabla \boldsymbol{v} - \nabla q \right) \nonumber  \\&  + \sum_{K \in \mathcal{K}_h}  \tau  \left( \boldsymbol{f}_h, - 2 \nu \nabla \cdot \varepsilon ( \boldsymbol{v}- \boldsymbol{v}_h)+ \boldsymbol{u}_h \cdot \nabla  ( \boldsymbol{v} - \boldsymbol{v}_h) - \nabla q \right)  = T_1 + T_2 + T_3 + T_4 +T_5, 
    \end{align}
    where we define the terms
    \begin{align*}
    &T_1 = \sum_{K \in \mathcal{K}_h} \int_K \left( \boldsymbol{f}_h + 2 \nu \nabla \cdot \varepsilon ( \boldsymbol{u}_h) - \nabla p_h - \boldsymbol{u}_h \cdot \nabla \boldsymbol{u}_h  \right) \left( \boldsymbol{v} - \boldsymbol{v}_h\right) +\int_\Omega \left( \boldsymbol{f} - \boldsymbol{f}_h\right) \boldsymbol{v} \\ &
    T_2 = \sum_{K \in \mathcal{K}_h} \int_K (\nabla \cdot \boldsymbol{u}_h) q  \\&
    T_3 = \sum_{K \in \mathcal{K}_h} \int_{\partial K_{in} } \bigg((p_h \boldsymbol{I} - 2 \nu \varepsilon (\boldsymbol{u}_h))   \cdot \boldsymbol{n} \bigg) (\boldsymbol{v}-\boldsymbol{v}_h) +  \sum_{E \in \Gamma_{\mathrm{Nav}}} \left[ \theta \int_E \boldsymbol{n}^t \left( 2 \nu \varepsilon (\boldsymbol{v}-\boldsymbol{v}_h) - q I\right) \boldsymbol{n} (\boldsymbol{n} \cdot \boldsymbol{u}_h)  \, ds  \right.  \\& \qquad \qquad \left. - \int_E \{\boldsymbol{n}^t(2 \nu \varepsilon (\boldsymbol{u}_h) -p_h I) \cdot \tau^i + \beta \sum_{i=1}^{d-1} (\tau^i \cdot \boldsymbol{u}_h) \} (\boldsymbol{v}- \boldsymbol{v}_h) \cdot \tau^i   \, ds - \frac{\gamma \nu}{h_E} \int_E (\boldsymbol{u}_h \cdot \boldsymbol{n})(\boldsymbol{v}-\boldsymbol{v}_h) \cdot \boldsymbol{n}  \, ds \right] \\ & 
    T_4 = \sum_{K \in \mathcal{K}_h} \int_K  \tau  (\boldsymbol{f}_h + 2\nu \nabla \cdot \varepsilon  (\boldsymbol{u}_h) - \boldsymbol{u}_h \cdot
    \nabla \boldsymbol{u}_h - \nabla p_h ) (-2 \nu \nabla \cdot \varepsilon (\boldsymbol{v} - \boldsymbol{v}_h) + \boldsymbol{u}_h \cdot \nabla (\boldsymbol{v}- \boldsymbol{v}_h ) - \nabla q) \\ & \qquad + \sum_{K \in \mathcal{K}_h}  \int_K \tau \left( \boldsymbol{f} - \boldsymbol{f}_h \right) \left(-2 \nu \nabla \cdot \varepsilon (\boldsymbol{v}) + \boldsymbol{u} \cdot \nabla \boldsymbol{v} - \nabla q \right) \\ & 
    T_5 =  \sum_{K \in \mathcal{K}_h} \int_K  \delta \nabla \cdot \boldsymbol{u}_h \nabla \cdot (\boldsymbol{v} - \boldsymbol{v}_h)
    \end{align*}
Applying the Cauchy-Schwarz inequality and Cl\'{e}ment interpolation estimates \cite{MR400739} to $T_1$ implies
\begin{align*}
    T_1 & \leq \left( \sum_{K \in \mathcal{K}_h} \frac{h_K^2}{\nu} \| \boldsymbol{R}_K \|_{0,K}^2 \right)^{1/2} \left( \sum_{K \in \mathcal{K}_h} \nu h_K^{-2} \|\boldsymbol{v} - \boldsymbol{v}_h \|_{0,K}^2 \right)^{1/2} +\int_\Omega \left( \boldsymbol{f} - \boldsymbol{f}_h\right) \boldsymbol{v}_h\\ &  \leq C \left( \sum_{K \in \mathcal{K}_h} \frac{h_K^2}{\nu} \| \boldsymbol{R}_K \|_{0,K}^2 \right)^{1/2} \| \nabla \boldsymbol{v}\|_{0,\Omega} +\| \boldsymbol{f} -\boldsymbol{f}_h \|_{0,\Omega} \|\boldsymbol{v} \|_{0,\Omega}.
\end{align*}
The bound for $T_2$ is defined as 
$ T_2 \leq \| \nabla \cdot \boldsymbol{u}_h\|_{0,\Omega} \| q\|_{0,\Omega}$.

Next, we rewrite \( T_3 \) in terms of a sum over interior facets and then apply the Cauchy-Schwarz inequality, the trace inequality, and the inverse inequality, we have
\begin{align*}
    T_3 
    &\leq \left( \sum_{E \in \mathcal{E}_{\Omega}}  \frac{h_E}{\nu} \|\boldsymbol{R}_E\|^2_{0,E}\right)^{1/2} \left( \sum_{E \in \mathcal{E}_{\Omega}} \frac{\nu}{h_E}  \|\boldsymbol{v}-\boldsymbol{v}_h\|_{0,E}^2 \right)^{1/2} + \theta \left(\sum_{E \in \mathcal{E}_{\mathrm{Nav}}} \frac{\nu}{h_E} \|\boldsymbol{v} - \boldsymbol{v}_h \|^2_{0,E} \right)^{1/2} \\
    &\quad \times  \left( \sum_{E \in \mathcal{E}_{\mathrm{Nav}}} \nu  \| \boldsymbol{u}_n \cdot \boldsymbol{n} \|^2_{0,E} \right)^{1/2}  + \| \boldsymbol{u}_h \cdot \boldsymbol{n} \|_{0,\Gamma_\mathrm{Nav}} \|q\|_{0,\Gamma_\mathrm{Nav}}  + \left( \sum_{E \in \mathcal{E}_\mathrm{Nav}} \frac{h_E}{\nu} \| \boldsymbol{R}_{J_K}^1 \|_{0,E}^2 \right)^{1/2} \\
    &\quad \times \left( \sum_{E \in \mathcal{E}_{\mathrm{Nav}}} \frac{\nu}{h_E}  \|\boldsymbol{v} - \boldsymbol{v}_h \|_{0,E}^2\right)^{1/2}  + \left(\frac{\gamma^2 \nu} {h_E} \sum_{E \in \mathcal{E}_\mathrm{Nav}}  \|\boldsymbol{u}_h \cdot \boldsymbol{n} \|^2_{0,E} \right)^{1/2}  \left(\sum_{E \in \mathcal{E}_\mathrm{Nav}}  \frac{\nu}{h_E}  \|\boldsymbol{v} - \boldsymbol{v}_h \|_{0,E}^2 \right)^{1/2} \\
    &\leq \left(\sum_{E \in \mathcal{E}_{\Omega}}  \frac{h_E}{\nu} \|\boldsymbol{R}_E\|^2_{0,E} + \sum_{E \in \mathcal{E}_\mathrm{Nav}} \bigg( \frac{h_E}{\nu} \| \boldsymbol{R}_{J_K}^1 \|_{0,E}^2 + \| \boldsymbol{R}_{J_K}^2\|^2_{0,E} \bigg)\right)^{1/2}  \left( \| \nabla \boldsymbol{v}\|_{0,\Omega}^2 + \|q\|^2_{0,\Omega}  \right)^{1/2}
\end{align*}
To establish the bound for \(T_4\), we apply the Cauchy-Schwarz inequality, Lemma \ref{H1Hinfty}, and the given condition \(\tau \leq \frac{m_K h_K^2}{8 \nu}\). This yields
\begin{align*}
T_4 & \leq \sum_{K \in \mathcal{K}_h} \frac{m_K h_K^2}{8 \nu} \|\boldsymbol{R}_K\|_{0,K} \left( h_K^{-2} \|\boldsymbol{v} - \boldsymbol{v}_h\|_{0,K} + h_K^{-1} \|\boldsymbol{u}_h\|_{\infty,K} \|\boldsymbol{v} - \boldsymbol{v}_h\|_{0,K} + \|\nabla q\|_{0,K} \right) \\
&  + \sum_{K \in \mathcal{K}_h} \frac{m_K h_K^2}{8 \nu} \|\boldsymbol{f} - \boldsymbol{f}_h\|_{0,K} \| - 2 \nu \nabla \cdot \varepsilon (\boldsymbol{v}) + \boldsymbol{u} \cdot \nabla \boldsymbol{v} - \nabla q \|_{0,K} \\
& \leq \sum_{K \in \mathcal{K}_h} \frac{m_K h_K^2}{8 \nu} \|\boldsymbol{R}_K\|_{0,K} \left( h_K^{-2} \|\boldsymbol{v} - \boldsymbol{v}_h\|_{0,K} + h_K^{-2} \|\boldsymbol{u}_h\|_{0,K} \|\boldsymbol{v} - \boldsymbol{v}_h\|_{0,K} + \|\nabla q\|_{0,K} \right) \\
&  + \sum_{K \in \mathcal{K}_h} \frac{m_K h_K^2}{8 \nu} \|\boldsymbol{f} - \boldsymbol{f}_h\|_{0,K} \| - 2 \nu \nabla \cdot \varepsilon (\boldsymbol{v}) + \boldsymbol{u} \cdot \nabla \boldsymbol{v} - \nabla q \|_{0,K} \\
& \leq C \max\left\{\frac{m_K}{8}, \frac{\|\boldsymbol{u}_h\|_{0,\Omega}}{\nu} \right\} \Bigg[ \left( \sum_{K \in \mathcal{K}_h} \frac{h_K^2}{\nu} \|\boldsymbol{R}_K\|_{0,K}^2 \right)^{1/2} \\
&  \times \left( \sum_{K \in \mathcal{K}_h} h_K^{-2} \nu \|\boldsymbol{v} - \boldsymbol{v}_h\|_{0,K}^2 + \sum_{K \in \mathcal{K}_h} h_K^{-2} \nu \|\boldsymbol{v} - \boldsymbol{v}_h\|_{0,K}^2 + \sum_{K \in \mathcal{K}_h} \frac{h_K^2}{\nu} \|\nabla q\|_{0,K}^2 \right)^{1/2} \\
&  + \left( \sum_{K \in \mathcal{K}_h} \frac{h_K^2}{\nu} \|\boldsymbol{f} - \boldsymbol{f}_h\|_{0,K}^2 \right)^{1/2} \left( \|\nabla \boldsymbol{v}\|^2_{0,\Omega} + \sum_{K \in \mathcal{K}_h} \frac{h_K^2}{\nu} \|\nabla q\|_{0,K}^2 \right)^{1/2} \Bigg] \\
& \leq C \max\left\{\frac{m_K}{8}, \frac{\|\boldsymbol{u}_h\|_{0,\Omega}}{\nu} \right\} \left( \sum_{K \in \mathcal{K}_h} \frac{h_K^2}{\nu} \|\boldsymbol{R}_K\|_{0,K}^2 + \frac{h_K^2}{\nu} \|\boldsymbol{f} - \boldsymbol{f}_h\|_{0,K}^2 \right)^{1/2} \\
&  \times \left( \|\nabla \boldsymbol{v}\|^2_{0,\Omega} + \sum_{K \in \mathcal{K}_h} \frac{h_K^2}{\nu} \|\nabla q\|_{0,K}^2 \right)^{1/2}.
\end{align*}
We need to calculate the bound for \(T_5\). Using the Cauchy-Schwarz inequality and the bound \(\delta \leq \frac{\lambda M^2 h_K^2 m_K^2}{4 \nu}\), we obtain
\begin{align*}
    T_5 & \leq  \sum_{K \in \mathcal{K}_h} \int_K \frac{\lambda M^2 m_K h_K^2}{4 \nu} \| \nabla \cdot \boldsymbol{u}_h \|_{0,K} \| \nabla \cdot (\boldsymbol{v} - \boldsymbol{v}_h) \|_{0,K} 
   \\ & \leq C \left( \frac{h_K^4}{\nu^3} \|\nabla \cdot \boldsymbol{u}_h \|^2_{0,K}\right)^{1/2} \left( \nu h_K^{-2} \| \boldsymbol{v}-\boldsymbol{v}_h \|^2_{0,K} \right)^{1/2}
   \\ & \leq C \left( \frac{h_K^4}{\nu^3} \|\nabla \cdot \boldsymbol{u}_h \|^2_{0,K}\right)^{1/2} \| \nabla \boldsymbol{v} \|_{0,\Omega}
\end{align*}
Finally, \eqref{SRL3_eq1} is obtained by combining the bounds derived for $T_1, T_2, T_3, T_4, T_5$ along with \eqref{sum_eq}.
\end{proof}
\begin{theorem}\label{reliability}
  Let $\left( \boldsymbol{u},p \right)$ be the regular solution to \eqref{1} and $\left( \boldsymbol{u}_h, p_h \right)$ a solution to \eqref{DSF}. Let $\Psi$ be the a  posteriori error estimator defined in \eqref{total_estimate}. If $\|\boldsymbol{u}\|_{1,\infty} < M $ for sufficiently small $M$, then the following estimate holds:
   \begin{align*}
   \left|\!\left|\!\left| (\boldsymbol{u}-\boldsymbol{u}_h , p -p_h)\right|\!\right|\!\right| & \leq C \max\left\{\frac{m_K}{8}, \frac{\|\boldsymbol{u}_h\|_{0,\Omega}}{\nu} \right\}  \left( \Psi + \| \boldsymbol{f}-\boldsymbol{f}_h\|_{0,\Omega} + \sum_{K \in \mathcal{K}_h} h_K \| \boldsymbol{f}-\boldsymbol{f}_h\|_{0,K} \right. \\ & \left. \quad  + \sum_{K \in \mathcal{K}_h} \frac{h_K^2}{\nu^{3/2}} \| \nabla \cdot \boldsymbol{u}_h \|_{0,K} \right),
  \end{align*}
    where $C>0$ is a constant independent of $h$.
    \begin{proof}
        Combining Lemmas \eqref{SRL2} and \eqref{SRL3} implies the stated result.
    \end{proof}
\end{theorem}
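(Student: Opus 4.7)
The plan is to derive the reliability bound by chaining the two preceding lemmas, which together form a complete abstract residual framework. The starting point is Lemma \ref{SRL2}, which already reduces the control of $\left|\!\left|\!\left| (\boldsymbol{e}^{\boldsymbol{u}},e^p)\right|\!\right|\!\right|$ to the evaluation of an expression involving the consistency defect $\mathcal{A}_h^{\mathrm{NS},\boldsymbol{u}_h}(\boldsymbol{u}_h,p_h;\boldsymbol{v}-\boldsymbol{v}_h,q)$, the data perturbation terms, and the jump contribution $\sum_K \boldsymbol{R}_{J_K}^2$. The hypothesis $\|\boldsymbol{u}\|_{1,\infty}<M$ with $M$ small is what absorbed the nonlinear transport contribution $(\boldsymbol{e}^{\boldsymbol{u}}\cdot\nabla\boldsymbol{u},\boldsymbol{v})$ on the left of that estimate, so we may assume it is available here.

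First I would fix a pair $(\boldsymbol{v},q)\in\tilde{\mathrm{V}}\times\tilde{\Pi}$ with $\left|\!\left|\!\left|(\boldsymbol{v},q)\right|\!\right|\!\right|\leq 1$ realizing (up to an arbitrarily small slack) the supremum in the global inf-sup inequality of Theorem \ref{GIS}, applied with $\tilde{\boldsymbol{u}}=\boldsymbol{u}_h$; the hypothesis $\|\boldsymbol{u}_h\|_{1,\mathcal{K}_h}\lesssim M$ that the inf-sup requires is a consequence of the stability Theorem \ref{stability} under the data assumptions already in force. Let $\boldsymbol{v}_h$ denote the Cl\'{e}ment interpolant of $\boldsymbol{v}$, whose local approximation properties are built into the bounds of Lemma \ref{SRL3}.

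Next, I would apply Lemma \ref{SRL2} to obtain
\[
\tfrac{C}{2}\left|\!\left|\!\left|(\boldsymbol{e}^{\boldsymbol{u}},e^p)\right|\!\right|\!\right|
\leq \mathcal{R}(\boldsymbol{v},\boldsymbol{v}_h,q)\;+\;(1+C)C_r\Bigl(\sum_{K\in\mathcal{K}_h}\boldsymbol{R}_{J_K}^2\Bigr)^{1/2},
\]
where $\mathcal{R}(\boldsymbol{v},\boldsymbol{v}_h,q)$ collects exactly the five quantities to which Lemma \ref{SRL3} applies. Inserting the bound of Lemma \ref{SRL3} and noting that by definition of $\Psi$ (see \eqref{total_estimate}) we have $\sum_K \boldsymbol{R}_{J_K}^2\leq \Psi^2$, the last term is absorbed into the $\Psi$ contribution on the right-hand side. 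Using $\left|\!\left|\!\left|(\boldsymbol{v},q)\right|\!\right|\!\right|\leq 1$ to cancel the factor on the right of Lemma \ref{SRL3}, and noting that $\|\boldsymbol{f}-\boldsymbol{f}_h\|_{0,\Omega}\|\boldsymbol{v}\|_{0,\Omega}$ together with the $\tau$-weighted data-oscillation term of Lemma \ref{SRL3} combine into $\|\boldsymbol{f}-\boldsymbol{f}_h\|_{0,\Omega}+\sum_K h_K\|\boldsymbol{f}-\boldsymbol{f}_h\|_{0,K}$ after invoking $\tau\lesssim h_K^2/\nu$ and a Poincar\'e inequality on $\boldsymbol{v}$, produces exactly the right-hand side stated in Theorem \ref{reliability}.

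I do not expect a genuine obstacle: the difficult nonlinear arguments have already been discharged inside Lemmas \ref{SRL2} and \ref{SRL3}. The only point that requires mild care is ensuring the constant $C$ in Theorem \ref{GIS} and the smallness threshold $M$ are simultaneously compatible with the nonlinear absorption used in Lemma \ref{SRL2}; this is guaranteed because both lemmas were proved under the same hypothesis $\|\boldsymbol{u}\|_{1,\infty}<M$. One small bookkeeping step is the identification of the maximum $\max\{m_K/8,\|\boldsymbol{u}_h\|_{0,\Omega}/\nu\}$ as the dominant constant, which is transparently inherited from the bound of $T_4$ in the proof of Lemma \ref{SRL3}. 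Taking the supremum over admissible $(\boldsymbol{v},q)$ then closes the argument.
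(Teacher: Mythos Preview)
Your proposal is correct and follows essentially the same approach as the paper: the paper's own proof consists of the single sentence ``Combining Lemmas \ref{SRL2} and \ref{SRL3} implies the stated result,'' and your write-up is simply a careful expansion of that combination, including the absorption of the jump contribution $\bigl(\sum_K \boldsymbol{R}_{J_K}^2\bigr)^{1/2}$ into $\Psi$ and the identification of the constant $\max\{m_K/8,\|\boldsymbol{u}_h\|_{0,\Omega}/\nu\}$ as inherited from the bound of $T_4$ in Lemma~\ref{SRL3}.
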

\subsection{Efficiency}
The efficiency of the estimator is studied using the standard technique of polynomial bubble functions, as outlined in \cite{MR2182149}. To this end, we introduce an interior bubble function $b_K$, which is defined and supported on an element $K$. For an internal edge $E$ shared by two elements $K$ and $K^{\prime}$, we define the patch $\omega_E = K \cup K^{\prime}$. Correspondingly, an edge bubble function $b_E$ is defined on $E$, ensuring it is positive within the patch interior and vanishes on its boundary. These bubble functions satisfy the following properties:
$$
b_K \in \mathrm{H}_{0}^1(K), \quad b_E \in H_{0}^1\left(\omega_E\right), \quad \text { and } \quad\left\|b_K\right\|_{L^{\infty}(K)}=\left\|b_E\right\|_{L^{\infty}(\omega_E)}=1 .
$$
The following estimates are well-known; see Verf\"urth
 \cite{MR3059294}.
\begin{lemma}\label{bubble}
For each element $K$ and edge $E$, the following estimates hold:
\begin{subequations}
\begin{align} 
    \left\|b_K \boldsymbol{v}\right\|_{0,K} &\leq C\| \boldsymbol{v} \|_{0,K}, \label{B1}\\ 
 \| \boldsymbol{v} \|_{0,K} &\leq C\| b_K^{1/2} \boldsymbol{v} \|_{0,K}, \label{B2}\\
\left\|\nabla (b_K \boldsymbol{v} )\right\|_{0,K} &\leq C h_K^{-1}\| \boldsymbol{v} \|_{0,K}, \label{B3} \\ 
\| \boldsymbol{v} \|_{0,E} & \leq C \| b_E^{1/2} \boldsymbol{v} \|_{0,E}, \label{B4}  \\
 \left\|b_E \boldsymbol{v} \right\|_{0,K} & \leq C h_E^{1/2}\| \boldsymbol{v} \|_{0,E},  \qquad \forall K \in \omega_E, \label{B5} \\
 \left\|\nabla (b_E \boldsymbol{v} )\right\|_{0,K} &\leq C h_E^{-1/2}\|\boldsymbol{v} \|_{0,E}, \qquad \forall K \in \omega_E \label{B6} ,
 \end{align}
\end{subequations}
where $ \boldsymbol{v} $ denotes a vector-valued polynomial function defined on the elements $K$ and the edges(facets) $E$.
\end{lemma}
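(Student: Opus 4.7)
The plan is to prove all six inequalities by a standard scaling/reference-element argument, exploiting the fact that on any fixed finite-dimensional polynomial space all norms are equivalent. First I would fix a reference simplex $\hat K$ and a reference face $\hat E$, and define reference bubble functions $\hat b_{\hat K}$ and $\hat b_{\hat E}$ (e.g. products of barycentric coordinates). On the finite-dimensional space $\mathbb{P}_k(\hat K)$ (or $\mathbb{P}_k(\hat E)$), the seminorms $\hat{\boldsymbol v}\mapsto\|\hat b_{\hat K}^{1/2}\hat{\boldsymbol v}\|_{0,\hat K}$ and $\hat{\boldsymbol v}\mapsto\|\hat{\boldsymbol v}\|_{0,\hat K}$ are both norms (the bubble is strictly positive in the interior), hence equivalent with constants depending only on $k$ and the reference geometry; the analogous equivalences hold on $\hat E$ and on $\hat K\cup\hat K'$ for the edge bubble.

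Next, I would transport these reference estimates to an arbitrary element $K\in\mathcal{K}_h$ through the affine map $F_K:\hat K\to K$, keeping track of the Jacobian scalings: $\|\boldsymbol{v}\|_{0,K}\simeq h_K^{d/2}\|\hat{\boldsymbol{v}}\|_{0,\hat K}$, $\|\nabla\boldsymbol{v}\|_{0,K}\simeq h_K^{d/2-1}\|\hat\nabla\hat{\boldsymbol{v}}\|_{0,\hat K}$, and $\|\boldsymbol{v}\|_{0,E}\simeq h_E^{(d-1)/2}\|\hat{\boldsymbol{v}}\|_{0,\hat E}$. Shape regularity of $\mathcal{K}_h$ ensures the constants in these equivalences are independent of $K$. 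Combining the norm equivalence on the reference element with the scaling of the Jacobian then yields \eqref{B1}–\eqref{B3} directly; note that \eqref{B1} is immediate from $\|b_K\|_{L^\infty}\le 1$, while \eqref{B3} uses an inverse estimate on the polynomial $b_K\boldsymbol{v}$ together with $|\nabla b_K|\lesssim h_K^{-1}$.

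For the facet-based estimates \eqref{B4}–\eqref{B6}, I would argue similarly on the patch $\omega_E$, which under shape regularity consists of at most two reference-size simplices. Estimate \eqref{B4} follows from the positivity of $b_E$ on $E$ combined with the reference norm equivalence on $\mathbb{P}_k(\hat E)$. For \eqref{B5} and \eqref{B6}, I would extend $\boldsymbol v$ constantly from $\hat E$ into $\hat K$ along the perpendicular direction, multiply by $b_{\hat E}$ (which vanishes on $\partial\hat K\setminus\hat E$ so the product belongs to $H_0^1(\omega_{\hat E})$), and then scale back using $d(\boldsymbol{x})\sim h_K\simeq h_E$ to convert the surface norm on $\hat E$ into a volume norm on $\hat K$; this produces the $h_E^{1/2}$ and $h_E^{-1/2}$ factors respectively, with the gradient bound again obtained via $|\nabla b_E|\lesssim h_E^{-1}$.

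The main technical obstacle is the rigorous control of constants under the affine transformation when the polynomial degree $k$ varies, and ensuring the edge-bubble extension in \eqref{B5}–\eqref{B6} is well-defined across the patch $\omega_E$; this is handled by the shape regularity assumption on $\{\mathcal{K}_h\}_{h>0}$ and by defining the extension as a fixed lifting on the reference configuration. Since these arguments are classical and appear explicitly in \cite{MR3059294}, the proof reduces to citing the reference, which is why the statement is declared well-known; for completeness one simply inserts the scaling computation outlined above.
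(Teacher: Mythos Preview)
Your proposal is correct and follows the standard scaling/reference-element argument that underlies these estimates. The paper itself does not give a proof of this lemma at all: it simply declares the estimates well-known and cites Verf\"urth \cite{MR3059294}, so your sketch is in fact more detailed than what the paper provides and matches the approach found in the cited reference.
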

In this lemma, we establish the efficiency bound for the internal residual $ \Psi_{R_K}$.
\begin{lemma}\label{efficiency_1}
Let $K$ be an element of $\mathcal{K}_h$.   The local equilibrium residual satisfies
 $$
 \Psi_{R_K}^2 \leq C \left( \| \boldsymbol{u} - \boldsymbol{u}_h \|^2_{1,K} + \|p-p_h\|_{0,K}^2 + h_K^2 \| \boldsymbol{f}- \boldsymbol{f}_h \|_{0,K} \right).
 $$
\end{lemma}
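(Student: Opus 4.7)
The plan is to carry out the standard bubble-function argument of Verf\"urth. I would begin by setting $\boldsymbol{w}_K := b_K \boldsymbol{R}_K$, extended by zero outside $K$, so that $\boldsymbol{w}_K \in \boldsymbol{H}_0^1(K)$. Inequality \eqref{B2} of Lemma \ref{bubble} yields
$$
\|\boldsymbol{R}_K\|_{0,K}^2 \lesssim \|b_K^{1/2}\boldsymbol{R}_K\|_{0,K}^2 = (\boldsymbol{R}_K, \boldsymbol{w}_K)_K,
$$
so it suffices to estimate $(\boldsymbol{R}_K, \boldsymbol{w}_K)_K$ from above.

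Because $(\boldsymbol{u}, p)$ satisfies $-2\nu \nabla \cdot \varepsilon(\boldsymbol{u}) + \boldsymbol{u} \cdot \nabla \boldsymbol{u} + \nabla p = \boldsymbol{f}$ strongly in $\Omega$, I would subtract this identity to rewrite
$$
\boldsymbol{R}_K = (\boldsymbol{f}_h - \boldsymbol{f}) + 2\nu \nabla \cdot \varepsilon(\boldsymbol{u}_h - \boldsymbol{u}) - (\boldsymbol{u}_h \cdot \nabla \boldsymbol{u}_h - \boldsymbol{u} \cdot \nabla \boldsymbol{u}) - \nabla(p_h - p),
$$
test against $\boldsymbol{w}_K$, and integrate by parts on the diffusive and pressure contributions (the boundary terms vanish since $\boldsymbol{w}_K|_{\partial K} = \boldsymbol{0}$) to obtain
$$
(\boldsymbol{R}_K, \boldsymbol{w}_K)_K = (\boldsymbol{f}_h - \boldsymbol{f}, \boldsymbol{w}_K)_K - 2\nu\bigl(\varepsilon(\boldsymbol{u}_h - \boldsymbol{u}), \varepsilon(\boldsymbol{w}_K)\bigr)_K + (p_h - p, \nabla \cdot \boldsymbol{w}_K)_K - (\boldsymbol{u}_h \cdot \nabla \boldsymbol{u}_h - \boldsymbol{u} \cdot \nabla \boldsymbol{u}, \boldsymbol{w}_K)_K.
$$

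For the convective term I would split $\boldsymbol{u}_h \cdot \nabla \boldsymbol{u}_h - \boldsymbol{u} \cdot \nabla \boldsymbol{u} = (\boldsymbol{u}_h - \boldsymbol{u}) \cdot \nabla \boldsymbol{u}_h + \boldsymbol{u} \cdot \nabla(\boldsymbol{u}_h - \boldsymbol{u})$ and use the smallness hypothesis $\|\boldsymbol{u}\|_{1,\infty} < M$ of Theorem \ref{reliability} together with the discrete $L^\infty$-type control of $\boldsymbol{u}_h$ (obtained from the stability Theorem \ref{stability} combined with the inverse inequality in Lemma \ref{H1Hinfty} applied to $\boldsymbol{u}_h - \boldsymbol{I}_h \boldsymbol{u}$) to bound this contribution by $C\,\|\boldsymbol{u} - \boldsymbol{u}_h\|_{1,K}\,\|\boldsymbol{w}_K\|_{0,K}$. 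Applying Cauchy-Schwarz to each of the four terms and substituting the bubble bounds \eqref{B1} and \eqref{B3}, i.e.\ $\|\boldsymbol{w}_K\|_{0,K} \lesssim \|\boldsymbol{R}_K\|_{0,K}$ and $\|\nabla \boldsymbol{w}_K\|_{0,K} \lesssim h_K^{-1}\|\boldsymbol{R}_K\|_{0,K}$, yields
$$
\|\boldsymbol{R}_K\|_{0,K}^2 \lesssim \|\boldsymbol{R}_K\|_{0,K}\Bigl( \|\boldsymbol{f} - \boldsymbol{f}_h\|_{0,K} + \nu h_K^{-1}\|\varepsilon(\boldsymbol{u} - \boldsymbol{u}_h)\|_{0,K} + h_K^{-1}\|p - p_h\|_{0,K} + \|\boldsymbol{u} - \boldsymbol{u}_h\|_{1,K}\Bigr).
$$
Cancelling one factor of $\|\boldsymbol{R}_K\|_{0,K}$, squaring, and multiplying through by $h_K^2/\nu$ reproduces $\Psi_{R_K}^2 = (h_K^2/\nu)\|\boldsymbol{R}_K\|_{0,K}^2$ on the left and (modulo the $\nu$-dependence hidden in the generic constant $C$) the claimed combination on the right.

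The main obstacle is the nonlinear convective term: obtaining a constant $C$ independent of $h$ requires absorbing its scaling through the smallness assumption on $\|\boldsymbol{u}\|_{1,\infty}$ and a discrete $L^\infty$ bound for $\boldsymbol{u}_h$; the remaining ingredients are routine consequences of the bubble-function machinery summarised in Lemma \ref{bubble}.
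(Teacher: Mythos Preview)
Your proposal is correct and follows essentially the same bubble-function argument as the paper: define the localised test function $b_K\boldsymbol{R}_K$, use \eqref{B2} to bound $\|\boldsymbol{R}_K\|_{0,K}^2$ by the duality pairing, subtract the strong form of the exact equation, integrate by parts, and estimate each term with Cauchy--Schwarz together with \eqref{B1} and \eqref{B3}. The only cosmetic difference is in the convective splitting: the paper writes $\boldsymbol{u}\cdot\nabla\boldsymbol{u}-\boldsymbol{u}_h\cdot\nabla\boldsymbol{u}_h=(\boldsymbol{u}-\boldsymbol{u}_h)\cdot\nabla\boldsymbol{u}+\boldsymbol{u}_h\cdot\nabla(\boldsymbol{u}-\boldsymbol{u}_h)$ whereas you use the symmetric variant, and the paper simply records $T_2\le C_2\|\boldsymbol{u}-\boldsymbol{u}_h\|_{1,K}\,h_K^{-1}\|\boldsymbol{R}_K\|_{0,K}$ without spelling out the $L^\infty$ control that you make explicit. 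Your more careful justification of that step is in fact an improvement over the paper's terse treatment.
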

\begin{proof}
    For each $K \in \mathcal{K}_h$, we define $\boldsymbol{W}_b = b_K \boldsymbol{R}_K$. Then, using \eqref{B2}, we have 
    \begin{align*}
    \frac{1}{C^2} \| \boldsymbol{R}_K \|_{0,K}^2 &\leq \| b_K^{1/2} \boldsymbol{R}_K \|^2_{0,K} = \int_K \boldsymbol{R}_K \cdot  \boldsymbol{W}_b  \\ & 
    = \int_K \left( \boldsymbol{f}_h + 2 \nu \nabla \cdot \varepsilon (\boldsymbol{u}_h) - \boldsymbol{u}_h \cdot \nabla \boldsymbol{u}_h - \nabla p_h  \right) \cdot \boldsymbol{W}_b  
    \end{align*}
Noting that $\left(\boldsymbol{f} + 2 \nu \nabla \cdot \varepsilon ( \boldsymbol{u}) - \boldsymbol{u} \cdot \nabla \boldsymbol{u} - \nabla p \right)|_K = 0$ for the exact solution $(\boldsymbol{u}, p)$, we simply subtract, then integrate by parts and note that $\boldsymbol{W}_b|_{\partial K} = \boldsymbol{0}$, to give
\begin{align*}
 \frac{1}{C^2} \| \boldsymbol{R}_K \|_{0,K}^2 &\leq \int_K \left( \boldsymbol{f}_h - \boldsymbol{f} \right) \cdot \boldsymbol{W}_b \,  
    + \int_K \left[ -2 \nu \nabla \cdot \varepsilon  (\boldsymbol{u} - \boldsymbol{u}_h) + \nabla (p - p_h) \right] \cdot \boldsymbol{W}_b \,   \\ & \quad +  \int_K \left[  \left( \boldsymbol{u} - \boldsymbol{u}_h \right) \cdot \nabla  \boldsymbol{u} +  \boldsymbol{u}_h \cdot \nabla  \left( \boldsymbol{u} - \boldsymbol{u}_h \right) \right] \cdot \boldsymbol{W}_b \,  \\ & 
  \leq   T_1 + T_2
\end{align*}
where 
\begin{align*}
      T_1 &= \int_K \bigg( 2 \nu \varepsilon (\boldsymbol{u}- \boldsymbol{u}_h ) :  \varepsilon ({W}_b) - (p-p_h) \nabla \cdot \boldsymbol{W}_b \bigg)  + \int_K ( \boldsymbol{f}_h - \boldsymbol{f} ) \cdot \boldsymbol{W}_b  \\
      T_2 &=  \int_K \left[ \left( \boldsymbol{u} - \boldsymbol{u}_h \right) \cdot \nabla  \boldsymbol{u} +  \boldsymbol{u}_h \cdot \nabla  \left( \boldsymbol{u} - \boldsymbol{u}_h \right) \right] \cdot \boldsymbol{W}_b \, 
\end{align*}
Using Cauchy-Schwarz inequality and Lemma \ref{bubble}, we have 
\begin{align*}
T_1 &\leq  C_1 \left( \| \boldsymbol{u} - \boldsymbol{u}_h \|_{1,K} + \|p-p_h\|_{0,K} + h_K \|\boldsymbol{f}- \boldsymbol{f}_h \|_{0,K} \right) h_K^{-1} \| \boldsymbol{R}_K\|_{0,K} \\&
\leq  C_1 \left( \| \boldsymbol{u} - \boldsymbol{u}_h \|_{1,K}^2 + \|p-p_h\|_{0,K}^2 + h_K^2 \|\boldsymbol{f}- \boldsymbol{f}_h \|_{0,K}^2 \right)^{1/2} \left(h_K^{-2} \| \boldsymbol{R}_K\|_{0,K}^2\right)^{1/2} \\
 T_2 & \leq C_2 \| \boldsymbol{u} - \boldsymbol{u}_h \|_{1,K} h_K^{-1} \|\boldsymbol{R}_{K} \|_{0,K},
\end{align*}
and combining these bounds leads to the stated result. 
\end{proof}
In this lemma, we establish the efficiency bound for the edge residual $ \Psi_{R_e}$.
\begin{lemma}\label{efficiency_2}
    Let $K$ be an element of $\mathcal{K}_h$. The jump residual satisfies 
    $$
\Psi^2_{R_E} \leq \sum_{K \in \omega_E} \left( \| \boldsymbol{u} - \boldsymbol{u}_h \|^2_{1,K} + \|p-p_h \|_{0,K}^2 + h_K^2 \|\boldsymbol{f} - \boldsymbol{f}_h \|_{0,K}^2\right).
    $$
\end{lemma}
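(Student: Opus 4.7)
The plan is to follow the standard Verf\"urth edge-bubble argument, mirroring the proof of Lemma \ref{efficiency_1}. Fix an interior face $E \in \mathcal{E}_\Omega$ with patch $\omega_E = K \cup K'$, and take as test function $\boldsymbol{W}_b := \mathcal{L}(b_E \boldsymbol{R}_E)$, where $\mathcal{L}$ lifts $b_E \boldsymbol{R}_E$ from $E$ to $\omega_E$ (for instance as a constant in the normal direction) so that $\boldsymbol{W}_b \in H_0^1(\omega_E)$. The bubble estimates \eqref{B4}--\eqref{B6} then yield
$$
\|\boldsymbol{R}_E\|_{0,E}^2 \leq C \int_E \boldsymbol{R}_E \cdot \boldsymbol{W}_b, \qquad \|\boldsymbol{W}_b\|_{0,K} \lesssim h_E^{1/2} \|\boldsymbol{R}_E\|_{0,E}, \qquad \|\nabla \boldsymbol{W}_b\|_{0,K} \lesssim h_E^{-1/2} \|\boldsymbol{R}_E\|_{0,E},
$$
for each $K \in \omega_E$.

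Next, I exploit the continuity of the exact stress $(p \boldsymbol{I} - 2\nu \varepsilon(\boldsymbol{u}))\boldsymbol{n}$ across $E$, which implies that its jump vanishes, so that
$$
\int_E \boldsymbol{R}_E \cdot \boldsymbol{W}_b \;=\; \tfrac{1}{2} \int_E \llbracket \bigl((p - p_h) \boldsymbol{I} - 2\nu \varepsilon(\boldsymbol{u} - \boldsymbol{u}_h)\bigr) \boldsymbol{n} \rrbracket \cdot \boldsymbol{W}_b.
$$
Integrating by parts elementwise on $\omega_E$ and using $\boldsymbol{W}_b = \boldsymbol{0}$ on $\partial \omega_E$, this facet integral becomes a volume integral on $\omega_E$ involving $2\nu \varepsilon(\boldsymbol{u}-\boldsymbol{u}_h) : \varepsilon(\boldsymbol{W}_b)$, $(p - p_h) \nabla \cdot \boldsymbol{W}_b$, and the bulk term $[-2\nu \nabla\cdot \varepsilon(\boldsymbol{u}-\boldsymbol{u}_h) + \nabla (p-p_h)] \cdot \boldsymbol{W}_b$. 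Substituting the exact momentum equation for $-2\nu \nabla \cdot \varepsilon(\boldsymbol{u}) + \nabla p$, the latter reduces to $(-\boldsymbol{R}_K - (\boldsymbol{f} - \boldsymbol{f}_h) + (\boldsymbol{u} \cdot \nabla \boldsymbol{u} - \boldsymbol{u}_h \cdot \nabla \boldsymbol{u}_h)) \cdot \boldsymbol{W}_b$ inside each $K \in \omega_E$.

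Applying Cauchy--Schwarz termwise and invoking the bubble bounds above, I trade $\|\nabla \boldsymbol{W}_b\|_{0,K}$ and $\|\boldsymbol{W}_b\|_{0,K}$ for $h_E^{\pm 1/2} \|\boldsymbol{R}_E\|_{0,E}$. The internal residual $\|\boldsymbol{R}_K\|_{0,K}$ produced by the substitution is absorbed via Lemma \ref{efficiency_1}, while the convective difference is split as $(\boldsymbol{u} - \boldsymbol{u}_h) \cdot \nabla \boldsymbol{u} + \boldsymbol{u}_h \cdot \nabla (\boldsymbol{u} - \boldsymbol{u}_h)$ and controlled by the smallness hypothesis $\|\boldsymbol{u}\|_{1,\infty} < M$ together with the embedding $H^1(\Omega) \hookrightarrow L^4(\Omega)$, exactly as in Lemma \ref{efficiency_1}. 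Dividing by $\|\boldsymbol{R}_E\|_{0,E}$, squaring, multiplying by $h_E / \nu$, and summing over $E \subset \partial K \setminus \Gamma$ then delivers the claimed bound on $\Psi_{R_E}^2$.

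The main obstacle will be the bookkeeping of $\nu$-powers and of the patch $\omega_E$: the right-hand side norm $\|\boldsymbol{u} - \boldsymbol{u}_h\|_{1,K}$ hides a $\nu^{1/2}$ factor needed to match the $h_E/\nu$ weight in $\Psi_{R_E}^2$, so the bubble scalings, the factor $2\nu$ in the viscous stress, and the pressure term must all combine consistently to leave precisely $\|\boldsymbol{u}-\boldsymbol{u}_h\|_{1,K}^2 + \|p-p_h\|_{0,K}^2 + h_K^2 \|\boldsymbol{f}-\boldsymbol{f}_h\|_{0,K}^2$ on the right, with no stray stabilization, pressure-gradient, or data terms surviving from the integration by parts.
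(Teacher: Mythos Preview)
Your proposal is correct and follows essentially the same Verf\"urth edge-bubble argument as the paper: subtract the zero jump of the exact stress, integrate by parts on $\omega_E$, substitute the momentum equation to recover $\boldsymbol{R}_K$, $\boldsymbol{f}-\boldsymbol{f}_h$, and the convective difference, then bound each piece via the bubble estimates and Lemma~\ref{efficiency_1}. The only cosmetic difference is that the paper builds the $h_E/\nu$ scaling directly into the test function by setting $\boldsymbol{W}_E = \tfrac{h_E}{2\nu}\, b_E \boldsymbol{R}_E$, whereas you work with the unscaled bubble and introduce the weight at the end; your worry about $\nu$-bookkeeping is moot here, since the paper's hidden constants are allowed to depend on the model parameters.
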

\begin{proof}
   Suppose $E$ be an interior facet (edge) and recall that the classical solution $(\boldsymbol{u},p)$ satisfies \\ $ \llbracket \left(p \boldsymbol{I} - 2 \nu \varepsilon (\boldsymbol{u}) \right) \boldsymbol{n}\rrbracket |_E =0. $ Now, define the localised jump term $\boldsymbol{W}_E =
   \sum_{E \in \partial K} \frac{h_E}{2 \nu} \boldsymbol{R}_E b_E$. Using Lemma \ref{bubble} gives 
 \begin{align*}
    \frac{h_E}{\nu} \|\boldsymbol{R}_E \|_{0,E}^2 &\leq C \left( \llbracket p_h \boldsymbol{I} - 2 \nu \varepsilon (\boldsymbol{u}_h )\rrbracket , \boldsymbol{W}_E \right)_E
    \\ & \leq C \left( \llbracket \left(p_h \boldsymbol{I} - 2 \nu \varepsilon (\boldsymbol{u}_h) \right) \boldsymbol{n}\rrbracket - \llbracket \left(p \boldsymbol{I} - 2 \nu \varepsilon (\boldsymbol{u}) \right) \boldsymbol{n}\rrbracket , \boldsymbol{W}_E \right)_E.
\end{align*}
Using integration by parts on each element of patch $\omega_e$ implies
\begin{align}\label{jump_bound}
\left( \llbracket \left(p_h \boldsymbol{I} - 2 \nu \varepsilon (\boldsymbol{u}_h) \right) \boldsymbol{n}\rrbracket - \llbracket \left(p \boldsymbol{I} - 2 \nu \varepsilon (\boldsymbol{u}) \right) \boldsymbol{n}\rrbracket , \boldsymbol{W}_E \right)_E &= \sum_{K \in \omega_E} \left\{ \int_K \left[ - 2 \nu \nabla \cdot \varepsilon (\boldsymbol{u}- \boldsymbol{u}_h) + \nabla (p-p_h) \right] \cdot \boldsymbol{W}_E \right. \nonumber  \\
& \quad + \left. \int_K \left[ -2 \nu \varepsilon (\boldsymbol{u} - \boldsymbol{u}_h ) + (p-p_h) \boldsymbol{I} \right] : \nabla \boldsymbol{W}_E \right\}
\end{align}
Since the exact solution $(\boldsymbol{u},p)$ satisfies $-2 \nu \nabla \cdot \varepsilon ( \boldsymbol{u}) + \boldsymbol{u} \cdot \nabla \boldsymbol{u} +  \nabla p |_K = \boldsymbol{f}|_K$, we have 
\begin{align}\label{bound_eq}
     \frac{h_E}{\nu} \|\boldsymbol{R}_E \|_{0,E}^2 & =  \sum_{K \in \omega_E} \int_K \{ \boldsymbol{f}_h + 2 \nu \nabla \cdot \varepsilon (\boldsymbol{u}_h) - \boldsymbol{u}_h \cdot \nabla \boldsymbol{u}_h - \nabla p_h \} \cdot \boldsymbol{W}_E + \sum_{K \in \omega_e} \int_K (\boldsymbol{f} - \boldsymbol{f}_h) \cdot \boldsymbol{W}_E \nonumber  \\ & \quad + \sum_{K \in \omega_E} \int_K \{ - 2 \nu \varepsilon (  \boldsymbol{u} - \boldsymbol{u}_h) +(p-p_h) \boldsymbol{I} \} : \nabla \boldsymbol{W}_E + \sum_{K \in \omega_e} \int_K \{ \boldsymbol{u}_h \cdot \nabla \boldsymbol{u}_h - \boldsymbol{u} \cdot \nabla \boldsymbol{u}\} \cdot \boldsymbol{W}_E \nonumber  \\ &   =
     T_1 + T_2 + T_3 +T_4 
\end{align}
These four terms will be bounded separately.
First, using the definition of $\boldsymbol{R}_K$, and then combining the Cauchy- Schwarz inequality with Lemma \ref{bubble}, gives
\begin{align*}
    T_1 &\leq C_1 \left(\sum_{K \in \omega_e} \frac{h_K^2}{\nu} \| \boldsymbol{R}_K  \|^2_{0,K} \right)^{1/2} \left( \sum_{K \in \omega_e} \nu  h_K^{-2} \|\boldsymbol{W}_E \|^2_{0,K} \right)^{1/2} \\
    T_2 & \leq C_2\left( \sum_{K \in \omega_e} h_K^2 \| \boldsymbol{f} - \boldsymbol{f}_h \|_{0,K}^2  \right)^{1/2} \left( \sum_{K \in \omega_e} h_K^{-2} \|\boldsymbol{W}_E \|^2_{0,K} \right)^{1/2}
\end{align*}
Next, given the shape regularity of the grid, using the definition of $\boldsymbol{W}_b$ and Lemma \ref{bubble} gives
$$
h_K^{-2} \|\boldsymbol{W}_E \|^2_{0,K}  \leq h_E^{-2} \| \boldsymbol{W}_E \|_{0,K}^2 \leq h_E^{-1} \left\| \frac{h_E}{\nu} \boldsymbol{R}_E \right\|_{0,E}^2.  
$$
Hence, the following estimate holds 
\begin{align*}
    T_1 &\leq C_1 \left(\sum_{K \in \omega_e} \frac{h_K^2}{\nu} \| \boldsymbol{R}_K  \|^2_{0,K} \right)^{1/2} \left( \sum_{K \in \omega_e} \frac{h_E}{\nu} \|\boldsymbol{R}_E \|^2_{0,K} \right)^{1/2}
    \\ & \leq  C_1 \left(\sum_{K \in \omega_e}  \| \boldsymbol{u} - \boldsymbol{u}_h \|^2_{1,K} + \|p-p_h\|_{0,K}^2  \right) \left( \sum_{K \in \omega_e} \frac{h_E}{\nu} \|\boldsymbol{R}_E \|^2_{0,K} \right)^{1/2} \\
    T_2 & \leq C_2 \left(\sum_{K \in \omega_e} \frac{h_K^2}{\nu} \| \boldsymbol{f} - \boldsymbol{f}_h \|_{0,K}^2  \right) \left( \sum_{K \in \omega_e} \frac{h_E }{\nu} \|\boldsymbol{R}_E \|^2_{0,K} \right)^{1/2}
\end{align*}
Hence, 
\begin{align*}
    T_1 +T_2 \lesssim \left(\sum_{K \in \omega_e}  \| \boldsymbol{u} - \boldsymbol{u}_h \|^2_{1,K} + \|p-p_h\|_{0,K}^2  + h_K^2 \| \boldsymbol{f} - \boldsymbol{f}_h \|_{0,K}^2  \right) \left( \sum_{K \in \omega_e} \frac{h_E}{\nu} \|\boldsymbol{R}_E \|^2_{0,K} \right)^{1/2}
\end{align*}
Similarly, 
\begin{align*}
 T_3 &\leq C_3 \left( \sum_{K \in \omega_e} \| \boldsymbol{u} - \boldsymbol{u}_h \|^2_{1,K} + \|p-p_h\|_{0,K}^2   \right)^{1/2}  \left(  \sum_{K \in \omega_e} \|\nabla \boldsymbol{W}_E\|^2_{0,E} \right)^{1/2}
   \end{align*}
   where this time the second term is bounded using \eqref{B6} i.e.
$\|\nabla \boldsymbol{W}_E\|^2_{0,K} \lesssim h_E^{-1} \left\| \frac{h_E}{\nu} \boldsymbol{R}_E\right\|^2_{0,E} $. This implies 
\begin{align*}
T_3 & \leq C_3 \left( \sum_{K \in \omega_e} \| \boldsymbol{u} - \boldsymbol{u}_h \|^2_{1,K} + \|p-p_h\|_{0,K}^2   \right)^{1/2}  \left(  \sum_{K \in \omega_e} \frac{h_E}{\nu} \| \boldsymbol{R}_E\|^2_{0,E} \right)^{1/2} \\
T_4 &\leq C_4 \left( \sum_{K \in \omega_e} \|\boldsymbol{u} - \boldsymbol{u}_h \|_{1,K}^2 \right)^{1/2} \left(  \sum_{K \in \omega_e} \frac{h_E}{\nu} \| \boldsymbol{R}_E\|^2_{0,E} \right)^{1/2}
   \end{align*}
   Combining the bounds of $T_1$, $T_2$, $T_3$ and $T_4$ with \eqref{jump_bound} and \eqref{bound_eq} implies the stated result. 
\end{proof}
In this lemma, we establish the efficiency bound for the trace residual $ \Psi_{J_K}$.
\begin{lemma}\label{efficiency_3}
Let $K$ be an element of $\mathcal{K}_h$. The local trace residual satisfies
$$
 \Psi_{J_K}^2 \lesssim \left( \| \boldsymbol{u} - \boldsymbol{u}_h \|^2_{1,K} + \|p-p_h\|_{0,K}^2  \right).
$$
\begin{proof}
Noting that the conditions 
\[
\left. \boldsymbol{u} \cdot \boldsymbol{n} \right|_E = 0 \quad \text{and} \quad 
\left. \left(2 \nu \boldsymbol{n}^\top \varepsilon(\boldsymbol{u}) \tau^i + \beta \, \boldsymbol{u} \cdot \tau^i \right) \right|_E = 0
\]
holds for all \( E \in \Gamma_{\mathrm{Nav}} \), where \( \boldsymbol{u} \) is the regular solution of \eqref{1}, it follows that
$$
\begin{aligned}
\Psi_{J_K}^2: & =
\sum_{E \in \Gamma_{\mathrm{Nav}}} \left(   \frac{h_E}{\nu} \boldsymbol{R}_{J_K}^1+ \boldsymbol{R}_{J_K}^2 \right) \\& = \sum_{E \in \Gamma_{\mathrm{Nav}} } \left(\frac{h_E}{\nu}  \|\sum_{i=1}^{d-1} \left(2 \nu \boldsymbol{n}^t \varepsilon (\boldsymbol{u}_h) \tau^i + \beta \boldsymbol{u}_h \cdot \tau^{i} \right)\|_{0,E}^2 + \frac{\gamma^2 \nu }{h_E}  \| \boldsymbol{u}_h \cdot \boldsymbol{n} \|_{0,E}^2 \right) \\
& = \sum_{E \in \Gamma_{\mathrm{Nav}} } \left(\frac{h_E}{\nu}  \|\sum_{i=1}^{d-1} \left(2 \nu \boldsymbol{n}^t \varepsilon (\boldsymbol{u}_h - \boldsymbol{u}) \tau^i + \beta (\boldsymbol{u}_h - \boldsymbol{u}) \cdot \tau^{i} \right)\|_{0,E}^2 + \frac{\gamma^2 \nu }{h_E}  \| (\boldsymbol{u}_h - \boldsymbol{u}) \cdot \boldsymbol{n} \|_{0,E}^2 \right) 
\end{aligned}
$$
Moreover, we have
$$
 \Psi_{J_K}^2 \lesssim \left( \| \boldsymbol{u} - \boldsymbol{u}_h \|^2_{1,K} + \|p-p_h\|_{0,K}^2  \right).
$$
\end{proof}
\end{lemma}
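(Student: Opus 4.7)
The plan is to exploit the fact that the exact regular solution $(\boldsymbol{u},p)$ satisfies the Navier boundary conditions pointwise on every facet $E\subset\Gamma_{\mathrm{Nav}}$, namely $\boldsymbol{u}\cdot\boldsymbol{n}=0$ and $2\nu\boldsymbol{n}^t\varepsilon(\boldsymbol{u})\boldsymbol{\tau}^i+\beta\,\boldsymbol{u}\cdot\boldsymbol{\tau}^i=0$. Subtracting these identities turns each boundary residual on $E$ into a quantity that depends only on the error $\boldsymbol{u}-\boldsymbol{u}_h$; the pressure does not enter $\Psi_{J_K}$ at all, so the $\|p-p_h\|_{0,K}^2$ term on the right will simply be carried along as a harmless upper bound to match the other efficiency lemmas.

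Concretely, I would first rewrite
\[
 2\nu\boldsymbol{n}^t\varepsilon(\boldsymbol{u}_h)\boldsymbol{\tau}^i+\beta\,\boldsymbol{u}_h\cdot\boldsymbol{\tau}^i
 = 2\nu\boldsymbol{n}^t\varepsilon(\boldsymbol{u}_h-\boldsymbol{u})\boldsymbol{\tau}^i+\beta(\boldsymbol{u}_h-\boldsymbol{u})\cdot\boldsymbol{\tau}^i,
 \qquad
 \boldsymbol{u}_h\cdot\boldsymbol{n}=(\boldsymbol{u}_h-\boldsymbol{u})\cdot\boldsymbol{n}.
\]
Then the triangle inequality splits $(h_E/\nu)\boldsymbol{R}_{J_K}^1$ into a $\nu$-weighted piece controlled by $\nu h_E\|\varepsilon(\boldsymbol{u}-\boldsymbol{u}_h)\|_{0,E}^2$ and a $\beta^2$-weighted piece controlled by $(h_E\beta^2/\nu)\|\boldsymbol{u}-\boldsymbol{u}_h\|_{0,E}^2$, while $\boldsymbol{R}_{J_K}^2$ becomes $(\gamma^2\nu/h_E)\|(\boldsymbol{u}-\boldsymbol{u}_h)\cdot\boldsymbol{n}\|_{0,E}^2$. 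To each edge norm I apply Lemma~\ref{Trace Inequality $- II$}, which gives, for $\boldsymbol{v}\in\boldsymbol{H}^1(K)$,
\[
 \|\boldsymbol{v}\|_{0,E}^2\lesssim h_K^{-1}\|\boldsymbol{v}\|_{0,K}^2+h_K\|\nabla\boldsymbol{v}\|_{0,K}^2.
\]
Using mesh regularity ($h_E\sim h_K$), the factors of $h_E$ and $h_K$ cancel cleanly so that each contribution is controlled by $\|\boldsymbol{u}-\boldsymbol{u}_h\|_{0,K}^2+\|\nabla(\boldsymbol{u}-\boldsymbol{u}_h)\|_{0,K}^2=\|\boldsymbol{u}-\boldsymbol{u}_h\|_{1,K}^2$, with a constant depending on $\nu,\beta,\gamma$ and $C_{\mathrm{tr}}'$ that is absorbed into~$\lesssim$.

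The only subtle point, and the main thing I expect to be careful about, is the second derivative that would appear when applying the trace inequality to $\varepsilon(\boldsymbol{u}-\boldsymbol{u}_h)$: the $h_K\|\nabla\varepsilon(\boldsymbol{u}-\boldsymbol{u}_h)\|_{0,K}^2$ contribution requires $\boldsymbol{u}\in\boldsymbol{H}^2(K)$ and a local inverse inequality applied to $\varepsilon(\boldsymbol{u}_h)$; under the standing regularity of $\boldsymbol{u}$ assumed throughout this section and Lemma~\ref{local_inverse}, this term is bounded again by $\|\boldsymbol{u}-\boldsymbol{u}_h\|_{1,K}^2$ after absorbing the generic constant. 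Summing the two contributions finishes the proof, and the $\|p-p_h\|_{0,K}^2$ appears on the right only to match the statement.
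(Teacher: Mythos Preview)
Your strategy coincides with the paper's: subtract the Navier conditions satisfied by the exact solution to rewrite both boundary residuals as quantities in $\boldsymbol{u}-\boldsymbol{u}_h$, and then pass from edge to element. The paper's own proof stops at the rewriting and simply states the final bound, so your version is in fact more detailed.

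There is, however, one step where your argument does not go through as written. For the pieces of $(h_E/\nu)\boldsymbol{R}_{J_K}^1$ the scaling works exactly as you say; but for $\boldsymbol{R}_{J_K}^2=(\gamma^2\nu/h_E)\|(\boldsymbol{u}-\boldsymbol{u}_h)\cdot\boldsymbol{n}\|_{0,E}^2$ the trace estimate of Lemma~\ref{Trace Inequality $- II$} yields
\[
\frac{\gamma^2\nu}{h_E}\|(\boldsymbol{u}-\boldsymbol{u}_h)\cdot\boldsymbol{n}\|_{0,E}^2
\;\lesssim\;
\gamma^2\nu\, h_K^{-2}\|\boldsymbol{u}-\boldsymbol{u}_h\|_{0,K}^2
+\gamma^2\nu\,|\boldsymbol{u}-\boldsymbol{u}_h|_{1,K}^2,
\]
and the first term carries an $h_K^{-2}$ that does \emph{not} cancel. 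So the claim that ``the factors of $h_E$ and $h_K$ cancel cleanly'' is false for this contribution, and you cannot absorb it into an $h$-independent constant multiplying the standard $H^1(K)$ norm.

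The way out (which the paper leaves implicit and which is what makes the global efficiency theorem work) is not to apply a trace inequality to this piece at all: the quantity $\dfrac{\nu}{h_E}\|(\boldsymbol{u}-\boldsymbol{u}_h)\cdot\boldsymbol{n}\|_{0,E}^2$ is itself one of the summands in the mesh-dependent norm $\|\cdot\|_{1,\mathcal{K}_h}$ and in the triple norm $\left|\!\left|\!\left|\cdot\right|\!\right|\!\right|$ used on the right-hand side of the final efficiency estimate. Hence $\boldsymbol{R}_{J_K}^2$ is bounded by $\gamma^2$ times the corresponding boundary part of the error norm directly, with no passage through the volume. Once you read $\|\boldsymbol{u}-\boldsymbol{u}_h\|_{1,K}$ in the lemma as the local contribution to that discrete norm (as the paper evidently intends, given how it is combined with Lemmas~\ref{efficiency_1}--\ref{efficiency_2}), the bound is immediate. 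Your treatment of the $\varepsilon(\boldsymbol{u}-\boldsymbol{u}_h)$ trace via regularity plus an inverse inequality is the right idea for the remaining piece.
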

\begin{theorem}
  Let $(\boldsymbol{u}, p)$ and $\left(\boldsymbol{u}_h, p_h \right)$ be the unique solutions of problems \eqref{1} and \eqref{DSF}, respectively. Let $\Psi$ be defined as in (3.3). Then there exists a constant $C>0$ that is independent of $h$ such that
$$
\Psi \leq C\left( \left|\!\left|\!\left| (\boldsymbol{u}-\boldsymbol{u}_h , p -p_h)\right|\!\right|\!\right| +\left(\sum_{K \in \mathcal{K}_h} h_K^2\left\|\boldsymbol{f}-\boldsymbol{f}_h\right\|_{0, K}^2\right)^{1 / 2}\right) .
$$
\begin{proof}
     Combining Lemmas \eqref{efficiency_1}, \eqref{efficiency_2} and \eqref{efficiency_3} implies the stated result.
\end{proof}
\end{theorem}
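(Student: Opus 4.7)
The plan is to assemble the global efficiency bound directly from the three local efficiency lemmas by summing over all elements in the mesh, keeping careful track of the overlap introduced by edge patches and then re-expressing the resulting $H^1$-type quantities in terms of the triple norm $\left|\!\left|\!\left|\cdot\right|\!\right|\!\right|$.

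First, by the definition \eqref{total_estimate} together with $\Psi_K^2 = \Psi_{R_K}^2 + \Psi_{R_E}^2 + \Psi_{J_K}^2$, I would write
\[
\Psi^2 \;=\; \sum_{K\in\mathcal{K}_h}\Psi_{R_K}^2 \;+\; \sum_{K\in\mathcal{K}_h}\Psi_{R_E}^2 \;+\; \sum_{K\in\mathcal{K}_h}\Psi_{J_K}^2,
\]
and then apply Lemmas~\ref{efficiency_1}, \ref{efficiency_2} and \ref{efficiency_3} term-by-term. For the element residual, Lemma~\ref{efficiency_1} gives the bound directly on each $K$; for the jump (trace) residual on $\Gamma_{\mathrm{Nav}}$, Lemma~\ref{efficiency_3} localises to the adjacent element; for the interior face residual, Lemma~\ref{efficiency_2} bounds $\Psi_{R_E}^2$ by a sum over the patch $\omega_E = K\cup K'$. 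Summing over $K$ (or equivalently over all interior facets) would multiply each element's contribution by the maximal number of faces per element, which is a constant under the shape-regularity assumption on $\{\mathcal{K}_h\}_{h>0}$. This yields
\[
\Psi^2 \;\lesssim\; \sum_{K\in\mathcal{K}_h}\!\Bigl(\|\boldsymbol{u}-\boldsymbol{u}_h\|_{1,K}^2 + \|p-p_h\|_{0,K}^2 + h_K^2\,\|\boldsymbol{f}-\boldsymbol{f}_h\|_{0,K}^2\Bigr).
\]

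Next, I would convert the broken $H^1$-seminorm of the velocity error into the triple norm. Using Korn's inequality on $\mathrm{V}\cap\boldsymbol{H}^1(\mathcal{K}_h)$ and the decomposition $\boldsymbol{u}-\boldsymbol{u}_h = (\boldsymbol{u}-\boldsymbol{u}_h^c) - \boldsymbol{u}_h^r$ (the latter controlled by $\left(\sum_K R_{J_K}^2\right)^{1/2}$ via Lemma~\ref{SRL1}), together with a Poincaré-type inequality for $\boldsymbol{u}-\boldsymbol{u}_h$ accounting for the boundary condition $\boldsymbol{u}\cdot\boldsymbol{n}=0$, I obtain
\[
\sum_{K\in\mathcal{K}_h}\|\boldsymbol{u}-\boldsymbol{u}_h\|_{1,K}^2 \;\lesssim\; \nu\,\|\varepsilon(\boldsymbol{u}-\boldsymbol{u}_h)\|_{0,\Omega}^2 + \sum_{E\in\mathcal{E}_{\mathrm{Nav}}}\frac{\nu}{h_E}\|(\boldsymbol{u}-\boldsymbol{u}_h)\cdot\boldsymbol{n}\|_{0,E}^2,
\]
up to a factor involving $\nu$ that can be absorbed into the constant $C$. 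The pressure contribution $\|p-p_h\|_{0,K}^2$ fits directly into the $L^2$-component of the triple norm (the $h_K^2/\nu\,\|\nabla q\|^2$ part of $\left|\!\left|\!\left|\cdot\right|\!\right|\!\right|$ is compatible via inverse inequalities since $p_h$ is piecewise polynomial), so summing yields
\[
\sum_{K\in\mathcal{K}_h}\!\Bigl(\|\boldsymbol{u}-\boldsymbol{u}_h\|_{1,K}^2 + \|p-p_h\|_{0,K}^2\Bigr) \;\lesssim\; \left|\!\left|\!\left|(\boldsymbol{u}-\boldsymbol{u}_h,p-p_h)\right|\!\right|\!\right|^2.
\]
Substituting this bound and taking the square root produces the stated efficiency estimate.

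The main obstacle is bookkeeping rather than a genuinely hard estimate: the three ingredients are already local, so the real work is (i) ensuring the patch overlaps sum up under shape regularity, and (ii) matching the powers of $\nu$ and $h_K$ in the local bounds to the weighted terms comprising $\left|\!\left|\!\left|\cdot\right|\!\right|\!\right|$. The factor $\frac{h_E}{\nu}$ appearing in $\Psi_{R_E}^2$ and $\Psi_{J_K}^2$ is naturally absorbed by the $\nu$ weights in the triple norm together with trace inequalities, while the data oscillation $h_K^2\|\boldsymbol{f}-\boldsymbol{f}_h\|_{0,K}^2$ passes through unchanged as the second term on the right-hand side.
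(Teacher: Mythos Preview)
Your approach is essentially the paper's: sum the three local efficiency lemmas over $\mathcal{K}_h$, control the patch overlaps by shape regularity, and read off the global bound. The paper's own proof is the single sentence ``Combining Lemmas \ref{efficiency_1}, \ref{efficiency_2} and \ref{efficiency_3} implies the stated result,'' so your expanded bookkeeping is more detailed but follows the same route.

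One technical slip: your justification for absorbing $\|p-p_h\|_{0,K}^2$ into the pressure part of the triple norm via inverse inequalities does not work. Inverse inequalities give $h_K^2\|\nabla q_h\|_{0,K}^2 \lesssim \|q_h\|_{0,K}^2$, not the reverse, and in any case $p$ is not piecewise polynomial. In fact no conversion is needed: the paper uses the triple norm in the a~posteriori section with the $L^2$ pressure component $\|p-p_h\|_{0,\Omega}^2$ (this is made explicit in the numerical section), so the sum $\sum_K\|p-p_h\|_{0,K}^2$ already \emph{is} the relevant pressure contribution. Likewise, the detour through Lemma~\ref{SRL1} and the conforming decomposition is unnecessary for efficiency; the broken $H^1$ norm of the velocity error is controlled directly (up to constants depending on $\nu$, which the statement allows) by $\nu\|\varepsilon(\boldsymbol{u}-\boldsymbol{u}_h)\|_{0,\Omega}^2 + \sum_{E\in\mathcal{E}_{\mathrm{Nav}}}\frac{\nu}{h_E}\|(\boldsymbol{u}-\boldsymbol{u}_h)\cdot\boldsymbol{n}\|_{0,E}^2$ via Korn plus the Navier boundary control already present in the triple norm.
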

\section{Numerical experiments}\label{section8}
All routines have been implemented using the open-source finite element library FEniCS \cite{alnaes2015fenics}, and the solvers used in this work are monolithic. We used the MUMPS distributed direct solver \cite{amestoy2000mumps} for the linear systems in all examples and employed the PETSc Nonlinear Solvers (SNES) in Example 4.
 In some experiments, we employ uniform meshes, while in others, we use adaptive mesh refinement.  Specifically, starting with an initial mesh $\mathcal{K}_{0,\Omega}$, we apply the iterative refinement loop
\begin{align*} 
\text{Solve} \rightarrow \text{Estimate} \rightarrow \text{Mark} \rightarrow \text{Refine} \end{align*}
to generate a sequence of (nested) regular meshes $\left\{\mathcal{K}_{\ell}\right\}$ with mesh size $h_{\ell}$. 
At each step, we compute the local error estimators $\Psi_{K}$ for all $K$ over the previous mesh $\mathcal{K}_h$ and refine those elements $K \in \mathcal{K}_h$ according to
\begin{align*} 
\Psi_{K} \geq \tilde{\theta} \max\{\Psi_{K} : K \in \mathcal{K}_h\}, 
\end{align*}
where $\tilde{\theta}  \in (0, 1)$ is a prescribed parameter.  We assess the quality of the a posteriori error estimator through the so-called effectivity index, which is required to remain bounded as $h$ approaches zero and is defined by
\begin{align*}
    \mathrm{Effec} := \frac{\Psi}{\|(u - \boldsymbol{u}_h, p - p_h)\|}. 
\end{align*}
\begin{remark}
  In the case where $\nu \ll 1$, the numerical algorithm requires a continuation strategy to reach the target viscosity. This strategy begins with a relatively large viscosity value, which is gradually reduced to the desired value. This process improves the initial guess and ensures the convergence of the Newton solver. In all numerical experiments, we set $\lambda = 1$ and $m_K = 0.0814814$, as specified in \cite{MR1186727}, to satisfy the stabilization parameters.
\end{remark}
\subsection{Analytic solution}
The computational domain is $\Omega := (0,1) \times (0,1)$, and we consider two viscosity values, $\nu = \{1, 0.01\}$. The function \(\boldsymbol{f}\) is chosen such that the exact solution is given by 
\begin{align*}
\boldsymbol{u}(x, y) &= \left(-256 x^2 (x-1)^2 y (y-1)(2y-1), \, 256 x^2 (x-1)^2 y (y-1)(2y-1) \right), \\
p(x, y) &= 150 (x - 0.5)(y - 0.5).
\end{align*}

The slip boundary condition is imposed on $y = 1$, while the essential boundary condition is enforced on the remainder of the boundary. Tables~\ref{table1} and~\ref{table2} present the results for $\mathbb{P}_1^2-\mathbb{P}_1$, and Tables~\ref{table3} and~\ref{table4} for $\mathbb{P}_2^2-\mathbb{P}_2$ under uniform refinement. These results demonstrate that the method maintains accuracy even for small viscosity coefficients. The method achieves the optimal order of convergence for both $\mathbb{P}_1^2-\mathbb{P}_1$ and $\mathbb{P}_2^2-\mathbb{P}_2$ spaces with parameters $\gamma = 10$ and $\beta = 10$. Tables~\ref{table1}--\ref{table4} further show that the effectivity index ($E$) remains bounded as $h \to 0$ for various values of $\nu$ and $\theta$.

The total error (T.E.) is defined as
\[
\left|\!\left|\!\left| (\boldsymbol{u} - \boldsymbol{u}_h, p - p_h) \right|\!\right|\!\right| = \nu \|\varepsilon(\boldsymbol{v} - \boldsymbol{v}_h) \|^2_{0,\Omega} + \sum_{E \in \mathcal{E}_{\mathrm{Nav}}} \frac{\nu}{h_E} \|(\boldsymbol{v} - \boldsymbol{v}_h) \cdot \boldsymbol{n} \|_{0,E}^2 +  \| p - p_h \|^2_{0,\Omega}.
\]

Table~\ref{table5} shows the error in the $L_2$ norm for the slip condition on $\Gamma_\mathrm{Nav}$. The results indicate that there are no significant differences in error between the symmetric and skew-symmetric variants. However, it shows that the larger the Nitsche parameter  $\gamma$ is, the smaller is the error on the slip condition, for both variants. 
\begin{table}[t!]
    \centering
    \begin{footnotesize}
    \caption{Analytical solution for $\nu = 1$, $\gamma = 10$, $\beta = 10$, and $\mathbb{P}_1^2 - \mathbb{P}_1$ elements under uniform refinement for different values of $\theta$.}
    \begin{tabular}{cccccccccccc}
    \toprule
    \multirow{2}{*}{$h$} & \multicolumn{2}{c}{$\|p-p_h\|_{0,\Omega}$} & \multicolumn{2}{c}{$\|\boldsymbol{u}-\boldsymbol{u}_h\|_{0,\Omega}$} & \multicolumn{2}{c}{$\|\boldsymbol{u}-\boldsymbol{u}_h\|_{1,\Omega}$} & \multicolumn{2}{c}{T.E.} & \multicolumn{2}{c}{$\Psi$} & \multirow{2}{*}{Effec} \\
    \cmidrule(lr){2-3} \cmidrule(lr){4-5} \cmidrule(lr){6-7} \cmidrule(lr){8-9} \cmidrule(lr){10-11}
     & Error & Rate & Error & Rate & Error & Rate & Error & Rate & Error & Rate & \\
    \midrule
    \multicolumn{12}{c}{$\theta=1$} \\
    \midrule
    0.3536 & 4.99e+00 & 0.00 & 3.96e-01 & 0.00 & 5.38e+00 & 0.00 & 6.99e+00 & 0.00 & 4.92e+01 & 0.00 & 7.03 \\
    0.1768 & 1.76e+00 & 1.50 & 8.95e-02 & 2.14 & 2.48e+00 & 1.12 & 3.19e+00 & 1.13 & 2.33e+01 & 1.08 & 7.30 \\
    0.0884 & 5.94e-01 & 1.56 & 2.17e-02 & 2.05 & 1.25e+00 & 0.99 & 1.56e+00 & 1.03 & 1.14e+01 & 1.03 & 7.30 \\
    0.0442 & 2.03e-01 & 1.55 & 5.24e-03 & 2.05 & 6.20e-01 & 1.01 & 7.70e-01 & 1.02 & 5.63e+00 & 1.02 & 7.32 \\
    0.0221 & 7.15e-02 & 1.50 & 1.27e-03 & 2.04 & 3.09e-01 & 1.01 & 3.81e-01 & 1.01 & 2.79e+00 & 1.01 & 7.32 \\
    0.0110 & 2.57e-02 & 1.47 & 3.10e-04 & 2.02 & 1.54e-01 & 1.00 & 1.90e-01 & 1.01 & 1.39e+00 & 1.01 & 7.33 \\
    0.0055 & 9.50e-03 & 1.43 & 8.00e-05 & 2.00 & 7.69e-02 & 1.00 & 9.48e-02 & 1.00 & 6.95e-01 & 1.00 & 7.34 \\
    \midrule
    \multicolumn{12}{c}{$\theta=-1$} \\
    \midrule
    0.3536 & 1.16e+01 & 0.00 & 7.43e-01 & 0.00 & 6.63e+00 & 0.00 & 1.11e+01 & 0.00 & 7.54e+01 & 0.00 & 6.78 \\
    0.1768 & 2.92e+00 & 1.99 & 1.72e-01 & 2.11 & 2.72e+00 & 1.29 & 4.28e+00 & 1.38 & 2.77e+01 & 1.44 & 6.47 \\
    0.0884 & 9.50e-01 & 1.62 & 4.28e-02 & 2.01 & 1.29e+00 & 1.07 & 2.01e+00 & 1.09 & 1.25e+01 & 1.15 & 6.22 \\
    0.0442 & 3.25e-01 & 1.55 & 1.07e-02 & 2.00 & 6.31e-01 & 1.04 & 9.77e-01 & 1.04 & 5.91e+00 & 1.08 & 6.05 \\
    0.0221 & 1.14e-01 & 1.51 & 2.67e-03 & 2.00 & 3.11e-01 & 1.02 & 4.82e-01 & 1.02 & 2.86e+00 & 1.05 & 5.93 \\
    0.0110 & 4.05e-02 & 1.49 & 6.70e-04 & 1.99 & 1.54e-01 & 1.01 & 2.40e-01 & 1.01 & 1.41e+00 & 1.02 & 5.87 \\
    0.0055 & 1.46e-02 & 1.47 & 1.70e-04 & 1.99 & 7.70e-02 & 1.00 & 1.20e-01 & 1.00 & 6.99e-01 & 1.01 & 5.84 \\
    \midrule
    \multicolumn{12}{c}{$\theta=0$} \\
    \midrule
    0.3536 & 5.72e+00 & 0.00 & 4.50e-01 & 0.00 & 5.43e+00 & 0.00 & 7.94e+00 & 0.00 & 5.49e+01 & 0.00 & 6.92 \\
    0.1768 & 1.70e+00 & 1.75 & 1.04e-01 & 2.12 & 2.48e+00 & 1.13 & 3.50e+00 & 1.18 & 2.41e+01 & 1.19 & 6.89 \\
    0.0884 & 5.59e-01 & 1.60 & 2.52e-02 & 2.04 & 1.24e+00 & 0.99 & 1.72e+00 & 1.03 & 1.16e+01 & 1.06 & 6.74 \\
    0.0442 & 1.88e-01 & 1.57 & 6.17e-03 & 2.03 & 6.19e-01 & 1.01 & 8.52e-01 & 1.02 & 5.68e+00 & 1.03 & 6.68 \\
    0.0221 & 6.59e-02 & 1.51 & 1.52e-03 & 2.03 & 3.08e-01 & 1.01 & 4.23e-01 & 1.01 & 2.80e+00 & 1.02 & 6.63 \\
    0.0110 & 2.37e-02 & 1.48 & 3.80e-04 & 2.01 & 1.54e-01 & 1.00 & 2.11e-01 & 1.00 & 1.39e+00 & 1.01 & 6.61 \\
    0.0055 & 8.84e-03 & 1.42 & 9.00e-05 & 2.00 & 7.69e-02 & 1.00 & 1.05e-01 & 1.00 & 6.96e-01 & 1.00 & 6.60 \\
    \bottomrule
    \end{tabular}
    \label{table1}
    \end{footnotesize}
    \end{table}
\begin{table}[t!]
    \centering
    \begin{footnotesize}
    \caption{Analytical solution for $\nu = 0.01$, $\gamma = 10$, $\beta = 10$, and $\mathbb{P}_1^2 - \mathbb{P}_1$ elements under uniform refinement for different values of $\theta$.
}
    \begin{tabular}{cccccccccccc}
    \toprule
    \multirow{2}{*}{$h$} & \multicolumn{2}{c}{$\|p-p_h\|_{0,\Omega}$} & \multicolumn{2}{c}{$\|\boldsymbol{u}-\boldsymbol{u}_h\|_{0,\Omega}$} & \multicolumn{2}{c}{$\|\boldsymbol{u}-\boldsymbol{u}_h\|_{1,\Omega}$} & \multicolumn{2}{c}{T.E.} & \multicolumn{2}{c}{$\Psi$} & \multirow{2}{*}{Effec} \\
    \cmidrule(lr){2-3} \cmidrule(lr){4-5} \cmidrule(lr){6-7} \cmidrule(lr){8-9} \cmidrule(lr){10-11}
     & Error & Rate & Error & Rate & Error & Rate & Error & Rate & Error & Rate & \\
     \midrule
    \multicolumn{12}{c}{$\theta=1$} \\
   \midrule
    0.3536 &  1.03e+00 &  0.00 &  1.13e+00 &  0.00 &  1.01e+01 &  0.00 &  1.25e+01 &  0.00 &  5.59e+01 &  0.00 &  4.47 \\  
    0.1768 &  4.49e-01 &  1.20 &  6.84e-01 &  0.73 &  8.25e+00 &  0.29 &  8.51e+00 &  0.55 &  1.65e+01 &  1.76 &  1.94   \\
    0.0884 &  7.49e-02 &  2.59 &  6.21e-02 &  3.46 &  1.54e+00 &  2.42 &  1.62e+00 &  2.39 &  4.02e+00 &  2.04 &  2.49   \\
    0.0442 &  1.80e-02 &  2.06 &  1.36e-02 &  2.19 &  6.51e-01 &  1.24 &  6.62e-01 &  1.29 &  1.13e+00 &  1.83 &  1.71   \\
    0.0221 &  4.29e-03 &  2.07 &  2.93e-03 &  2.21 &  3.11e-01 &  1.07 &  3.13e-01 &  1.08 &  3.74e-01 &  1.60 &  1.20   \\
    0.0110 &  1.06e-03 &  2.01 &  6.90e-04 &  2.10 &  1.54e-01 &  1.01 &  1.54e-01 &  1.02 &  1.53e-01 &  1.29 &  0.99   \\
    0.0055 &  2.70e-04 &  1.97 &  1.70e-04 &  2.03 &  7.69e-02 &  1.00 &  7.69e-02 &  1.00 &  7.13e-02 &  1.10 &  0.93   \\
   \midrule
    \multicolumn{12}{c}{\textbf{$\theta=-1$}} \\
   \midrule
    0.3536 &  1.04e+00 &  0.00 &  1.14e+00 &  0.00 &  1.01e+01 &  0.00 &  1.25e+01 &  0.00 &  5.59e+01 &  0.00 &  4.47 \\  
    0.1768 &  4.51e-01 &  1.20 &  6.89e-01 &  0.72 &  8.30e+00 &  0.29 &  8.56e+00 &  0.55 &  1.65e+01 &  1.76 &  1.93   \\
    0.0884 &  7.59e-02 &  2.57 &  6.38e-02 &  3.43 &  1.55e+00 &  2.42 &  1.63e+00 &  2.40 &  4.02e+00 &  2.04 &  2.47   \\
    0.0442 &  1.82e-02 &  2.06 &  1.40e-02 &  2.19 &  6.52e-01 &  1.25 &  6.64e-01 &  1.29 &  1.13e+00 &  1.83 &  1.71   \\
    0.0221 &  4.35e-03 &  2.07 &  3.04e-03 &  2.21 &  3.11e-01 &  1.07 &  3.13e-01 &  1.08 &  3.74e-01 &  1.60 &  1.19   \\
    0.0110 &  1.08e-03 &  2.01 &  7.11e-04 &  2.09 &  1.54e-01 &  1.01 &  1.54e-01 &  1.02 &  1.53e-01 &  1.29 &  0.99   \\
    0.0055 &  2.81e-04 &  1.97 &  1.72e-04 &  2.03 &  7.69e-02 &  1.00 &  7.69e-02 &  1.00 &  7.13e-02 &  1.10 &  0.93   \\
    \midrule
    \multicolumn{12}{c}{$\theta=0$} \\
   \midrule
     0.3536 &  1.04e+00 &  0.00 &  1.13e+00 &  0.00 &  1.01e+01 &  0.00 &  1.25e+01 &  0.00 &  5.59e+01 &  0.00 &  4.47 \\  
    0.1768 &  4.50e-01 &  1.20 &  6.86e-01 &  0.72 &  8.28e+00 &  0.29 &  8.53e+00 &  0.55 &  1.65e+01 &  1.76 &  1.94   \\
    0.0884 &  7.54e-02 &  2.58 &  6.29e-02 &  3.45 &  1.55e+00 &  2.42 &  1.62e+00 &  2.40 &  4.02e+00 &  2.04 &  2.48   \\
    0.0442 &  1.81e-02 &  2.06 &  1.38e-02 &  2.19 &  6.52e-01 &  1.25 &  6.63e-01 &  1.29 &  1.13e+00 &  1.83 &  1.71   \\
    0.0221 &  4.32e-03 &  2.07 &  2.98e-03 &  2.21 &  3.11e-01 &  1.07 &  3.13e-01 &  1.08 &  3.74e-01 &  1.60 &  1.20   \\
    0.0110 &  1.07e-03 &  2.01 &  7.02e-04 &  2.09 &  1.54e-01 &  1.01 &  1.54e-01 &  1.02 &  1.53e-01 &  1.29 &  0.99   \\
    0.0055 &  2.74e-04 &  1.97 &  1.72e-04 &  2.03 &  7.69e-02 &  1.00 &  7.69e-02 &  1.00 &  7.13e-02 &  1.10 &  0.93   \\
   \bottomrule
    \end{tabular}
    \label{table2}
    \end{footnotesize}
\end{table}
\begin{table}[t!]
    \centering
    \begin{footnotesize}
    \caption{Analytical solution for $\nu = 1$, $\gamma = 10$, $\beta = 10$, and $\mathbb{P}_2^2 - \mathbb{P}_2$ elements under uniform refinement for different values of $\theta$.}
    \begin{tabular}{cccccccccccc}
    \toprule
    \multirow{2}{*}{$h$} & \multicolumn{2}{c}{$\|p-p_h\|_{0,\Omega}$} & \multicolumn{2}{c}{$\|\boldsymbol{u}-\boldsymbol{u}_h\|_{0,\Omega}$} & \multicolumn{2}{c}{$\|\boldsymbol{u}-\boldsymbol{u}_h\|_{1,\Omega}$} & \multicolumn{2}{c}{T.E.} & \multicolumn{2}{c}{$\Psi$} & \multirow{2}{*}{Effec} \\
    \cmidrule(lr){2-3} \cmidrule(lr){4-5} \cmidrule(lr){6-7} \cmidrule(lr){8-9} \cmidrule(lr){10-11}
     & Error & Rate & Error & Rate & Error & Rate & Error & Rate & Error & Rate & \\
     \midrule
    \multicolumn{12}{c}{$\theta=1$} \\
   \midrule
    0.3536 &  1.06e+00 &  0.00 &  4.23e-02 &  0.00 &  1.11e+00 &  0.00 &  1.90e+00 &  0.00 &  1.17e+01 &  0.00 &  6.15 \\  
    0.1768 &  2.58e-01 &  2.04 &  1.16e-02 &  1.86 &  3.30e-01 &  1.75 &  4.54e-01 &  2.06 &  3.09e+00 &  1.92 &  6.81   \\
    0.0884 &  6.45e-02 &  2.00 &  3.07e-03 &  1.92 &  8.50e-02 &  1.96 &  1.12e-01 &  2.02 &  7.95e-01 &  1.96 &  7.08   \\
    0.0442 &  1.64e-02 &  1.98 &  7.80e-04 &  1.98 &  2.14e-02 &  1.99 &  2.83e-02 &  1.99 &  2.02e-01 &  1.97 &  7.15   \\
    0.0221 &  4.14e-03 &  1.98 &  1.90e-04 &  2.03 &  5.34e-03 &  2.00 &  7.18e-03 &  1.98 &  5.10e-02 &  1.99 &  7.11   \\
    0.0110 &  1.04e-03 &  1.99 &  5.00e-06 &  2.00 &  1.33e-03 &  2.00 &  1.82e-03 &  1.98 &  1.28e-02 &  2.00 &  7.05   \\
    0.0055 &  2.60e-04 &  2.00 &  1.00e-07 &  1.98 &  3.30e-04 &  2.00 &  4.60e-04 &  1.99 &  3.21e-03 &  1.99 &  7.02  \\
        \midrule
        \multicolumn{12}{c}{\textbf{$\theta=-1$}} \\
        \midrule
    0.3536 &  9.50e-01 &  0.00 &  6.17e-02 &  0.00 &  1.07e+00 &  0.00 &  1.45e+00 &  0.00 &  1.07e+01 &  0.00 &  7.41 \\
    0.1768 &  2.52e-01 &  1.92 &  1.18e-02 &  2.39 &  3.27e-01 &  1.71 &  4.03e-01 &  1.85 &  3.06e+00 &  1.81 &  7.59 \\
    0.0884 &  6.44e-02 &  1.97 &  3.07e-03 &  1.94 &  8.46e-02 &  1.95 &  1.04e-01 &  1.95 &  7.96e-01 &  1.94 &  7.65 \\
    0.0442 &  1.64e-02 &  1.98 &  7.80e-04 &  1.98 &  2.13e-02 &  1.99 &  2.64e-02 &  1.98 &  2.03e-01 &  1.98 &  7.68 \\
    0.0221 &  4.14e-03 &  1.98 &  1.90e-04 &  2.03 &  5.34e-03 &  2.00 &  6.64e-03 &  1.99 &  5.10e-02 &  1.99 &  7.69 \\
    0.0110 &  1.04e-03 &  1.99 &  5.00e-05 &  2.00 &  1.33e-03 &  2.00 &  1.67e-03 &  1.99 &  1.28e-02 &  2.00 &  7.69 \\
    0.0055 &  2.60e-04 &  2.00 &  1.00e-05 &  1.98 &  3.30e-04 &  2.00 &  4.20e-04 &  1.99 &  3.21e-03 &  1.99 &  7.68 \\
       \midrule
       \multicolumn{12}{c}{$\theta=0$} \\
      \midrule
  0.3536 &  8.44e-01 &  0.00 &  5.05e-02 &  0.00 &  9.98e-01 &  0.00 &  1.48e+00 &  0.00 &  1.04e+01 &  0.00 &  7.06 \\  
    0.1768 &  2.44e-01 &  1.79 &  1.16e-02 &  2.12 &  3.24e-01 &  1.62 &  4.09e-01 &  1.85 &  3.03e+00 &  1.78 &  7.42   \\
   0.0884 &  6.36e-02 &  1.95 &  3.06e-03 &  1.92 &  8.43e-02 &  1.94 &  1.05e-01 &  1.96 &  7.92e-01 &  1.94 &  7.52   \\
   0.0442 &  1.63e-02 &  1.97 &  7.80e-04 &  1.98 &  2.13e-02 &  1.99 &  2.67e-02 &  1.98 &  2.02e-01 &  1.97 &  7.57   \\
   0.0221 &  4.12e-03 &  1.98 &  1.90e-04 &  2.03 &  5.33e-03 &  2.00 &  6.73e-03 &  1.99 &  5.10e-02 &  1.99 &  7.58   \\
  0.0110 &  1.04e-03 &  1.99 &  5.00e-05 &  2.00 &  1.33e-03 &  2.00 &  1.69e-03 &  1.99 &  1.28e-02 &  2.00 &  7.57   \\
  0.0055 &  2.60e-04 &  2.00 &  1.00e-05 &  1.98 &  3.30e-04 &  2.00 &  4.20e-04 &  1.99 &  3.21e-03 &  1.99 &  7.57  \\
    \bottomrule
    \end{tabular}
    \label{table3}
    \end{footnotesize}
\end{table}
\begin{table}[t!]
    \centering
    \begin{footnotesize}
    \caption{Analytical solution for $\nu = 0.01$, $\gamma = 10$, $\beta = 10$, and $\mathbb{P}_2^2 - \mathbb{P}_2$ elements under uniform refinement for different values of $\theta$.}
  \begin{tabular}{cccccccccccc}
    \toprule
    \multirow{2}{*}{$h$} & \multicolumn{2}{c}{$\|p-p_h\|_{0,\Omega}$} & \multicolumn{2}{c}{$\|\boldsymbol{u}-\boldsymbol{u}_h\|_{0,\Omega}$} & \multicolumn{2}{c}{$\|\boldsymbol{u}-\boldsymbol{u}_h\|_{1,\Omega}$} & \multicolumn{2}{c}{T.E.} & \multicolumn{2}{c}{$\Psi$} & \multirow{2}{*}{Effec} \\
    \cmidrule(lr){2-3} \cmidrule(lr){4-5} \cmidrule(lr){6-7} \cmidrule(lr){8-9} \cmidrule(lr){10-11}
     & Error & Rate & Error & Rate & Error & Rate & Error & Rate & Error & Rate & \\
     \midrule
    \multicolumn{12}{c}{$\theta=1$} \\
   \midrule
    0.3536 &  3.58e-02 &  0.00 &  6.25e-02 &  0.00 &  1.21e+00 &  0.00 &  1.23e+00 &  0.00 & 4.66e+01 &  0.00 & 3.79e+01 \\
    0.1768 &  1.50e-02 &  1.25 &  1.91e-02 &  1.71 &  4.07e-01 &  1.57 &  4.08e-01 &  1.59 &  1.16e+01 &  2.01 & 2.83e+01 \\
    0.0884 &  5.04e-03 &  1.58 &  5.93e-03 &  1.69 &  1.11e-01 &  1.87 &  1.11e-01 &  1.87 &  2.93e+00 &  1.98 & 2.63e+01 \\
    0.0442 &  1.31e-03 &  1.94 &  1.53e-03 &  1.95 &  2.81e-02 &  1.98 &  2.82e-02 &  1.98 &  7.34e-01 &  2.00 & 2.61e+01 \\
    0.0221 &  3.31e-04 &  2.01 &  3.80e-04 &  2.01 &  7.00e-03 &  2.01 &  7.01e-03 &  2.01 &  1.83e-01 &  2.00 & 2.62e+01 \\
    0.0110 &  8.00e-05 &  2.00 &  1.00e-04 &  2.00 &  1.75e-03 &  2.00 &  1.75e-03 &  2.00 &  4.58e-02 &  2.00 & 2.62e+01 \\
    0.0055 &  2.00e-05 &  1.99 &  2.00e-05 &  1.99 &  4.40e-04 &  2.00 &  4.40e-04 &  1.99 &  1.15e-02 &  2.00 & 2.61e+01 \\
    \midrule
       \multicolumn{12}{c}{\textbf{$\theta=-1$}} \\
   \midrule
    0.3536 &  3.75e-02 &  0.00 &  6.46e-02 &  0.00 &  1.22e+00 &  0.00 &  1.24e+00 &  0.00 & 4.66e+01 &  0.00 & 3.75e+01 \\
    0.1768 &  1.50e-02 &  1.32 &  1.91e-02 &  1.76 &  4.07e-01 &  1.59 &  4.08e-01 &  1.61 &  1.16e+01 &  2.01 & 2.84e+01 \\
    0.0884 &  5.03e-03 &  1.58 &  5.92e-03 &  1.69 &  1.11e-01 &  1.87 &  1.11e-01 &  1.87 &  2.93e+00 &  1.98 & 2.63e+01 \\
    0.0442 &  1.31e-03 &  1.94 &  1.53e-03 &  1.95 &  2.81e-02 &  1.98 &  2.82e-02 &  1.98 &  7.34e-01 &  2.00 & 2.61e+01 \\
    0.0221 &  3.31e-04 &  2.01 &  3.80e-04 &  2.01 &  7.00e-03 &  2.01 &  7.01e-03 &  2.01 &  1.83e-01 &  2.00 & 2.62e+01 \\
    0.0110 &  8.00e-05 &  2.00 &  1.00e-04 &  2.00 &  1.75e-03 &  2.00 &  1.75e-03 &  2.00 &  4.58e-02 &  2.00 & 2.62e+01 \\
    0.0055 &  2.00e-05 &  1.99 &  2.00e-05 &  1.99 &  4.40e-04 &  2.00 &  4.40e-04 &  1.99 &  1.15e-02 &  2.00 & 2.61e+01 \\
     \midrule
       \multicolumn{12}{c}{$\theta=0$} \\
     \midrule
    0.3536 &  3.65e-02 &  0.00 &  6.34e-02 &  0.00 &  1.21e+00 &  0.00 &  1.23e+00 &  0.00 & 4.66e+01 &  0.00 & 3.77e+01 \\
    0.1768 &  1.50e-02 &  1.28 &  1.91e-02 &  1.73 &  4.07e-01 &  1.58 &  4.08e-01 &  1.60 &  1.16e+01 &  2.01 & 2.83e+01 \\
    0.0884 &  5.04e-03 &  1.58 &  5.92e-03 &  1.69 &  1.11e-01 &  1.87 &  1.11e-01 &  1.87 &  2.93e+00 &  1.98 & 2.63e+01 \\
    0.0442 &  1.31e-03 &  1.94 &  1.53e-03 &  1.95 &  2.81e-02 &  1.98 &  2.82e-02 &  1.98 &  7.34e-01 &  2.00 & 2.61e+01 \\
    0.0221 &  3.31e-04 &  2.01 &  3.80e-04 &  2.01 &  7.00e-03 &  2.01 &  7.01e-03 &  2.01 &  1.83e-01 &  2.00 & 2.62e+01 \\
    0.0110 &  8.00e-05 &  2.00 &  1.00e-04 &  2.00 &  1.75e-03 &  2.00 &  1.75e-03 &  2.00 &  4.58e-02 &  2.00 & 2.62e+01 \\
    0.0055 &  2.00e-05 &  1.99 &  2.00e-05 &  1.99 &  4.40e-04 &  2.00 &  4.40e-04 &  1.99 &  1.15e-02 &  2.00 & 2.61e+01 \\
    \bottomrule
    \end{tabular}
    \label{table4}
    \end{footnotesize}
\end{table}
\begin{table}[t!]
    \centering
    \begin{footnotesize}
   \caption{Values of $\left\|\boldsymbol{u}_h \cdot \boldsymbol{n}\right\|_{0, \Gamma_{\mathrm{Nav}}}$ computed for various $\theta$ and $\gamma$ with parameters $\nu = 1$, $\beta = 10$, and $\mathbb{P}_1^2 - \mathbb{P}_1$ elements under uniform mesh refinement.}
    \begin{tabular}{lccc|ccc}
    \toprule
    & \multicolumn{3}{c}{$\theta=-1$} & \multicolumn{3}{|c}{$\theta=1$} \\
    \cmidrule(lr){2-4} \cmidrule(lr){5-7}
    $h$ & $\gamma=10^{-3}$ & $\gamma=1$ & $\gamma=10^3$ & $\gamma=10^{-3}$ & $\gamma=1$ & $\gamma=10^3$ \\
    \midrule
    0.3536 & 3.739618 & 2.703788 & 0.008016 & 1.672831 & 1.241215 & 0.007919 \\
    0.1768 & 8.029179 & 1.281138 & 0.001739 & 0.471996 & 0.334333 & 0.001718  \\
    0.0884 & 1.479223 & 0.701043 & 0.000421 & 0.115706 & 0.082970 & 0.000416  \\
    0.0442 & 0.476380 & 0.086847 & 0.000103 & 0.028373 & 0.020496 & 0.000102  \\
    0.0221 & 0.124253 & 0.025162 & 0.000025 & 0.007058 & 0.005110 & 0.000025  \\
    0.0110 & 0.031169 & 0.005674 & 0.000006 & 0.001763 & 0.001277 & 0.000006  \\
    0.0055 & 0.007796 & 0.001387 & 0.000002 & 0.000441 & 0.000319 & 0.000001 \\ 
    \bottomrule 
    \end{tabular}
    \label{table5}
    \end{footnotesize}
\end{table}
\subsection{Lid driven cavity problem}
The lid-driven cavity problem serves as a well-established benchmark in computational fluid mechanics (see \cite{medic1999nsike, GhiaShin1982}). The computational domain is defined as a square region, $\Omega = (0,1)^2$, with negligible body forces and a single moving boundary. A uniform velocity, $\boldsymbol{u} = (1,0)^{\mathrm{T}}$, is applied along the top boundary at $y = 1$. 
Two cases are considered: in the first case, a homogeneous slip boundary condition with a friction coefficient $\beta = 0$ and Nitsche parameter $\gamma = 10$ is imposed on the remaining three sides, while in the second case, no-slip boundary conditions are enforced on the same boundaries. The Reynolds number is defined as $Re = 1/\nu$, and computations are conducted for $Re = 5000$. The velocity streamline plots obtained are similar to those presented in \cite{MR4812237}, exhibiting one primary vortex and three secondary vortices in the no-slip case. In contrast, in the slip case, the viscous forces near the boundaries are reduced, leading to decreased flow instabilities and the absence of secondary vortices. The final adapted mesh for the $\mathbb{P}_1^2-\mathbb{P}_1$ element pair and the corresponding velocity streamlines for the symmetric formulation ($\theta = 1$) are shown in Figures~\ref{figure2} for both slip and no-slip boundary conditions. Notably, the mesh refinement is concentrated within the primary vortex in the no-slip boundary condition case, contributing to accurate solution approximation. In the slip case, the refinement is concentrated near the boundaries. It is also observed that increasing the friction coefficient \(\beta\) yields results comparable to those obtained with no-slip boundary conditions. Furthermore, Table \ref{tab:comparison} demonstrates that the position of the primary vortex center, determined using the stabilized finite element method \eqref{DSF}, aligns well with the results obtained by Ghia and Shin \cite{GhiaShin1982} and Medic and Mohammadi \cite{medic1999nsike}.
\begin{table}[t!]
    \centering
    \begin{footnotesize}
   \caption{Comparison of results using various schemes and boundary conditions.}
      \begin{tabular}{@{}lcc@{}}
    \toprule
    Scheme & Re = 5000 \\ \midrule
    Ghia et al.  \cite{GhiaShin1982} & $x = 0.5117; \ y = 0.5352$ \\
    Medic et al.  \cite{medic1999nsike} & $x = 0.53; \ y = 0.53$ \\
    Stabilized FE (with slip bc)  $\mathbb{P}_1^2 \times \mathbb{P}_1$ (adapted mesh) & $x = 0.5018; \ y = 0.5000$ \\
    Stabilized FE (with no-slip bc)  $\mathbb{P}_1^2 \times \mathbb{P}_1$ (adapted mesh) & $x = 0.5172; \ y = 0.5351$ \\
    \bottomrule
    \end{tabular}
\label{tab:comparison}
\end{footnotesize}
\end{table}
\begin{figure}[H]
\centering
\subfloat{\includegraphics[width=0.25\linewidth]{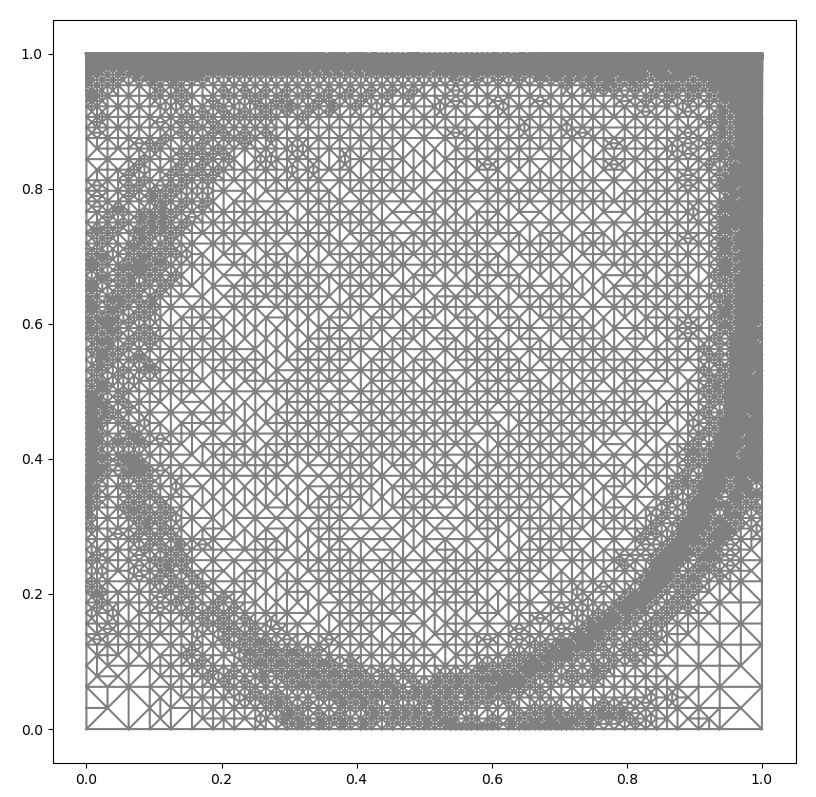}} 
 \subfloat{\includegraphics[width=0.25\linewidth]{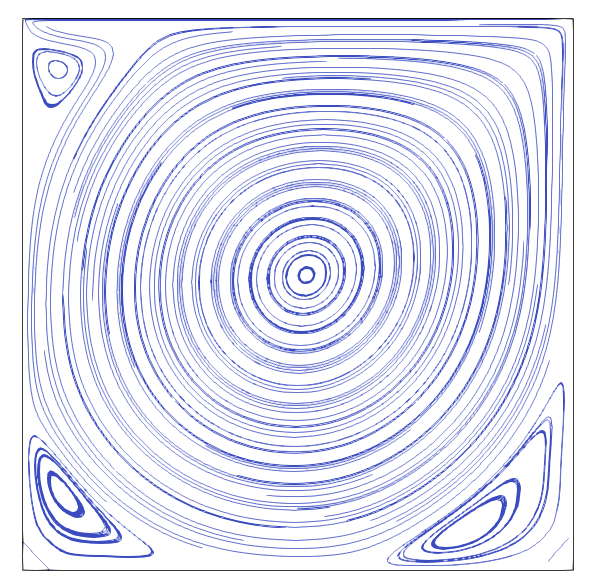}} 
 \subfloat{\includegraphics[width=0.25\linewidth]{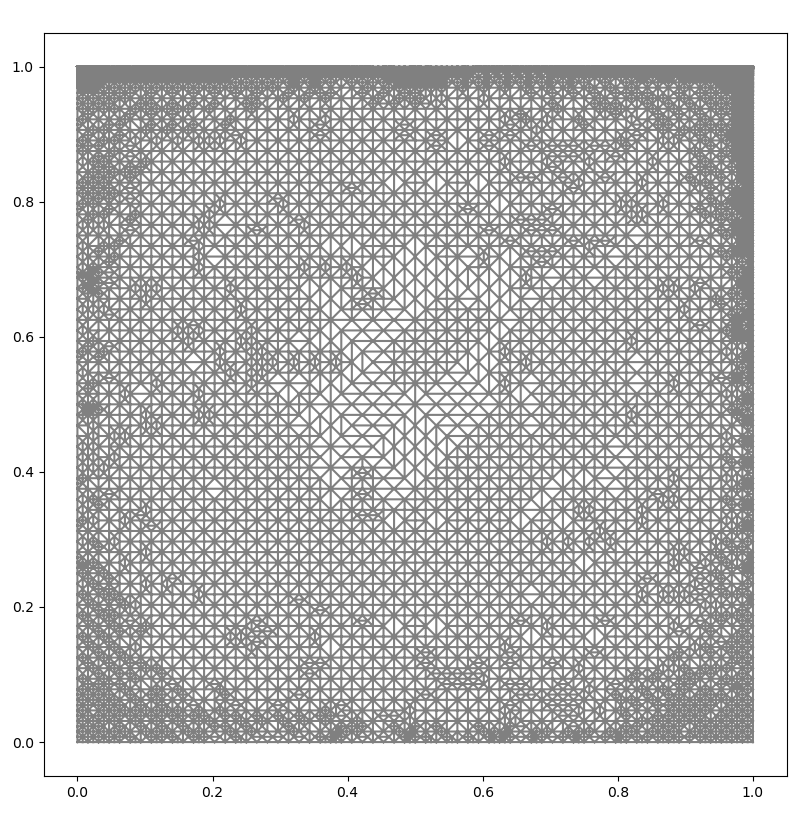}} 
 \subfloat{\includegraphics[width=0.25\linewidth]{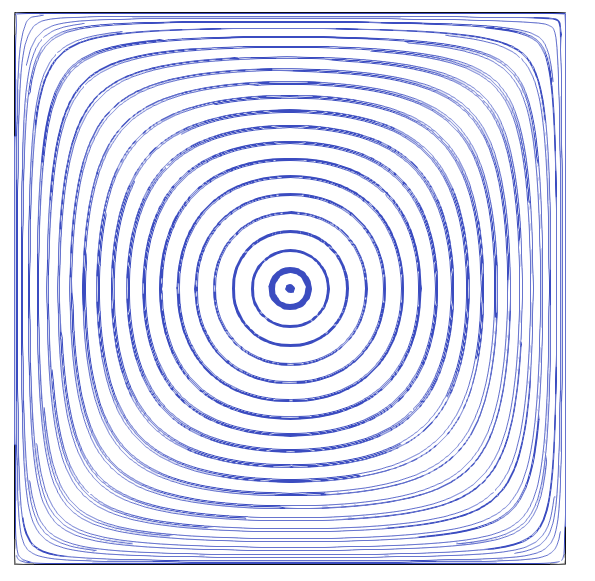}}
\caption{Refined mesh and streamlines with no-slip (top row) and slip (bottom row) boundary conditions at $Re = 5000.$}
\label{figure2}
\end{figure}
\subsection{Convergence in the case of adaptive mesh refinement (non-convex domain)}
We conduct a test to recover optimal convergence rates through adaptive mesh refinement. This process is guided by the proposed \emph{a posteriori} error estimator and follows the maximum marking strategy. 

The convergence rates are computed using the alternative formula
\begin{align*}
\text{rate} = -2 \log(e / \widehat{e}) \big[\log(\mathrm{DoFs} / \widehat{\mathrm{DoF}})\big]^{-1}.
\end{align*}

We consider a non-convex rotated L-shaped domain $\Omega = (-1, 1)^2 \setminus (-1, 0)^2$ and employ manufactured velocity and fluid pressure solutions with sharp gradients near the domain re-entrant corner
\begin{align*}
\boldsymbol{u}(r, s) &= \frac{r^\chi}{2a} 
\begin{pmatrix}
-(\chi+1) \cos ([\chi+1] s) + (M_2-\chi-1) M_1 \cos ([\chi-1] s) \\
(\chi+1) \sin ([\chi+1] s) + (M_2+\chi-1) M_1 \sin ([\chi-1] s)
\end{pmatrix}, \\
p(r, s) &= r^{1/3} \sin \left(\frac{1}{3} \left(\frac{\pi}{2} + s\right)\right),
\end{align*}
where the polar coordinates and parameters are defined as $M_1 = \frac{-\cos ([\chi+1] \omega)}{\cos ([\chi-1] \omega)}, ~ M_2 = \frac{2(a+2b)}{a+b}, ~
r = \sqrt{x_1^2 + x_2^2}, ~ s = \arctan(x_2, x_1), ~
\omega = \frac{3\pi}{4},~ \chi = 0.54448373, ~ a = 10^3, ~  b = 10.$ These computations were performed using the $\mathbb{P}_1^2-\mathbb{P}_1$ element pair with the parameters $\nu = 1$, $\beta = 0$, $\theta = 1$, and $\gamma = 10$. The boundary conditions are defined as follows: \(\Gamma_{\mathrm{Nav}}\) represents the segments where \(x = 0\) and \(y = 0\), while \(\Gamma_{\mathrm{D}}\) covers the rest of the boundary. The forcing terms are derived from the manufactured solution.

It is important to note that the exact pressure belongs to the space \(H^{4/3 - \epsilon}(\Omega)\) for any \(\epsilon > 0\). This indicates sufficient regularity to achieve optimal convergence. However, the pressure gradient has a singularity at the re-entrant corner, leading us to expect a convergence rate of approximately \(\mathcal{O}(h^{1/3})\). The numerical results for this test are summarized in Table \ref{conv_adap}. We observe the expected sub-optimal convergence when using uniform mesh refinement, while optimal convergence across all variables is achieved in the adaptive case due to localized mesh refinement. Notably, this optimal convergence is reached with a significantly smaller number of degrees of freedom compared to the uniformly refined case. The last column of the table again confirms the reliability and efficiency of the \emph{a posteriori} error estimator. Furthermore, in Figure~\ref{figure_shaped} sample triangulations obtained after several adaptive refinement steps, which illustrate the anticipated clustering of vertices near the re-entrant corner. The error plots in Figure \ref{figure_error_plots} reveal that the error in the different norms converges suboptimally under uniform refinement, while in the case of adaptive refinement, we achieve an optimal rate of convergence.
\begin{table}[ht]
\centering
\begin{footnotesize}
\label{conv_adap}
\caption{Comparison of results obtained using uniform and adaptive mesh refinement with $\mathbb{P}_1^2-\mathbb{P}_1$ elements. }
\begin{tabular}{ccccc ccc ccc c c}
\toprule
\multirow{2}{*}{DoFs} & \multirow{2}{*}{$h$} & \multicolumn{2}{c}{$\|p-p_h\|_{0,\Omega}$} & \multicolumn{2}{c}{$\|\boldsymbol{u}-\boldsymbol{u}_h\|_{0,\Omega}$} & \multicolumn{2}{c}{$\|\boldsymbol{u}-\boldsymbol{u}_h\|_{1,\Omega}$} & \multicolumn{2}{c}{T.E.} & \multicolumn{2}{c}{$\Psi$} & \multirow{2}{*}{Effec} \\
\cmidrule{3-12}
& & Error & Rate & Error & Rate & Error & Rate & Error & Rate & Error & Rate & \\
\midrule
\multicolumn{13}{c}{Uniform Mesh Refinement} \\
\midrule
25 & 1.4142 & 4.6e-01 & 0.00 & 3.7e-02 & 0.00 & 1.6e-01 & 0.00 & 2.0e-01 & 0.00 & 1.7e+00 & 0.00 & 8.09 \\
64 & 0.7071 & 2.0e-01 & 1.19 & 1.3e-02 & 1.49 & 1.1e-01 & 0.46 & 1.2e-01 & 0.72 & 1.1e+00 & 0.61 & 8.74 \\
196 & 0.3536 & 8.2e-02 & 1.31 & 6.1e-03 & 1.09 & 8.1e-02 & 0.47 & 8.5e-02 & 0.54 & 7.6e-01 & 0.52 & 8.91 \\
676 & 0.1768 & 4.5e-02 & 0.86 & 2.9e-03 & 1.06 & 5.7e-02 & 0.51 & 5.9e-02 & 0.53 & 5.3e-01 & 0.51 & 9.02 \\
2500 & 0.0884 & 2.7e-02 & 0.74 & 1.5e-03 & 1.00 & 3.9e-02 & 0.53 & 4.1e-02 & 0.53 & 3.7e-01 & 0.53 & 9.08 \\
9604 & 0.0442 & 1.7e-02 & 0.67 & 7.6e-04 & 0.95 & 2.7e-02 & 0.54 & 2.8e-02 & 0.54 & 2.5e-01 & 0.53 & 9.11 \\
37636 & 0.0221 & 1.1e-02 & 0.63 & 4.0e-04 & 0.91 & 1.9e-02 & 0.54 & 1.9e-02 & 0.54 & 1.7e-01 & 0.54 & 9.12 \\
\midrule
\multicolumn{13}{c}{Adaptive Mesh Refinement} \\
\midrule
25 & 1.4142 & 4.6e-01 & 0.00 & 3.7e-02 & 0.00 & 1.6e-01 & 0.00 & 2.0e-01 & 0.00 & 1.7e+00 & 0.00 & 8.09 \\
55 & 1.0000 & 1.6e-01 & 2.76 & 1.7e-02 & 1.99 & 1.1e-01 & 0.78 & 1.2e-01 & 1.29 & 1.1e+00 & 1.06 & 8.83 \\
118 & 0.7071 & 9.3e-02 & 1.35 & 8.3e-03 & 1.86 & 8.7e-02 & 0.73 & 9.0e-02 & 0.80 & 7.8e-01 & 0.85 & 8.67 \\
169 & 0.7071 & 5.9e-02 & 2.53 & 6.6e-03 & 1.22 & 7.1e-02 & 1.10 & 7.3e-02 & 1.17 & 6.3e-01 & 1.20 & 8.63 \\
220 & 0.7071 & 4.4e-02 & 2.19 & 6.1e-03 & 0.60 & 6.3e-02 & 0.96 & 6.4e-02 & 0.97 & 5.5e-01 & 1.12 & 8.47 \\
304 & 0.7071 & 4.4e-02 & -0.01 & 5.1e-03 & 1.13 & 5.4e-02 & 0.96 & 5.6e-02 & 0.91 & 4.7e-01 & 0.90 & 8.49 \\
451 & 0.5000 & 2.8e-02 & 2.26 & 3.6e-03 & 1.75 & 4.5e-02 & 0.91 & 4.6e-02 & 0.93 & 3.9e-01 & 0.99 & 8.40 \\
604 & 0.5000 & 2.2e-02 & 1.83 & 3.3e-03 & 0.69 & 4.0e-02 & 0.87 & 4.1e-02 & 0.88 & 3.3e-01 & 1.06 & 8.18 \\
907 & 0.3536 & 1.5e-02 & 1.85 & 1.4e-03 & 4.17 & 3.1e-02 & 1.15 & 3.2e-02 & 1.19 & 2.7e-01 & 1.03 & 8.47 \\
1162 & 0.3536 & 1.3e-02 & 1.39 & 1.3e-03 & 0.55 & 2.7e-02 & 1.10 & 2.8e-02 & 1.11 & 2.4e-01 & 1.04 & 8.54 \\
1912 & 0.3536 & 1.1e-02 & 0.40 & 1.1e-03 & 0.79 & 2.2e-02 & 0.88 & 2.2e-02 & 0.88 & 1.9e-01 & 0.99 & 8.32 \\
2548 & 0.2500 & 8.2e-03 & 2.25 & 6.9e-04 & 3.05 & 1.8e-02 & 1.18 & 1.9e-02 & 1.20 & 1.6e-01 & 1.04 & 8.51 \\
3694 & 0.2500 & 7.0e-03 & 0.87 & 5.8e-04 & 1.00 & 1.6e-02 & 0.86 & 1.6e-02 & 0.86 & 1.3e-01 & 0.96 & 8.35 \\
5389 & 0.1768 & 4.9e-03 & 1.85 & 3.0e-04 & 3.37 & 1.3e-02 & 1.02 & 1.3e-02 & 1.04 & 1.0e-01 & 1.10 & 8.43 \\
\bottomrule
\end{tabular}
\end{footnotesize}
\end{table}
\begin{figure}[H]
\centering
\subfloat{\includegraphics[width=0.33\linewidth]{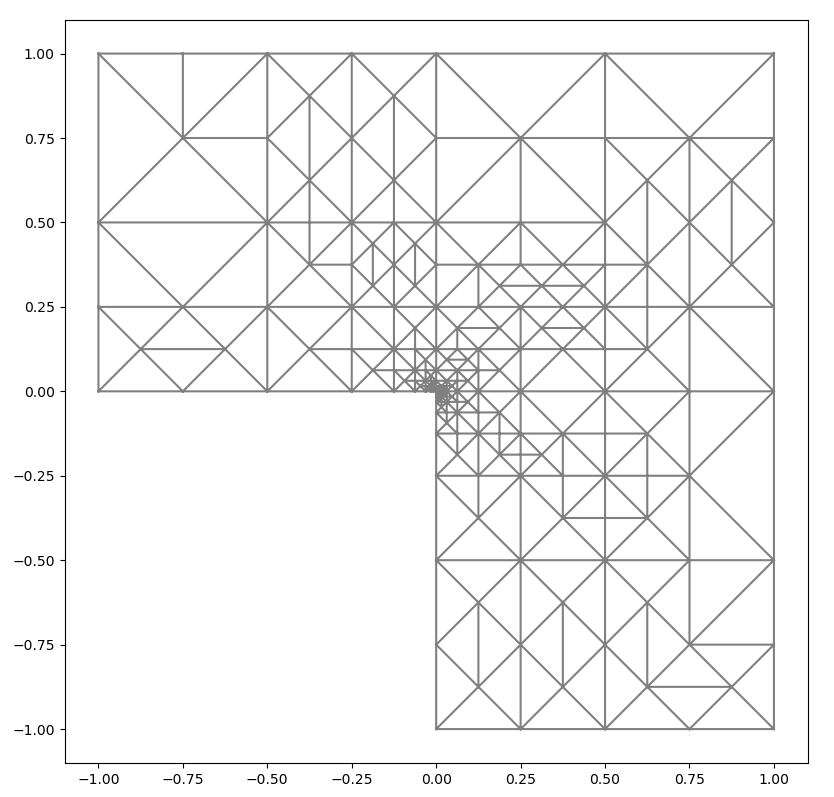}} 
 \subfloat{\includegraphics[width=0.33\linewidth]{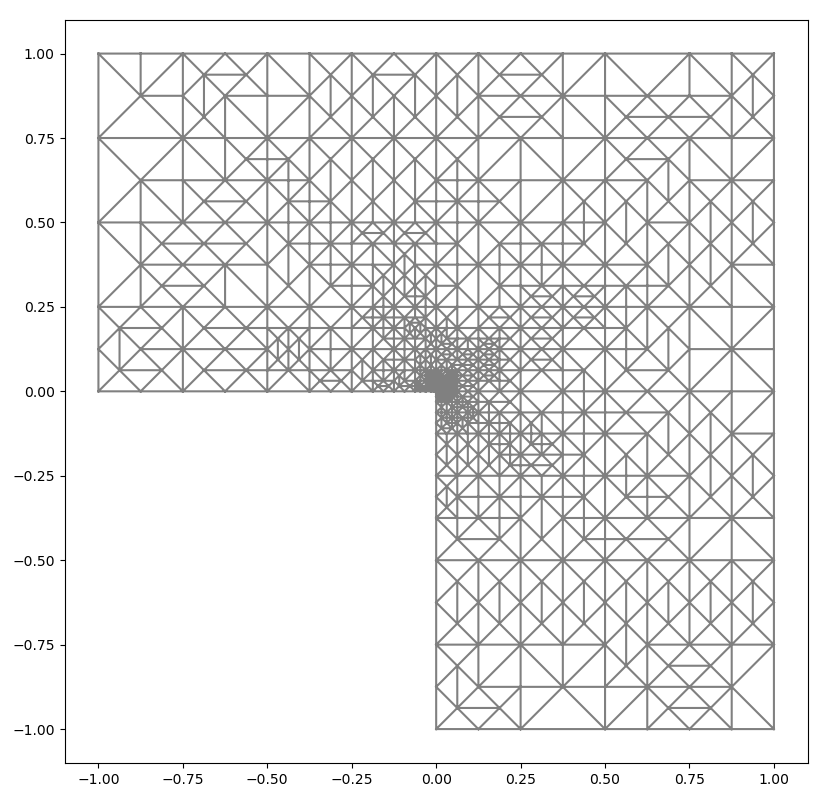}} 
 \subfloat{\includegraphics[width=0.33\linewidth]{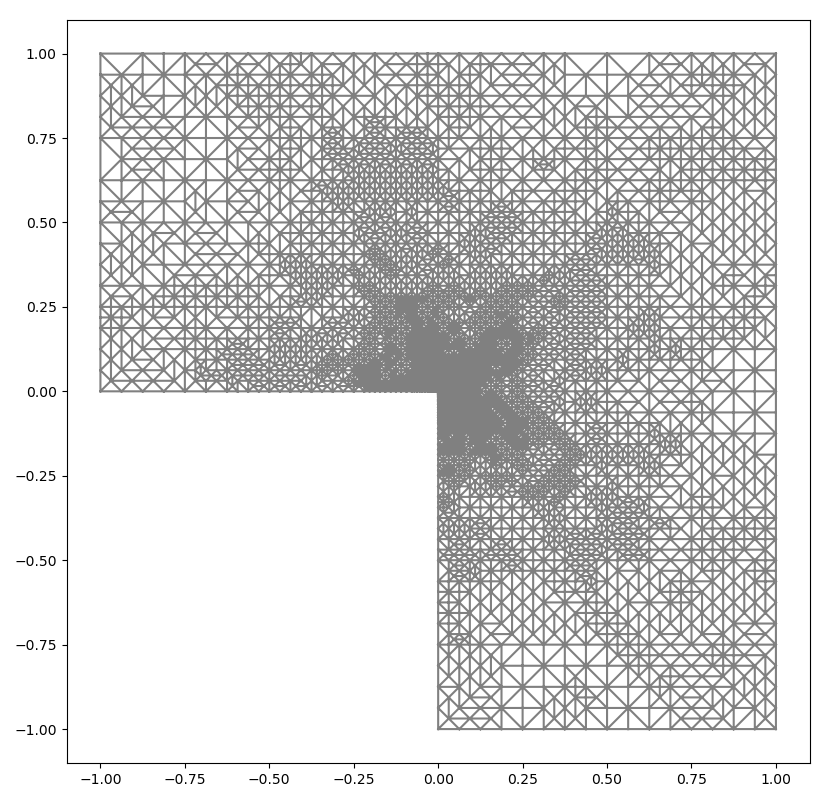}} 
\caption{The refined meshes obtained using the adaptive strategy with $\tilde{\theta} = 0.5$, corresponding to $451$, $1912$, and $11146$ degrees of freedom, respectively. 
}
\label{figure_shaped}
\end{figure}
\begin{figure}[H]
\centering
\subfloat{\includegraphics[width=0.4\linewidth]{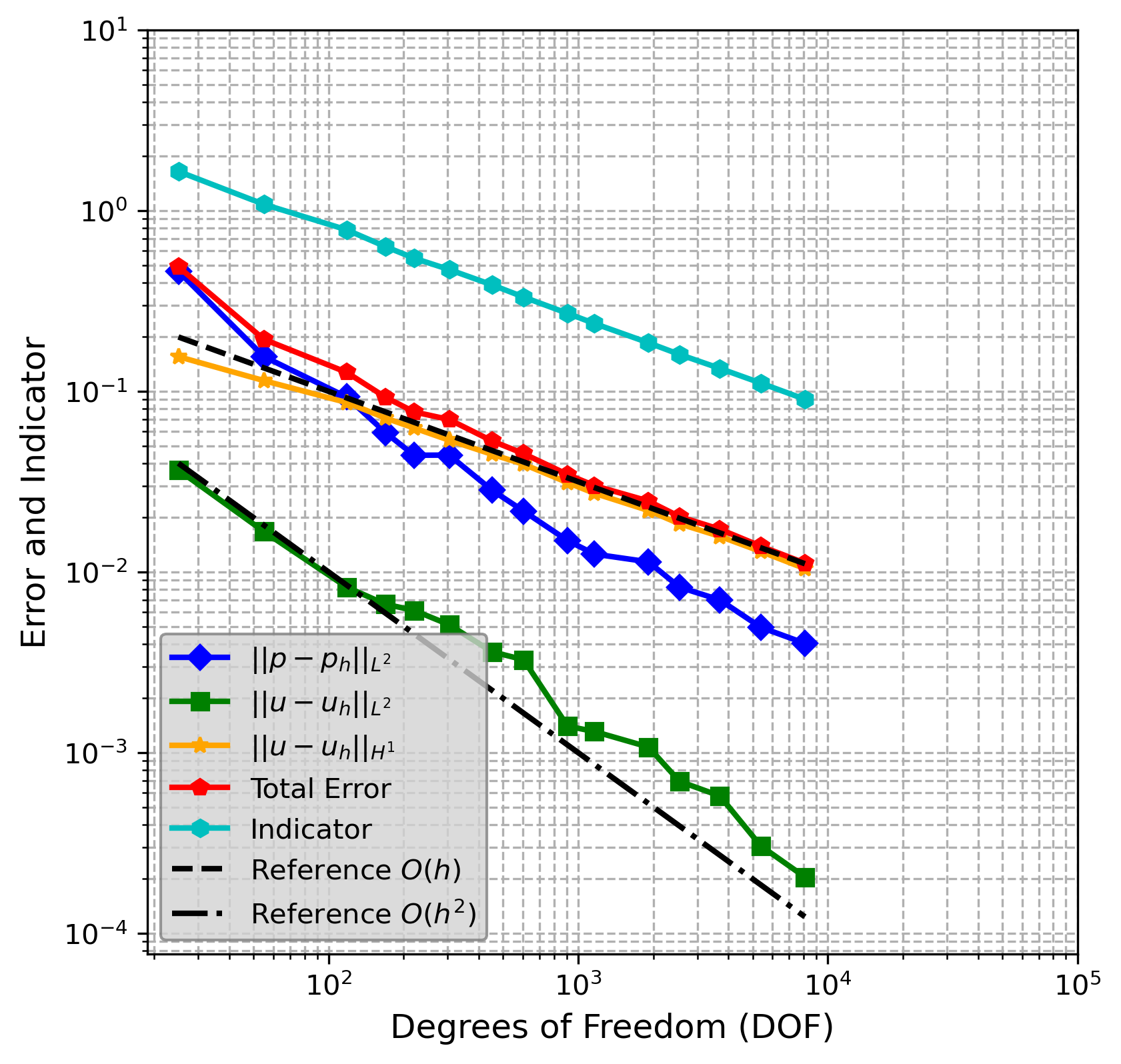}} 
 \subfloat{\includegraphics[width=0.4\linewidth]{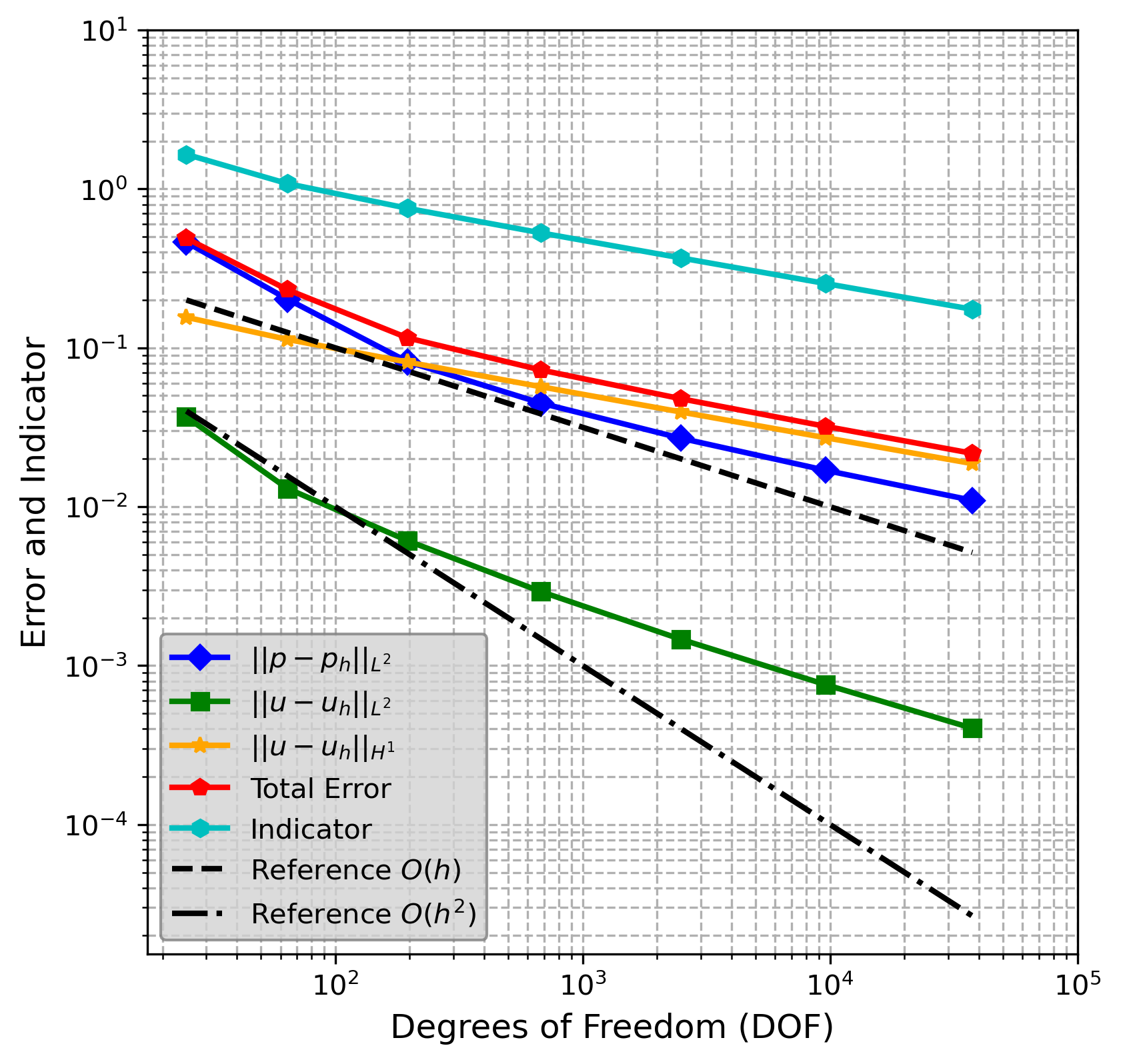}} 
\caption{Comparison of errors and error indicators for adaptive refinement (left) and uniform refinement (right) with $\tilde{\theta} = 0.5$. }
\label{figure_error_plots}
\end{figure}
\subsection{Flow past through a 3D circular cylinder problem}
The geometrical settings of the domain are taken from \cite{MR4744101, Bayraktar2012, BraackRichter2006}. The domain $\Omega$ is the region $] 0,2.5[\times$ $] 0, H[\times] 0, H[$, with $H=0.41 \mathrm{~m}$, without a cylinder of diameter $D=0.1 \mathrm{~m}$. 
In this problem, we impose no-slip boundary conditions on all the lateral walls of the box, and we use do-nothing boundary conditions at the outflow plane. For the surface of the cylinder, we consider two cases: (1) with a homogeneous slip condition and (2) with a no-slip condition. The final adapted meshes obtained using our adaptive scheme for both cases are shown in Figure ~\ref{cylinder_slip}. As expected, most of the refinement is concentrated near the cylinder.
\begin{figure}[H]
\centering
    \subfloat{\includegraphics[width=0.5\linewidth]{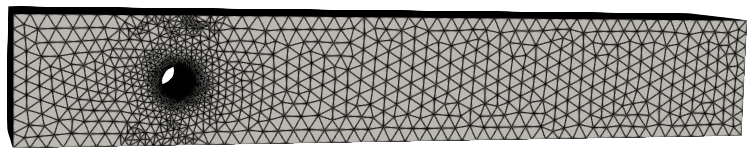}} \\
 \subfloat{\includegraphics[width=0.5\linewidth]{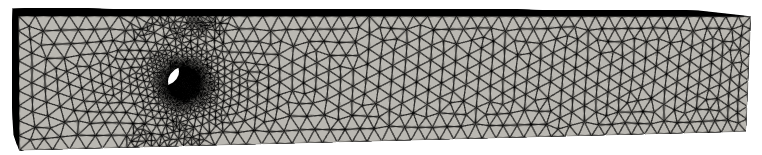}} 
	\caption{Surface view of the final adapted mesh with $\tilde{\theta} = 0.5$ (976,763 elements), showing no-slip (top) and slip (bottom) conditions on the inner cylinder.}
	\label{cylinder_slip}
\end{figure}	
Finally, the inflow condition from \cite{Bayraktar2012} is given by
$\boldsymbol{u}_D:=\left(\frac{16 U  y z(H-y)(H-z)}{H^4}, 0,0\right)^T,$
with $U=0.45 \mathrm{~m} / \mathrm{s}$, the fluid viscosity is given by $\nu=10^{-3}$ and the right-hand side of the momentum equation vanishes, i.e. $\boldsymbol{f}=\boldsymbol{0}$. We have
chosen the following values for the remaining parameters: $\theta = -1$, $\beta = 0$, and $\gamma = 100$. 
The quantities to compute are the following three: the pressure difference $\Delta p$ between the points $(0.55,0.2,0.205)$ and ( $0.45,0.2,0.205)$, and the drag and lift coefficients defined as follows:
$$
C_{\text {drag }}:=\frac{2 F_{\text {drag }}}{\rho \bar{\boldsymbol{u}}^2 D H} \quad \text { and } \quad C_{\text {lift }}:=\frac{2 F_{\text {lift }}}{\rho \bar{\boldsymbol{u}}^2 D H}
$$
where $\rho=1$ and $\bar{u}=0.2$, are the density of the fluid and the mean inflow, respectively, and
$$
F_{\mathrm{drag}}:=\int_S\left(\rho \nu \frac{\partial \boldsymbol{u}_t}{\partial \boldsymbol{n}} n_y-p n_x\right) \mathrm{d} S \quad \text { and } \quad F_{\text {lift }}:=\int_S\left(\rho \nu \frac{\partial \boldsymbol{u}_t}{\partial \boldsymbol{n}} n_x-p n_y\right) \mathrm{d} S
$$
are the drag and lift forces. Here $S$ is the surface of the cylinder, $\boldsymbol{n}=\left(n_x, n_y, n_z\right)$ the inward pointing unit vector with respect to $\Omega, \boldsymbol{t}$ a tangential vector on $S$ and $\boldsymbol{u}_t=\boldsymbol{u} \cdot \boldsymbol{t}$ is the projection of the velocity into the direction $\boldsymbol{t}$.

The stabilized method with adaptive mesh is used to compute the drag coefficient, lift coefficient, and pressure drop. The computed values are $C_{\text{drag}} = 6.00488 $, $C_{\text{lift}} =  0.09602$, and $\Delta p = 0.16557 $. These results are obtained by imposing the no-slip boundary condition on the inner cylinder. The computed values are in good agreement with the reference intervals reported in the literature (see \cite{MR1928952}): $C_{\text{drag}} \in [6.05, 6.25]$, $C_{\text{lift}} \in [0.008, 0.1]$, and $\Delta p \in [0.165, 0.175]$.

When a slip boundary condition is applied to the inner cylinder, while the other conditions are kept the same, the computed values are \( C_{\text{drag}} = 5.88612  \), \( C_{\text{lift}} = 0.07487 \), and \( \Delta p = 0.16245 \). These results indicate a lower drag coefficient, a lower lift coefficient, and a reduced pressure difference, suggesting a more efficient flow around the cylinder compared to the no-slip case.

The numerical solutions for fluid velocity and pressure under both the no-slip and slip boundary conditions are presented in Figure~\ref{cylinder_noslip_velocity}. Comparing the results, we observe that the flow velocity is higher with the slip boundary condition compared to the no-slip boundary condition.
\begin{figure}[H]
\centering
    \subfloat{\includegraphics[width=0.5\linewidth]{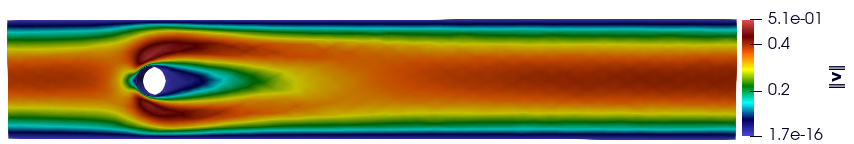}} 
 \subfloat{\includegraphics[width=0.5\linewidth]{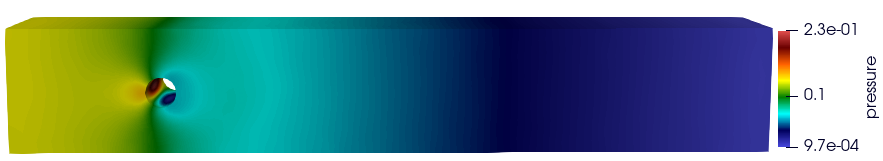}} \\
    \subfloat{\includegraphics[width=0.5\linewidth]{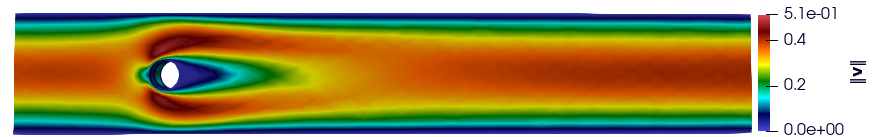}} 
 \subfloat{\includegraphics[width=0.5\linewidth]{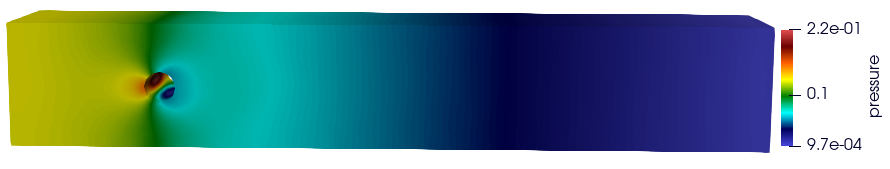}} 
	\caption{Velocity magnitude and Pressure isovalues on the final adapted mesh with no-slip (top row) and slip (bottom row) boundary conditions.}
	\label{cylinder_noslip_velocity}
\end{figure}

\section{Conclusion}
This study extends the equal-order stabilized scheme proposed by Franca et al. to the stationary Navier-Stokes equations with slip boundary conditions. We develop a reliable method based on Nitsche's approach to effectively address boundary conditions on domains with complex geometries. We prove that the discrete problem is well-posed under mild assumptions and show that the approximation error achieves optimal convergence rates. Furthermore, we rigorously evaluate the efficiency and reliability of residual-based a posteriori error estimators for the discrete problem. The theoretical results are validated through several benchmark numerical tests to confirm that the proposed scheme works well. We also compare the results with slip and no-slip boundary conditions in the lid-driven cavity test and the flow past a 3D cylinder, demonstrating the robustness of the slip boundary conditions. The extension of this scheme to the unsteady Navier-Stokes equations is left for future work, as it will require designing suitable stabilization techniques. Additionally, developing parameter-robust block preconditioners to improve the computational efficiency and scalability will be a key area for further research.

\section*{Acknowledgments}
AB was supported by the Ministry of Education, Government of India - MHRD. NAB was supported by Centro de Modelamiento Matematico (CMM), Proyecto Basal FB210005, and by ANID, Proyecto 3230326.

\section*{Conflict of interest statement}
 The authors affirm that they do not have any conflicts  of interests.

\section*{Data availability statement}
 Data generated during the research discussed in the paper will be made available upon reasonable request. 
\bibliographystyle{siam}
\bibliography{main}
\end{document}